\def\sc{\scriptsize}
\setlist[enumerate]{leftmargin=.5in}
\setlist[itemize]{leftmargin=.5in}
\crefname{hypothesis}{Hypothesis}{Hypotheses}
\title{Analysis of anaerobic digestion model with two serial interconnected chemostats
\thanks{Submitted to the editors 2024-08-05.
\funding{
This work was supported by the Euro-Mediterranean research network TREASURE (\url{http://www.inra.fr/treasure}).}
}}
\author{
     THAMER HMIDHI  \thanks{University of Tunis El Manar, National Engineering School of Tunis, LAMSIN, Tunisia (\email{thamer.hmidhi@enit.utm.tn}).}
\and RADHOUANE FEKIH-SALEM \footnotemark[1]~\footnotemark[2]~\thanks{University of Monastir, Higher Institute of Computer Science of Mahdia, Tunisia     (\email{radhouane.fekih-salem@enit.utm.tn}).}
\and J\'ER\^OME HARMAND \thanks{LBE, University of Montpellier, INRAE, Narbonne, France (\email{jerome.harmand@inrae.fr}).}
}
\begin{document}

\maketitle
\begin{abstract}
In this paper, we study a well known two-step anaerobic digestion model in a configuration of two chemostats in series. This model is an eight-dimensional system of ordinary differential equations. Since the reaction system has a cascade structure, we show that the eight-order model can be reduced to a four-dimensional one. Using general growth rates, we provide an in-depth mathematical analysis of the asymptotic behavior of the system. First, we determine all the steady states of the model where there can be more than fifteen equilibria with a non-monotonic growth rate. Then, the necessary and sufficient conditions of existence and local stability of all steady states are established according to the operating parameters: the dilution rate, the input concentrations of the two nutrients, and the distribution of the total process volume considered. The operating diagrams are then analyzed theoretically to describe the asymptotic behavior of the process according to the four control parameters. There can be seventy regions with rich behavior where the system may exhibit bistability or tristability with the coexistence of both microbial species in the two bioreactors.
\end{abstract}
\begin{keywords}
Anaerobic digestion, Bifurcation diagram, Coexistence, Interconnected chemostats, Stability.
\end{keywords}
\begin{MSCcodes}
  34A34, 34D20, 37N25, 92B05
\end{MSCcodes}

\section{Introduction}
Anaerobic Digestion (AD) is a biological process in which organic matter is transformed by a complex microbial ecosystem into biogas in the absence of oxygen. It is used for the treatment of wastewater and the production of energy in the form of biogas containing methane. Under specific conditions, it may also be used to produce hydrogen from waste: the process is then named `black fermentation'. It is thus a fundamental process in the field of circular economy and green energy production from waste, \cite{ParuER2023}.

On the one hand, biological are attractive: based on natural processes, they are relatively easy and simple to manage. On the other hand, they are usually considered as low yield and rate processes when compared to other energy producing processes. This why it is particularly important to optimize their design. In particular, in (bio)chemical engineering, one very important and actually quite old question is whether a biological process
must be implemented in a homogeneous medium, or not. To study this question and characterize the mixing characteristics of the medium in a reactor, chemical engineers have proposed formal representations of chemical and biochemical devices known as `ideal' (or `perfect') reactors. On the one side of the scale on which reactors would be classified according to their homogeneity, there are the Completely Stirred Tank Reactors: they are completely mixed systems where the medium is considered as being perfectly homogeneous (CSTR). On the other side, we define the Plug Flow Reactors (PFR) which are nonhomogeneous systems in which small volumes of matter progress from the input to the output of the process independently of what happens around them. Practically, we can represent this situation
with a long pipe of small diameter in which slices of little volumes of medium progress at a given speed without any exchange with those which go before and those which come after, the bioreaction taking place only inside each slice of liquid. Notice that a PFR may be approximated by an infinite number of CSTR in series.
A limited number of CSTR is thus suited to represent situations between these two ideal cases, see for instance \cite{LevenspielBook1998}.

In microbiology/biotechnology, a CSTR is named `chemostat'. It consists in a tank filled with an aqueous solution in which microorganisms, generically named `biomass', grow on one or more resources, named `substrates'. The first introduction of the chemostat dates back to 1950 by its inventors Novick and Szilard \cite{NovickSience1950} simultaneously with Monod \cite{Monod1950}. More than a common laboratory apparatus used for the continuous cultivation of microorganisms, its model has been studied as a mathematical object since its origins in the fifties \cite{HarmandBook2017,SmithBook1995}. As such, it plays an important role as a model to study the growth of microbes and their interactions in mathematical biology and mathematical ecology. It has been long time used as a model of wastewater biological treatment processes and has been widely used to model industrial bioreactors.

Following the increasing knowledge accumulated over time about the anaerobic microbial ecosystems, the AD process is schematically represented as a set of bioreactions in cascade realized by several groups of microbes, the output of a reaction step being considered to be the input of the next one. Depending on the nature of its inputs, the AD is represented either as a one, two, three, or even four-step process, \emph{cf.} for instance \cite{WeedermanJBD2013}.
It is recognized that the most complete actual model is the ADM1 developed initially proposed by Batstone \emph{et al.} \cite{BatstoneWST2002,IWAReport2002}.
Of course, greater the complexity, lesser the possibility of analytically analyzing the properties of the associated model. Indeed, because of its complexity, and notably its relatively high dimension, the ADM1 is not easy to study from a mathematical viewpoint: very few studies were able to investigate its properties. From the best of our knowledge, there exists only one paper studying its steady states and their stability (see \cite{BornhoftND2013}).
The AM2 model proposed by \cite{Bernard2001} is a two-step AD model. It is particularly interesting because it is of medium complexity.
It has been shown to be able to represent real data in capturing the main dynamical features of different full-scale AD processes and
is thus well suited for control design. During the last decade, many papers in the literature have studied the AM2 model and some of its extensions (see \cite{BenyahiaMBE2020,BenyahiaJPC2012,FekihSIADS2021,SariNonLinDyn2021}).

The `optimal design' of a biological system consists in finding the `best configuration' - in the sense of a given performance criterion - of this system.
In particular, a major question in process engineering is to determine which type of reactor or interconnection of reactors - typically perfect reactors - is the most suitable to maximize such a performance for a given biological process, \cite{GooijerEMT1996}. By `type or reactor' or `interconnection of reactors', we essentially mean determining whether it is important to favor a homogeneous medium (and thus use a chemostat), or not
(and thus rather use an interconnection of chemosts, or even a PFR).
Notice that there exist many alternative configurations, such as several chemostats working in parallel with or without convective or diffusive connections.
Several mathematical studies have addressed these issues in considering what is named the `minimal chemostat model', that is a single-step biological reaction and different biomass growth rates, see \cite{DaliYoucefIFAC2022, DaliYoucefMBE2020, DaliYoucefBMB2022, DaliYoucefIJB2023, HaidarMB2011, HarmandJPC2004, HarmandAIChE2003} for configurations involving chemostats in series, and \cite{HaidarMB2011, LevenspielBook1998} for configurations of chemostats in parallel.
For instance, considering the maximization of conversion yield, Haidar \emph{et al.} \cite{HaidarMB2011} have shown that there exists a threshold on the concentration of nutrient supply for which the series and parallel configurations are better than a single chemostat at steady state. In \cite{DaliYoucefMBE2020}, authors have compared the performances between the two serial interconnected chemostats and of a single chemostat and showed the most effective serial configuration of two chemostats against one to maximize the conversion of the substrate.
Notice that if the objective is to maximize the productivity of biomass or, equivalently the production of biogas, the results may be different. And indeed, in the most general case, the results highly depend on both the growth rate considered, the input substrate concentration, and on the objective pursued.
The papers cited hereabove are very general studies that did not consider specifically the AD process and only concentrated on the optimal design of single-step biological reactional systems.
What about the optimal design of multiple-step processes as it is the case for the AD? In particular, do the results establish for a one-step system hold for a two-step process? In addition, one may further question whether a configuration in which only the first reaction would take place in one reactor (in which case the first reactor may be used to produce hydrogen) and the second reaction in another one would be better than any other configuration of reactors, or not.
Several authors have begun to address these questions either experimentally (\emph{cf.} for instance \cite{SchievanoAE2014}) or theoretically in using the minimal chemostat model in each stage (\emph{cf.} for instance \cite{ChorukovaECES2021,DonosoMBE2019,PanIFAC2022}).
However, before addressing the problem of optimally designing multi-step biological processes, it is essential to study the properties of such a process in a cascade of chemostats, that are the existence and the steady states properties of the steady states.


In this work, we propose an original model describing the two-stage AD process in two series-connected chemostats. Using the same removal rates of two species in the two bioreactors, the eight-dimensional system with a cascade structure can be reduced to a four-dimensional system. Using monotonic growth rates for the first species and non-monotonic growth rates for the second species, we investigate the existence and stability of all steady states according to the control parameters. Moreover, we theoretically describe the operating diagrams and show that the model exhibits a very rich behavior, where there can be bistability and tristability with the coexistence of two populations of micro-organisms in each bioreactor.


Indeed, the operating diagram is a very useful tool for engineers, enabling them to graphically synthesize the asymptotic behavior of the process as a function of the control parameters.
It was established in the book \cite{HarmandBook2017} for the classical chemostat model with Monod and Haldane growth rates.
Three methods of analysis and construction of the operating diagram in the existing literature have been reported and summarized in Mtar et al \cite{MtarDcdsB2022}: purely numerical point-by-point \cite{WadePloS2017,WadeJTB2016}, by the numerical continuation of bifurcations through software packages such as MatCont (see \cite{DhoogeMCMDS2008} and the reference therein) and theoretical from the existence and stability conditions. We can cite the following papers based on the numerical study of the operating diagram as in \cite{HanakiProc2021,KhedimAMM2018,SbarciogBEJ2010,WadeJTB2016,WeedermannNonlinDyn2015,XuJTB2011}.
Several other recent papers have been based on the theoretical study of the operating diagram with various biological and ecological applications:
anaerobic digestion \cite{DaoudMMNP2018,FekihSIADS2021,SariProcess22,SariNonLinDyn2021,SariMB2016,WeedermanJBD2013},
microbial food webs \cite{NouaouraSiap2021,NouaouraDcDsB2021,NouaouraJMB2022,SariMB2017,SobieszekMBE2020}
serially interconnected chemostats \cite{DaliYoucefIFAC2022,DaliYoucefMBE2020,DaliYoucefBMB2022,DaliYoucefIJB2023},
competition with inhibition and allelopathy effects \cite{BarDCDSB2020,DellalDcDsB2021,DellalDcDsB2022,DellalMB2018,ZitouniJMB2023}
and density-dependence models \cite{FekihMB2017,MtarIJB2020,MtarDcdsB2022}.


This paper is organized as follows:
in \cref{SecAnalysisMod}, we propose a original model describing the AD process with a serial configuration of two chemostats.
In \cref{SecRedModel}, we show that the study of the eight-dimension system can be reduced to a four-dimension one.
\Cref{SecExisMulti} is devoted to analyzing the existence conditions and multiplicity of all steady states of the reduced model.
\Cref{SecStabSS} is dedicated to establishing the local stability conditions of all steady states.
Then, in \cref{Sec_DO}, we establish theoretically the operating diagram from the existence and stability conditions.
In \cref{Sec-Conc}, we discuss our results by comparing them with those of the existing literature.
The proofs of all results are reported in \cref{AppProof}. The parameter values used in all figures and the tables defining the various regions of the operating diagram are provided in \cref{Sec_AppTableVal}.
\section{Modeling the configuration of two interconnected chemostats} \label{SecAnalysisMod}
We first recall the general two-step model in a single chemostat with a cascade of the following two biological reactions (see \cite{Bernard2001}):
\begin{equation}                                          \label{r1Ch1}
k_1 S_1 \xrightarrow{\quad{\mu_1(\cdot)\quad}} X_1+ k_2 S_2 + k_4 CO_2, \qquad  k_3 S_2 \xrightarrow{\quad{\mu_2(\cdot)\quad}} X_2 + k_5 CO_2 + k_6 CH_4
\end{equation}
where the organic substrate $S_1$ is consumed by acidogenic bacteria $X_1$ to produce a product $S_2$ (Volatile Fatty Acid).
In the second reaction, the methanogenic bacteria $X_2$ consumes the substrate $S_2$ and produces biogas.
$\mu_1$ and $\mu_2$ are the kinetics of the two reactions, $k_i$, $i=1,\ldots,6$ are the pseudo-stoichiometric coefficients.
The substrates $S_1$ and $S_2$ are introduced in the reactor with the inflowing concentrations $S_1^{in}$ and $S_2^{in}$, respectively, with the input flow rate $Q$ so that the dilution rate $D=Q/V$.
Thus, we obtain the so-called AM2 model which is described by the following ordinary differential equations:
\begin{equation}                                        \label{ModelAM2}
\left\{ \begin{array}{lll}
\dot{S}_1 &=& D \left(S_1^{in}- S_1\right)- k_1\mu_1(\cdot) X_1,                 \\
\dot{X}_1 &=& \left(\mu_1(\cdot)-\alpha D\right) X_1,                              \\
\dot{S}_2 &=& D \left(S_2^{in}-S_2\right)+ k_2 \mu_1(\cdot) X_1-k_3\mu_2(\cdot) X_2, \\
\dot{X}_2 &=& \left(\mu_2(\cdot)-\alpha D\right) X_2,
\end{array} \right.
\end{equation}
where only a fraction $\alpha$ of the biomass is in the liquid phase.
More precisely, $\alpha\in [0, 1]$ is a parameter allowing us to decouple the Hydraulic Retention Time (HRT) and the Solid Retention Time (SRT) (see \cite{Bernard2001} for more detail on the model development).

The AM2 model \cref{ModelAM2} has been studied in \cite{BenyahiaJPC2012,SariNonLinDyn2021,SbarciogBEJ2010} when the growth rate $\mu_i$ for $i=1,2$ depends only on substrate $S_i$ ($\mu_i(\cdot)=\mu_i(S_i))$, that is, \cref{ModelAM2} has a cascade-connected subsystems describing a commensalistic relationship.
In \cite{FekihSIADS2021}, an in-depth mathematical study of model \cref{ModelAM2} has been established by considering a syntrophic relationship between the species, that is, $\mu_1(\cdot)$ depends on both substrates $S_1$ and $S_2$ while $\mu_2(\cdot)$ depends on the substrate $S_2$ ($\mu_1(\cdot)=\mu_1(S_1,S_2)$, $\mu_2(\cdot)=\mu_2(S_2)$). A summary review of the AM2 model for commensalistic and syntrophic relationships describing the modeling assumptions and the growth rates is provided in \cite{FekihSIADS2021}.
Recently, the maximization of biogas production of the AM2 model \cref{ModelAM2} was established theoretically according to the bioreactor control parameters \cite{SariProcess22}.

In this paper, we consider a serial configuration of two interconnected chemostats where the volume $V$ is divided into two volumes, $r_1V=rV$ and $r_2V=(1-r)V$ with $r\in(0,1)$ (see \cref{Figchemo}). The two-tiered model we consider here describes the same biological reactions \cref{r1Ch1}.
However, the concentrations of substrate $S_1$ and $S_2$ and biomass $X_1$ and $X_2$ are different in each bioreactor, which we'll note as $S_i^j$ and $X_i^j$, $i,j=1,2$ in each bioreactor $j$.
Moreover, $S_1^{in}$ and $S_2^{in}$ are the concentrations of input substrate $S_1^1$ and $S_2^1$ in the first chemostat at a flow rate $Q$. The substrates $S_1^2$ and $S_2^2$ are introduced in the second chemostat with the inflowing concentrations $S_1^1$ and $S_2^1$, respectively, and a flow rate $Q$.
\begin{figure}[!ht]
\setlength{\unitlength}{1.0cm}
\begin{center}
\begin{picture}(7,25.3)(0,0)		
\put(-6.7,0){{\includegraphics[width=20cm,height=25cm]{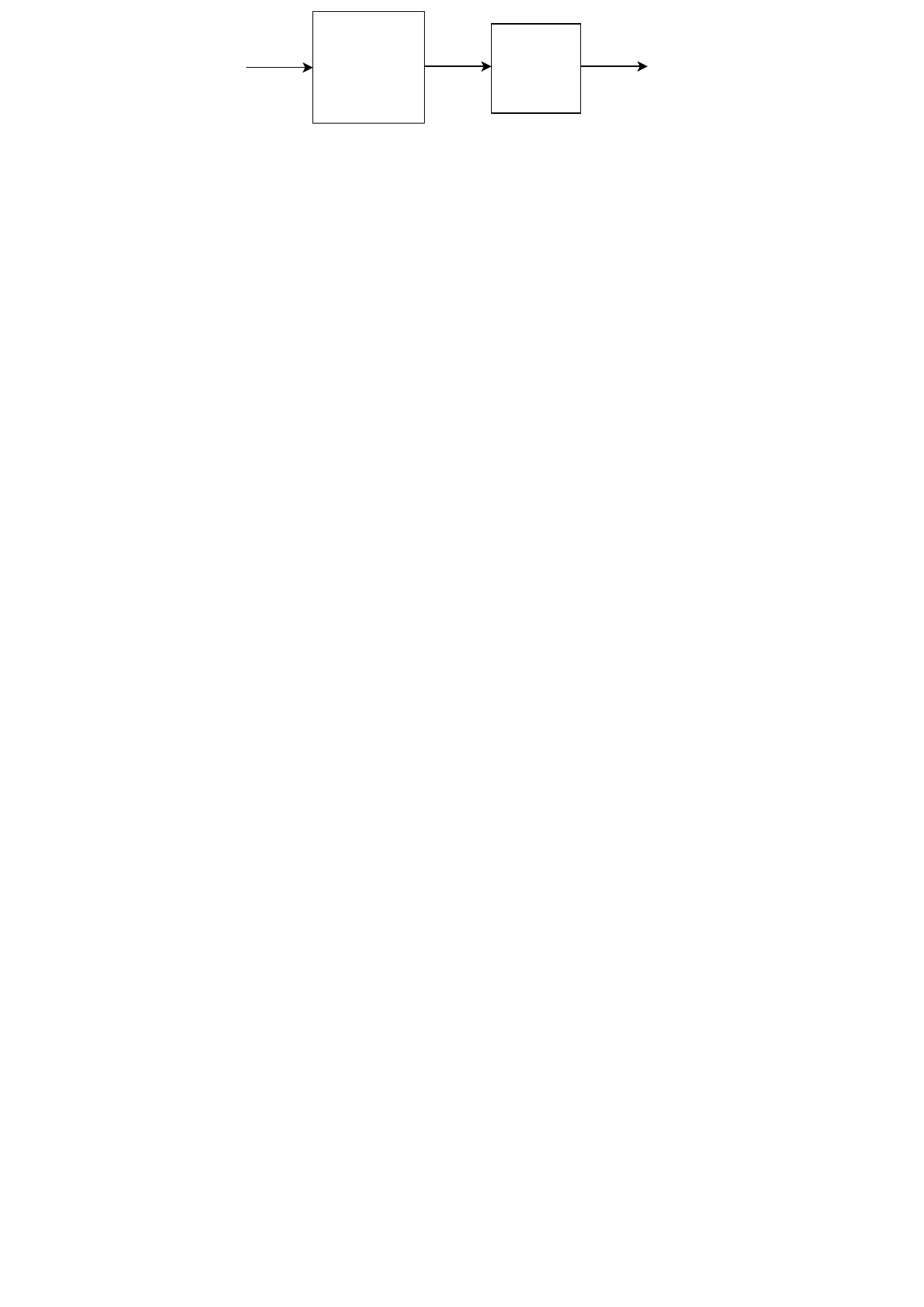}}}
\put(-2.5,23.7){\sc $S_1^{in},S_2^{in}$}
\put(0.8,24){\sc $S_1^1,X_1^1$}
\put(0.8,23.3){\sc $S_2^1,X_2^1$}
\put(-0.8,23.8){\sc $Q$}
\put(0.8,24.9){\sc $r_1V=rV$}
\put(4.4,24){\sc $S_1^2,X_1^2$}
\put(4.4,23.3){\sc $S_2^2,X_2^2$}
\put(4,24.7){\sc $r_2V=(1-r)V$}
\put(7.5,23.7){\sc $S_1^2,X_1^2,S_2^2,X_2^2$}
\put(3.1,23.8){\sc $Q$}
\put(6.4,23.8){\sc $Q$}
\end{picture}
\end{center}
\vspace{-22.7cm}
\caption{The serial configuration of two interconnected chemostats.} \label{Figchemo}
\end{figure}

Using Antoine Lavoisier's law: ``Nothing is lost, nothing is created, everything is transformed'' and establishing the mass balance between the instants $t$ and $t+dt$, we obtain the mathematical model given by the following eight-dimensional system of ordinary differential equations:
\begin{equation}                                  \label{ModelAM2S2C}
\left\{\begin{array}{lll}
\dot{S}_1^1  &=& D_1\left(S_1^{in}- S_1^1\right)-k_1\mu_1\left(S_1^1\right) X_1^1, \\
\dot{X}_1^1  &=&\left(\mu_1\left(S_1^1\right) - D_1\right)X_1^1, \\
\dot{S}_2^1  &=& D_1\left(S_2^{in}- S_2^1\right)+k_2\mu_1\left(S_1^1\right)X_1^1-k_3\mu_2\left(S_2^1\right) X_2^1,  \\
\dot{X}_2^1  &=&\left(\mu_2\left(S_2^1\right) - D_1\right)X_2^1,\\
\dot{S}_1^2  &=& D_2\left(S_1^1- S_1^2\right)-k_1\mu_1\left(S_1^2\right) X_1^2, \\
\dot{X}_1^2  &=& D_2\left(X_1^1- X_1^2\right)+\mu_1\left(S_1^2\right)X_1^2,  \\
\dot{S}_2^2  &=& D_2\left(S_2^1- S_2^2\right)+k_2\mu_1\left(S_1^2\right)X_1^2-k_3\mu_2\left(S_2^2\right) X_2^2 ,\\
\dot{X}_2^2  &=& D_2\left(X_2^1- X_2^2\right)+\mu_2\left(S_2^2\right)X_2^2,
\end{array}\right.
\end{equation}
where $D_1=D/r_1$ and $D_2=D/r_2$ are the dilution rates corresponding to the first and second chemostats, respectively.
In this paper, we assume that the growth functions $\mu_1(S_1)$ and $\mu_2(S_2)$ belongs to $\mathcal{C}^1({R}_+)$ and satisfy the following hypotheses.
\begin{hypothesis}                                    \label{Hypoth1}
$\mu_1(0)=0$, $\mu_1(+\infty)=m_1$, and $\mu_1'(S_1)>0$ for all $S_1>0$.
\end{hypothesis}
\begin{hypothesis}                                    \label{Hypoth2}
 $\mu_2(0)=0$, $\mu_2(+\infty)=0$, and there exists $S_2^m>0$ such that $\mu_2 '(S_2)>0$, for all $0<S_2<S_2^m$ and $\mu_2 '(S_2)<0$, for all $S_2>S_2^m$.
\end{hypothesis}

\cref{Hypoth1} means that the substrate is necessary for the growth of the first species. In addition, the growth rate of the first species is favored by the concentration of the substrate $S_1$.
\cref{Hypoth2} means that the growth of the second species takes into account the growth-limiting
for low concentrations of substrate $S_2$ and the growth-inhibiting for high concentrations.

To preserve the biological meaning of our model \cref{ModelAM2S2C}, we prove the following result. Since the proof is classical, it is left to the reader.
\begin{proposition}
All solutions of model \cref{ModelAM2S2C} with nonnegative initial condition remain nonnegative
and bounded for all positive times. Let $\xi=\left(S_1^1,X_1^1,S_2^1,X_2^1,S_1^2,X_1^2,S_2^2,X_2^2\right)$ be the state vector. The set
$$
\Omega=\left\{ \xi\in \mathbb{R}_+^8: S_1^i+k_1X_1^i=S_1^{in}~\mbox{and}~S_2^i+k_3X_2^i-k_2X_1^i=S_2^{in},i=1,2\right\}
$$
is positively invariant and is a global attractor for \cref{ModelAM2S2C}.
\end{proposition}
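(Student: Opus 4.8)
\emph{Sketch of proof.} The plan is to prove the three assertions --- nonnegativity, boundedness, and positive invariance together with global attractivity of $\Omega$ --- in that order, exploiting the cascade structure of \cref{ModelAM2S2C}.

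For nonnegativity, I would check that the ($C^1$, hence locally Lipschitz) vector field of \cref{ModelAM2S2C} never points strictly outward on the faces of $\mathbb{R}_+^8$, so that $\mathbb{R}_+^8$ is positively invariant by the usual sub-tangentiality criterion for ODEs. The two biomass equations of the first tank are easiest: $\dot X_1^1=(\mu_1(S_1^1)-D_1)X_1^1$ and $\dot X_2^1=(\mu_2(S_2^1)-D_1)X_2^1$ are linear and homogeneous in $X_1^1$, resp. $X_2^1$, with coefficients continuous (indeed bounded, since $\mu_1\le m_1$) along any solution, so $X_1^1$ and $X_2^1$ keep the sign of their initial data. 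For the first-tank substrates, $\mu_1(0)=0$ (\cref{Hypoth1}) gives $\dot S_1^1=D_1 S_1^{in}\ge 0$ on $\{S_1^1=0\}$, and $\mu_2(0)=0$ (\cref{Hypoth2}) gives $\dot S_2^1=D_1 S_2^{in}+k_2\mu_1(S_1^1)X_1^1\ge 0$ on $\{S_2^1=0\}$ once $S_1^1,X_1^1\ge 0$. The second tank is treated the same way, using that the boundary values of its equations are governed by the already-handled first-tank variables: $\dot S_1^2=D_2 S_1^1\ge 0$ on $\{S_1^2=0\}$, $\dot X_1^2=D_2 X_1^1\ge 0$ on $\{X_1^2=0\}$, $\dot S_2^2=D_2 S_2^1+k_2\mu_1(S_1^2)X_1^2\ge 0$ on $\{S_2^2=0\}$, and $\dot X_2^2=D_2 X_2^1\ge 0$ on $\{X_2^2=0\}$. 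So the estimates are carried out tank by tank, following the couplings $X_1^1\to X_1^2$ and tank $1\to$ tank $2$.

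The heart of the proof is the change of variables $z_1^i=S_1^i+k_1X_1^i$ and $z_2^i=S_2^i+k_3X_2^i-k_2X_1^i$ for $i=1,2$. Substituting \cref{ModelAM2S2C}, all reaction terms $\mu_1(\cdot)X_1^i$ and $\mu_2(\cdot)X_2^i$ cancel and one obtains the triangular linear system
\[
\dot z_1^1 = D_1(S_1^{in}-z_1^1),\qquad \dot z_2^1 = D_1(S_2^{in}-z_2^1),
\]
\[
\dot z_1^2 = D_2(z_1^1-z_1^2),\qquad \dot z_2^2 = D_2(z_2^1-z_2^2).
\]
The first two scalar equations give $z_1^1(t)\to S_1^{in}$ and $z_2^1(t)\to S_2^{in}$ exponentially; each of the last two is a scalar asymptotically stable linear ODE driven by a convergent input, hence $z_1^2(t)\to S_1^{in}$ and $z_2^2(t)\to S_2^{in}$ as well. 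The set $\Omega$ is exactly $\{z_1^1=z_1^2=S_1^{in},\ z_2^1=z_2^2=S_2^{in}\}$, the common equilibrium set of this linear system, hence positively invariant; and the convergence just established shows every solution of \cref{ModelAM2S2C} approaches $\Omega$, i.e. $\Omega$ is a global attractor.

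Boundedness then follows from the same aggregated variables, and this is where the only mild subtlety lies: $z_2^i$ is not a sum of nonnegative terms, so $S_2^i$ and $X_2^i$ cannot be bounded from it directly. On the maximal interval of existence each $z_1^i$ and $z_2^i$ solves a scalar stable linear ODE with bounded forcing, hence stays bounded; since $S_1^i,X_1^i\ge 0$ and $S_1^i+k_1X_1^i=z_1^i$, both $S_1^i$ and $X_1^i$ are bounded; then $S_2^i+k_3X_2^i=z_2^i+k_2X_1^i$ is bounded, and the nonnegativity of $S_2^i$ and $X_2^i$ bounds each of them. Hence every solution stays in a fixed bounded subset of $\mathbb{R}_+^8$, and the standard continuation argument upgrades local existence (from $C^1$ regularity of the right-hand side) to existence on $[0,+\infty)$. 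No step is a genuine obstacle; the only thing to watch is the order in which the estimates are chained, which is dictated by the cascade structure.
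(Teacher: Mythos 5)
Your proof is correct, and it follows exactly the route the paper has in mind: the paper leaves this "classical" proof to the reader, but the change of variables $Z_1^i=S_1^i+k_1X_1^i$, $Z_2^i=S_2^i+k_3X_2^i-k_2X_1^i$ that you use is precisely the one the authors introduce in \cref{SecRedModel} to obtain the triangular system \cref{ModelAM2SZi}. Your boundary check for nonnegativity, the ordering of the boundedness estimates dictated by the cascade (first $z_1^i$, then $S_1^i,X_1^i$, then $S_2^i+k_3X_2^i$), and the attractivity argument via the exponentially stable linear $z$-dynamics are all sound.
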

\section{Reduced model}                           \label{SecRedModel}
In the following, we show that the existence and asymptotic stability of the steady states of the eight-dimensional system \cref{ModelAM2S2C} can be deduced from that of the four-dimensional system \cref{ModelAM2SR4}.
The change of variable
$$
Z_1^i=S_1^i+k_1X_1^i \quad\mbox{and}\quad Z_2^i=S_2^i+k_3X_2^i-k_2X_1^i, \quad i=1,2,
$$
transforms system \cref{ModelAM2S2C} into a system having the same structure, namely a system of the form
\begin{equation}                                   \label{ModelAM2SZi}
\left\{
\begin{array}{lll}
\dot{Z}_1^1  &=& D_1\left(S_1^{in}- Z_1^1\right), \\
\dot{Z}_2^1  &=& D_1\left(S_2^{in}- Z_2^1\right),  \\
\dot{Z}_1^2  &=& D_2\left(Z_1^1- Z_1^2\right), \\
\dot{Z}_2^2  &=& D_2\left(Z_2^1- Z_2^2\right), \\
\dot{X}_1^1  &=&\left(\mu_1\left(Z_1^1-k_1X_1^1\right) - D_1\right)X_1^1,  \\
\dot{X}_2^1  &=&\left(\mu_2\left(Z_2^1+k_2X_1^1-k_3X_2^1\right) - D_1\right)X_2^1,  \\
\dot{X}_1^2  &=&D_2\left(X_1^1- X_1^2\right)+\mu_1\left(Z_1^2-k_1X_1^2\right)X_1^2, \\
\dot{X}_2^2  &=&D_2\left(X_2^1- X_2^2\right)+\mu_2\left(Z_2^2+k_2X_1^2-k_3X_2^2\right)X_2^2.
\end{array}\right.
\end{equation}
Thus, the steady states of the eight-dimension system \cref{ModelAM2S2C} are given by the steady state of \cref{ModelAM2SZi}.
From the first four equations of \cref{ModelAM2SZi}, we see that a steady state of \cref{ModelAM2SZi} must satisfy $Z_i^j=S_i^{in}$, $i,j=1,2$.
Setting $Z_i^j=S_i^{in}$ in the lower subsystem of \cref{ModelAM2SZi}, we obtain the following reduced system
\begin{equation}                                   \label{ModelAM2SR4}
\left\{
\begin{array}{lll}
\dot{X}_1^1 &=& \left(\mu_1\left(S_1^{in}-k_1X_1^1\right) - D_1\right)X_1^1,  \\
\dot{X}_2^1 &=& \left(\mu_2\left(S_2^{in}+k_2X_1^1-k_3X_2^1\right) - D_1\right)X_2^1,  \\
\dot{X}_1^2 &=& D_2\left(X_1^1- X_1^2\right)+\mu_1\left(S_1^{in}-k_1X_1^2\right)X_1^2 , \\
\dot{X}_2^2 &=& D_2\left(X_2^1- X_2^2\right)+\mu_2\left(S_2^{in}+k_2X_1^2-k_3X_2^2\right)X_2^2.
\end{array}
\right.
\end{equation}
Hence, the last four components of a steady state of the eight-dimensional system \cref{ModelAM2SZi} are given by putting the left-hand side of the reduced system \cref{ModelAM2SR4} equal to zero. Thus, for each steady state of the four-dimensional system \cref{ModelAM2SR4} denoted by
$$
\mathcal{E}_{ij}^{kl}\left(X_1^1,X_2^1,X_1^2,X_2^2\right), \quad \{i,j,k,l\}\in \{0,1,2\}
$$
corresponds a steady state of the eight-dimensional system \cref{ModelAM2S2C} denoted by
$$
\mathcal{F}_{ij}^{kl}\left(S_1^1,X_1^1,S_2^1,X_2^1,S_1^2,X_1^2,S_2^2,X_2^2\right)
$$
where
$$
S_1^i=S_1^{in}-k_1X_1^i \quad\mbox{and}\quad S_2^i=S_2^{in}-k_3X_2^i+k_2X_1^i,i=1,2.
$$
To analyze the local stability of each steady state, system \cref{ModelAM2SZi} possesses a pair of cascade-connected subsystems in the $(\xi_1,\xi_2)$ coordinates of the form (see e.g. \cite{IsidoriBook1999})
\begin{eqnarray*}
\left\{\begin{array}{l}
 \dot{\xi}_1 = f(\xi_1) \\
 \dot{\xi}_2 = g(\xi_2,\xi_1),
\end{array}\right.
\end{eqnarray*}
where
\begin{displaymath}
f(\xi_1)=
\left[\begin{array}{c}
   D_1\left(S_1^{in}- Z_1^1\right) \\
   D_1\left(S_2^{in}- Z_2^1\right)  \\
   D_2\left(Z_1^1- Z_1^2\right) \\
   D_2\left(Z_2^1- Z_2^2\right)
\end{array}\right],\quad
g(\xi_2,\xi_1)=
\left[\begin{array}{l}
 \left(\mu_1\left(Z_1^1-k_1X_1^1\right) - D_1\right)X_1^1  \\
 \left(\mu_2\left(Z_2^1+k_2X_1^1-k_3X_2^1\right) - D_1\right)X_2^1  \\
 D_2\left(X_1^1- X_1^2\right)+\mu_1\left(Z_1^2-k_1X_1^2\right)X_1^2, \\
 D_2\left(X_2^1- X_2^2\right)+\mu_2\left(Z_2^2+k_2X_1^2-k_3X_2^2\right)X_2^2
\end{array}\right]
\end{displaymath}
where  $\xi_1=\left(Z_1^1,Z_2^1,Z_1^2,Z_2^2\right) \in \mathbb{R}_+^4$ and $\xi_2=\left(X_1^1,X_2^1,X_1^2,X_2^2\right) \in \mathbb{R}_+^4$. So that
$$
g\left(\xi_1^*\right)=0, \quad g\left(\xi_2^*,\xi_1^*\right)=0,
$$
Note that $\xi_1^*$ is LES of the upper subsystem of \cref{ModelAM2SZi} since at $\xi_1^*$, the characteristic polynomial is
$$
P(\lambda)=\left(D_1+\lambda\right)^2 \left(D_2+\lambda\right)^2.
$$
Using \cite[Corollary 10.3.2]{IsidoriBook1999}, we show that the local stability of $\xi_2^*$ of the lower subsystem $\dot{\xi}_2=g\left(\xi_2,\xi_1^*\right)$ is the same as the one of the cascade \cref{ModelAM2SZi} at $(\xi_1,\xi_2)=\left(\xi_1^*,\xi_2^*\right)$.

Since the components $S_1^1$, $S_2^1$, $S_1^2$ and $S_2^2$ are nonnegative, system \cref{ModelAM2SR4} is defined on the set:
\begin{equation}                                              \label{M}
 M = \left\{ \left(X_1^1,X_2^1,X_1^2,X_2^2\right)\in \mathbb{R}_+^4: X_1^i\leq \frac{S_1^{in}}{k_1}\mbox{ and } X_2^i\leq \frac{S_2^{in}+k_2X_1^i}{k_3},i=1,2\right\}.
 \end{equation}
\subsection{Existence and multiplicity of the steady states}        \label{SecExisMulti}
A steady state exists if and only if all its components are nonnegative.
Each steady state is preceded by a subscript and an exponent, indicating which population of microorganisms survives in the first and the second chemostat, respectively.
For example, the subscript in $\mathcal{E}_{0i}^{11}$, $i=1,2$ implies that $X_1^1=0$ and $X_2^1>0$ with the possibility of two different concentrations of $X_2^1$ at the steady state.
However, the exponent in $\mathcal{E}_{0i}^{11}$ implies that $X_1^2>0$ and $X_2^2>0$.
Indeed, system \cref{ModelAM2SR4} has the following nine types of steady states with $i=1,2$.
\begin{itemize}[leftmargin=*]
\item $\mathcal{E}_{00}^{00}\left(X_1^1=0,X_2^1=0,X_1^2=0,X_2^2=0\right):$ washout of the two species in each bioreactor.
\item $\mathcal{E}_{00}^{0i}\left(X_1^1=0,X_2^1=0,X_1^2=0,X_2^2>0\right):$ only the second species is in the second bioreactor.
\item $\mathcal{E}_{00}^{10}\left(X_1^1=0,X_2^1=0,X_1^2>0,X_2^2=0\right):$ only the first species is in the second bioreactor.
\item $\mathcal{E}_{00}^{1i}\left(X_1^1=0,X_2^1=0,X_1^2>0,X_2^2>0\right):$ only the two species is in the second bioreactor.
\item $\mathcal{E}_{10}^{10}\left(X_1^1>0,X_2^1=0,X_1^2>0,X_2^2=0\right):$ only the first species is in each bioreactor.
\item $\mathcal{E}_{10}^{1i}\left(X_1^1>0,X_2^1=0,X_1^2>0,X_2^2>0\right):$ exclusion of the second species in the first bioreactor.
\item $\mathcal{E}_{0i}^{01}\left(X_1^1=0,X_2^1>0,X_1^2=0,X_2^2>0\right):$ only the second species is in each bioreactor.
\item $\mathcal{E}_{0i}^{11}\left(X_1^1=0,X_2^1>0,X_1^2>0,X_2^2>0\right):$ exclusion of the first species in the first bioreactor.
\item $\mathcal{E}_{1i}^{11}\left(X_1^1>0,X_2^1>0,X_1^2>0,X_2^2>0\right):$ the two species in each bioreactor are maintained.
\end{itemize}

To describe the steady states of \cref{ModelAM2SR4}, we define in \cref{TabFunc1} the following auxiliary functions depending on the operating parameters $D$, $r$, $S_1^{in}$, and $S_2^{in}$. To simplify the notations, we will denote in the following
$$
\lambda_1^i=\lambda_1^i(D,r) \quad \mbox{and} \quad\lambda_2^{ij}=\lambda_2^{ij}(D,r),\quad i,j=1,2.
$$
\begin{table}[ht]
\caption{Auxiliary functions depending on $D$, $r$, $S_1^{in}$, and $S_2^{in}$ for $i,j=1,2$ where $D_i:=D/r_i$, $D_i^m:=r_i\mu_2\left(S_2^m\right)$, $r_1=r$, $r_2=1-r$, and $X_1^{2*}$ is the unique solution of equation $f_1(x)=g_1(x)$ with the functions $f_1$ and $g_1$ are defined in \cref{TabFunc2}.}  \label{TabFunc1}
\begin{center}
\begin{tabular}{ @{\hspace{1mm}}l@{\hspace{1mm}}  @{\hspace{1mm}}l@{\hspace{1mm}} }	
\hline
                                         &  Definition
\\ \Xhline{3\arrayrulewidth}
$\lambda_1^i(D,r)$                       &
\begin{tabular}{l}
$\lambda_1^i(D,r)$ is the unique solution of equation $\mu_1(S_1) = D_i$. \\
It is defined for $0 < D < r_im_1$. \\
If $D \geq r_im_1$, by convention we let $\lambda_1^i(D,r) = +\infty$.
\end{tabular}
\\ \hline
$\lambda_2^{ij}(D,r)$                     &
\begin{tabular}{l}
$\lambda_2^{i1}(D,r)<\lambda_2^{i2}(D,r)$ are the two solutions of equation $\mu_2\left(S_2\right) = D_i$. \\
They are defined for $0 < D \leq D_i^m$. \\
If $D > D_i^m$, by convention we let $\lambda_2^{ij}(D,r) = +\infty$.
\end{tabular}
\\ \hline
$F_{1j}\left(D,r,S_2^{in}\right)$         &
\begin{tabular}{l}
$F_{1j}\left(D,r,S_2^{in}\right) =\lambda_1^1(D,r)+\frac{k_1}{k_2}\left(\lambda_2^{1j}(D,r)-S_2^{in}\right)$. \\
It is defined for $0 < D < \min\left(r_1m_1, D_1^m\right)$.
\end{tabular}
\\ \hline
$F_{2j}\left(D,r,S_2^{in}\right)$         &
\begin{tabular}{l}
$F_{2j}\left(D,r,S_2^{in}\right) =\lambda_1^2(D,r)+\frac{k_1}{k_2}\left(\lambda_2^{2j}(D,r)-S_2^{in}\right)$. \\
It is defined for $0 < D < \min\left(r_2m_1, D_2^m\right)$.
\end{tabular}
\\ \hline
$\phi_j\left(D,r,S_1^{in},S_2^{in}\right)$         &
\begin{tabular}{l} $\phi_j\left(D,r,S_1^{in},S_2^{in}\right)=S_2^{in}+k_2X_1^{2*}\left(D,r,S_1^{in}\right)-\lambda_2^{2j}(D,r)$.\\
It is defined for $0 < D \leq D_2^m$ and $S_1^{in}>\lambda_1^1$.
\end{tabular}
\\ \hline
\end{tabular}
\end{center}
\end{table}

On the other hand, to establish the existence and multiplicity of steady states of \cref{ModelAM2SR4}, we also need to define some auxiliary functions listed in \cref{TabFunc2}.
\begin{table}[!ht]
\begin{center}
\caption{Auxiliary functions and their domains of definition. Note that $X_1^{1*}$, $X_2^{1*}$, $X_1^{2*}$ are the components of different steady state in \cref{TabComSS}, and $k_i,i=1,2,3$ are the pseudo-stoichiometric coefficients.}\label{TabFunc2}
\begin{tabular}{llll}
\hline
  Auxiliary functions                                    &  Domains of definition \\ \Xhline{3\arrayrulewidth}
$g_1(x):=D_2\left(\frac{x-X_1^{1*}}{x}\right)$           & $x\in \left(0, +\infty\right)$ \\
$g_2(x):=D_2\left(\frac{x-X_2^{1*}}{x}\right)$           & $x\in \left(0, +\infty\right)$\\
$f_1(x):=\mu_1\left(S_1^{in}-k_1x\right)$                & $x\in \left[0, S_1^{in}/k_1\right]$  \\
$f_2(x):=\mu_2\left(S_2^{in}-k_3x\right)$                & $x\in \left[0, S_2^{in}/k_3\right]$\\
$f_3(x):=\mu_2\left(S_2^{in}+k_2X_1^{2*}-k_3x\right)$    & $x\in \left[0, \left(S_2^{in}+k_2X_1^{2*}\right)/k_3\right]$
\\ \hline
\end{tabular}
\end{center}
\end{table}
In the following proposition, we provide the necessary and sufficient conditions for the existence of all steady states of model \cref{ModelAM2SR4}.
\begin{proposition}                                     \label{propES}
Assume that \cref{Hypoth1,Hypoth2} hold. Then, the nine types of steady states of \cref{ModelAM2SR4} are given in \cref{TabComSS}. Their existence conditions are given in \cref{TableCondExis}.
\end{proposition}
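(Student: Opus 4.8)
The plan is to exploit the triangular (cascade) structure of \cref{ModelAM2SR4}: a steady state is obtained by first solving the two equations for the first-stage biomasses $\left(X_1^1,X_2^1\right)$ and then, with those values frozen, solving the two equations for the second-stage biomasses $\left(X_1^2,X_2^2\right)$. Each of the four equations factors or rearranges into a scalar equation, and the full list of nine types is produced by enumerating the admissible combinations of roots, discarding those that violate the domain constraints of \cref{M}.

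First I would treat the first stage. The equation $X_1^1\left(\mu_1\left(S_1^{in}-k_1X_1^1\right)-D_1\right)=0$ has the trivial root $X_1^1=0$ always, and by \cref{Hypoth1} (strict monotonicity of $\mu_1$) a unique positive root $X_1^{1*}=\left(S_1^{in}-\lambda_1^1\right)/k_1$ precisely when $\lambda_1^1<S_1^{in}$, which forces $0<D<r_1m_1$. The equation $X_2^1\left(\mu_2\left(S_2^{in}+k_2X_1^1-k_3X_2^1\right)-D_1\right)=0$ likewise has $X_2^1=0$ always; by \cref{Hypoth2} the level set $\mu_2=D_1$ is nonempty only for $0<D\le D_1^m$, and then consists of the (at most) two values $\lambda_2^{1j}$, so $k_3X_2^1=S_2^{in}+k_2X_1^1-\lambda_2^{1j}$. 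Imposing $X_2^1>0$ and substituting the two possible values of $X_1^1$ turns this into $S_2^{in}>\lambda_2^{1j}$ (case $X_1^1=0$) or $S_1^{in}>F_{1j}$ (case $X_1^1=X_1^{1*}$), the definition of $F_{1j}$ in \cref{TabFunc1} being exactly this rearrangement.

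Next I would treat the second stage. The equation $D_2\left(X_1^1-X_1^2\right)+\mu_1\left(S_1^{in}-k_1X_1^2\right)X_1^2=0$ carries an extra inflow term. If $X_1^1=0$ it collapses to the chemostat form, with roots $X_1^2=0$ and $X_1^2=\left(S_1^{in}-\lambda_1^2\right)/k_1$, the latter requiring $S_1^{in}>\lambda_1^2$. If $X_1^1=X_1^{1*}>0$, then $X_1^2=0$ is impossible (the left-hand side equals $D_2X_1^{1*}>0$ there), so $X_1^2$ must solve $f_1(x)=g_1(x)$; here I would use a monotone-intersection argument --- $f_1$ is strictly decreasing from $\mu_1\left(S_1^{in}\right)$ to $0$ while $g_1$ is strictly increasing with $g_1\left(X_1^{1*}\right)=0<D_1=f_1\left(X_1^{1*}\right)$ --- to get a unique root $X_1^{2*}\in\left(X_1^{1*},S_1^{in}/k_1\right)$, which justifies its well-definedness as asserted in \cref{TabFunc1}. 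Finally, the equation $D_2\left(X_2^1-X_2^2\right)+\mu_2\left(S_2^{in}+k_2X_1^2-k_3X_2^2\right)X_2^2=0$ splits into four sub-cases according to whether $X_2^1$ and $X_1^2$ vanish. When $X_2^1=0$ it reduces to $\mu_2\left(S_2^{in}+k_2X_1^2-k_3X_2^2\right)=D_2$, contributing (through $\lambda_2^{2j}$) up to two roots $X_2^2$ whose positivity, after substituting the admissible values of $X_1^2$, reads $S_2^{in}>\lambda_2^{2j}$, or $S_1^{in}>F_{2j}$, or $\phi_j>0$. When $X_2^1>0$ the root $X_2^2=0$ is again impossible and $X_2^2$ solves an equation $f(x)=g_2(x)$ where $g_2(x)=D_2\left(x-X_2^{1*}\right)/x$ is strictly increasing and $f$ is $\mu_2$ composed with a decreasing affine map (the functions $f_2$ for $X_1^2=0$ and $f_3$ for $X_1^2=X_1^{2*}$ of \cref{TabFunc2}, and their analogues).

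The hard part will be counting the roots of these last equations $f(x)=g_2(x)$. Unlike $f_1$, the map $f$ is the Haldane-shaped $\mu_2$ (by \cref{Hypoth2}) composed with a decreasing affine change of variable, hence unimodal --- increasing then decreasing, with turning point at the $x$ where the affine argument equals $S_2^m$ --- or plainly decreasing when that argument never exceeds $S_2^m$ on the domain. Intersecting such a curve with the increasing $g_2$ can yield zero, one, or two solutions, so I would locate the turning point, evaluate $f-g_2$ at $x=X_2^{1*}$ (where $g_2=0$) and at the right end of the admissible interval, and compare the maximum of $f$ with the value of $g_2$ there to extract sharp existence inequalities. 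Running this case analysis through all nine types --- and for each admissible choice of the possibly double-valued components $X_1^{1*}$, $X_2^{1*}$, $X_1^{2*}$ --- while checking that the constraints defining $M$ in \cref{M} either hold automatically or reproduce the listed inequalities, is exactly the bookkeeping that produces \cref{TabComSS} and \cref{TableCondExis}; collecting those lists and discarding empty or redundant cases completes the proof.
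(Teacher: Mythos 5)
Your proposal is correct and follows essentially the same route as the paper: you exploit the cascade structure to solve the upper pair $\left(X_1^1,X_2^1\right)$ first (the classical AM2 steady states) and then the lower pair with those values frozen, using the monotone-intersection argument for $f_1(x)=g_1(x)$ and sign changes of $f-g_2$ at the endpoints for $f_2(x)=g_2(x)$ and $f_3(x)=g_2(x)$, which is exactly the content of the paper's \cref{lemH1,lemH2,lemH3}. The only imprecision is your preliminary remark that the unimodal-versus-increasing intersection ``can yield zero'' solutions; the endpoint evaluation you then propose shows it always yields at least one (generically an odd number), which is all the existence table requires, the finer multiplicity count being deferred to \cref{propmultip}.
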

\begin{table}[!ht]
\begin{center}
\caption{Components of all steady states of \cref{ModelAM2SR4}. The functions $\lambda_1^i$, $\lambda_2^{1i}$, $\lambda_2^{2i}$, $F_{ij}$ and $\phi_i$, $i,j=1,2$ are defined in \cref{TabFunc1} while the functions $f_1$, $f_2$, $f_3$, $g_1$ and $g_2$ are defined in \cref{TabFunc2}.}\label{TabComSS}
\begin{tabular}{@{\hspace{1mm}}l@{\hspace{1mm}} @{\hspace{1mm}}l@{\hspace{1mm}} @{\hspace{0mm}}l@{\hspace{0mm}}}	
\hline
                          & $\left(X_1^{1*}, X_2^{1*}\right)$        & $\left(X_1^{2*}, X_2^{2*}\right)$   \\ \Xhline{3\arrayrulewidth}
 $\mathcal{E}_{00}^{00}$  & (0, 0)       &  (0, 0) \\ \hline
$\mathcal{E}_{00}^{0i}$   & (0, 0)       &  $\left(0, \left(S_2^{in}-\lambda_2^{2i}\right)/k_3\right)$  \\ \hline
$\mathcal{E}_{00}^{10}$   & (0, 0)       &  $\left(\frac{1}{k_1}\left(S_1^{in}-\lambda_1^2\right), 0\right)$ \\ \hline
$\mathcal{E}_{00}^{1i}$   & (0, 0)       &  $\left(\left(S_1^{in}-\lambda_1^2\right)/k_1,  k_2\left(S_1^{in}-F_{2i}\right)/(k_1k_3)\right)$ \\ \hline
$\mathcal{E}_{10}^{10}$   & $\left(\left(S_1^{in}-\lambda_1^1\right)/k_1, 0\right)$    &
\begin{tabular}{l}
$\left(X_1^{2*}, 0\right)$  \\
$X_1^{2*}$ is the unique solution of $f_1(x)=g_1(x)$
\end{tabular}    \\ \hline
$\mathcal{E}_{10}^{1i}$   & $\left(\left(S_1^{in}-\lambda_1^1\right)/k_1, 0\right)$    &
\begin{tabular}{l}
$\left(X_1^{2*}, \phi_i/k_3\right)$  \\
$X_1^{2*}$ is the unique solution of $f_1(x)=g_1(x)$
\end{tabular}     \\ \hline
$\mathcal{E}_{0i}^{01}$   & $\left(0, \left(S_2^{in}-\lambda_2^{1i}\right) /k_3 \right)$    &
\begin{tabular}{l}
$\left(0, X_2^{2*} \right)$  \\
$X_2^{2*}$ is a solution of $f_2\left(X_2^2\right)=g_2\left(X_2^2\right)$
\end{tabular}
\\ \hline
$\mathcal{E}_{0i}^{11}$   & $\left(0, \left(S_2^{in}-\lambda_2^{1i}\right)/k_3 \right)$    &
\begin{tabular}{l}
$\left(\left(S_1^{in}-\lambda_1^2\right)/k_1, X_2^{2*} \right)$  \\
$X_2^{2*}$ is a solution of $f_3(x)=g_2(x)$
\end{tabular}
\\ \hline
$\mathcal{E}_{1i}^{11}$   & $\left(\frac{1}{k_1}\left(S_1^{in}-\lambda_1^1\right),  \frac{k_2}{k_1k_3}\left(S_1^{in}-F_{1i}\right)\right)$    &
\begin{tabular}{l}
$X_1^2$ is a unique solution of $f_1(x)=g_1(x)$ \\
$X_2^2$ is a solution of $f_3(x)=g_2(x)$
\end{tabular}
\\ \hline
\end{tabular}
\end{center}
\end{table}
\begin{table}[!ht]
\begin{center}
\caption{Existence condition of steady states of system \cref{ModelAM2SR4} where $X_1^{2*}$ is a unique solution of equation $f_1(x)=g_1(x)$. The functions $\lambda_1^i$, $\lambda_2^{1i}$, $\lambda_2^{2i}$, $F_{ij}$ and $\phi_i$, $i,j=1,2$ are defined in \cref{TabFunc1} and the functions $f_1$ and $g_1$ are defined in \cref{TabFunc2}.}\label{TableCondExis}
\begin{tabular}{ll}
\hline
                          & Existence conditions  \\ \Xhline{3\arrayrulewidth}
$\mathcal{E}_{00}^{00}$  & Always exists \\
$\mathcal{E}_{00}^{0i}$  & $S_2^{in}>\lambda_2^{2i}$  \\
$\mathcal{E}_{00}^{10}$  & $S_1^{in}>\lambda_1^2$  \\
$\mathcal{E}_{00}^{1i}$  & $S_1^{in}>\max\left(\lambda_1^2,F_{2i}\right)$\\
$\mathcal{E}_{10}^{10}$  & $S_1^{in}>\lambda_1^1$ \\
$\mathcal{E}_{10}^{1i}$  & $S_1^{in}>\lambda_1^1$ and $\phi_i>0$\\
$\mathcal{E}_{0i}^{01}$  & $S_2^{in}>\lambda_2^{1i}$ \\
$\mathcal{E}_{0i}^{11}$  & $S_1^{in}>\lambda_1^2$ and $S_2^{in}>\lambda_2^{1i}$ \\
$\mathcal{E}_{1i}^{11}$  & $S_1^{in}>\max\left(\lambda_1^1,F_{1i}\right)$
\\ \hline
\end{tabular}
\end{center}
\end{table}
The following proposition determines the existence and multiplicity of steady states of \cref{ModelAM2SR4}.
\begin{proposition}                                 \label{propmultip}
Assume that \cref{Hypoth1,Hypoth2}, and the existence conditions in \cref{TableCondExis} hold for the steady states
$\mathcal{E}_{00}^{kl}$, $\mathcal{E}_{10}^{1l}$, $\mathcal{E}_{0i}^{k1}$ and $\mathcal{E}_{1i}^{11}$, $i=1,2$, $k=0,1$, and $l=0,1,2$.
\begin{itemize}[leftmargin=*]
\item The steady states $\mathcal{E}_{00}^{kl}$, $\mathcal{E}_{10}^{1l}$ and $\mathcal{E}_{01}^{01}$ are unique.
\item There exists at least one steady state of the form $\mathcal{E}_{02}^{01}$. Generically, the system has an odd number of steady states of the form $\mathcal{E}_{02}^{01}$.
\item If $X_2^{1*}> x_2^m$, the steady state $\mathcal{E}_{0i}^{11}$ and $\mathcal{E}_{1i}^{11}$ are unique. If $X_2^{1*}< x_2^m$, then there exists at least one steady state of the form $\mathcal{E}_{0i}^{11}$ and $\mathcal{E}_{1i}^{11}$. Generically, the system has an odd number of steady states of the form $\mathcal{E}_{0i}^{11}$ and $\mathcal{E}_{1i}^{11}$, $i=1,2$.
\end{itemize}
\end{proposition}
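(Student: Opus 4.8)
The plan is to reduce the steady-state equations of \cref{ModelAM2SR4} to a single scalar equation in one of the second-chemostat variables, and then to count its roots by a shape argument, in the spirit of \cref{TabComSS,TableCondExis}. In the first chemostat, $(X_1^{1*},X_2^{1*})$ is pinned down with no freedom: either $X_1^{1*}=0$, or $X_1^{1*}=(S_1^{in}-\lambda_1^1)/k_1$ is the unique root of $\mu_1=D_1$ by \cref{Hypoth1}; then $X_2^{1*}=0$, or $X_2^{1*}=(S_2^{in}+k_2X_1^{1*}-\lambda_2^{1i})/k_3$ for the chosen root $\lambda_2^{1i}$ of $\mu_2=D_1$. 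The first second-chemostat component solves $f_1(x)=g_1(x)$ from \cref{TabFunc2}; since $f_1$ is strictly decreasing on $[0,S_1^{in}/k_1]$ by \cref{Hypoth1} while $g_1(x)=D_2(1-X_1^{1*}/x)$ is strictly increasing there (from $-\infty$ at $0^+$ to $D_2$ when $X_1^{1*}>0$), and since $f_1>g_1$ at the left end and $f_1<g_1$ at the right end, there is exactly one $X_1^{2*}$; when $X_1^{1*}=0$ the third equation reduces to $(\mu_1(S_1^{in}-k_1X_1^2)-D_2)X_1^2=0$, giving $X_1^{2*}\in\{0,(S_1^{in}-\lambda_1^2)/k_1\}$. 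Hence all multiplicity is carried by $X_2^{2*}$, which solves $f_2(x)=g_2(x)$ when $X_1^{2*}=0$ and $f_3(x)=g_2(x)$ when $X_1^{2*}>0$, on the interval $(X_2^{1*},x_{\max}]$ dictated by \cref{M}, with $x_{\max}=S_2^{in}/k_3$ resp. $(S_2^{in}+k_2X_1^{2*})/k_3$; here one uses that $g_2<0$ for $x<X_2^{1*}$ while $f_2,f_3\geq0$, so a root cannot lie below $X_2^{1*}$, and $g_2(X_2^{1*})=0$ with $f_2,f_3>0$ forces it strictly above.

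For the uniqueness statements, note first that for $\mathcal{E}_{00}^{kl}$ and $\mathcal{E}_{10}^{1l}$ one has $X_2^{1*}=0$, so $g_2\equiv D_2$ and the equation for $X_2^{2*}$ reads $\mu_2(\cdot)=D_2$, whose two roots are $\lambda_2^{2l}$, $l\in\{1,2\}$; combined with the uniqueness of the other components this gives at most one steady state per index pattern. For $\mathcal{E}_{01}^{01}$ we use $\lambda_2^{11}\leq S_2^m$, hence $X_2^{1*}=(S_2^{in}-\lambda_2^{11})/k_3\geq(S_2^{in}-S_2^m)/k_3$; then on $[X_2^{1*},S_2^{in}/k_3]$ the argument $S_2^{in}-k_3x$ of $\mu_2$ stays $\leq S_2^m$, so $f_2$ is strictly decreasing by \cref{Hypoth2} while $g_2$ is strictly increasing; since $f_2-g_2$ equals $\mu_2(\lambda_2^{11})-0=D_1>0$ at $x=X_2^{1*}$ and $\mu_2(0)-g_2(S_2^{in}/k_3)<0$ at the right end, it has exactly one zero. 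The same monotonicity argument applies to $\mathcal{E}_{0i}^{11}$ and $\mathcal{E}_{1i}^{11}$ whenever $X_2^{1*}>x_2^m$, where $x_2^m$ is the value of $x$ at which the argument of $f_3$ equals $S_2^m$: then $f_3$ is strictly decreasing on $(X_2^{1*},x_{\max}]$, $g_2$ strictly increasing, and the endpoint signs again force a single zero.

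For the existence-and-parity statements — at least one $\mathcal{E}_{02}^{01}$, and at least one $\mathcal{E}_{0i}^{11}$ and $\mathcal{E}_{1i}^{11}$ when $X_2^{1*}<x_2^m$ — put $h:=f-g_2$ (with $f=f_2$ or $f_3$) on $[X_2^{1*},x_{\max}]$. At $x=X_2^{1*}$, $g_2=0$ and $f=\mu_2(s_0)$ with $s_0>0$ — namely $s_0=\lambda_2^{12}$ for $\mathcal{E}_{02}^{01}$ and $s_0=\lambda_2^{1i}+k_2X_1^{2*}$ for the families with superscript $11$ — so $h(X_2^{1*})>0$; at $x=x_{\max}$ the argument of $\mu_2$ is $0$, so $f=\mu_2(0)=0<g_2(x_{\max})$ and $h(x_{\max})<0$. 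By the intermediate value theorem $h$ vanishes at least once. When the operating parameters lie outside the negligible set on which $h$ has a non-simple zero — which is what ``generically'' means — all zeros of $h$ are simple, hence isolated, and the sign of $h'$ alternates from one zero to the next; since $h$ is positive at the left endpoint and negative at the right, their number is odd. This is precisely the asserted count of steady states in each family.

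The main difficulty is bookkeeping rather than a single sharp estimate. One must (i) justify that the admissible interval of each scalar equation is exactly $(X_2^{1*},x_{\max}]$, using the invariant set \cref{M} and the sign of $g_2$; (ii) check, family by family, that the argument of $\mu_2$ at the left endpoint $X_2^{1*}$ is strictly positive — this is where the explicit formulas of \cref{TabComSS} for $X_2^{1*}$ and $X_1^{2*}$ are invoked; and (iii) identify, per family, the threshold $x_2^m$ (the pre-image of the peak $S_2^m$ of $\mu_2$ under the relevant affine substitution) and check that $\mathcal{E}_{01}^{01}$ always satisfies $X_2^{1*}\geq x_2^m$ (since $\lambda_2^{11}\leq S_2^m$) while $\mathcal{E}_{02}^{01}$ always satisfies $X_2^{1*}\leq x_2^m$ (since $\lambda_2^{12}\geq S_2^m$), whereas for $\mathcal{E}_{0i}^{11}$ and $\mathcal{E}_{1i}^{11}$ both regimes can occur. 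Finally, for $\mathcal{E}_{1i}^{11}$ one must invoke the already-established uniqueness of $X_1^{2*}$, the root of $f_1=g_1$, before the equation $f_3=g_2$ is even well-posed.
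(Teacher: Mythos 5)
Your proposal is correct and follows essentially the same route as the paper: you reduce each family to a scalar equation $f_i=g_j$ on an explicit interval, play the increasing hyperbola $g_j$ against the decreasing (or unimodal, with peak at the preimage of $S_2^m$) function $f_i$ to get uniqueness above the threshold, and use the endpoint sign change plus simplicity of zeros for the generic odd count below it --- exactly the content of the paper's \cref{lemH1,lemH2,lemH3} and their application in the proof of \cref{propmultip}. The only slip is the left-endpoint value for $\mathcal{E}_{1i}^{11}$, which is $\mu_2\left(\lambda_2^{1i}+k_2\left(X_1^{2*}-X_1^{1*}\right)\right)$ rather than $\mu_2\left(\lambda_2^{1i}+k_2X_1^{2*}\right)$; positivity still holds because $X_1^{2*}>X_1^{1*}$, so nothing breaks.
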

\subsection{Local stability of steady states}        \label{SecStabSS}
The following result provides the local stability condition of each steady state of system \cref{ModelAM2SR4}. For convenience, we shall use the abbreviation LES for Locally Exponentially Stable.
\begin{proposition}                                   \label{propstab}
Assume that \cref{Hypoth1,Hypoth2} hold. The steady states $\mathcal{E}_{00}^{02}$, $\mathcal{E}_{00}^{12}$, $\mathcal{E}_{10}^{12}$, $\mathcal{E}_{02}^{01}$, $\mathcal{E}_{02}^{11}$ and $\mathcal{E}_{12}^{11}$ of system \cref{ModelAM2SR4} are unstable when they exist. The conditions stability of all other steady states are given in \cref{TableCondStab}.
\begin{table}[!ht]
\begin{center}
\caption{The stability condition of steady states of system \cref{ModelAM2SR4} except for $\mathcal{E}_{00}^{02}$, $\mathcal{E}_{00}^{12}$, $\mathcal{E}_{10}^{12}$, $\mathcal{E}_{02}^{01}$, $\mathcal{E}_{02}^{11}$ and $\mathcal{E}_{12}^{11}$ which are unstable. The functions $f_1$, $f_3$, $g_1$ and $g_2$ are defined in \cref{TabFunc2} while $X_1^{2*}$ is a unique solution of equation $f_1(x)=g_1(x)$ and $X_2^{2*}$ is a solution of $f_3(x)=g_2(x)$.} \label{TableCondStab}
\begin{tabular}{@{\hspace{2mm}}l@{\hspace{2mm}} @{\hspace{2mm}}l@{\hspace{2mm}} }		
\hline
                         &   Stability conditions        \\ \Xhline{3\arrayrulewidth}
$\mathcal{E}_{00}^{00}$  & $S_1^{in}<\min\left(\lambda_1^1, \lambda_1^2\right)$ and $S_2^{in}\notin\left[\min\left(\lambda_2^{11}, \lambda_2^{21}\right),\max\left(\lambda_2^{12}, \lambda_2^{22}\right)\right]$    \\
$\mathcal{E}_{00}^{01}$  & $S_1^{in}<\min\left(\lambda_1^1, \lambda_1^2\right)$ and $S_2^{in}\notin\left[\lambda_2^{11}, \lambda_2^{12}\right]$    \\
$\mathcal{E}_{00}^{10}$  & $S_1^{in}<\lambda_1^1$, $S_2^{in}\notin\left[\lambda_2^{11}, \lambda_2^{12}\right]$ and $S_2^{in}+\frac{k_2}{k_1}\left(S_1^{in}-\lambda_1^2\right) \notin\left[\lambda_2^{21}, \lambda_2^{22}\right]$     \\
$\mathcal{E}_{00}^{11}$  & $S_1^{in}<\lambda_1^1$ and $S_2^{in}\notin\left[\lambda_2^{11}, \lambda_2^{12}\right]$     \\
$\mathcal{E}_{10}^{10}$  & $S_2^{in}+\frac{k_2}{k_1}\left(S_1^{in}-\lambda_1^1\right) \notin\left[\lambda_2^{11}, \lambda_2^{12}\right]$ and $\phi_1<0$ or $\phi_2>0$\\
$\mathcal{E}_{10}^{11}$  & $S_2^{in}+\frac{k_2}{k_1}\left(S_1^{in}-\lambda_1^1\right) \notin\left[\lambda_2^{11}, \lambda_2^{12}\right]$  \\
$\mathcal{E}_{01}^{01}$  & $S_1^{in}<\min\left(\lambda_1^1, \lambda_1^2\right)$ \\
$\mathcal{E}_{01}^{11}$  & $S_1^{in}<\lambda_1^1$ and $g_2'(X_2^{2*})>f_3'(X_2^{2*})$ \\
$\mathcal{E}_{11}^{11}$  &  $g_2'(X_2^{2*})>f_3'(X_2^{2*})$ \\ \hline
\end{tabular}
\end{center}
\end{table}
\end{proposition}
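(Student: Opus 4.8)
The plan is to exploit a second layer of cascade structure, this time \emph{inside} the reduced system \cref{ModelAM2SR4}. Ordering the variables as $(X_1^1,X_2^1,X_1^2,X_2^2)$, one reads off \cref{ModelAM2SR4} that $\dot{X}_1^1$ depends only on $X_1^1$, that $\dot{X}_2^1$ depends only on $(X_1^1,X_2^1)$, that $\dot{X}_1^2$ depends only on $(X_1^1,X_1^2)$, and that $\dot{X}_2^2$ depends only on $(X_2^1,X_1^2,X_2^2)$. Hence the Jacobian at any steady state is \emph{lower triangular}, and its eigenvalues are exactly the four diagonal entries
\[
J_1=\frac{\partial\dot{X}_1^1}{\partial X_1^1},\quad J_2=\frac{\partial\dot{X}_2^1}{\partial X_2^1},\quad J_3=\frac{\partial\dot{X}_1^2}{\partial X_1^2},\quad J_4=\frac{\partial\dot{X}_2^2}{\partial X_2^2},
\]
evaluated at the steady state; so the steady state is LES iff $J_1,J_2,J_3,J_4<0$ (the non-hyperbolic situations $J_m=0$ occur only on the boundaries of the existence regions of \cref{TableCondExis} and are left aside). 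That this yields the stability of the corresponding eight-dimensional equilibrium $\mathcal{F}_{ij}^{kl}$ is already granted by the cascade argument with \cite[Corollary 10.3.2]{IsidoriBook1999} recalled above.

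I would then evaluate each $J_m$ at a generic steady state, splitting according to whether the biomass carried by that equation is zero or positive and simplifying by means of the steady-state relations. For the two ``$\mu_1$-entries'': if $X_1^{1*}=0$ then $J_1=\mu_1(S_1^{in})-D_1$, which by \cref{Hypoth1} is negative iff $S_1^{in}<\lambda_1^1$; if $X_1^{1*}>0$, then $\mu_1(\cdot)=D_1$ gives $J_1=-k_1\mu_1'(\cdot)X_1^{1*}<0$ with no condition. Since $X_1^{2*}=0$ forces $X_1^{1*}=0$, in that case $J_3=\mu_1(S_1^{in})-D_2$, negative iff $S_1^{in}<\lambda_1^2$; and if $X_1^{2*}>0$, the relation $f_1(X_1^{2*})=g_1(X_1^{2*})$ gives $J_3=-D_2X_1^{1*}/X_1^{2*}-k_1\mu_1'(\cdot)X_1^{2*}<0$ with no condition (equivalently $g_1'(X_1^{2*})>f_1'(X_1^{2*})$, automatic since $g_1'\geq 0>f_1'$). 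Thus $J_1$ and $J_3$ never contribute more than ``$S_1^{in}<\lambda_1^1$'' and/or ``$S_1^{in}<\lambda_1^2$''.

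The genuine content is in the two ``$\mu_2$-entries''. If $X_2^{1*}=0$ then $J_2=\mu_2(S_2^{in}+k_2X_1^{1*})-D_1$, which by \cref{Hypoth2} is negative iff $S_2^{in}+k_2X_1^{1*}\notin[\lambda_2^{11},\lambda_2^{12}]$; inserting the appropriate value of $X_1^{1*}$ produces exactly the interval-exclusion conditions on $S_2^{in}$, resp.\ on $S_2^{in}+\frac{k_2}{k_1}(S_1^{in}-\lambda_1^1)$, in \cref{TableCondStab}, and the same computation for $J_4$ with $X_2^{2*}=0$ gives the exclusion of $S_2^{in}+\frac{k_2}{k_1}(S_1^{in}-\lambda_1^2)$ from $[\lambda_2^{21},\lambda_2^{22}]$, which for $\mathcal{E}_{10}^{1i}$ is rewritten, via $\mu_2(S_2^{in}+k_2X_1^{2*})<D_2=\mu_2(\lambda_2^{2j})$, as ``$\phi_1<0$ or $\phi_2>0$''. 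If $X_2^{1*}>0$, then $\mu_2(\cdot)=D_1$ gives $J_2=-k_3\mu_2'(\cdot)X_2^{1*}$, whose sign is opposite to that of $\mu_2'$ at the argument: by \cref{Hypoth2} it is negative exactly when that argument is the small root $\lambda_2^{i1}$ of $\mu_2(\cdot)=D_1$ (this is what the subscript ``$1$'' records) and positive when it is the large root $\lambda_2^{i2}$ (subscript ``$2$''). Finally, if $X_2^{2*}>0$, then $\mu_2(\cdot)=g_2(X_2^{2*})$ gives $J_4=-D_2X_2^{1*}/X_2^{2*}-k_3\mu_2'(\cdot)X_2^{2*}$; dividing by $X_2^{2*}>0$ and using $g_2'(x)=D_2X_2^{1*}/x^2$ together with $f'(x)=-k_3\mu_2'(\cdot)$ (with $f=f_2$ if $X_1^{2*}=0$, $f=f_3$ if $X_1^{2*}>0$), one finds that the sign of $J_4$ is that of $f'(X_2^{2*})-g_2'(X_2^{2*})$, so $J_4<0\iff g_2'(X_2^{2*})>f'(X_2^{2*})$ --- which, for $f=f_3$, is precisely the condition stated for $\mathcal{E}_{01}^{11}$ and $\mathcal{E}_{11}^{11}$.

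Assembling these facts, the six steady states $\mathcal{E}_{00}^{02},\mathcal{E}_{00}^{12},\mathcal{E}_{10}^{12},\mathcal{E}_{02}^{01},\mathcal{E}_{02}^{11},\mathcal{E}_{12}^{11}$ each carry a biomass $X^*>0$ whose $\mu_2$-argument is a large root exceeding $S_2^m$ --- which is what the ``$2$'' in their label means --- so the pertinent diagonal entry is $-k_3\mu_2'(\text{that root})\,X^*>0$ and they are unstable whenever they exist, irrespective of the others. For each of the remaining nine types, collecting $J_1,\dots,J_4$ from the formulas above reproduces the matching line of \cref{TableCondStab} (for $\mathcal{E}_{00}^{00}$ one also uses that the intervals $[\lambda_2^{11},\lambda_2^{12}]$ and $[\lambda_2^{21},\lambda_2^{22}]$ are nested, so the two conditions from $J_2$ and $J_4$ merge into one). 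The step I expect to require the most care is the case $\mathcal{E}_{01}^{01}$, where $J_4$ is claimed to be \emph{automatically} negative: this needs a crossing-location argument showing that $X_2^{2*}$, the unique solution of $f_2=g_2$, falls on the decreasing branch of $f_2$ --- which follows because $X_2^{1*}=(S_2^{in}-\lambda_2^{11})/k_3$ exceeds the threshold $x_2^m$ (as $\lambda_2^{11}<S_2^m$), so the crossing of the increasing curve $g_2$ with $f_2$ lies past the peak of $f_2$, where $f_2'<0<g_2'$. Beyond that, the proof is a systematic but lengthy bookkeeping over all fifteen steady states, using the same placement-of-crossings analysis that underlies the multiplicity statements of \cref{propmultip}.
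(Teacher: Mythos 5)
Your proposal is correct and follows essentially the same route as the paper: the Jacobian of \cref{ModelAM2SR4} is lower triangular in the ordering $\left(X_1^1,X_2^1,X_1^2,X_2^2\right)$ (the paper writes it as two lower-triangular $2\times 2$ blocks), so the eigenvalues are the four diagonal entries, and your case-by-case evaluation of these entries, including the automatic negativity of $a_{44}$ at $\mathcal{E}_{01}^{01}$ via the location of the crossing of $f_2$ and $g_2$ past the peak, reproduces the paper's argument. Only two harmless notational slips: the ``$\phi_1<0$ or $\phi_2>0$'' condition belongs to $\mathcal{E}_{10}^{10}$ (not $\mathcal{E}_{10}^{1i}$), and the threshold relevant to $\mathcal{E}_{01}^{01}$ is $x_1^m=\left(S_2^{in}-S_2^m\right)/k_3$ rather than $x_2^m$, though your parenthetical $\lambda_2^{11}<S_2^m$ shows you meant the right quantity.
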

\section{Operating diagrams}                            \label{Sec_DO}
In this section, we study theoretically the operating diagrams that describe the asymptotic behavior of the process according to four control parameters $D$, $S_1^{in}$, $S_2^{in}$, and $r$ which are the easiest parameters to manipulate in a chemostat.
It is the best tool presented for the biologist to understand the asymptotic behavior of the process according to the operating parameters.
However, since it is difficult to visualize all the regions of the operating diagrams in the four-dimensional space, we fix two parameters and release two others to illustrate these diagrams in the plan.
In \cref{SecDO1s2inFix}, it is analyzed in the two-dimensional plane $\left(D, S_1^{in}\right)$ by fixing the parameter $S_2^{in}$ and $r$.
In \cref{sec2DO}, we fix $D$ and $r$, and we determine the operating diagrams in the plane $\left(S_2^{in},S_1^{in}\right)$. The other cases can be treated similarly.
To determine the different regions in the operating diagram, we must define the surfaces $\gamma_i$, $i=1,\ldots,15$, listed in \cref{TabSurf}.
They are obtained from the existence and stability conditions given in \cref{TableCondExis,TableCondStab}.
They correspond to the boundaries of the change in the number of steady states in the positive octant and/or their local asymptotic behavior.
\begin{table}[!ht]
\begin{center}
\caption{Definitions of the surfaces $\gamma_i$, $i=1,\ldots,15$. The functions $F_{ij}$ and $\phi_j$, $i,j=1,2$ are defined in \cref{TabFunc1} and $X_1^{2*}$ is the unique solution of equation $f_1(x)=g_1(x)$.} \label{TabSurf}
\begin{tabular}{l}
 \hline
 Curve $\gamma_i$   \\ \Xhline{3\arrayrulewidth}
$\gamma_0=\left\{\left(D,S_1^{in},S_2^{in},r\right): S_1^{in}=\lambda_1^1(D,r),~ D\in\left[0,r_1m_1\right)\right\}$\\
$\gamma_1=\left\{\left(D,S_1^{in},S_2^{in},r\right): S_1^{in}=\lambda_1^2(D,r),~ D\in\left[0,r_2m_1\right)\right\}$\\
$\gamma_2=\left\{\left(D,S_1^{in},S_2^{in},r\right): S_1^{in}=F_{21}\left(D,r,S_2^{in}\right),~ D\in\left(0,r_2\min\left(m_1,\mu_2\left(S_2^m\right)\right)\right)\right\}$\\
$\gamma_3=\left\{\left(D,S_1^{in},S_2^{in},r\right): S_1^{in}=F_{22}\left(D,r,S_2^{in}\right),~ D\in\left(0,r_2\min\left(m_1,\mu_2\left(S_2^m\right)\right)\right)\right\}$\\
$\gamma_4=\left\{\left(D,S_1^{in},S_2^{in},r\right): S_1^{in}=F_{11}\left(D,r,S_2^{in}\right),~ D\in\left(0,r_1\min\left(m_1,\mu_2\left(S_2^m\right)\right)\right)\right\}$\\
$\gamma_5=\left\{\left(D,S_1^{in},S_2^{in},r\right): S_1^{in}=F_{12}\left(D,r,S_2^{in}\right),~ D\in\left(0,r_1\min\left(m_1,\mu_2\left(S_2^m\right)\right)\right)\right\}$\\
$\gamma_6=\left\{\left(D,S_1^{in},S_2^{in},r\right): D=D_1^{*}:=r_1\mu_2\left(S_2^{in}\right)\right\}$ \\
$\gamma_7=\left\{\left(D,S_1^{in},S_2^{in},r\right): D=D_2^{*}:=r_2\mu_2\left(S_2^{in}\right)\right\}$\\
$\gamma_8=\left\{\left(D,S_1^{in},S_2^{in},r\right): D=D_1^m:=r_1\mu_2\left(S_2^m\right)\right\}$ \\
$\gamma_9=\left\{\left(D,S_1^{in},S_2^{in},r\right): D=D_2^m:=r_2\mu_2\left(S_2^m\right)\right\}$\\
\hline
$\gamma_{10}=\left\{\left(D,S_1^{in},S_2^{in},r\right): S_2^{in}=S_{21}^{in*}=\lambda_2^{11}(D,r),~ D<D_1^m\right\}$   \\
$\gamma_{11}=\left\{\left(D,S_1^{in},S_2^{in},r\right): S_2^{in}={S_{22}^{in*}}=\lambda_2^{21}(D,r),~ D<D_2^m\right\}$ \\
$\gamma_{12}=\left\{\left(D,S_1^{in},S_2^{in},r\right): S_2^{in}={S_{23}^{in*}}=\lambda_2^{12}(D,r),~ D<D_1^m\right\}$   \\
$\gamma_{13}=\left\{\left(D,S_1^{in},S_2^{in},r\right): S_2^{in}={S_{24}^{in*}}=\lambda_2^{22}(D,r),~ D<D_2^m\right\}$\\
$\gamma_{14}=\left\{\left(D,S_1^{in},S_2^{in},r\right):\phi_1\left(D,r,S_1^{in},S_2^{in}\right)=0,~D \leq D_2^m~\mbox{and}~S_1^{in}>\lambda_1^1\right\}$\\
$\gamma_{15}=\left\{\left(D,S_1^{in},S_2^{in},r\right):\phi_2\left(D,r,S_1^{in},S_2^{in}\right)=0,~D \leq D_2^m~\mbox{and}~S_1^{in}>\lambda_1^1\right\}$
\\ \hline
\end{tabular}
\end{center}
\end{table}
\begin{remark}
\begin{enumerate}
    \item Note that for all $D<D_1^m$, (or equivalently $D<r_1\mu_2\left(S_2^m\right)$), the equation $\mu_2\left(S_2^{in}\right)=D_1:=D/r_1$ is equivalent to $S_2^{in}=\lambda_2^{11}$ or $S_2^{in}=\lambda_2^{12}$. From \cref{TabSurf}, we see that $\gamma_6=\gamma_{10} \cup \gamma_{12}$.
Similarly, for all $D<D_2^m$, (or equivalently $D<r_2\mu_2\left(S_2^m\right)$), the equation $\mu_2\left(S_2^{in}\right)=D_2:=D/r_2$ is equivalent to $S_2^{in}=\lambda_2^{21}$ or $S_2^{in}=\lambda_2^{22}$. From \cref{TabSurf}, we see that $\gamma_7=\gamma_{11} \cup \gamma_{13}$.
\item  Let $r$ be fixed.
When $S_2^{in}$ is fixed, the curves $\gamma_6$ and $\gamma_7$ are vertical lines in the plan $\left(D,S_1^{in}\right)$ with equation $D=D_1^{*}:=r_1\mu_2\left(S_2^{in}\right)$ and $D=D_2^{*}:=r_2\mu_2\left(S_2^{in}\right)$, respectively.
When $D$ is fixed, the curve $\gamma_i$, $i=10,\ldots,13$ is a vertical line in the plan $\left(S_2^{in},S_1^{in}\right)$ with equation $S_2^{in}=S_{2j}^{in*}$, $j=1,\ldots,4$.
\end{enumerate}
\end{remark}
Using the definitions of the break-even concentrations $\lambda_1^i$ and $\lambda_2^{ij}$, $i,j=1,2$, the following proposition determines the relative positions of these critical values according to the distribution of the total volume $r\in(0,1)$.
\begin{lemma}                             \label{LemmaPositionLambdai}
Assume that \cref{Hypoth1,Hypoth2} hold.
\begin{itemize}
    \item If $r\in (0,1/2)$, then $\lambda_1^2(D,r)<\lambda_1^1(D,r)$, $\lambda_2^{21}(D,r)<\lambda_2^{11}(D,r)<\lambda_2^{12}(D,r)<\lambda_2^{22}(D,r)$.
    \item If $r\in (1/2,1)$, then $\lambda_1^1(D,r)<\lambda_1^2(D,r)$, $\lambda_2^{11}(D,r)<\lambda_2^{21}(D,r)<\lambda_2^{22}(D,r)<\lambda_2^{12}(D,r)$.
    \item If $r=1/2$,        then $\lambda_1^1(D,r)=\lambda_1^2(D,r)$, $\lambda_2^{11}(D,r)=\lambda_2^{21}(D,r)$, $\lambda_2^{12}(D,r)=\lambda_2^{22}(D,r)$.
\end{itemize}
\end{lemma}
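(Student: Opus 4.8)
The plan is to reduce the whole statement to two elementary monotonicity facts together with the observation that $r\mapsto D/r$ is order-reversing. First I would record that $r_1=r$ and $r_2=1-r$, so $D_1=D/r$ and $D_2=D/(1-r)$, whence
\[
r<1/2 \iff r_1<r_2 \iff D_1>D_2,\qquad r=1/2\iff D_1=D_2,
\]
with the reverse equivalences for $r>1/2$; likewise $r<1/2$ gives $r_1m_1<r_2m_1$ and $D_1^m=r_1\mu_2(S_2^m)<r_2\mu_2(S_2^m)=D_2^m$, with the converses for $r>1/2$. Everything then follows by comparing the defining equations of the break-even concentrations at the two dilution rates $D_1$ and $D_2$.

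For the functions $\lambda_1^i$ I would use \cref{Hypoth1}: $\mu_1$ is a strictly increasing $\mathcal C^1$ bijection of $(0,+\infty)$ onto $(0,m_1)$, so on its domain of definition $\lambda_1^i(D,r)=\mu_1^{-1}(D_i)$ is a strictly increasing function of $D_i$ (and $\lambda_1^i=+\infty$ past the threshold $D=r_im_1$). Hence $D_1>D_2$ yields $\lambda_1^2<\lambda_1^1$: either both are finite and $\lambda_1^2$ inverts $\mu_1$ at the smaller value, or $D_2<m_1\le D_1$ and $\lambda_1^2<+\infty=\lambda_1^1$. The case $r>1/2$ is symmetric, and $r=1/2$ gives $D_1=D_2$, hence $\lambda_1^1=\lambda_1^2$.

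For the functions $\lambda_2^{ij}$ I would use \cref{Hypoth2}: the restriction of $\mu_2$ to $[0,S_2^m]$ is a strictly increasing bijection onto $[0,\mu_2(S_2^m)]$, while its restriction to $[S_2^m,+\infty)$ is strictly decreasing down to $0$. Thus, for $0<D_i<\mu_2(S_2^m)$ (i.e. $D<D_i^m$), the equation $\mu_2(S_2)=D_i$ has exactly two roots, $\lambda_2^{i1}\in(0,S_2^m)$ on the increasing branch and $\lambda_2^{i2}\in(S_2^m,+\infty)$ on the decreasing branch; in particular $\lambda_2^{i1}<S_2^m<\lambda_2^{i2}$ always, $D_i\mapsto\lambda_2^{i1}$ is strictly increasing, and $D_i\mapsto\lambda_2^{i2}$ is strictly decreasing. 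I would then conclude by comparing $i=1,2$ branch by branch: if $r<1/2$ then $D_1>D_2$, so $\lambda_2^{21}<\lambda_2^{11}$ on the increasing branch and $\lambda_2^{12}<\lambda_2^{22}$ on the decreasing branch, and chaining through $S_2^m$ gives
\[
\lambda_2^{21}(D,r)<\lambda_2^{11}(D,r)<\lambda_2^{12}(D,r)<\lambda_2^{22}(D,r);
\]
the case $r>1/2$ reverses the two branch comparisons, and $r=1/2$ collapses each pair to an equality.

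I do not anticipate a genuine obstacle: the argument is just inversion of monotone branches of $\mu_1$ and $\mu_2$ combined with the order-reversing map $r\mapsto D/r$. The only point deserving a word of care is the convention regime $D\ge r_im_1$ or $D>D_i^m$, where one or both $\lambda$'s equal $+\infty$; there the stated strict inequalities are to be read with the convention $x<+\infty$ for every finite $x$, and the chains remain valid as long as at least one member is finite, becoming degenerate only when all members are infinite.
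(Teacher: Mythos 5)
Your argument is correct and is essentially the canonical one: the paper in fact states \cref{LemmaPositionLambdai} without proof, and your reduction to (i) the order reversal $r<1/2\iff r_1<r_2\iff D_1>D_2$ and (ii) strict monotonicity of $\mu_1^{-1}$ on $(0,m_1)$ and of the two branches of $\mu_2$ on either side of $S_2^m$ is exactly what the definitions in \cref{TabFunc1} together with \cref{Hypoth1,Hypoth2} are designed to deliver; the branch-by-branch comparison chained through $\lambda_2^{i1}<S_2^m<\lambda_2^{i2}$ is the right way to assemble the four-term inequality.

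One caveat concerns only your closing remark about the convention $\lambda=+\infty$. For the pair $\lambda_1^1,\lambda_1^2$ the convention is harmless, as you note. But for the $\lambda_2$ chain it is not true that the inequalities ``remain valid as long as at least one member is finite'': take $r<1/2$, so $D_1^m=r_1\mu_2\left(S_2^m\right)<r_2\mu_2\left(S_2^m\right)=D_2^m$, and choose $D\in\left(D_1^m,D_2^m\right]$. Then $\lambda_2^{11}=\lambda_2^{12}=+\infty$ while $\lambda_2^{21}$ and $\lambda_2^{22}$ are finite, and the last link $\lambda_2^{12}<\lambda_2^{22}$ of the claimed chain reads $+\infty<\lambda_2^{22}$, which is false. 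The lemma should therefore be read on the range of $D$ where all the quantities involved are finite, i.e. $D<\min\left(D_1^m,D_2^m\right)$ for the $\lambda_2$ comparison (and $D<\min\left(r_1,r_2\right)m_1$ if one wants both $\lambda_1^i$ finite), which is in any case the only regime in which the paper uses it. This is a defect of the side remark, not of the proof itself.
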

\subsection\protect{Operating diagram in the plane $\left(D, S_1^{in}\right)$ when $S_2^{in}$ and $r$ are fixed.}  \label{SecDO1s2inFix}
Fix $r$ and $S_2^{in}$. In this case, the surfaces $\gamma_i$ correspond to curves for $i=0,\dots,5$ and vertical lines for $i=6,\dots,9$. Depending on the critical values $m_1$, $\mu_2\left(S_2^{in}\right)$, and $\mu_2\left(S_2^m\right)$, the following three cases should be distinguished in the operating diagram study when $r \in (0,1/2)$ or $r \in (1/2,1)$:
\begin{equation}                                      \label{Cases123}
\mbox{case 1: } m_1 >\mu_2\left(S_2^m\right), \quad \mbox{case 2: } \mu_2\left(S_2^{in}\right) > m_1, \quad \mbox{case 3: } \mu_2\left(S_2^m\right)>m_1 >\mu_2\left(S_2^{in}\right).
\end{equation}
Note that for each case in \cref{Cases123}, $S_2^{in}$ may be less than or greater than $S_2^m$.
In this section, we present cases 1 and 2 for $r \in (0,1/2)$ and case 1 for $r \in (1/2,1)$. However, the study is similar in the other cases.

From the definitions of the auxiliary functions in \cref{TabFunc1}, the following proposition establishes the relative positions of surfaces $\gamma_i$, $i=0,\dots,5$ according to the critical values $D_1^{*}$ and $D_2^{*}$ defined in \cref{TabSurf}.
\begin{proposition}                           \label{propPositionCurv}
Let $r\in(0,1)$. If $m_1>\mu_2\left(S_2^{in}\right)$, then the curve $\gamma_0$ and the vertical line $\gamma_6$ intersect at the point $P_1\left(S_2^{in},r\right)=\left(D_1^*,\lambda_1^1\left(D_1^*,r\right)\right)$ in the plane $\left(D,S_1^{in}\right)$ (see \cref{FigDO1} and \cref{CurvFigDO1}(a)). If, in addition, $S_2^{in} \geq S_2^m$, then $\gamma_0$, $\gamma_5$, and $\gamma_6$ intersect at the same point $P_1$. Otherwise, when $S_2^{in}<S_2^m$, then $\gamma_0$, $\gamma_4$, and $\gamma_6$ intersect at the same point $P_1$. Moreover:
\begin{itemize}
    \item If $D<D_1^*$, then $F_{11}\left(D,r,S_2^{in}\right) < \lambda_1^1(D,r) < F_{12}\left(D,r,S_2^{in}\right)$.
    \item If $D>D_1^*$ and $S_2^{in} \geq S_2^m$, then $F_{11}\left(D,r,S_2^{in}\right) < F_{12}\left(D,r,S_2^{in}\right) < \lambda_1^1(D,r)$.
    \item If $D>D_1^*$ and $S_2^{in} < S_2^m$, then $\lambda_1^1(D,r) < F_{11}\left(D,r,S_2^{in}\right) < F_{12}\left(D,r,S_2^{in}\right)$.
\end{itemize}
If $m_1>\mu_2\left(S_2^{in}\right)$, the curve $\gamma_1$ and the vertical line $\gamma_7$ intersect at  $P_2\left(S_2^{in},r\right)=\left(D_2^*,\lambda_1^2\left(D_2^*,r\right)\right)$ in the plane $\left(D,S_1^{in}\right)$ (see \cref{FigDO1} and \cref{CurvFigDO1}(a)). If, in addition, $S_2^{in} \geq S_2^m$, then $\gamma_1$, $\gamma_3$, and $\gamma_7$ intersect at the same point $P_2$. Otherwise, when $S_2^{in}<S_2^m$, then $\gamma_1$, $\gamma_2$, and $\gamma_7$ intersect at the same point $P_2$. Moreover:
\begin{itemize}
    \item If $D<D_2^*$, then $F_{21}\left(D,r,S_2^{in}\right) < \lambda_1^2(D,r) < F_{22}\left(D,r,S_2^{in}\right)$.
    \item If $D>D_2^*$ and $S_2^{in} \geq S_2^m$, then $F_{21}\left(D,r,S_2^{in}\right) < F_{22}\left(D,r,S_2^{in}\right) < \lambda_1^2(D,r)$.
    \item If $D>D_2^*$ and $S_2^{in} < S_2^m$, then $\lambda_1^2(D,r) < F_{21}\left(D,r,S_2^{in}\right) < F_{22}\left(D,r,S_2^{in}\right)$.
\end{itemize}
\end{proposition}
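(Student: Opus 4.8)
The plan is to extract everything from the definitions of the break-even concentrations $\lambda_1^i$, $\lambda_2^{ij}$ and of the auxiliary functions $F_{1j}$, $F_{2j}$ in \cref{TabFunc1}, combined with the monotonicity of $\mu_1$ and $\mu_2$ provided by \cref{Hypoth1,Hypoth2}. The statement splits into two parallel halves, one concerning $(\gamma_0,\gamma_6,P_1,F_{1j})$ and one concerning $(\gamma_1,\gamma_7,P_2,F_{2j})$, and the second is obtained from the first by replacing $r_1$ with $r_2$, $\lambda_1^1$ with $\lambda_1^2$, and $\lambda_2^{1j}$ with $\lambda_2^{2j}$. So I would prove the first half in detail, and the second follows verbatim.

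First I would locate the intersection points. On $\gamma_6$ one has $D=D_1^*=r_1\mu_2(S_2^{in})$, and the hypothesis $m_1>\mu_2(S_2^{in})$ gives $D_1^*<r_1m_1$, so $D_1^*$ lies in the domain $[0,r_1m_1)$ of the graph $\gamma_0$; substituting into $S_1^{in}=\lambda_1^1(D,r)$ produces the unique intersection point $P_1=(D_1^*,\lambda_1^1(D_1^*,r))$. To see which of $\gamma_4,\gamma_5$ passes through $P_1$, I would evaluate $F_{1j}$ at $D=D_1^*$: since $\mu_2(\lambda_2^{1j}(D_1^*,r))=D_1^*/r_1=\mu_2(S_2^{in})$, and by \cref{Hypoth2} the equation $\mu_2(S_2)=\mu_2(S_2^{in})$ has $S_2^{in}$ as its smaller root when $S_2^{in}\leq S_2^m$ and as its larger root when $S_2^{in}\geq S_2^m$, one gets $\lambda_2^{11}(D_1^*,r)=S_2^{in}$ in the first case and $\lambda_2^{12}(D_1^*,r)=S_2^{in}$ in the second. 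Plugging this into $F_{1j}(D,r,S_2^{in})=\lambda_1^1(D,r)+\frac{k_1}{k_2}(\lambda_2^{1j}(D,r)-S_2^{in})$ gives $F_{11}(D_1^*,r,S_2^{in})=\lambda_1^1(D_1^*,r)$ when $S_2^{in}<S_2^m$ (so $\gamma_4$ meets $P_1$) and $F_{12}(D_1^*,r,S_2^{in})=\lambda_1^1(D_1^*,r)$ when $S_2^{in}\geq S_2^m$ (so $\gamma_5$ meets $P_1$).

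For the three orderings I would use the identity $F_{1j}(D,r,S_2^{in})-\lambda_1^1(D,r)=\frac{k_1}{k_2}(\lambda_2^{1j}(D,r)-S_2^{in})$, which reduces everything to the sign of $\lambda_2^{1j}(D,r)-S_2^{in}$, together with the trivial inequality $F_{11}<F_{12}$ coming from $\lambda_2^{11}<\lambda_2^{12}$. By \cref{Hypoth2}, $D\mapsto\lambda_2^{11}(D,r)$ is strictly increasing with $\lambda_2^{11}(D,r)\leq S_2^m$, while $D\mapsto\lambda_2^{12}(D,r)$ is strictly decreasing with $\lambda_2^{12}(D,r)\geq S_2^m$, both attaining $S_2^m$ only at $D=D_1^m$. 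Combining this with the values computed at $D=D_1^*$: when $S_2^{in}<S_2^m$, the quantity $\lambda_2^{11}(D,r)-S_2^{in}$ is negative for $D<D_1^*$ and positive for $D_1^*<D<D_1^m$, whereas $\lambda_2^{12}(D,r)-S_2^{in}\geq S_2^m-S_2^{in}>0$ throughout; when $S_2^{in}\geq S_2^m$, the quantity $\lambda_2^{12}(D,r)-S_2^{in}$ is positive for $D<D_1^*$ and negative for $D>D_1^*$, whereas $\lambda_2^{11}(D,r)-S_2^{in}\leq S_2^m-S_2^{in}\leq0$ throughout. Feeding these signs back into the identity and using $F_{11}<F_{12}$ yields exactly the three displayed triple inequalities in the sub-cases $D<D_1^*$, then $D>D_1^*$ with $S_2^{in}\geq S_2^m$, then $D>D_1^*$ with $S_2^{in}<S_2^m$.

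The argument is bookkeeping rather than a genuine obstacle; the only points requiring care are: (i) tracking the domains of definition so that $D_1^*$ (resp. $D_2^*$) indeed lies on the intervals over which $\gamma_0,\gamma_4,\gamma_5$ (resp. $\gamma_1,\gamma_2,\gamma_3$) are defined, which uses $m_1>\mu_2(S_2^{in})$ together with the always-true inequality $D_1^*\leq D_1^m$ (since $\mu_2(S_2^{in})\leq\mu_2(S_2^m)$); (ii) checking that the inequalities stay strict, which holds because in each listed sub-case one stays in the interior $D<D_1^m$ of the domain, forcing $\lambda_2^{11}(D,r)<S_2^m$ and $\lambda_2^{12}(D,r)>S_2^m$; and (iii) the degenerate boundary case $S_2^{in}=S_2^m$, in which $D_1^*=D_1^m$ and $\lambda_2^{11}=\lambda_2^{12}=S_2^m$, so the sub-case $D>D_1^*$ is vacuous and $\gamma_4,\gamma_5$ coincide and both pass through $P_1$.
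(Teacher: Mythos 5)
Your proposal is correct and follows essentially the same route as the paper: identify $P_1$ (resp.\ $P_2$) by noting $m_1>\mu_2\left(S_2^{in}\right)$ puts $D_1^*$ in the domain of $\lambda_1^1$, then observe that at $D=D_1^*$ one has $\lambda_2^{11}\left(D_1^*,r\right)=S_2^{in}$ or $\lambda_2^{12}\left(D_1^*,r\right)=S_2^{in}$ according to the position of $S_2^{in}$ relative to $S_2^m$, which forces $F_{11}$ or $F_{12}$ to equal $\lambda_1^1$ there. Your explicit derivation of the three orderings from the identity $F_{1j}-\lambda_1^1=\frac{k_1}{k_2}\left(\lambda_2^{1j}-S_2^{in}\right)$ and the monotonicity of $D\mapsto\lambda_2^{1j}(D,r)$ is sound and in fact supplies detail (including the boundary case $S_2^{in}=S_2^m$) that the paper's proof leaves implicit.
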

The asymptotic behavior of system \cref{ModelAM2S2C} varies according to the value of $r$ in the intervals $(0,1/2)$ and $(1/2,1)$ such that the position of the various curves changes with $r$.
The following proposition establishes the relative positions of the curves $\gamma_2$ and $\gamma_4$ when $S_2^{in}>S_2^m$.
\begin{proposition}                                 \label{PropF21F11}
Let $S_2^{in}>S_2^m$.
\begin{itemize}
    \item If $r\in(0,1/2)$, then $F_{21}\left(D,r,S_2^{in}\right)<F_{11}\left(D,r,S_2^{in}\right)$.
    \item If $r\in(1/2,1)$, then $F_{21}\left(D,r,S_2^{in}\right)>F_{11}\left(D,r,S_2^{in}\right)$.
\end{itemize}
\end{proposition}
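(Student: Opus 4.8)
The plan is to reduce the claimed inequality to the ordering of the break-even concentrations already established in \cref{LemmaPositionLambdai}. Recalling from \cref{TabFunc1} that
\[
F_{11}\left(D,r,S_2^{in}\right)=\lambda_1^1(D,r)+\frac{k_1}{k_2}\left(\lambda_2^{11}(D,r)-S_2^{in}\right),\qquad
F_{21}\left(D,r,S_2^{in}\right)=\lambda_1^2(D,r)+\frac{k_1}{k_2}\left(\lambda_2^{21}(D,r)-S_2^{in}\right),
\]
I would first check that the two functions are simultaneously defined on the smaller of their two domains: since $r\in(0,1/2)$ forces $r_1<r_2$ (and $r\in(1/2,1)$ forces $r_1>r_2$), the bounds $r_1m_1$ and $D_1^m=r_1\mu_2\!\left(S_2^m\right)$ delimiting the domain of $F_{11}$ are both smaller (resp.\ larger) than $r_2m_1$ and $D_2^m=r_2\mu_2\!\left(S_2^m\right)$, so one domain of definition is contained in the other. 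On this common domain the input concentration $S_2^{in}$ cancels in the difference, giving
\[
F_{21}\left(D,r,S_2^{in}\right)-F_{11}\left(D,r,S_2^{in}\right)=\left(\lambda_1^2(D,r)-\lambda_1^1(D,r)\right)+\frac{k_1}{k_2}\left(\lambda_2^{21}(D,r)-\lambda_2^{11}(D,r)\right).
\]

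The second step is simply to determine the sign of the right-hand side by invoking \cref{LemmaPositionLambdai}. If $r\in(0,1/2)$, then $\lambda_1^2(D,r)<\lambda_1^1(D,r)$ and $\lambda_2^{21}(D,r)<\lambda_2^{11}(D,r)$, so both summands are negative and, since $k_1,k_2>0$, we get $F_{21}<F_{11}$. If $r\in(1/2,1)$, then $\lambda_1^1(D,r)<\lambda_1^2(D,r)$ and $\lambda_2^{11}(D,r)<\lambda_2^{21}(D,r)$, so both summands are positive and $F_{21}>F_{11}$. This is precisely the assertion.

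There is essentially no serious obstacle here: the only points requiring a little care are the bookkeeping of the domains of $F_{11}$ and $F_{21}$ (so that the subtraction is legitimate throughout the region where the conclusion is asserted) and the observation that the two orderings furnished by \cref{LemmaPositionLambdai} — the one for the $\lambda_1^i$ and the one for the lower branches $\lambda_2^{i1}$ — point in the \emph{same} direction, so they reinforce rather than cancel. Both facts descend from the single comparison between $D_1=D/r_1$ and $D_2=D/r_2$ (equivalently, the sign of $r-1/2$) together with the monotonicity of $\mu_1$ on $\mathbb{R}_+$ and of $\mu_2$ on $\left(0,S_2^m\right)$ given by \cref{Hypoth1,Hypoth2}. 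I would also remark that the hypothesis $S_2^{in}>S_2^m$ is not used in the algebraic inequality itself; it only serves to locate the curves $\gamma_2$ and $\gamma_4$ in the portion of the operating diagram where this comparison is relevant.
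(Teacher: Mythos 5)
Your proof is correct and follows essentially the same route as the paper's: both reduce the comparison to the orderings $\lambda_1^2\lessgtr\lambda_1^1$ and $\lambda_2^{21}\lessgtr\lambda_2^{11}$ from \cref{LemmaPositionLambdai} and then read off the sign from the definitions of $F_{11}$ and $F_{21}$ in \cref{TabFunc1}. Your additional remarks on the nesting of the domains of definition and on the fact that the hypothesis $S_2^{in}>S_2^m$ plays no role in the algebra are accurate refinements of the same argument.
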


Now to construct the operating diagrams, the growth rates $\mu_1$ and $\mu_2$ are chosen of Monod and Haldane type which are written:
\begin{equation}                                    \label{SpeciFunc}
\mu_1(S_1) =\frac{m_1 S_1}{k_{S_1}+S_1} \quad \mbox{and} \quad \mu_2(S_2) =\frac{m_2S_2}{k_{S_2}+S_2+\frac{(S_2)^2}{k_I}},
\end{equation}
where $m_i$ is the maximum growth rate and $k_{S_i}$ is the half-saturation (or Michaelis-Menten) constant associated to $S_i$, for $i = 1,2$. However, the construction can be applied to any growth rate verifying \cref{Hypoth1,Hypoth2}.
The biological parameter values used in all figures are provided in \cref{TabParamVal}.
Note that we have used the same specific growth rates of Monod and Haldane types \cref{SpeciFunc} and parameter values as in \cite{Bernard2001}, with the exception of the maximum growth rate $m_1$ to have a richer process behavior.
\begin{proposition}                                \label{propstabstabDO1}
Assume that \cref{Hypoth1,Hypoth2} hold. Let $r$ and $S_2^{in}$ be fixed. Let $\mu_1$ and $\mu_2$ be the specific growth rates defined in \cref{SpeciFunc} and the set of the biological parameter values in \cref{TabParamVal}.
Let $\mathcal{J}_i$, $i=0,\dots,69$ be the various regions defined in \cref{TabRegDOJi}.
The existence and the local stability properties of steady states of the AM2 model with two interconnected chemostats \cref{ModelAM2S2C} in the operating diagram are defined as follows:
\begin{enumerate}
    \item If $r\in(0,1/2)$, $S_2^{in}>S_2^m$ and Case 1 of \cref{Cases123} holds $\left(m_1>\mu_2\left(S_2^m\right)\right)$, there are twenty-one regions $\mathcal{J}_i$, $i=0-20$ shown in \cref{FigDO1}.
    \item If $r\in(0,1/2)$, $S_2^{in}>S_2^m$ and Case 2 of \cref{Cases123} holds $\left(m_1<\mu_2\left(S_2^{in}\right)\right)$, there are seventeen regions $\mathcal{J}_i$, $i=0,3-5,8,13-24$ shown in \cref{FigDO2}.
    \item If $r\in(0,1/2)$, $S_2^{in}<S_2^m$ and Case 1 of \cref{Cases123} holds, there are twenty-one regions $\mathcal{J}_i$, $i=0,1,4-9,15-20,25-31$ shown in \cref{FigDO3}.
    \item If $r\in(1/2,1)$, $S_2^{in}>S_2^m$ and Case 1 of \cref{Cases123} holds, there are twenty regions $\mathcal{J}_i$, $i=0,15,20-22,32-46$ shown in \cref{FigDO4}.
\end{enumerate}
The conditions on the operating parameters defining the various boundaries of the regions are described in \cref{TabRegionCondDO123} [resp. \cref{TabRegionCondDO4}] for the first three items [last item].
\end{proposition}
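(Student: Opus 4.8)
The plan is to reduce each of the four items to a single geometric picture: freeze $r$ and $S_2^{in}$, overlay on the plane $\left(D,S_1^{in}\right)$ the curves $\gamma_i$ of \cref{TabSurf}, and then in every connected component of the complement of $\bigcup_i\gamma_i$ read off the list of steady states that exist from \cref{TableCondExis} and the sublist that is LES from \cref{TableCondStab}. With $r$ and $S_2^{in}$ fixed, the surfaces $\gamma_6,\dots,\gamma_9$ become the vertical lines $D=D_1^*$, $D=D_2^*$, $D=D_1^m$, $D=D_2^m$, while $\gamma_0,\dots,\gamma_5$ are the graphs $S_1^{in}=\lambda_1^1,\lambda_1^2,F_{21},F_{22},F_{11},F_{12}$, and $\gamma_{14},\gamma_{15}$ are the loci $\phi_1=0$, $\phi_2=0$. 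The specific Monod--Haldane choice \cref{SpeciFunc}, together with \cref{Hypoth1,Hypoth2}, is what makes all these curves genuinely $\mathcal{C}^1$ on their stated domains, makes the auxiliary solutions $X_1^{2*},X_2^{2*}$ entering \cref{TabComSS} well defined, and keeps the multiplicities at the values allowed by \cref{propmultip}, so that the cell count is finite and equal to the numbers claimed.

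First I would fix the discrete configuration, i.e. the sign of $r-1/2$, the sign of $S_2^{in}-S_2^m$, and which of the three alternatives in \cref{Cases123} holds. By \cref{LemmaPositionLambdai} this determines the order of $\lambda_1^1,\lambda_1^2$ and of the four roots $\lambda_2^{11},\lambda_2^{21},\lambda_2^{12},\lambda_2^{22}$; by \cref{propPositionCurv} it determines how $F_{11},F_{12}$ straddle $\lambda_1^1$ and how $F_{21},F_{22}$ straddle $\lambda_1^2$ on either side of the crossing points $P_1=\left(D_1^*,\lambda_1^1\left(D_1^*,r\right)\right)$ and $P_2=\left(D_2^*,\lambda_1^2\left(D_2^*,r\right)\right)$; and, when $S_2^{in}>S_2^m$, \cref{PropF21F11} fixes the relative position of $F_{21}$ and $F_{11}$. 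Which of the alternatives in \cref{Cases123} is in force controls whether $D_1^*,D_2^*$ and $D_1^m,D_2^m$ actually fall inside the $D$-range where the $F_{ij}$ are defined, hence which vertical lines really cut the picture; this is precisely what drops the region count from twenty-one to seventeen in item~2.

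Next I would sweep $D$ from $0$ upward. Crossing a vertical line changes the set of active curves (a pair $\lambda_2^{1j}$ or $\lambda_2^{2j}$ disappears at $D_i^m$; an $F_{ij}$ collides with $\lambda_1^i$ at $D_i^*$), so within each resulting vertical strip the ambient curves are ordered horizontal levels among $\lambda_1^1,\lambda_1^2,F_{11},F_{12},F_{21},F_{22}$ plus the two loci $\phi_1=0$, $\phi_2=0$, and each cell is a curvilinear rectangle in $\left(D,S_1^{in}\right)$. Evaluating at a representative interior point turns the conditions of \cref{TableCondExis} into elementary inequalities among the already ordered quantities (giving the list of equilibria) and those of \cref{TableCondStab} into sign conditions on the same quantities, supplemented only by the sign of $g_2'\!\left(X_2^{2*}\right)-f_3'\!\left(X_2^{2*}\right)$ at the relevant intersection (giving the LES sublist); matching the resulting pair of lists against the definitions in \cref{TabRegDOJi} assigns the cell its label $\mathcal{J}_i$. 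Collecting labels over all cells yields the index sets stated in items~1--4, the cell-defining inequalities are exactly what is recorded in \cref{TabRegionCondDO123} and \cref{TabRegionCondDO4}, and one finishes by checking consistency across shared boundaries (the equilibrium branches vary continuously as a curve is crossed) and that \cref{FigDO1,FigDO2,FigDO3,FigDO4} realize this combinatorics.

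The main obstacle is organizational rather than a single hard estimate: making the case tree (sign of $r-1/2$ times sign of $S_2^{in}-S_2^m$ times the three alternatives of \cref{Cases123}, and within each node all the vertical strips and horizontal orderings) exhaustive and pairwise disjoint, so that every component of the complement of $\bigcup_i\gamma_i$ is counted exactly once and no phantom region is created by two curves that in fact coincide or never meet in the relevant range. The only genuinely analytic ingredient is the sign of $g_2'\!\left(X_2^{2*}\right)-f_3'\!\left(X_2^{2*}\right)$, which is not visible from the orderings of the $\lambda$'s and $F$'s; here I would use the Haldane shape of $\mu_2$ together with \cref{propmultip} (the dichotomy according to whether $X_2^{1*}<x_2^m$ or $X_2^{1*}>x_2^m$) to show that along the relevant branch this sign is constant, so that it does not subdivide the regions any further than the curves $\gamma_{14}$ and $\gamma_{15}$ already do.
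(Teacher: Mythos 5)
Your overall strategy -- fix the discrete configuration via \cref{LemmaPositionLambdai}, \cref{propPositionCurv} and \cref{PropF21F11}, overlay the curves $\gamma_i$ in the $\left(D,S_1^{in}\right)$ plane, and read off existence and stability in each cell from \cref{TableCondExis} and \cref{TableCondStab} -- is the same as the paper's; the paper merely organizes the bookkeeping steady state by steady state rather than cell by cell, which is an equivalent way of filling in \cref{TabRegDOJi}.

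There is, however, one concrete gap, and it sits exactly at the point you identify as ``the only genuinely analytic ingredient.'' You propose to deduce from the Haldane shape of $\mu_2$ together with the dichotomy of \cref{propmultip} that the sign of $g_2'\left(X_2^{2*}\right)-f_3'\left(X_2^{2*}\right)$ is constant along the relevant branch, so that no curve beyond $\gamma_{14}$ and $\gamma_{15}$ subdivides the diagram. That does not follow: when $X_2^{1*}<x_2^m$, \cref{propmultip} (via \cref{lemH3}) only guarantees an odd number of solutions of $f_3(x)=g_2(x)$, and if that number exceeds one the sign of $g_2'-f_3'$ necessarily alternates between consecutive intersections, the steady states $\mathcal{E}_{01}^{11}$ and $\mathcal{E}_{11}^{11}$ are no longer unique, and a fold locus not listed in \cref{TabSurf} would create extra regions, breaking the counts in items~1--4 and the single-entry structure of \cref{TabRegDOJi}. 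The proposition is stated for the specific Monod--Haldane rates \cref{SpeciFunc} with the parameter values of \cref{TabParamVal} precisely because this uniqueness is a quantitative, not structural, fact: the paper verifies it by exhibiting that $\phi_3(x):=f_3(x)-g_2(x)$ has a single zero across the relevant ranges of $D$ and $S_1^{in}$ (\cref{FigCondStabE0111,FigCondStabE1111}), and it treats the sign of $\phi_2(D)$ for $D<D_2^*$ in the same computational way (\cref{FigCondE101i}, complementing \cref{Lemr3}). To close your argument you must either supply such a verification for the chosen parameters or give an actual proof of uniqueness of $X_2^{2*}$; the appeal to \cref{propmultip} alone cannot do it.
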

\begin{figure}[!ht]
\setlength{\unitlength}{1.0cm}
\begin{center}
\begin{picture}(7,6)(0,0)
\put(-4.5,0){\rotatebox{0}{\includegraphics[width=7.5cm,height=5.5cm]{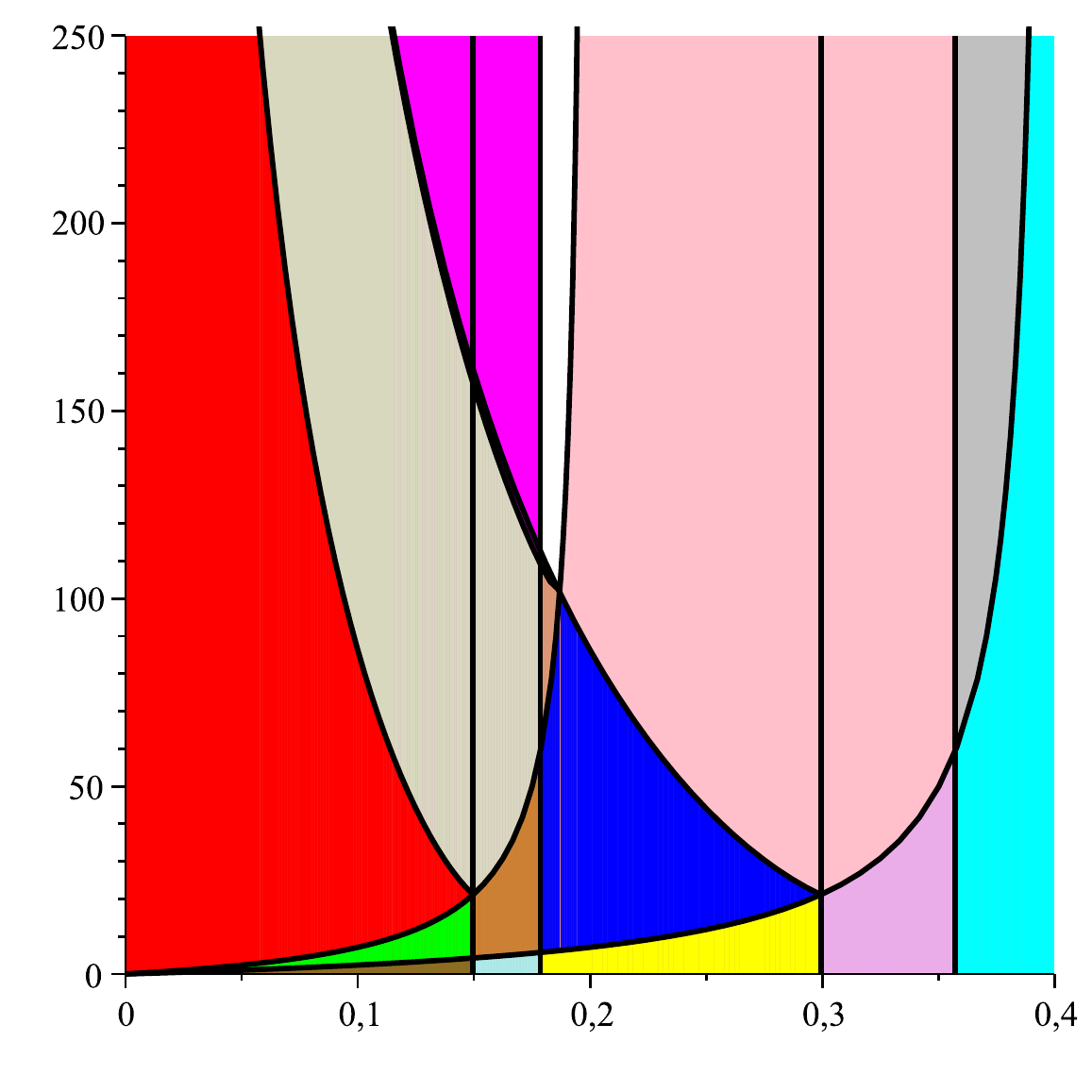}}}
\put(3.5,0){\rotatebox{0}{\includegraphics[width=7.5cm,height=5.5cm]{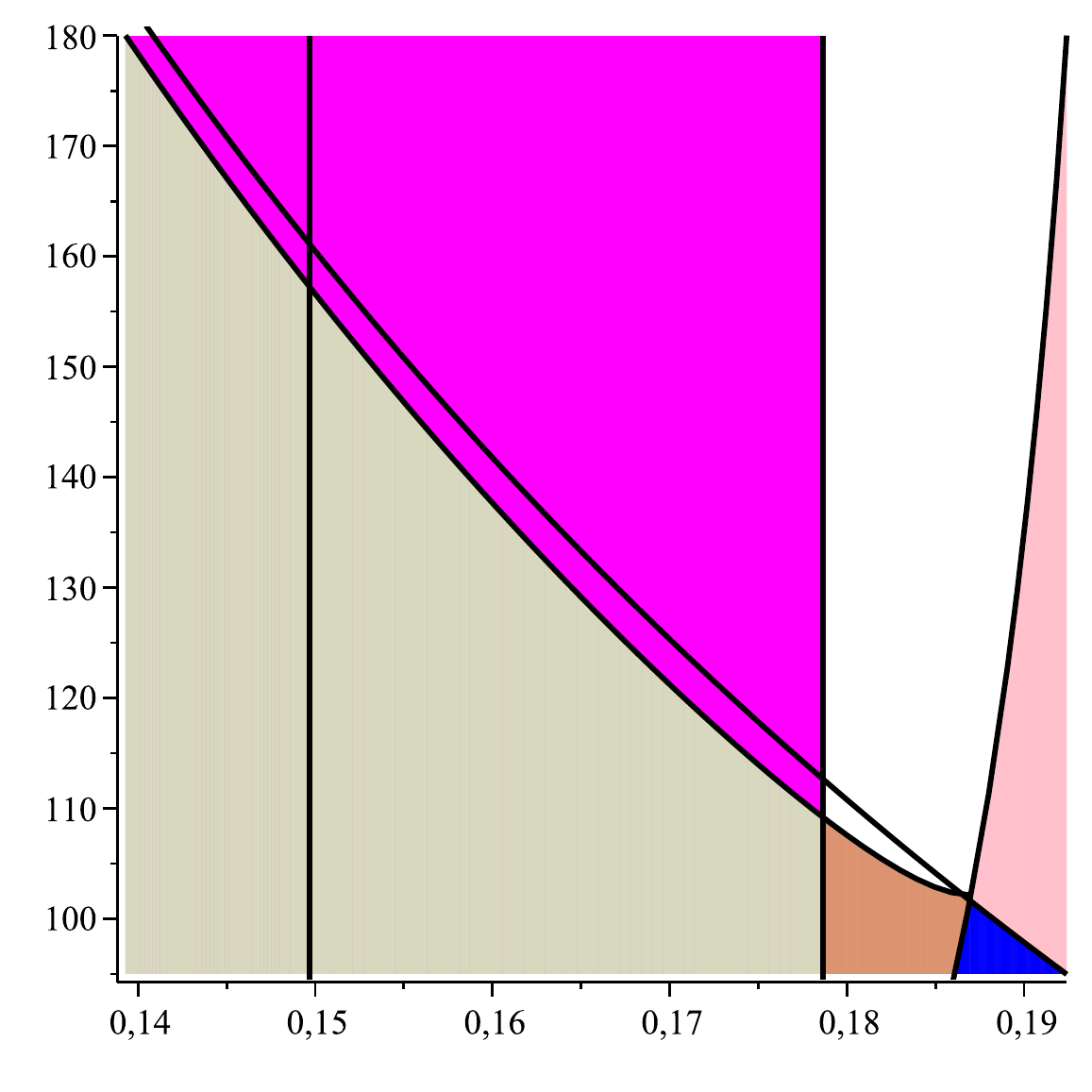}}}
\put(-1,5.8){\sc $(a)$}	
\put(2.2,0.7){\sc { $\mathcal{J}_0$}}
\put(2,3.7){\sc { $\mathcal{J}_1$}}
\put(1.3,3.7){\sc { $\mathcal{J}_2$}}
\put(1.45,0.7){\sc { $\mathcal{J}_3$}}
\put(0.6,0.67){\sc { $\mathcal{J}_4$}}
\put(-0.5,1.1){\sc { \color{white}$\mathcal{J}_5$}}
\put(-1.08,2.35){\sc { $\rightarrow$}}
\put(-1.35,2.3){\sc { $\mathcal{J}_6$}}
\put(0.2,3.7){\sc { $\mathcal{J}_8$}}
\put(-0.8,3.7){\sc { $\leftarrow$}}
\put(-0.5,3.7){\sc { $\mathcal{J}_9$}}
\put(-0.85,2.55){\sc { $\leftarrow$}}
\put(-0.5,2.55){\sc { $\mathcal{J}_7$}}
\put(-1.25,3.7){\sc {\color{white} $\mathcal{J}_{10}$}}
\put(-1.25,1.7){\sc {$\mathcal{J}_{12}$}}
\put(-1.25,0.8){\sc {\color{white}$\mathcal{J}_{13}$}}
\put(-1.1,0.45){\sc $\uparrow$}
\put(-1.25,0.15){\sc { $\mathcal{J}_{14}$}}
\put(-1.8,0.45){\sc $\uparrow$}	
\put(-2,0.15){\sc { $\mathcal{J}_{15}$}}
\put(-1.8,0.8){\sc {\color{white} $\searrow$}}
\put(-2.2,1.1){\sc {\color{white} $\mathcal{J}_{16}$}}
\put(-3.4,3.5){\sc { \color{white}$\mathcal{J}_{17}$}}
\put(-1.75,3.75){\sc { $\rightarrow$}}
\put(-2.1,3.7){\sc { $\mathcal{J}_{19}$}}
\put(-1.5,3.15){\sc { $\rightarrow$}}
\put(-1.9,3.1){\sc { $\mathcal{J}_{11}$}}
\put(-2.5,4.2){\sc { $\mathcal{J}_{18}$}}
\put(-1.85,5){\sc { \color{white}$\mathcal{J}_{20}$}}
\put(-3.8,5.45){\sc $S_1^{in}$}
\put(2.75,0.55){\sc  $D$}
\put(-0.75,5.45){\sc { $\gamma_0$}}
\put(0.9,5.45){\sc { $\gamma_7$}}
\put(-1.5,5.45){\sc { $\gamma_6$}} 	
\put(-1.9,5.45){\sc { $\gamma_3$}}
\put(-2.3,5.45){\sc { $\gamma_{15}$}}
\put(-1.05,5.45){\sc { $\gamma_8$}}
\put(1.8,5.45){\sc { $\gamma_9$}}
\put(2.35,5.45){\sc { $\gamma_1$}}
\put(-3,5.45){\sc { $\gamma_5$}}
\put(6.8,5.8){{\sc $(b)$}}
\put(9.4,0.65){\sc { $\mathcal{J}_6$}}
\put(9.6,3.7){\sc { $\mathcal{J}_9$}}
\put(10.3,1.5){\sc { $\mathcal{J}_8$}}
\put(10,0.65){\sc { \color{white}$\mathcal{J}_5$}}
\put(9.2,1.4){\sc { $\swarrow$}}
\put(9.4,1.7){\sc { $\mathcal{J}_7$}}
\put(8,3.7){\sc { \color{white}$\mathcal{J}_{10}$}}
\put(6.8,1.5){\sc {$\mathcal{J}_{12}$}}
\put(4.8,2.2){\sc {$\mathcal{J}_{18}$}}
\put(4.95,4.35){\sc { $\rightarrow$}}
\put(4.5,4.3){\sc { $\mathcal{J}_{19}$}}
\put(6.6,2.9){\sc { $\rightarrow$}}
\put(6.1,2.9){\sc { $\mathcal{J}_{11}$}}
\put(5,5){\color{white}\sc { $\mathcal{J}_{20}$}}
\put(4,5.5){\sc $S_1^{in}$}
\put(10.9,0.5){\sc  $D$}
\put(10.6,5.45){\sc { $\gamma_0$}}
\put(8.8,5.45){\sc { $\gamma_8$}}
\put(5.3,5.45){\sc { $\gamma_6$}} 	
\put(4.4,5.45){\sc { $\gamma_3$}}
\put(4.1,5){\sc { $\nearrow$}}
\put(3.7,4.9){\sc { $\gamma_{15}$}}
\end{picture}
\vspace{-0.5cm}
\caption{Operating diagram of \cref{ModelAM2S2C} with twenty-one regions $\mathcal{J}_i$, $i=0-20$ when $r=1/3\in(0,1/2)$ and $S_2^{in}=150>S_2^m\approx48.740$. Case 1 of \cref{Cases123}: $m_1=0.6>\mu_2\left(S_2^m\right)\approx0.535$. }\label{FigDO1}
\end{center}
\end{figure}
\begin{figure}[!ht]
\setlength{\unitlength}{1.0cm}
\begin{center}
\begin{picture}(7,5.6)(0,0)
\put(-4.5,0){\rotatebox{0}{\includegraphics[width=7.5cm,height=5.5cm]{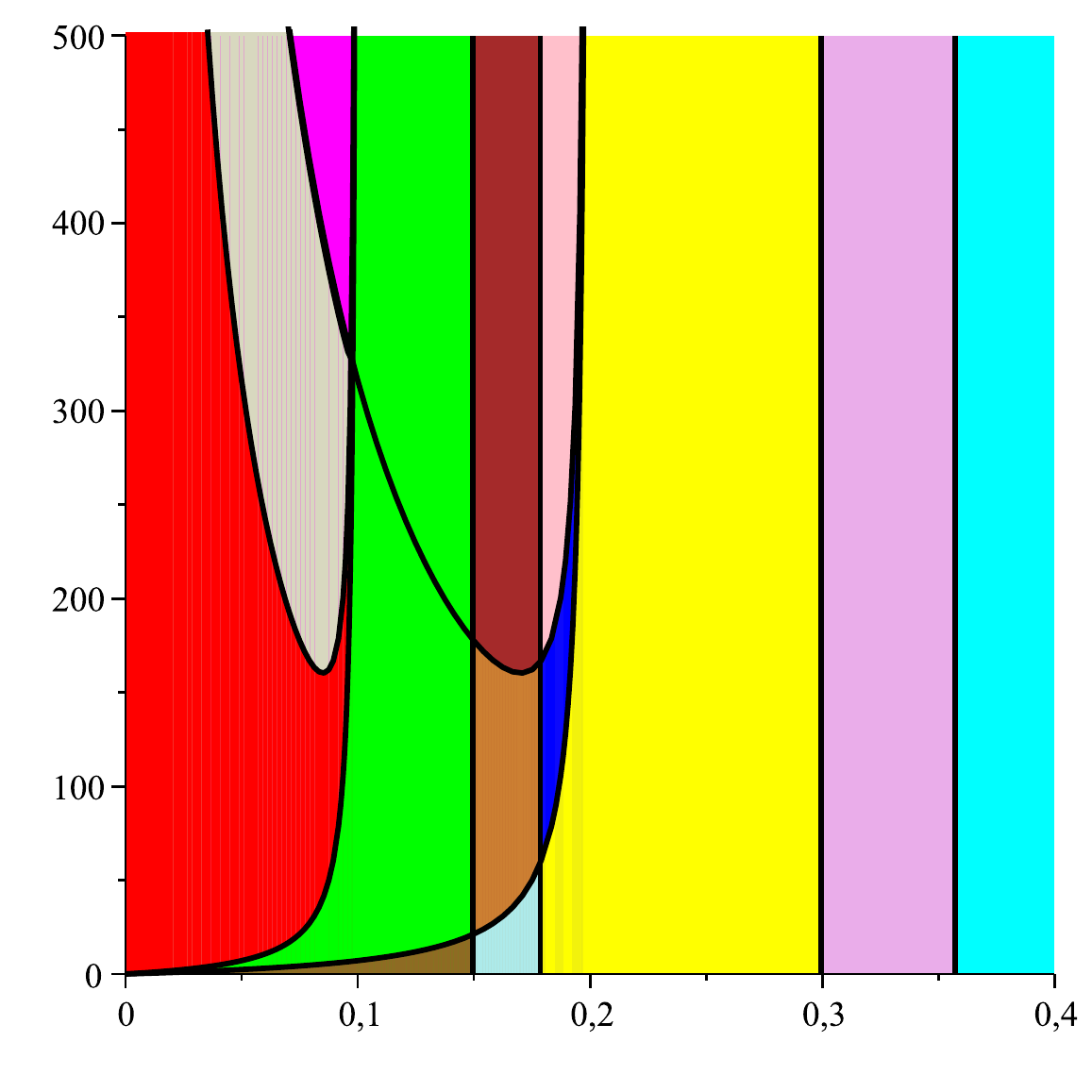}}}
\put(3.5,0){\rotatebox{0}{\includegraphics[width=7.5cm,height=5.5cm]{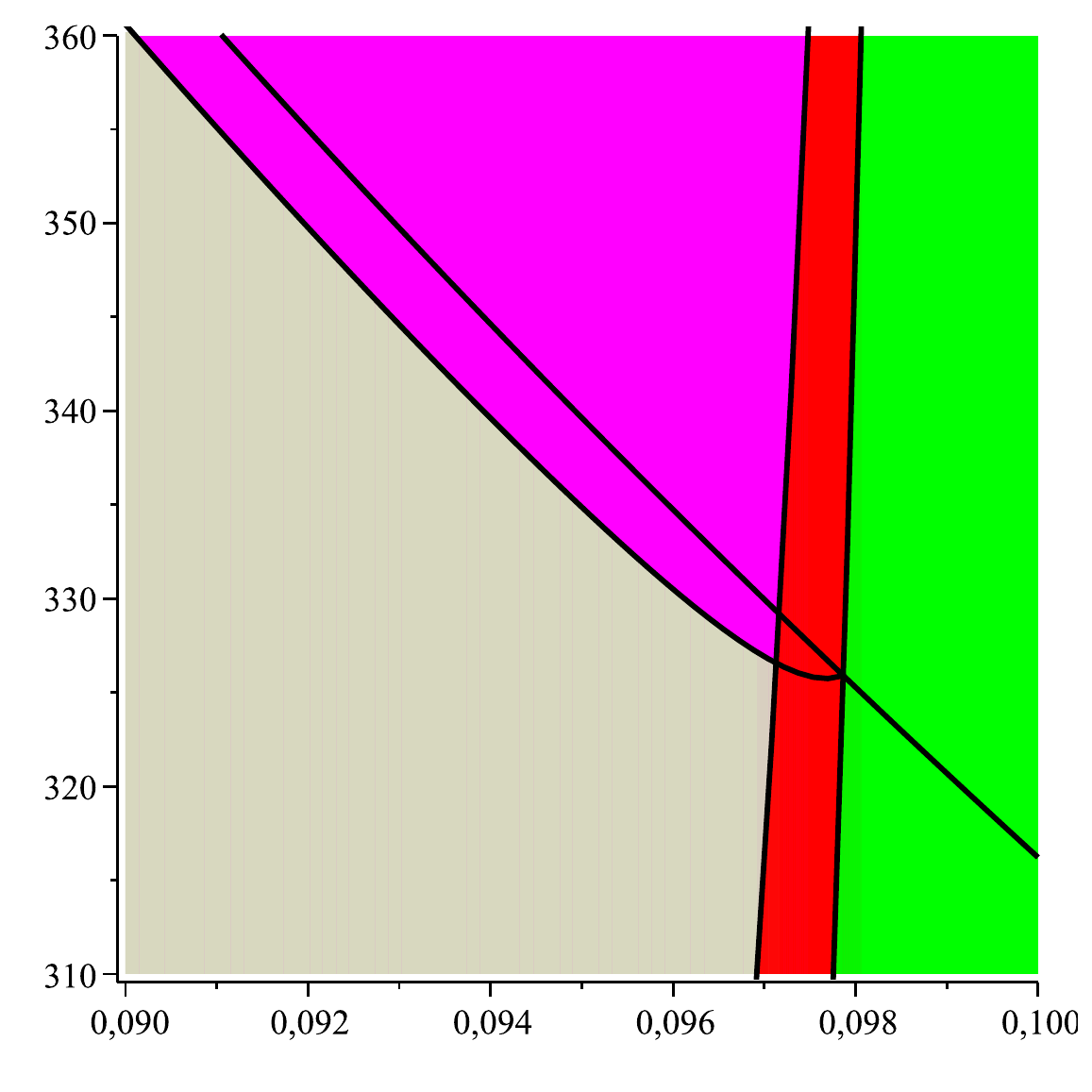}}}
\put(-1,5.8){\sc $(a)$}	
\put(2.15,3.2){\sc { $\mathcal{J}_0$}}
\put(1.4,3.2){\sc { $\mathcal{J}_3$}}
\put(0.2,3.2){\sc { $\mathcal{J}_4$}}
\put(-0.8,1.6){\sc { $\leftarrow$}}
\put(-0.6,1.6){\sc { $\mathcal{J}_5$}}
\put(-0.8,3.7){\sc { $\leftarrow$}}
\put(-0.5,3.7){\sc { $\mathcal{J}_8$}}
\put(-1.3,3.7){\sc {\color{white} $\mathcal{J}_{24}$}}
\put(-1.4,1.6){\sc { $\color{white}\mathcal{J}_{13}$}}
\put(-1.05,0.5){\sc $\uparrow$}
\put(-1.3,0.1){\sc { $\mathcal{J}_{14}$}}
\put(-1.75,0.5){\sc $\uparrow$}	
\put(-1.9,0.1){\sc { $\mathcal{J}_{15}$}}
\put(-2,1.6){\sc { $\color{white}\mathcal{J}_{16}$}}
\put(-2,3.7){\sc {$\color{white}\mathcal{J}_{23}$}}
\put(-2.6,4.95){\sc { \color{white}$\mathcal{J}_{20}$}}
\put(-2.2,4.35){\sc { $\color{white}\leftarrow$}}
\put(-2,4.35){\sc { $\color{white}\mathcal{J}_{22}$}}
\put(-2.7,4.4){\sc { $\rightarrow$}}
\put(-3,4.35){\sc {$\mathcal{J}_{19}$}}
\put(-2.8,3.7){\sc {$\mathcal{J}_{18}$}}
\put(-3.5,3.2){\sc {\color{white} $\mathcal{J}_{17}$}}
\put(-3.8,5.5){\sc $S_1^{in}$}
\put(2.8,0.5){\sc  $D$}
\put(-2.3,5.5){\sc { $\gamma_0$}}
\put(-1.1,5.5){\sc { $\gamma_8$}}
\put(-1.5,5.5){\sc { $\gamma_6$}} 	
\put(-2.55,5.5){\sc { $\gamma_3$}}
\put(-2.95,5.5){\sc { $\gamma_{15}$}}
\put(0.9,5.5){\sc { $\gamma_7$}}
\put(1.75,5.5){\sc { $\gamma_9$}}
\put(-3.3,5.5){\sc { $\gamma_5$}}
\put(-0.75,5.5){\sc { $\gamma_1$}}
\put(6.8,5.8){{\sc $(b)$}}
\put(9.7,1){\sc { $\mathcal{J}_{16}$}}
\put(9.7,3.7){\sc { $\mathcal{J}_{23}$}}
\put(8.6,1){\sc { \color{white}$\mathcal{J}_{17}$}}
\put(8.9,2.3){\sc {\color{white} $\swarrow$}}
\put(8.9,2.6){\sc {\color{white} $\mathcal{J}_{21}$}}
\put(7.8,3.7){\sc { \color{white}$\mathcal{J}_{20}$}}
\put(6.4,3.7){\sc { \color{white}$\mathcal{J}_{19}$}}
\put(8.8,3.7){\sc { \color{white}$\mathcal{J}_{22}$}}
\put(5.8,2.5){\sc {$\mathcal{J}_{18}$}}
\put(4,5.55){\sc $S_1^{in}$}
\put(10.7,0.5){\sc  $D$}
\put(8.8,5.45){\sc { $\gamma_5$}}
\put(9.2,5.45){\sc { $\gamma_0$}} 	
\put(4.8,5.45){\sc { $\gamma_3$}}
\put(4.3,5.45){\sc { $\gamma_{15}$}}
\end{picture}
\vspace{-0.5cm}
\caption{Operating diagram of \cref{ModelAM2S2C} with seventeen regions $\mathcal{J}_i$, $i=0,3-5,8,13-24$ when $r=1/3\in(0,1/2)$ and $S_2^{in}=150>S_2^m\approx48.740$. Case 2 of \cref{Cases123}: $m_1=0.3<\mu_2\left(S_2^{in}\right)\approx0.449$.}\label{FigDO2}
\end{center}
\end{figure}
\begin{table}[!ht]
{\scriptsize
\begin{center}
\caption{Existence and stability of steady states in the regions regions $\mathcal{J}_i$, $i=0,\dots,69$ of the operating diagrams in \cref{FigDO1,FigDO2,FigDO3,FigDO4,FigDO5Dfix}.} \label{TabRegDOJi}
\vspace{-0.5cm}
\begin{tabular}{ @{\hspace{1mm}}l@{\hspace{1mm}} @{\hspace{1mm}}c@{\hspace{1mm}} @{\hspace{1mm}}l@{\hspace{1mm}} @{\hspace{1mm}}l@{\hspace{1mm}} @{\hspace{1mm}}l@{\hspace{1mm}}
                 @{\hspace{1mm}}l@{\hspace{1mm}} @{\hspace{1mm}}l@{\hspace{1mm}} @{\hspace{1mm}}l@{\hspace{1mm}} @{\hspace{1mm}}l@{\hspace{1mm}} @{\hspace{1mm}}l@{\hspace{1mm}}
                 @{\hspace{1mm}}l@{\hspace{1mm}} @{\hspace{1mm}}l@{\hspace{1mm}} @{\hspace{1mm}}l@{\hspace{1mm}} @{\hspace{1mm}}l@{\hspace{1mm}} @{\hspace{1mm}}l@{\hspace{1mm}}
                 @{\hspace{1mm}}l@{\hspace{1mm}} @{\hspace{1mm}}l@{\hspace{1mm}} @{\hspace{1mm}}l@{\hspace{1mm}} @{\hspace{1mm}}l@{\hspace{1mm}} }
 & $\mathcal{E}_{00}^{00}$ & $\mathcal{E}_{00}^{01}$ & $\mathcal{E}_{00}^{02}$ & $\mathcal{E}_{00}^{10}$ & $\mathcal{E}_{00}^{11}$ & $\mathcal{E}_{00}^{12}$ & $\mathcal{E}_{10}^{10}$ & $\mathcal{E}_{10}^{11}$ & $\mathcal{E}_{10}^{12}$ & $\mathcal{E}_{01}^{01}$ & $\mathcal{E}_{02}^{01}$ & $\mathcal{E}_{01}^{11}$ & $\mathcal{E}_{02}^{11}$ & $\mathcal{E}_{11}^{11}$ & $\mathcal{E}_{12}^{11}$ & Color \\ \hline
$\mathcal{J}_0$   & S &   &   &   &   &   &   &   &   &   &   &   &   &   &    &   Cyan     \\ \hline
$\mathcal{J}_1$   & U &   &   & S &   &   &   &   &   &   &   &   &   &   &    &   Grey     \\ \hline
$\mathcal{J}_2$   & U &  U & U & S & S & U &   &   &   &   &   &   &   &   &    &   Pink     \\ \hline
$\mathcal{J}_3$   & S & S & U &   &   &   &   &   &   &   &   &   &   &   &    &  Plum   \\ \hline
$\mathcal{J}_4$   & U & S &   &   &   &   &   &   &   &   &   &   &   &   &    &   Yellow     \\ \hline
$\mathcal{J}_5$   & U & U &   & U & S &   &   &   &   &   &   &   &   &   &    &   Blue   \\ \hline
$\mathcal{J}_6$   & U & U &   & U & U &   & U & S &  &   &   &   &   &   &    &   Tan      \\ \hline
$ \mathcal{J}_7$   & U & U &   & U & U &  & S & S & U &   &   &   &   &   &    &  White      \\ \hline
$\mathcal{J}_8$   & U & U &   & S & S & U &   &   &   &   &   &   &   &   &    &  Pink    \\ \hline
$\mathcal{J}_9$   & U & U &   & U & U & U & S & S & U &   &   &   &   &   &    &  White    \\ \hline
$\mathcal{J}_{10}$   & U & U &   & U & U & U & S & S & U  &  U & U &  U & U & S &  U &   Magenta   \\ \hline
$\mathcal{J}_{11}$   & U &  U &   & U & U &   & S & S & U &  U  & U &  U  & U & S &  U &   Magenta   \\ \hline
$\mathcal{J}_{12}$& U & U &   & U & U &   & U & S &   &  U & U &  U & U & S &  U &   Wheat  \\ \hline
$\mathcal{J}_{13}$& U & U &   & U & S &   &   &   &   &  U & U & S & U &   &    &   Gold  \\ \hline
$\mathcal{J}_{14}$& U & S &   &   &   &   &   &   &   &   S & U &   &   &   &    &   Turquoise  \\ \hline
$\mathcal{J}_{15}$& U & U &   &   &   &   &   &   &   & S &   &   &   &   &    &   Sienna    \\ \hline
$\mathcal{J}_{16}$& U & U &   & U & U &   &   &   &   & U &   & S &   &   &    &   Green  \\ \hline
$\mathcal{J}_{17}$& U & U &   & U & U &   & U & U &   & U &   & U &   & S &    &   Red  \\ \hline
$\mathcal{J}_{18}$& U & U &   & U & U &   & U & S &   & U &   & U &   & S &  U &   Wheat    \\ \hline
$\mathcal{J}_{19}$& U & U &   & U & U &   & S & S & U & U &   & U &   & S &  U &   Magenta   \\ \hline
$\mathcal{J}_{20}$& U & U &   & U & U & U & S & S & U & U &   & U &   & S &  U &   Magenta   \\ \Xhline{3\arrayrulewidth}
$\mathcal{J}_{21}$& U & U &   & U & U &   & U & U & U  & U &   & U &   & S &    &   Red  \\ \hline
$\mathcal{J}_{22}$& U & U &   & U & U & U & U & U & U  & U &   & U &   & S &    &   Red  \\ \hline
$\mathcal{J}_{23}$& U & U &   & U & U & U &   &   &   & U &   & S &   &   &    & Green   \\ \hline
$\mathcal{J}_{24}$& U & U &   & S & S & U &   &   &   &   U  & U &  S  & U &   &    &   Brown    \\
\hline
$\mathcal{J}_{25}$& U &   &   & U & S &   &   &   &   &   &   &   &   &   &    &   Blue    \\ \hline
$\mathcal{J}_{26}$& U &   &   & S & S & U &   &   &   &   &   &   &   &   &    &    Pink   \\ \hline
$\mathcal{J}_{27}$& U & U &   & U & U & U & S & S & U &   &   &   &   & S &  U &   Magenta  \\ \hline
$\mathcal{J}_{28}$   & U &  U &   & U & U &   & S & S & U &    & U &   & U & S &  U &   Magenta   \\ \hline
$\mathcal{J}_{29}$& U & U &   & U & U &   & U & S &   &   &   &   &   & S &  U &   Wheat  \\ \hline
$\mathcal{J}_{30}$& U & U &   & U & U &   & U & U &   &   &   &   &   & S &    &   Red      \\ \hline
$\mathcal{J}_{31}$& U & U &   & U & U &   & S & S &   &   &   &   &   &   &    &   White    \\ \Xhline{3\arrayrulewidth}
$\mathcal{J}_{32}$   & U &   &   &   &   &   &  S &   &   &   &   &   &   &   &    &  Violet     \\ \hline
$\mathcal{J}_{33}$   & U &   &   &   &   &   & S  &   &   &  U & U &   &   &  S &  U  &   Navy     \\ \hline
$\mathcal{J}_{34}$   & S &   &   &   &   &   &   &   &   & S &  U &   &   &   &    & Khaki   \\ \hline
$\mathcal{J}_{35}$   & U &  &   &   &   &   &   &   &   & S  &   &   &   &   &    &   Sienna\\ \hline
$\mathcal{J}_{36}$   & U &   &   &   &   &   & U &   &   &  U &   &   &   &  S &    &   Red  \\ \hline
$\mathcal{J}_{37}$   & U &   &   & U & U &  U & U &   &   &  U &   & U &   &  S &    &   Red      \\ \hline
$\mathcal{J}_{38}$   & U &   &   &   &   &   &  S &   &   &  U &   &   &   &  S &  U  &  Navy    \\ \hline
$\mathcal{J}_{39}$   & U &   &   & U & U &  U & S &   &   &  U &   & U &   &  S &  U  &   Navy    \\ \hline
$\mathcal{J}_{40}$   & U &  U  &  U & U & U & U & S & S & U & U &   & U &   & S &  U &   Magenta   \\ \hline
$\mathcal{J}_{41}$ & U & U & U & U & U &  U & U &  U & U & U &   &  U &   &  S &    &   Red      \\ \hline
$\mathcal{J}_{42}$ & U &  U  & U &   &   &    & U &  U & U & U &   &    &   &  S &    &   Red  \\ \hline
$\mathcal{J}_{43}$& U &  U &  U &   &   &   &   &   &   & S &   &   &   &   &    &    Sienna  \\ \hline
$\mathcal{J}_{44}$& U & U &   &   &   &   &  U &  U &   & U &   &   &   &  S &    &   Red  \\ \hline
$ \mathcal{J}_{45}$& U & U &   &   &   &   &  U &  U & U & U &   &   &   &  S &    &   Red  \\ \hline
$\mathcal{J}_{46}$  & U & U &  & U & U &  & U & U &  & U &  & U &  & S &  & Red\\ \Xhline{3\arrayrulewidth}
$\mathcal{J}_{47}$ & S & S & U &  &  &  &   &  &   & S & U &   &   &   &  & Black \\ \hline
$\mathcal{J}_{48}$ & U & U & U & U & S & U &  &  &  & U & U & S & U &  &  & Gold\\ \hline
$\mathcal{J}_{49}$ & U & U & U & U & U & U & S & S & U & U & U & U & U & S & U & Magenta \\ \hline
$\mathcal{J}_{50}$ & U & U &  & U & U & U & S & S & U & U & U & U & U & S & U  & Magenta \\\hline
$\mathcal{J}_{51}$ & U & U &  & U & U & & S & S & U & U & U & U & U & S & U & Magenta\\\hline
$\mathcal{J}_{52}$ & U & U &  & U & U &  & U & S & & U & U & U & U & S & U & Wheat\\ \hline
$\mathcal{J}_{53}$ & U & U &  & S & S & U &  &   &  & U & U & S & U &  & &Coral\\ \hline
$\mathcal{J}_{54}$ & U & U &  & U & S &  &  &  &  & U & U & S & U &  &  & Gold\\ \hline
$\mathcal{J}_{55}$ & U & S &  &  &  &  &  &  &  & S & U &   &  &  &  &Turquoise\\ \hline
$\mathcal{J}_{56}$  & U & U &  & U & U &  &  &  &  & U & & S &  &  &  & Green\\ \hline
$\mathcal{J}_{57}$  & U & U &  & U & U &  & U & S &  & U &  & U &  & S & U & Wheat\\ \hline
$\mathcal{J}_{58}$  & U & U &  & U & U &  & S & S & U & U &  & U &  & S & U & Magenta\\ \hline
$\mathcal{J}_{59}$  & U & U &  & U & U &  U & S & S & U & U &  & U &  & S & U & Magenta\\ \hline
$\mathcal{J}_{60}$ & U & U &  & U & U &  & S & S & U &  &  &  &  & S & U & Magenta\\ \hline
$\mathcal{J}_{61}$  & U & U &  & U & U &  & U & S &  &  &  &  &  & S & U &Wheat\\ \hline
$\mathcal{J}_{62}$  & U & U &  & U & U &  & U & U &  &  &  &  &  & S &  & Red\\ \hline
$\mathcal{J}_{63}$  & U & U &  & U & U &  & U & S &  &  &  &  &  &  &  &Tan\\ \hline
$\mathcal{J}_{64}$  & U &  &  & U & S &  &  &  &  &  &  &  &  &  &  & Blue\\ \hline
$\mathcal{J}_{65}$  & U &  &  & U & U &  & U & S &  &  &  &  &  &  &  & Tan\\ \hline
$\mathcal{J}_{66}$  & U &  &  & U & U &  & U & U &  &  &  &  &  & S & & Red\\ \hline
$\mathcal{J}_{67}$  & U &  &  & U & U &  & U & S &  &  &  &  &  & S & U &Wheat\\ \hline
$\mathcal{J}_{68}$  & U &  &  & U & U &  & S & S & U &  &  &  &  & S & U & Magenta\\ \hline
$\mathcal{J}_{69}$  & U &  &  & U & U & U & S & S & U &  &  &  &  & S & U & Magenta\\ \hline
\end{tabular}
\end{center}}
\end{table}
\begin{figure}[!ht]
\setlength{\unitlength}{1.0cm}
\begin{center}
\begin{picture}(7,5.7)(0,0)
\put(-4.5,0){\rotatebox{0}{\includegraphics[width=7.5cm,height=5.5cm]{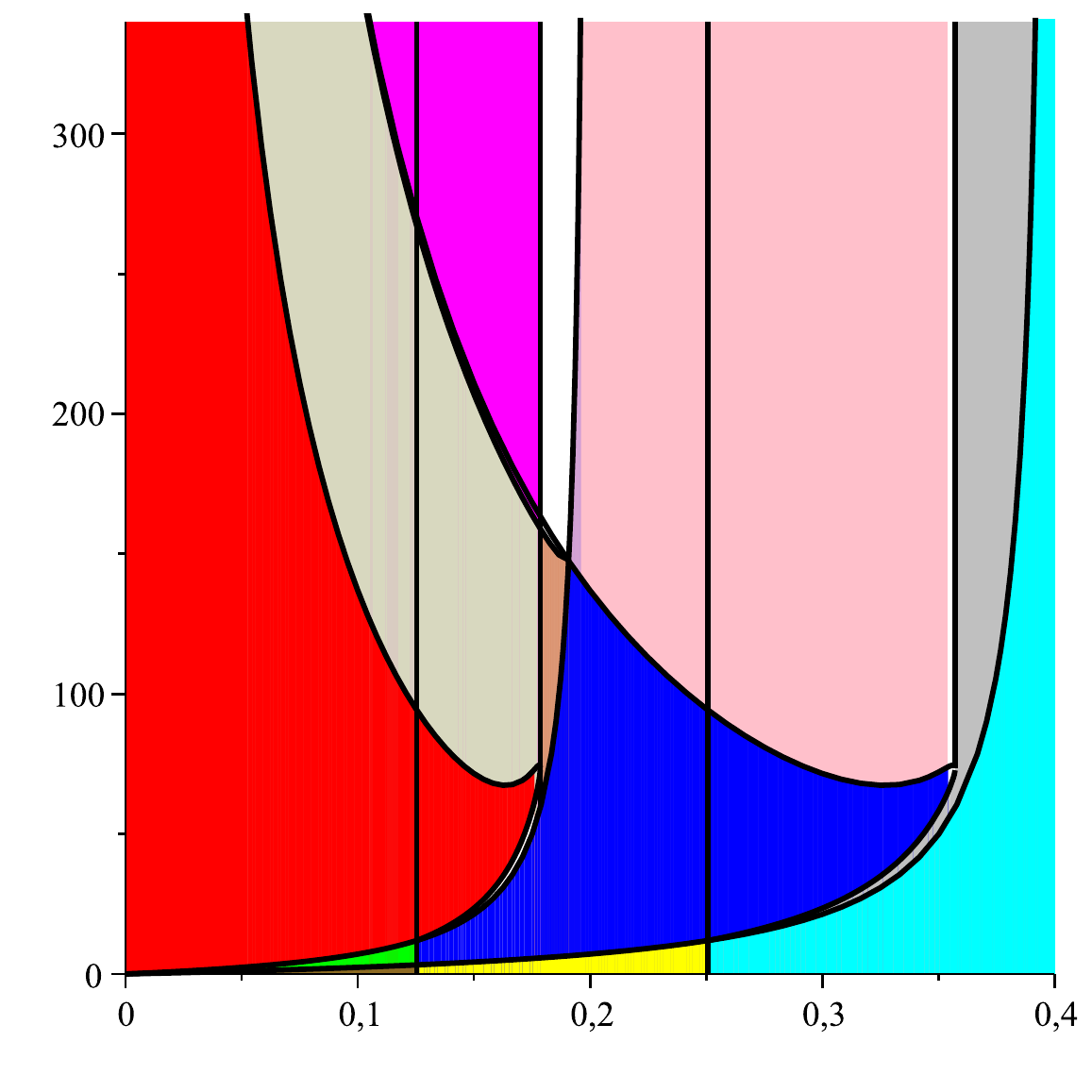}}}
\put(3.5,0){\rotatebox{0}{\includegraphics[width=7.5cm,height=5.5cm]{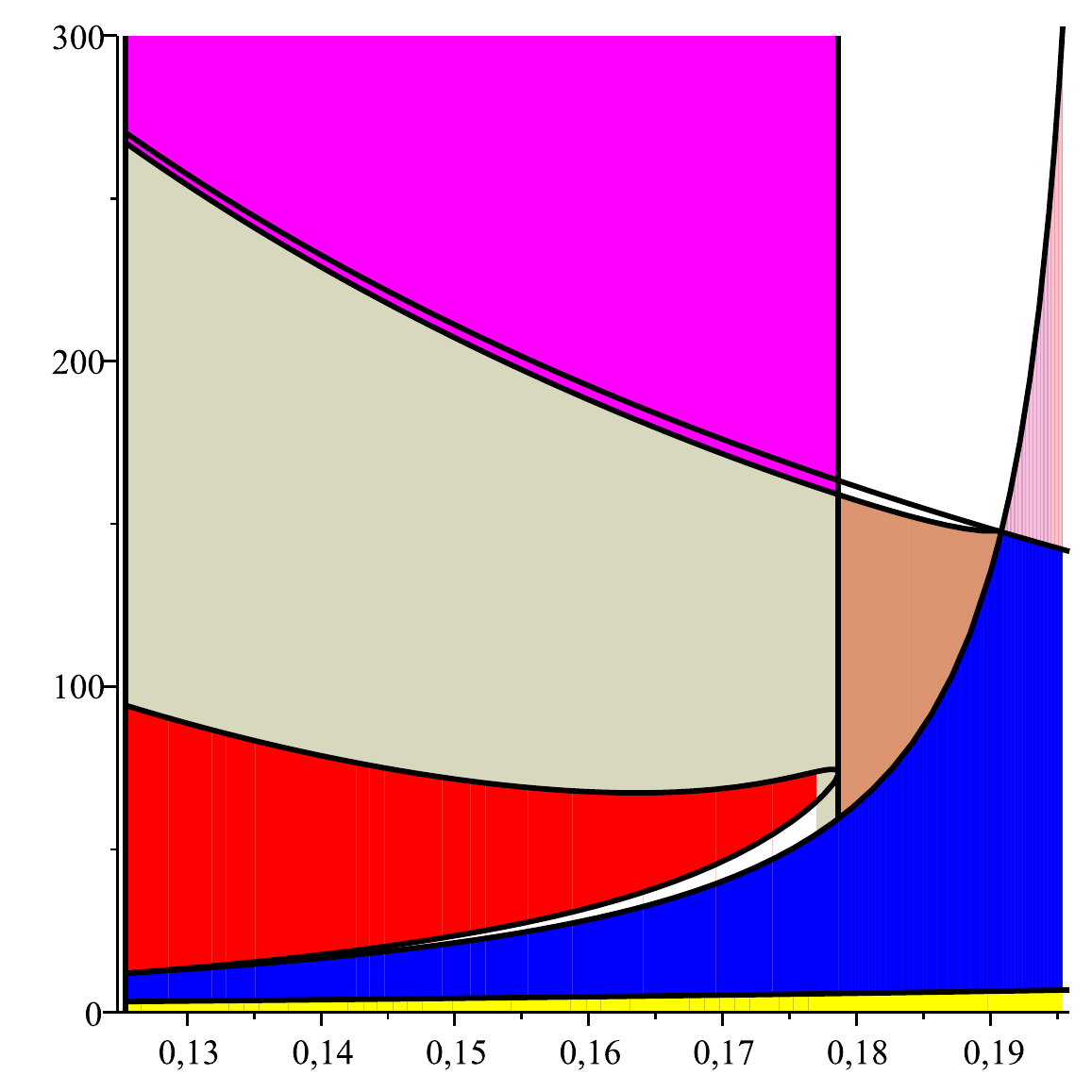}}}
\put(-0.5,5.8){\sc $(a)$}
\put(2,3.5){\sc { $\mathcal{J}_1$}}
\put(2,0.9){\sc { $\mathcal{J}_0$}}
\put(0.8,1.1){\sc { \color{white}$\mathcal{J}_{25}$}}
\put(0.8,3.5){\sc { $\mathcal{J}_{26}$}}
\put(-1.3,4.2){\sc {\color{white} $\mathcal{J}_{27}$}}
\put(-1.6,2.5){\sc { $\mathcal{J}_{29}$}}
\put(-1.85,1.4){\sc { \color{white}$\rightarrow$}}
\put(-2.2,1.4){\sc {\color{white}$\mathcal{J}_{30}$}}
\put(-1.3,1.1){\sc { \color{white}$\rightarrow$}}
\put(-1.6,1.1){\sc {\color{white} $\mathcal{J}_{31}$}}
\put(-0.4,1.1){\sc { \color{white}$\mathcal{J}_5$}}
\put(-1.1,2){\sc { $\rightarrow$}}
\put(-1.4,1.95){\sc { $\mathcal{J}_6$}}
\put(-0.75,3.5){\sc { $\leftarrow$}}
\put(-0.4,3.5){\sc { $\mathcal{J}_9$}}
\put(-0.8,2.7){\sc { $\leftarrow$}}
\put(-0.5,2.7){\sc { $\mathcal{J}_7$}}
\put(-0.3,4.3){\sc { $\mathcal{J}_8$}}
\put(-0.2,0.45){\sc $\uparrow$}
\put(-0.4,0,15){\sc { $\mathcal{J}_4$}}
\put(-1.9,0.42){\sc $\uparrow$}	
\put(-2.1,0.12){\sc { $\mathcal{J}_{15}$}}
\put(-2.1,0.75){\sc {\color{white} $\searrow$}}
\put(-2.3,1){\sc { \color{white}$\mathcal{J}_{16}$}}
\put(-3.3,2.8){\sc { \color{white}$\mathcal{J}_{17}$}}
\put(-2.3,3.3){\sc { $\mathcal{J}_{18}$}}
\put(-1.9,3.95){\sc { $\rightarrow$}}
\put(-2.3,3.9){\sc { $\mathcal{J}_{28}$}}
\put(-2.25,4.85){\sc { $\rightarrow$}}
\put(-2.7,4.8){\sc { $\mathcal{J}_{19}$}}
\put(-1.85,4.8){\sc { \color{white}$\leftarrow$}}
\put(-1.55,4.8){\sc {\color{white} $\mathcal{J}_{20}$}}
\put(-3.9,5.6){\sc $S_1^{in}$}
\put(2.8,0.5){\sc  $D$}
\put(-0.7,5.5){\sc { $\gamma_0$}}
\put(1.8,5.5){\sc { $\gamma_9$}}
\put(-1.9,5.5){\sc { $\gamma_6$}} 	
\put(-2.2,5.5){\sc { $\gamma_3$}}
\put(-2.2,5.2){\sc {$\searrow$}}
\put(-2.6,5.5){\sc { $\gamma_{15}$}}
\put(0.1,5.5){\sc { $\gamma_7$}}
\put(-1.1,5.5){\sc { $\gamma_8$}}
\put(2.4,5.5){\sc { $\gamma_1$}}
\put(-3.1,5.5){\sc { $\gamma_5$}}		
\put(7,5.7){{\sc $(b)$}}
\put(6,2.8){\sc { $\mathcal{J}_{29}$}}
\put(9.6,2.95){\sc { $\searrow$}}
\put(9.5,3.3){\sc { $\mathcal{J}_7$}}
\put(8.2,3.25){\sc { \color{white}$\leftarrow$}}
\put(8.5,3.3){\sc { \color{white}$\mathcal{J}_{28}$}}
\put(9.5,1.1){\sc { \color{white}$\mathcal{J}_5$}}
\put(9.8,4.5){\sc {$\mathcal{J}_9$}}
\put(5.8,4.7){\sc {\color{white}$\mathcal{J}_{27}$}}
\put(10.5,3.3){\sc {$\mathcal{J}_8$}}
\put(8,0.25){\sc $\uparrow$}
\put(7.8,-0.05){\sc { $\mathcal{J}_4$}}
\put(7.65,1.03){\sc { \color{white}$\searrow$}}
\put(7.2,1.25){\sc { \color{white}$\mathcal{J}_{31}$}}
\put(4.8,1.25){\sc { \color{white}$\mathcal{J}_{30}$}}
\put(9.5,2.2){\sc { $\mathcal{J}_6$}}
\put(4,5.6){\sc $S_1^{in}$}
\put(10.9,0.3){\sc  $D$}
\put(3.9,0.6){\sc { $\gamma_4$}}
\put(4.25,5.45){\sc { $\gamma_6$}}
\put(3.75,4.8){\sc { $\gamma_{15}$}}
\put(10.6,5.5){\sc { $\gamma_0$}} 	
\put(10.8,2.7){\sc { $\gamma_3$}}
\put(10.8,0.6){\sc { $\gamma_1$}}
\put(3.88,1.8){\sc { $\gamma_5$}}
\put(9,5.45){\sc { $\gamma_8$}}
\end{picture}
\vspace{-0.3cm}
\caption{Operating diagram of \cref{ModelAM2S2C} with twenty-one regions $\mathcal{J}_i$, $i=0,1,4-9,15-20,25-31$ when $r= 1/3\in(0,1/2)$, and $S_2^{in}=10<S_2^m\approx48.740$. Case 1 of \cref{Cases123}: $m_1=0.6>\mu_2\left(S_2^m\right)\approx0.535$.}\label{FigDO3}
\end{center}
\end{figure}

Note that we have chosen the same color for regions with the same number of LES steady states. For example, the regions $\mathcal{J}_2$ and $\mathcal{J}_8$ don't have the same number of unstable steady states, but they are colored in pink because both regions have the same behavior with the bistability between $\mathcal{E}_{00}^{10}$ and $\mathcal{E}_{00}^{11}$ (see \cref{TabRegDOJi}). In summary, there are three types of behavior of the process: either the stability of only one steady state while all others are unstable if they exist;
or the system exhibits bistability or the tristability of only two or three steady states while all others are unstable if they exist.
\begin{figure}[!ht]
\setlength{\unitlength}{1.0cm}
\begin{center}
\begin{picture}(7,5.9)(0,0)
\put(-4.5,0){\rotatebox{0}{\includegraphics[width=7.5cm,height=5.5cm]{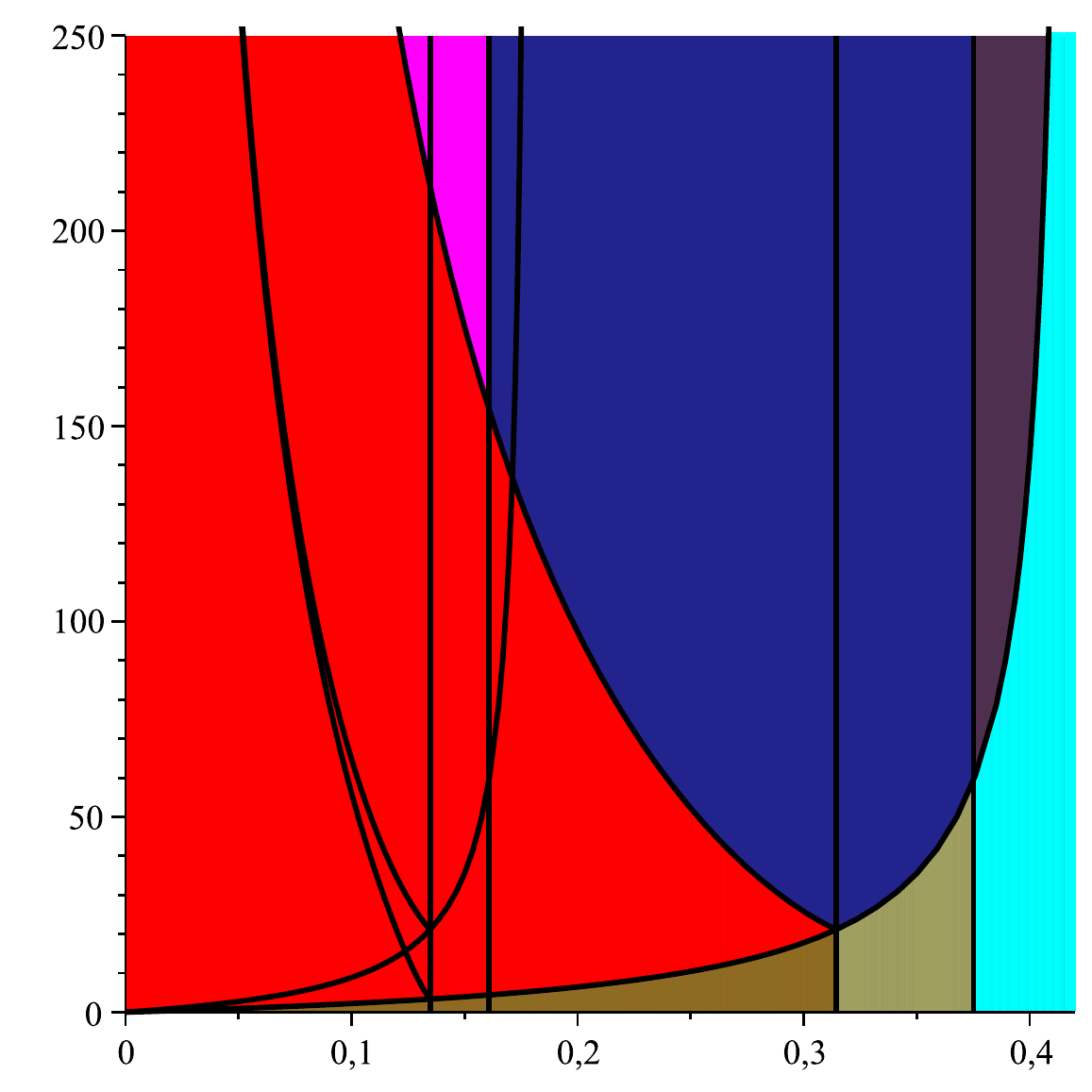}}}
\put(3.5,0){\rotatebox{0}{\includegraphics[width=7.5cm,height=5.5cm]{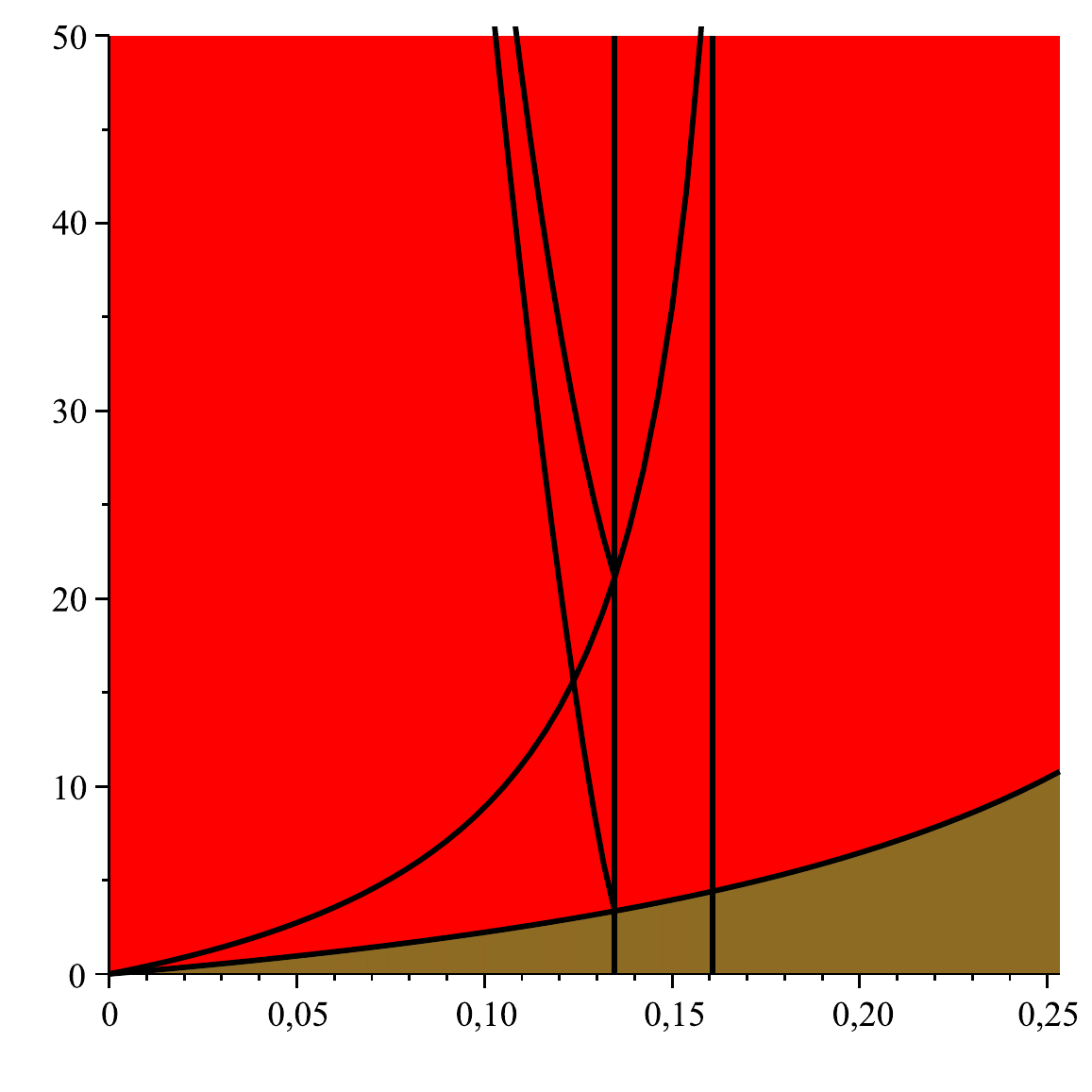}}}
\put(-1,5.8){\sc $(a)$}
\put(2.3,1){\sc { $\mathcal{J}_0$}}
\put(2.15,3.7){\sc {\color{white} $\mathcal{J}_{32}$}}
\put(1.5,3.7){\sc {\color{white} $\mathcal{J}_{33}$}}
\put(1.4,0.65){\sc { \color{white}$\mathcal{J}_{34}$}}
\put(0.75,0.5){\sc {\color{white} $\mathcal{J}_{35}$}}
\put(-0.2,1){\sc { \color{white}$\mathcal{J}_{36}$}}
\put(-1.65,1.9){\sc { \color{white}$\mathcal{J}_{41}$}}
\put(0.1,3.7){\sc { \color{white}$\mathcal{J}_{38}$}}
\put(-1.15,3.7){\sc { \color{white}$\leftarrow$}}
\put(-0.7,3.7){\sc {\color{white} $\mathcal{J}_{39}$}}
\put(-1.55,4.8){\sc {\color{white} $\mathcal{J}_{40}$}}
\put(-1.1,2.2){\sc \color{white}$\nwarrow$}
\put(-0.9,1.9){\sc { \color{white}$\mathcal{J}_{37}$}}
\put(-1.35,1){\sc { \color{white}$\leftarrow$}}
\put(-1,1){\sc {\color{white}$\mathcal{J}_{42}$}}
\put(-1.75,0.55){\sc { \color{white}$\leftarrow$}}
\put(-1.45,0.55){\sc {\color{white}$\mathcal{J}_{45}$}}
\put(-1.35,0.25){\sc $\uparrow$}
\put(-1.55,-0.05){\sc { $\mathcal{J}_{43}$}}
\put(-2.,0.2){\sc $\uparrow$}	
\put(-2.2,-0.1){\sc { $\mathcal{J}_{15}$}}
\put(-2.2,1.2){\sc {\color{white} $\rightarrow$}}
\put(-2.75,1.2){\sc {\color{white} $\mathcal{J}_{21}$}}
\put(-2.4,0.6){\sc { \color{white}$\searrow$}}
\put(-2.75,0.8){\sc {\color{white} $\mathcal{J}_{44}$}}
\put(-3.5,3.7){\sc { \color{white}$\mathcal{J}_{46}$}}
\put(-2.3,3.7){\sc {\color{white} $\mathcal{J}_{22}$}}
\put(-1.9,4.95){\sc {\color{white} $\rightarrow$}}
\put(-2.25,4.8){\sc {\color{white} $\mathcal{J}_{20}$}}
\put(-3.8,5.45){\sc $S_1^{in}$}
\put(2.9,0.4){\sc  $D$}
\put(-1.1,5.45){\sc { $\gamma_1$}}
\put(2,5.45){\sc { $\gamma_8$}}
\put(1,5.45){\sc { $\gamma_6$}} 	
\put(-2.95,5.45){\sc { $\gamma_3$}}
\put(-3.25,5.15){\sc { $\searrow$}}
\put(-3.4,5.45){\sc { $\gamma_{15}$}}
\put(-1.8,5.45){\sc { $\gamma_7$}}
\put(-1.4,5.45){\sc { $\gamma_9$}}
\put(2.45,5.45){\sc { $\gamma_0$}}
\put(-2.1,5.45){\sc { $\gamma_5$}}	
\put(7,5.8){{\sc $(b)$}}
\put(7.1,4.8){\sc { \color{white}$\mathcal{J}_{22}$}}
\put(7.2,2.5){\sc { \color{white}$\mathcal{J}_{21}$}}
\put(7.1,0.65){\sc { \color{white}$\mathcal{J}_{15}$}}
\put(7.8,0.7){\sc { \color{white}$\mathcal{J}_{43}$}}
\put(9.7,0.85){\sc { \color{white}$\mathcal{J}_{35}$}}
\put(7.8,2){\sc { \color{white}$\mathcal{J}_{42}$}}
\put(9.7,2){\sc { \color{white}$\mathcal{J}_{36}$}}
\put(7.7,4.8){\sc { \color{white}$\mathcal{J}_{41}$}}
\put(7.4,2){\sc {\color{white}$\mathcal{J}_{45}$}}
\put(6.5,1){\sc { \color{white}$\mathcal{J}_{44}$}}
\put(5,4){\sc { \color{white}$\mathcal{J}_{46}$}}
\put(4,5.45){\sc $S_1^{in}$}
\put(10.8,0.6){\sc  $D$}
\put(6.4,5.45){\sc { $\gamma_{15}$}}	
\put(7.9,5.45){\sc { $\gamma_1$}}
\put(8.2,5.45){\sc { $\gamma_9$}}
\put(10.7,1.6){\sc { $\gamma_0$}}
\put(6.9,5.45){\sc { $\gamma_3$}}
\put(7.4,5.45){\sc { $\gamma_7$}}
\end{picture}
\vspace{-0.3cm}
\caption{Operating diagram of \cref{ModelAM2S2C} with twenty regions $\mathcal{J}_i$, $i=0,15,20-22,32-46$ when $r= 0.7\in(1/2,1)$ and $S_2^{in}=150>S_2^m\approx48.740$. Case 1 of \cref{Cases123}: $m_1=0.6>\mu_2\left(S_2^m\right)\approx0.535$.}\label{FigDO4}
\end{center}
\end{figure}
\begin{figure}[!ht]
\setlength{\unitlength}{1.0cm}
\begin{center}
\begin{picture}(7,6)(0,0)
\put(-4.5,0.2){\rotatebox{0}{\includegraphics[width=7.5cm,height=5.5cm]{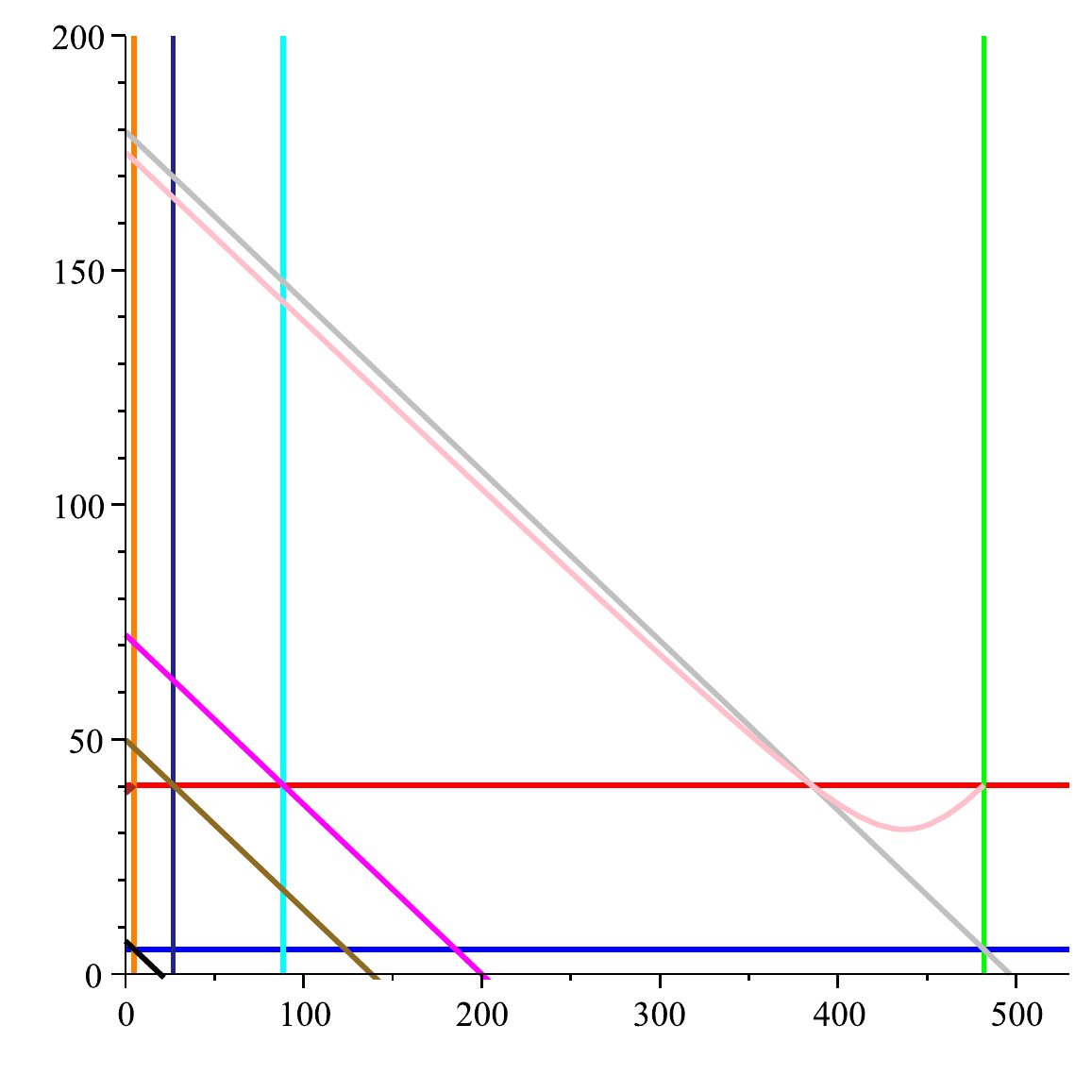}}}
\put(3.5,0.1){\rotatebox{0}{\includegraphics[width=7.5cm,height=5.5cm]{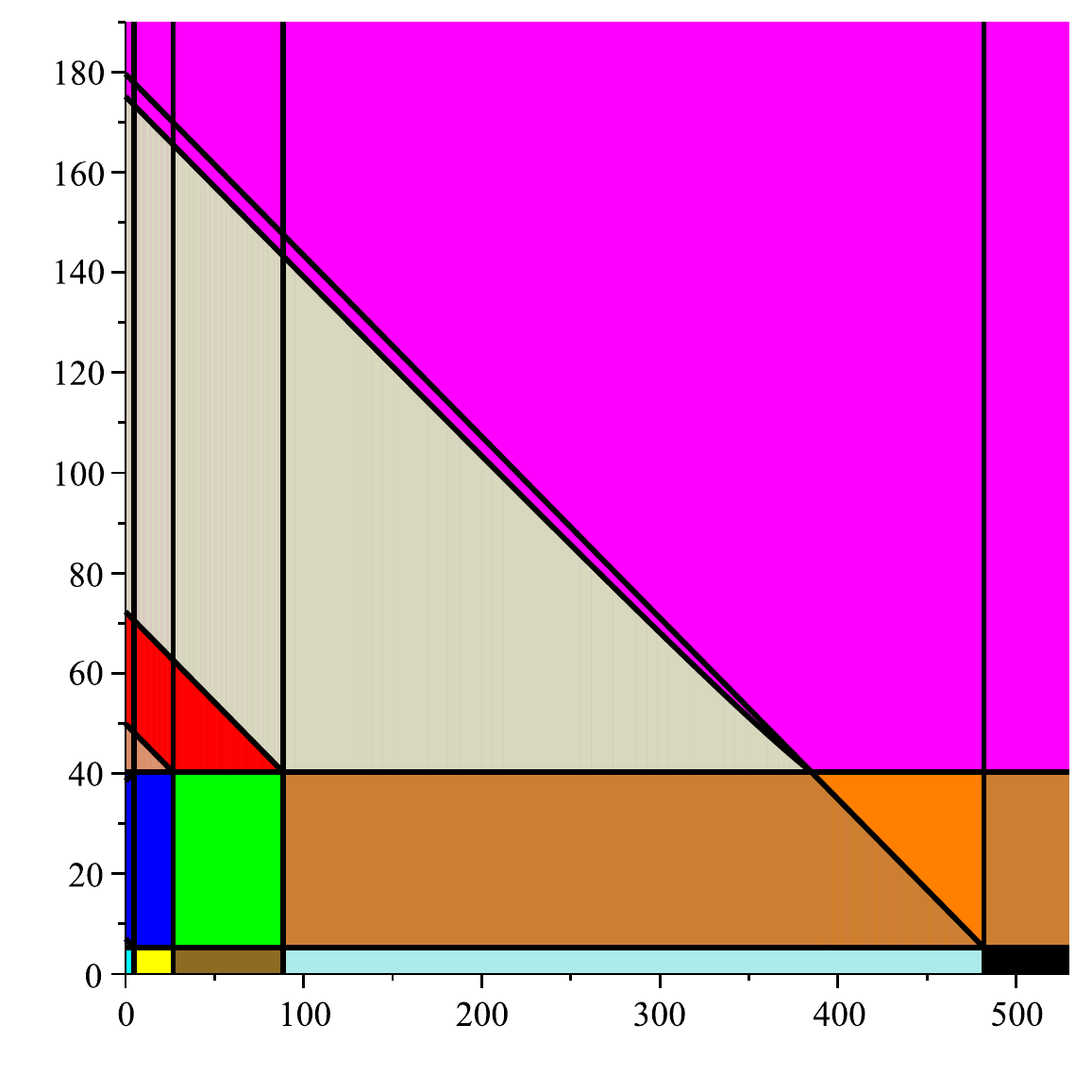}}}
\put(-1,5.8){\sc $(a)$}
\put(-3.9,5.6){\sc $S_1^{in}$}
\put(2.85,0.7){\sc $S_2^{in}$}
\put(2.4,1.9){\sc { $\gamma_0$}}
\put(-4,5.2){\sc { $\rightarrow$}}
\put(-4.4,5.25){\sc { $\gamma_{11}$}}
\put(-1.7,3.1){\sc { $\gamma_{15}$}}
\put(-2.8,5.6){\sc { $\gamma_{12}$}} 
\put(-0.85,3.1){\sc { $\gamma_3$}}
\put(2,5.6){\sc { $\gamma_{13}$}}
\put(2.4,1.1){\sc { $\gamma_1$}}
\put(-3.5,5.6){\sc { $\gamma_{10}$}}
\put(-1.7,1.15){\sc { $\gamma_5 $}}
\put(-4.1,1){\sc { $ \gamma_2 $}}
\put(-2.5,1.15){\sc { $ \gamma_4 $}}			
\put(7,5.8){{\sc $(b)$}}
\put(10.3,4.4){\sc {\color{white} $\mathcal{J}_{49}$}}
\put(9.7,1.3){\sc {\color{white} $\mathcal{J}_{53}$}}
\put(10.3,1.3){\sc {\color{white} $\mathcal{J}_{48}$}}
\put(7.5,4.4){\sc {\color{white} $\mathcal{J}_{50}$}}
\put(7.1,3){\sc { \color{white}$\leftarrow$}}
\put(7.5,3){\sc {\color{white} $\mathcal{J}_{51}$}}
\put(7.5,1.3){\sc {\color{white} $\mathcal{J}_{54}$}}
\put(5.3,5.15){\sc {\color{white} $\leftarrow$}}
\put(5.6,5.1){\sc {\color{white} $\mathcal{J}_{59}$}}
\put(4.7,1.8){\sc {\color{white} $\mathcal{J}_{46}$}}
\put(4.7,3){\sc { $\mathcal{J}_{57}$}}
\put(6,3){\sc { $\mathcal{J}_{52}$}}
\put(8.35,0.5){\sc $\uparrow$}
\put(8.2,0.2){\sc { $\mathcal{J}_{55}$}}
\put(10.35,0.5){\sc $\uparrow$}
\put(10.2,0.2){\sc { $\mathcal{J}_{47}$}}
\put(4.9,0.5){\sc $\uparrow$}	
\put(4.7,0.2){\sc { $\mathcal{J}_{15}$}}
\put(4.2,2){\sc { \color{white}$\mathcal{J}_{62}$}}
\put(4.3,1.3){\sc { \color{white}$\mathcal{J}_5$}}
\put(4.7,1.3){\sc { $\mathcal{J}_{56}$}}
\put(5.3,4.4){\sc {\color{white} $\leftarrow$}}
\put(5.5,4.4){\sc { \color{white}$\mathcal{J}_{58}$}}
\put(4.5,5){\sc {\color{white} $\leftarrow$}}
\put(4.7,5){\sc { \color{white}$\mathcal{J}_{60}$}}
\put(4.3,5.3){\sc {$\rightarrow$}}
\put(3.8,5.4){\sc {$\mathcal{J}_{27}$}}
\put(4.23,3){\sc {$\mathcal{J}_{61}$}}
\put(4.1,5.6){\sc $S_1^{in}$}
\put(10.9,0.6){\sc  $S_2^{in}$}
\put(10.8,1.8){\sc { $\gamma_0$}}
\put(4.5,5.6){\sc { $\gamma_{10}$}}
\put(5.1,5.6){\sc { $\gamma_{12}$}}	
\put(9.85,5.6){\sc { $\gamma_{13}$}}
\put(10.8,0.9){\sc { $\gamma_1$}}
\put(-4.5,-5.4){\rotatebox{0}{\includegraphics[width=7.5cm,height=5.5cm]{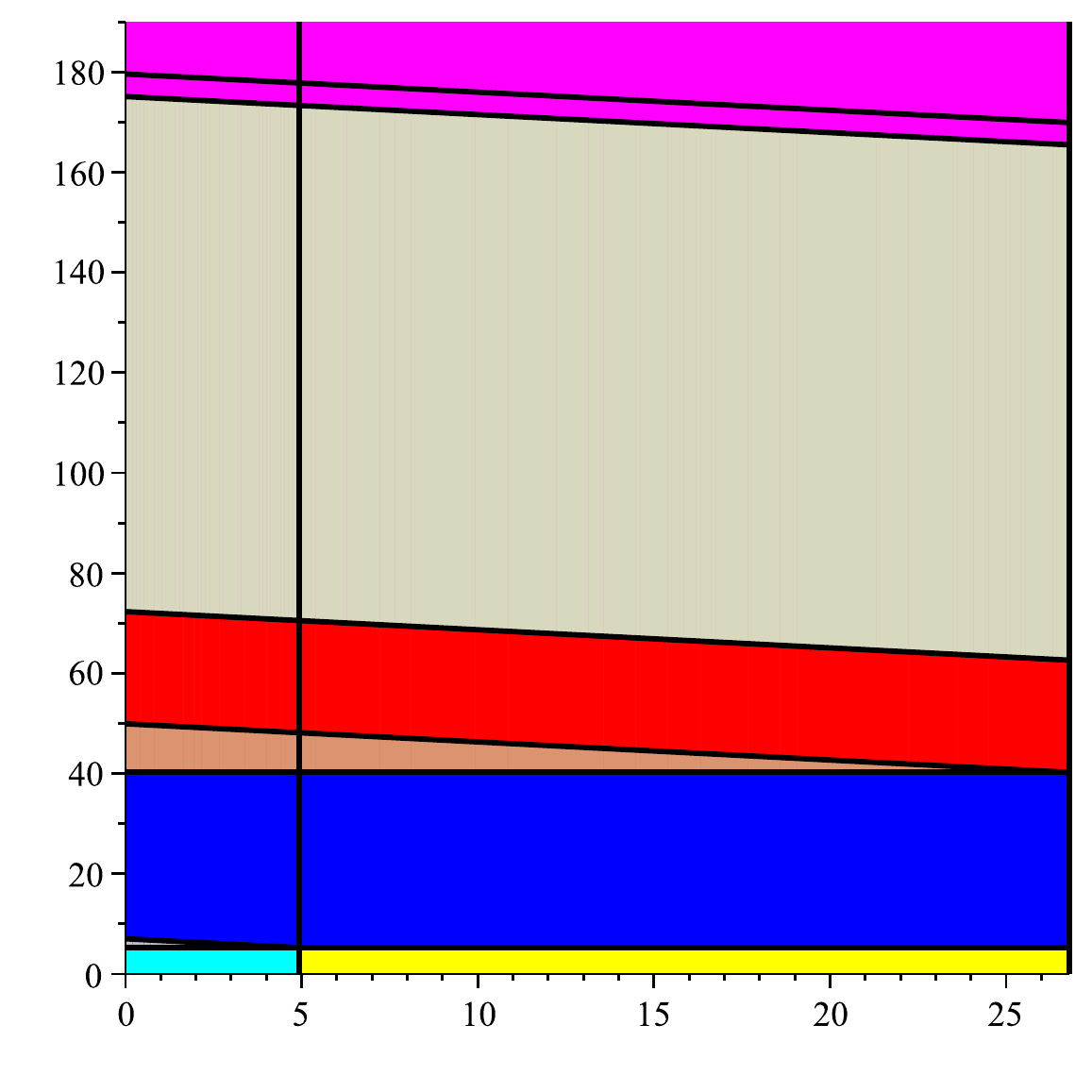}}}
\put(3.5,-5.4){\rotatebox{0}{\includegraphics[width=7.5cm,height=5.5cm]{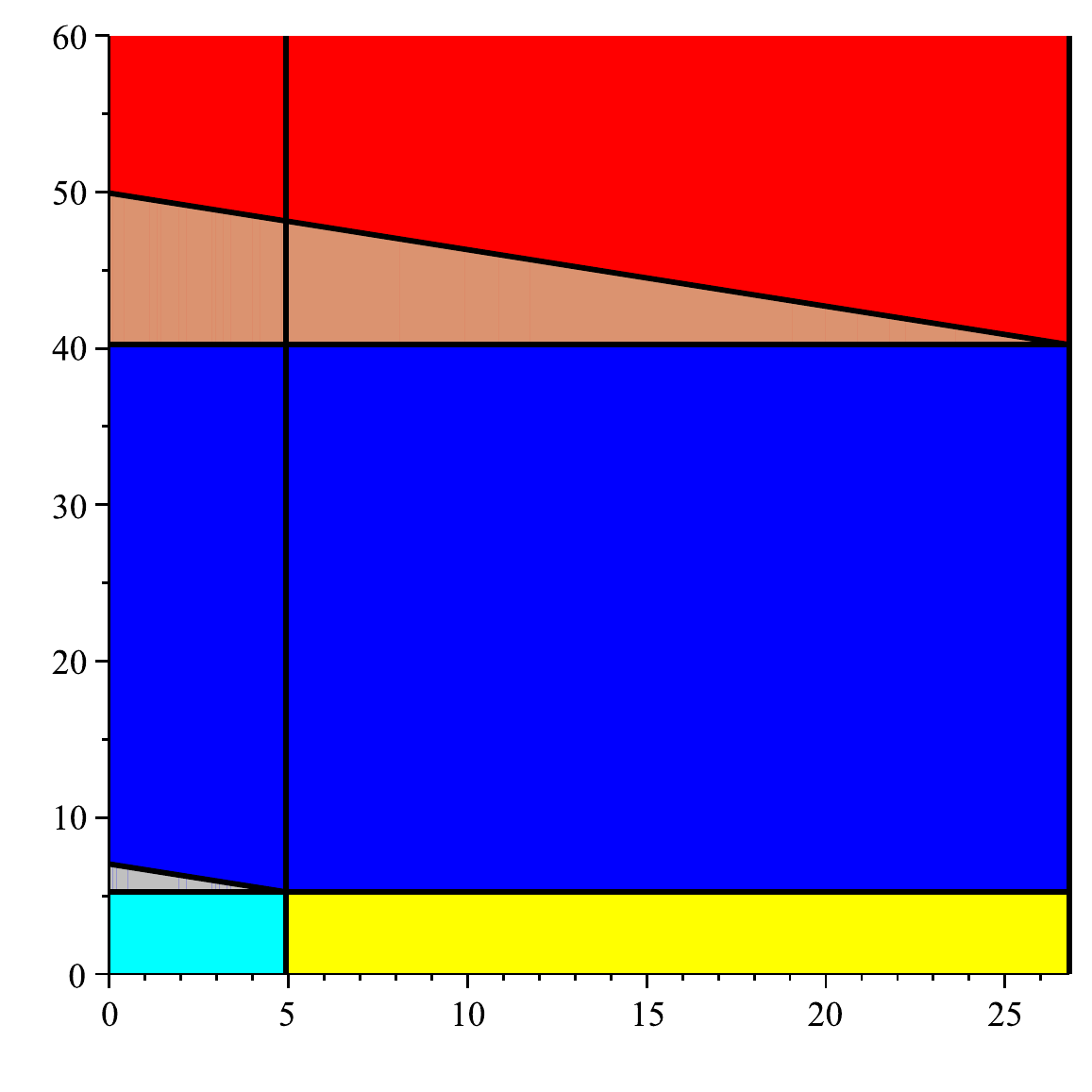}}}
\put(-0.5,0.1){\sc $(c)$}
\put(1.5,-2.2){\sc { $\mathcal{J}_{61}$}}
\put(1.6,-4.3){\sc { \color{white}$\mathcal{J}_5$}}
\put(1.5,-3.5){\sc { \color{white}$\mathcal{J}_{62}$}}
\put(-3.3,-3.7){\sc {\color{white} $\mathcal{J}_{65}$}}
\put(-3.3,-4.3){\sc {\color{white}$\mathcal{J}_{64}$}}
\put(-3.95,-4.75){\sc {$\rightarrow$}}
\put(-4.3,-4.75){\sc { $\mathcal{J}_1$}}
\put(0.5,-3.7){\sc\color{white} {$\downarrow$}}
\put(0.3,-3.4){\sc { \color{white}$\mathcal{J}_{63}$}}
\put(-3.3,-3.3){\sc {\color{white} $\mathcal{J}_{66}$}}
\put(1.5,-0.3){\sc { $\mathcal{J}_{27}$}}
\put(1.7,-0.7){\sc $\uparrow$}
\put(1.5,-1){\sc { $\mathcal{J}_{60}$}}
\put(1.8,-4.95){\sc $\uparrow$}
\put(1.6,-5.25){\sc { $\mathcal{J}_4$}}
\put(-3,-4.95){\sc $\uparrow$}	
\put(-3.3,-5.25){\sc { $\mathcal{J}_0$}}
\put(-3.1,-0.55){\sc $\uparrow$}
\put(-3.3,-0.85){\sc { $\mathcal{J}_{68}$}}
\put(-3.3,-0.2){\sc {$\mathcal{J}_{69}$}}
\put(-3.3,-2.2){\sc {$\mathcal{J}_{67}$}}
\put(-3.8,0.1){\sc $S_1^{in}$}
\put(2.9,-4.95){\sc  $S_2^{in}$}
\put(2.8,-4.65){\sc { $\gamma_1$}}
\put(2.8,-0.75){\sc { $\gamma_{15}$}}
\put(2.8,-0.45){\sc { $\gamma_3$}}
\put(2.6,0.1){\sc { $\gamma_{10}$}}	
\put(-4.2,-3.5){\sc { $\gamma_4$}}
\put(-2.75,0.1){\sc { $\gamma_{11}$}}
\put(-4.2,-4.48){\sc { $\gamma_2$}}
\put(-4.2,-3.7){\sc { $\gamma_0$}}
\put(2.8,-3.25){\sc { $\gamma_5$}}	
\put(7,0.1){{\sc $(d)$}}
\put(8,-0.7){\sc { \color{white}$\mathcal{J}_{62}$}}
\put(8,-1.55){\sc { \color{white}$\mathcal{J}_{63}$}}
\put(8.2,-4.7){\sc { $\mathcal{J}_4$}}
\put(8.2,-3.2){\sc { \color{white}$\mathcal{J}_5$}}
\put(4.5,-4.3){\sc\color{white} {$\downarrow$}}
\put(4.5,-4){\sc {\color{white} $\mathcal{J}_1$}}
\put(4.5,-4.7){\sc {$\mathcal{J}_0$}}
\put(4.4,-0.7){\sc { \color{white}$\mathcal{J}_{66}$}}
\put(4.4,-1.4){\sc { \color{white}$\mathcal{J}_{65}$}}
\put(4.4,-3.2){\sc { \color{white}$\mathcal{J}_{64}$}}
\put(4.1,0){\sc $S_1^{in}$}
\put(10.9,-4.9){\sc $S_2^{in}$}
\put(3.75,-0.7){\sc { $\gamma_4$}}
\put(10.8,-4.4){\sc { $\gamma_1$}} 	
\put(3.75,-4.3){\sc { $\gamma_2$}}
\put(10.8,-1.6){\sc { $\gamma_0$}}
\put(10.6,0){\sc { $\gamma_{10}$}}
\put(5.2,0){\sc { $\gamma_{11}$}}
\end{picture}
\vspace{4.8cm}
\caption{Operating diagram of \cref{ModelAM2S2C} in the plane $(S_2^{in},S_1^{in})$ with thirty regions $\mathcal{J}_i$, $i=0,1,4,5,15,27,46-69$ when $D=0.17$ and $r=0.3\in(0,1/2)$.}\label{FigDO5Dfix}
\end{center}
\end{figure}

More precisely, the regions of existence of only one steady state LES (while all others are unstable if they exist) with the color are the following:
$\mathcal{E}_{00}^{00}$ in cyan ($\mathcal{J}_0$),
$\mathcal{E}_{00}^{10}$ in grey ($\mathcal{J}_1$),
$\mathcal{E}_{00}^{01}$ in yellow ($\mathcal{J}_4$),
$\mathcal{E}_{00}^{11}$ in blue ($\mathcal{J}_5$, $\mathcal{J}_{25}$, $\mathcal{J}_{64}$),
$\mathcal{E}_{10}^{11}$ in tan ($\mathcal{J}_6$, $\mathcal{J}_{63}$, $\mathcal{J}_{65}$),
$\mathcal{E}_{01}^{01}$ in sienna ($\mathcal{J}_{15}$, $\mathcal{J}_{35}$, $\mathcal{J}_{43}$),
$\mathcal{E}_{01}^{11}$ in green ($\mathcal{J}_{16}$, $\mathcal{J}_{23}$, $\mathcal{J}_{56}$),
$\mathcal{E}_{11}^{11}$ in red ($\mathcal{J}_i$, $i=17,21,22,30,36,37,41,42,44-46,62,66$),
$\mathcal{E}_{10}^{10}$ in violet ($\mathcal{J}_{32}$).

The bistability regions with the corresponding two steady states LES (while all others are unstable if they exist) and color are the following:
$\mathcal{E}_{00}^{10}$ and $\mathcal{E}_{00}^{11}$ in pink ($\mathcal{J}_2$, $\mathcal{J}_8$, $\mathcal{J}_{26}$),
$\mathcal{E}_{00}^{00}$ and $\mathcal{E}_{00}^{01}$ in plum ($\mathcal{J}_3$),
$\mathcal{E}_{10}^{10}$ and $\mathcal{E}_{10}^{11}$ in white ($\mathcal{J}_7$, $\mathcal{J}_9$, $\mathcal{J}_{31}$),
$\mathcal{E}_{10}^{11}$ and $\mathcal{E}_{11}^{11}$ in wheat ($\mathcal{J}_i$, $i=12,18,29,52,57,61,67$),
$\mathcal{E}_{00}^{11}$ and $\mathcal{E}_{01}^{11}$ in gold ($\mathcal{J}_{13}$, $\mathcal{J}_{48}$, $\mathcal{J}_{54}$),
$\mathcal{E}_{00}^{01}$ and $\mathcal{E}_{01}^{01}$ in turquoise ($\mathcal{J}_{14}$, $\mathcal{J}_{55}$),
$\mathcal{E}_{10}^{10}$ and $\mathcal{E}_{11}^{11}$ in navy ($\mathcal{J}_{33}$, $\mathcal{J}_{38}$, $\mathcal{J}_{39}$),
$\mathcal{E}_{00}^{00}$ and $\mathcal{E}_{01}^{01}$ in khaki ($\mathcal{J}_{34}$).

The tristability regions with the corresponding three steady states LES and color are the following:
$\mathcal{E}_{10}^{10}$, $\mathcal{E}_{10}^{11}$ and $\mathcal{E}_{11}^{11}$ in magenta ($\mathcal{J}_i$, $i=10,11,19,20,27,28,40,49-51,58-60,68,69$),
$\mathcal{E}_{00}^{10}$, $\mathcal{E}_{00}^{11}$ and $\mathcal{E}_{01}^{11}$ in brown ($\mathcal{J}_{24}$),
$\mathcal{E}_{00}^{00}$, $\mathcal{E}_{00}^{01}$ and $\mathcal{E}_{01}^{01}$ in black ($\mathcal{J}_{47}$),
$\mathcal{E}_{00}^{10}$, $\mathcal{E}_{00}^{11}$ and $\mathcal{E}_{01}^{11}$ in coral ($\mathcal{J}_{53}$).
\subsection\protect{Operating diagrams in the plane $\left(S_2^{in},S_1^{in}\right)$ when $D$ and $r$ are fixed.}  \label{sec2DO}
\begin{proposition}                                     \label{PropPosiCurvDfix}
For $D$ fixed in $\left(0,r\mu_2\left(S_2^m\right)\right) $ and $r\in(0,1)$, the lines $\gamma_0$, $\gamma_4$ and $\gamma_{10}$ intersect at the same point $\left(S_{21}^{in*},\lambda_1^1(D,r)\right)$. In addition, the lines $\gamma_0$, $\gamma_5$ and $\gamma_{12}$ intersect at the same point $\left({S_{23}^{in*}},\lambda_1^1(D,r)\right)$ in the plane $(S_2^{in},S_1^{in})$, where $S_{21}^{in*}=\lambda_2^{11}(D,r)$ and ${S_{23}^{in*}}=\lambda_2^{12}(D,r)$ (see \cref{FigDO5Dfix}). Moreover:
\begin{itemize}
    \item If $S_2^{in}<S_{21}^{in*}$, then $\lambda_1^1(D,r)<F_{11}\left(D,r,S_2^{in}\right)<F_{12}\left(D,r,S_2^{in}\right)$.
    \item If $S_{21}^{in*}<S_2^{in}<{S_{23}^{in*}}$, then $F_{11}\left(D,r,S_2^{in}\right)<\lambda_1^1(D,r)<F_{12}\left(D,r,S_2^{in}\right)$.
    \item If $S_2^{in}>{S_{23}^{in*}}$, then $F_{11}\left(D,r,S_2^{in}\right)<F_{12}\left(D,r,S_2^{in}\right)<\lambda_1^1(D,r)$.
\end{itemize}
For $D$ fixed in $(0,r_2\mu_2(S_2^m)) ~~\mbox{and}~~r\in(0,1)$, the lines $\gamma_1$, $\gamma_2$, $\gamma_{11}$ intersect at the same point $\left({S_{22}^{in*}},\lambda_1^2(D,r)\right)$ and the lines $\gamma_1$, $\gamma_3$, $\gamma_{13}$ intersect at the same point $\left({S_{24}^{in*}},\lambda_1^2(D,r)\right)$ in the plane $(S_2^{in},S_1^{in})$, where $S_{22}^{in*}=\lambda_2^{21}(D,r)$ and $S_{24}^{in*}=\lambda_2^{22}(D,r)$ (see \cref{FigDO5Dfix}). Moreover:
\begin{itemize}
    \item If $S_2^{in}<S_{22}^{in*}$, then $\lambda_1^2(D,r)<F_{21}\left(D,r,S_2^{in}\right)<F_{22}\left(D,r,S_2^{in}\right)$.
    \item If $S_{24}^{in*}<S_2^{in}<S_{24}^{in*}$, then $F_{21}\left(D,r,S_2^{in}\right)<\lambda_1^2(D,r)<F_{22}\left(D,r,S_2^{in}\right)$.
    \item If $S_2^{in}>S_{24}^{in*}$, then $F_{21}\left(D,r,S_2^{in}\right)<F_{22}\left(D,r,S_2^{in}\right)<\lambda_1^2(D,r)$.
\end{itemize}
\end{proposition}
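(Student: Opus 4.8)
The plan is to exploit the elementary observation that, for fixed $D$ and $r$, every auxiliary function $F_{ij}(D,r,\cdot)$ of \cref{TabFunc1} is affine and strictly decreasing in the variable $S_2^{in}$. Indeed,
\[
F_{1j}\left(D,r,S_2^{in}\right)=\lambda_1^1(D,r)+\frac{k_1}{k_2}\left(\lambda_2^{1j}(D,r)-S_2^{in}\right),
\]
and likewise $F_{2j}\left(D,r,S_2^{in}\right)=\lambda_1^2(D,r)+\frac{k_1}{k_2}\left(\lambda_2^{2j}(D,r)-S_2^{in}\right)$, so each has slope $-k_1/k_2<0$ and, crucially, $F_{1j}\left(D,r,S_2^{in}\right)=\lambda_1^1(D,r)$ holds precisely when $S_2^{in}=\lambda_2^{1j}(D,r)$, while $F_{2j}\left(D,r,S_2^{in}\right)=\lambda_1^2(D,r)$ holds precisely when $S_2^{in}=\lambda_2^{2j}(D,r)$. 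This also explains why the relevant curves $\gamma_i$ of \cref{TabSurf} are genuine lines in the plane $\left(S_2^{in},S_1^{in}\right)$: $\gamma_0$ (resp.\ $\gamma_1$) is the horizontal line $S_1^{in}=\lambda_1^1(D,r)$ (resp.\ $S_1^{in}=\lambda_1^2(D,r)$); $\gamma_{10},\dots,\gamma_{13}$ are the vertical lines $S_2^{in}=S_{2j}^{in*}$; and $\gamma_2,\dots,\gamma_5$ are the affine graphs of the $F_{ij}$. Throughout one works with $D$ as in the hypotheses, below $r\mu_2(S_2^m)$ (resp.\ below $r_2\mu_2(S_2^m)$), so that all break-even concentrations $\lambda_1^i$ and $\lambda_2^{i1}<\lambda_2^{i2}$ are finite and the curves in question actually appear.

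First I would establish the concurrences. Since $\gamma_{10}$ is the vertical line $S_2^{in}=S_{21}^{in*}=\lambda_2^{11}(D,r)$ and $F_{11}\left(D,r,\lambda_2^{11}(D,r)\right)=\lambda_1^1(D,r)$ by the identity above, the point $\left(S_{21}^{in*},\lambda_1^1(D,r)\right)$ lies on the graph $\gamma_4$ of $F_{11}$ and on $\gamma_0=\{S_1^{in}=\lambda_1^1(D,r)\}$; as $\gamma_0\cap\gamma_{10}$ is already this single point, the three lines are concurrent there. Replacing $F_{11},\lambda_2^{11}$ by $F_{12},\lambda_2^{12}=S_{23}^{in*}$ gives in the same way $\gamma_0\cap\gamma_5\cap\gamma_{12}=\left\{\left(S_{23}^{in*},\lambda_1^1(D,r)\right)\right\}$, and the two assertions concerning $\gamma_1,\gamma_2,\gamma_{11}$ and $\gamma_1,\gamma_3,\gamma_{13}$ follow verbatim after the substitutions $\lambda_1^1\mapsto\lambda_1^2$, $\lambda_2^{1j}\mapsto\lambda_2^{2j}$, i.e.\ upon using $F_{2j}$ instead of $F_{1j}$.

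Next I would derive the orderings. For a fixed $S_2^{in}$, since $\lambda_2^{i1}(D,r)<\lambda_2^{i2}(D,r)$ by definition, subtracting the two affine formulas gives $F_{i2}\left(D,r,S_2^{in}\right)-F_{i1}\left(D,r,S_2^{in}\right)=\frac{k_1}{k_2}\left(\lambda_2^{i2}(D,r)-\lambda_2^{i1}(D,r)\right)>0$, so $F_{i1}<F_{i2}$ always. Because $F_{11}(D,r,\cdot)$ is strictly decreasing and meets the level $\lambda_1^1(D,r)$ at $S_2^{in}=S_{21}^{in*}$, one has $F_{11}>\lambda_1^1$ for $S_2^{in}<S_{21}^{in*}$ and $F_{11}<\lambda_1^1$ for $S_2^{in}>S_{21}^{in*}$; similarly $F_{12}>\lambda_1^1$ iff $S_2^{in}<S_{23}^{in*}$. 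Combining these facts yields $\lambda_1^1<F_{11}<F_{12}$ for $S_2^{in}<S_{21}^{in*}$, then $F_{11}<\lambda_1^1<F_{12}$ for $S_{21}^{in*}<S_2^{in}<S_{23}^{in*}$, and $F_{11}<F_{12}<\lambda_1^1$ for $S_2^{in}>S_{23}^{in*}$, which is the first list; the second list is obtained identically with $\lambda_1^1,F_{1j}$ replaced by $\lambda_1^2,F_{2j}$ and $S_{21}^{in*}<S_{23}^{in*}$ by $S_{22}^{in*}<S_{24}^{in*}$.

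I do not anticipate a genuine analytic obstacle: the statement reduces entirely to the affine form of the $F_{ij}$ and the inequality $\lambda_2^{i1}<\lambda_2^{i2}$ from \cref{TabFunc1}. The only place calling for a little care is the bookkeeping of the domains of definition — checking that, for the prescribed range of $D$, all of $\lambda_1^i$ and $\lambda_2^{ij}$ are finite, so that the lines $\gamma_0,\gamma_1$, the graphs $\gamma_2,\dots,\gamma_5$ and the vertical lines $\gamma_{10},\dots,\gamma_{13}$ really appear in the operating window — which is precisely why the hypotheses restrict $D$ to $\left(0,r\mu_2(S_2^m)\right)$ and $\left(0,r_2\mu_2(S_2^m)\right)$.
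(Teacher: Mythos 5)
Your proposal is correct and follows essentially the same route as the paper's proof: both rest on the identity $F_{1j}\left(D,r,\lambda_2^{1j}(D,r)\right)=\lambda_1^1(D,r)$ (and its analogue for $F_{2j}$, $\lambda_1^2$) to get the concurrences, and on the sign of $\lambda_2^{ij}(D,r)-S_2^{in}$ together with $\lambda_2^{i1}<\lambda_2^{i2}$ to get the orderings. Your write-up is merely more explicit about the affine, strictly decreasing dependence of $F_{ij}$ on $S_2^{in}$ and about the domain bookkeeping, which the paper leaves implicit.
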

\begin{proposition}                                \label{PropDO-Dfix}
Assume that \cref{Hypoth1,Hypoth2} hold. Let $r$ and $D$ be fixed so that $0<r<1/2$ and $D\in\left(0,\mu_2\left(S_2^m\right)\right)$. For the specific growth rates $\mu_1$ and $\mu_2$ defined in \cref{SpeciFunc} and the set of the biological parameter values in \cref{TabParamVal}, the existence and the local stability properties of steady states of system \cref{ModelAM2S2C} in the thirty regions $\mathcal{J}_i$, $i=0,1,4,5,15,27,46-69$ of the operating diagram shown in \cref{FigDO5Dfix} are defined in \cref{TabRegDOJi}, where the conditions on the operating parameters defining the various boundaries of the regions are described in \cref{TabRegionCondDO5}.
\end{proposition}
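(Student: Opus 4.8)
The plan is to derive \cref{PropDO-Dfix} exactly as \cref{propstabstabDO1} was obtained, namely by superimposing on the plane $\left(S_2^{in},S_1^{in}\right)$ the bifurcation curves $\gamma_i$ and reading off, in each resulting cell, the existence list from \cref{TableCondExis} and the stability list from \cref{TableCondStab}. First, by the cascade reduction of \cref{SecRedModel} together with \cite[Corollary~10.3.2]{IsidoriBook1999}, the existence and local stability of each steady state $\mathcal{F}_{ij}^{kl}$ of \cref{ModelAM2S2C} coincide with those of the corresponding $\mathcal{E}_{ij}^{kl}$ of the reduced system \cref{ModelAM2SR4}, so the whole argument is carried out on \cref{ModelAM2SR4}. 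Fixing $D$ and $r$, I translate every condition of \cref{TableCondExis} and \cref{TableCondStab} into a locus in the plane $\left(S_2^{in},S_1^{in}\right)$: $S_1^{in}\lessgtr\lambda_1^i$ gives the horizontal lines $\gamma_0,\gamma_1$; $S_2^{in}\lessgtr\lambda_2^{ij}$ gives the vertical lines $\gamma_{10},\dots,\gamma_{13}$ (recall $\gamma_6=\gamma_{10}\cup\gamma_{12}$ and $\gamma_7=\gamma_{11}\cup\gamma_{13}$, so the interval endpoints occurring in \cref{TableCondStab} are among these four lines); $S_1^{in}\lessgtr F_{ij}$ gives the curves $\gamma_2,\dots,\gamma_5$; and the sign conditions on $\phi_1,\phi_2$ give the curves $\gamma_{14},\gamma_{15}$. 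With $D$ fixed, the remaining conditions involving $D_i^m$ merely select which of the $\lambda_2^{ij}$, $F_{ij}$, $\phi_i$ are defined (this is where the ordering of $D$ relative to $D_1^m$ and $D_2^m$ enters), and those involving $D_i^{*}=r_i\mu_2\left(S_2^{in}\right)$ reduce to conditions on $S_2^{in}$.

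Next I fix the combinatorics of this curve arrangement. Since $r\in(0,1/2)$, \cref{LemmaPositionLambdai} orders the four vertical lines as $S_{22}^{in*}<S_{21}^{in*}<S_{23}^{in*}<S_{24}^{in*}$ and the two horizontal lines as $\lambda_1^2<\lambda_1^1$, while \cref{PropPosiCurvDfix} locates $\gamma_2,\dots,\gamma_5$ relative to $\gamma_0$ and $\gamma_1$ inside each vertical strip and shows that $\gamma_0,\gamma_4,\gamma_{10}$ (respectively $\gamma_0,\gamma_5,\gamma_{12}$; $\gamma_1,\gamma_2,\gamma_{11}$; $\gamma_1,\gamma_3,\gamma_{13}$) pass through a common triple point. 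This already pins down the skeleton of the diagram. For the two remaining curves I use that, for fixed $\left(D,r,S_1^{in}\right)$ with $S_1^{in}>\lambda_1^1$, the equation $f_1(x)=g_1(x)$ has a unique root $X_1^{2*}$, because $f_1$ is decreasing and $g_1$ increasing by \cref{Hypoth1}; hence $\phi_i=S_2^{in}+k_2X_1^{2*}\left(D,r,S_1^{in}\right)-\lambda_2^{2i}(D,r)$ is affine, hence monotone, in $S_2^{in}$, so $\gamma_{14},\gamma_{15}$ are graphs over the $S_1^{in}$-axis, and for the Monod/Haldane kinetics \cref{SpeciFunc} the root $X_1^{2*}$ solves a quadratic, making these curves explicit algebraic curves whose position relative to the $\gamma_j$'s is determined by the parameter values of \cref{TabParamVal}. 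Counting the cells of the resulting arrangement, discarding the empty ones and those outside the domains of the auxiliary functions, yields exactly the thirty regions $\mathcal{J}_i$ with $i\in\{0,1,4,5,15,27\}\cup\{46,\dots,69\}$.

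It then remains, cell by cell, to list the steady states that exist (from \cref{TableCondExis}, using the arrangement just established) and, among those, the LES ones (from \cref{TableCondStab}), recalling from \cref{propstab} that $\mathcal{E}_{00}^{02},\mathcal{E}_{00}^{12},\mathcal{E}_{10}^{12},\mathcal{E}_{02}^{01},\mathcal{E}_{02}^{11},\mathcal{E}_{12}^{11}$ are unstable wherever they exist, so the ``$2$''-indexed states never change the count of LES steady states; the odd multiplicities of $\mathcal{E}_{02}^{01}$, $\mathcal{E}_{0i}^{11}$ and $\mathcal{E}_{1i}^{11}$ are furnished by \cref{propmultip}. The stability conditions $g_2'(X_2^{2*})>f_3'(X_2^{2*})$ for $\mathcal{E}_{01}^{11}$ and $\mathcal{E}_{11}^{11}$ are read off geometrically: $g_2$ is increasing while $f_3$ is unimodal (it is $\mu_2$ evaluated at a decreasing affine function of $x$), so $f_3(x)=g_2(x)$ has an odd number of roots and the inequality $g_2'>f_3'$ selects those at which $g_2$ crosses $f_3$ from below, i.e. it is the branch of the Haldane curve carrying $X_2^{2*}$ that decides stability; the same type of argument disposes of the $\phi_i$-sign conditions appearing in the rows of $\mathcal{E}_{10}^{10}$ and $\mathcal{E}_{10}^{1i}$. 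Assembling the entries reproduces \cref{TabRegDOJi} restricted to the listed regions, and the boundary descriptions of \cref{TabRegionCondDO5} follow directly from the definitions of the $\gamma_i$.

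The main obstacle is the non-monotone second reaction: because $\mu_2$ is of Haldane type, the equations $f_2(x)=g_2(x)$ and $f_3(x)=g_2(x)$ can have one or three roots with different transversality signs, and precisely which case occurs is what separates the multistable cells (magenta, wheat, coral, \dots) from the simpler ones. Proving uniformly, for a \emph{general} $\mu_2$, how many such roots there are and which are stable on each cell is delicate; this is why the statement is restricted to the explicit kinetics \cref{SpeciFunc} and the parameter set of \cref{TabParamVal}, for which all the relevant root counts, transversality signs and curve intersections become explicit algebraic inequalities that can be checked — and, where convenient, confirmed numerically — region by region. Everything else is a mechanical consequence of \cref{propES,propmultip,propstab,PropPosiCurvDfix}.
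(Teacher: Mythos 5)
Your proposal is correct and follows essentially the same route as the paper, whose own proof of \cref{PropDO-Dfix} is a one-line reference back to the argument of \cref{propstabstabDO1} combined with \cref{PropPosiCurvDfix} and \cref{Lem2D}; your expansion (horizontal lines from $\lambda_1^i$, vertical lines from $\lambda_2^{ij}$, the affine curves from $F_{ij}$, monotonicity of $\phi_i$ in $S_2^{in}$ playing the role of \cref{Lem2D}, then a cell-by-cell reading of \cref{TableCondExis,TableCondStab}) is exactly what that reference unpacks to. Your closing remark about why the root counts for $f_3(x)=g_2(x)$ force the restriction to the specific kinetics and parameter values also matches the paper's reliance on the numerical checks in \cref{FigCondStabE0111,FigCondStabE1111}.
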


\section{Discussion and conclusion}\label{Sec-Conc}

In this paper, we have introduced the original AM2 model \cref{ModelAM2S2C} with a serial configuration of two chemostats, thanks to the mass conservation principle.
The mathematical model considered is a differential system in eight dimensions.
The analysis of this model is original and has not been studied in the literature.
Using the same dilution rate with a large class of monotonic and nonmonotonic growth functions for the first and the second species, we have presented an in-depth mathematical study of system \cref{ModelAM2S2C}.
Since this model has a cascade structure, we show that it can be reduced to a four-dimensional system \cref{ModelAM2SR4}.
The existence and the stability of each steady state of the complete model \cref{ModelAM2S2C} can be deduced from the reduced model \cref{ModelAM2SR4}.
The necessary and sufficient conditions of existence and local stability of all steady states of \cref{ModelAM2SR4} are established according to the four operating parameters $D$, $r$, $S_1^{in}$ and $S_2^{in}$.
Generically, there can be nine types of steady states, six of which can be doubled for non-monotonic growth rates, giving a total of fifteen steady states.

To describe the behavior of the process when these four control parameters are varied, we have determined theoretically the operating diagram from the existence and local stability conditions.
Fixing the biological parameter set provided in \cref{TabParamVal}, there can be seventy regions with three different behaviors: either the stability of a single LES steady state, or bistability or tristability, with the existence of two or three stable steady states, while all other steady states are unstable when they exist.


It is interesting to note that this general study includes in fact a number of already known results. In other words, the actual results comprise results that were already established for specific functioning conditions. In particular, it is the case when the functioning conditions are such that both biomasses are washed in the first reactor (i.e. when $X_1^1=0$ and $X_2^1=0$): the results obtained are then equivalent to those obtained for the analysis of the AM2 model considered in a single chemostat \cite{BenyahiaJPC2012,SariNonLinDyn2021}.
It is logically the case when the first reactor is smaller than the first one, in which case, for a given too high dilution rate, the washout of the biomasses arises in the first reactor.
The other singular case is that one in which the first biomass is washed out in both reactors: in such a case the analysis is equivalent to the study of a single bioreaction i) having as bioreactional scheme $S_2\rightarrow X_2 + biogas$ and ii) taking place in a system of two chemostats in series \cite{DaliYoucefIFAC2022,DaliYoucefBMB2022}.
Following the same idea, we also find more recent results established in \cite{HarmandJPC2020}. These authors had shown that under certain circumstances, well characterized as a function of operating conditions and system parameters by Sari and Benyahia (cf. \cite{SariNonLinDyn2021}), a multi-step system (in the sense that the substrate feed to a reaction $i$ is the reaction output of step $i-1$, which is the case of the AM2 reaction scheme), in a chemostat could exhibit highly original behavior.
In particular, it was shown that in these particular circumstances, the system can be stabilized by increasing the feed rate, rather than decreasing it as is traditionally the case.
It's interesting to note that this is the case here for two chemostats in series, and it may happen when the value of $r$ is smaller than $1/2$.  
In fact it appears that the operating diagrams shown in \cref{FigDO2,FigDO3}, were drawn up with values that allow these results to be found. Consider \cref{FigDO2} where $S_2^{in}>S_2^m$ and Case 1 of \cref{Cases123} ($m_1>\mu_2\left(S_2^m\right)$) holds: the red zone rises slightly to the right of the lowest point of the wheat zone. Let's start from a point in the wheat zone, very close to its low point. In this zone (denoted $\mathcal{J}_{18}$ in \cref{FigDO2}), there are two stable steady states.
Suppose that the initial conditions are such that the system converges to the steady state $\mathcal{E}_{10}^{11}$ characterized by $X_2^1=0$.
We can imagine that on an operational level, it is not desirable to have one of the biomasses of the first reactor which is zero.
By increasing $D$ very slightly, we will find ourselves in the part of the red zone ($\mathcal{J}_{17}$) which goes up to the right of the low point of the wheat zone.
In other words, we find ourselves in a zone where the only stable steady state is now the positive steady state $\mathcal{E}_{11}^{11}$, having increased rather than decreased the feed rate of the system!

Now, when $r<1/2$, $S_2^{in}<S_2^m$ and Case 2 of \cref{Cases123} $\left(m_1<\mu_2\left(S_2^{in}\right)\right)$ holds, one has exactly the same phenomenon that can be highlighted from the operating diagram plotted in \cref{FigDO3}.

To enhance and extend the results of the mathematical study of this anaerobic digestion process with a series configuration of interconnected chemostats, a forthcoming paper will analyse performance criteria by comparing with the single chemostat.
\appendix
\section{Proofs}                                      \label{AppProof}
First, we need the following Lemmas to establish the existence and multiplicity of all steady states of \cref{ModelAM2SR4}.
\begin{lemma}                                            \label{lemH1}
Let $f_1$ and $g_1$ be the functions defined in \cref{TabFunc2}. Let $X_1^{1*}<S_1^{in}/k_1$. Assume that \cref{Hypoth1} holds. The equation $f_1(x)=g_1(x)$ has a unique solution in $\left(X_1^{1*},S_1^{in}/k_1\right)$ (see \cref{Figsoleq12}(a)).
\end{lemma}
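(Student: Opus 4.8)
The plan is to reduce the equation $f_1(x)=g_1(x)$ to the statement that the continuous function $h:=f_1-g_1$ is strictly monotone on $\bigl[X_1^{1*},S_1^{in}/k_1\bigr]$ and changes sign across that interval, which forces exactly one zero in the interior.

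First I would record the shape of each function on the relevant interval. By \cref{Hypoth1}, $\mu_1$ is strictly increasing with $\mu_1(0)=0$, hence $f_1(x)=\mu_1\bigl(S_1^{in}-k_1x\bigr)$ is strictly decreasing on $\bigl[0,S_1^{in}/k_1\bigr]$, with $f_1\bigl(S_1^{in}/k_1\bigr)=\mu_1(0)=0$ and $f_1\bigl(X_1^{1*}\bigr)=\mu_1\bigl(S_1^{in}-k_1X_1^{1*}\bigr)>0$, the positivity coming from $X_1^{1*}<S_1^{in}/k_1$, so that the argument is a positive number. On the other hand $g_1(x)=D_2\bigl(1-X_1^{1*}/x\bigr)$ is nondecreasing on $(0,+\infty)$, with $g_1\bigl(X_1^{1*}\bigr)=0$ and $g_1\bigl(S_1^{in}/k_1\bigr)=D_2\bigl(1-k_1X_1^{1*}/S_1^{in}\bigr)>0$, again because $X_1^{1*}<S_1^{in}/k_1$ and $D_2=D/r_2>0$.

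Then I would conclude. At the left endpoint $h(X_1^{1*})=f_1(X_1^{1*})-0>0$, and at the right endpoint $h(S_1^{in}/k_1)=0-g_1(S_1^{in}/k_1)<0$, so by the intermediate value theorem $h$ has a zero in $\bigl(X_1^{1*},S_1^{in}/k_1\bigr)$. For uniqueness I would differentiate on that open interval: there the argument $S_1^{in}-k_1x$ lies strictly between $0$ and $S_1^{in}-k_1X_1^{1*}$, so \cref{Hypoth1} gives $\mu_1'(S_1^{in}-k_1x)>0$ and hence $f_1'(x)=-k_1\mu_1'(S_1^{in}-k_1x)<0$, while $g_1'(x)=D_2X_1^{1*}/x^2\ge 0$. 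Therefore $h'=f_1'-g_1'<0$ throughout, so $h$ is strictly decreasing and can vanish at most once. Combining the two facts, $f_1(x)=g_1(x)$ has exactly one solution in $\bigl(X_1^{1*},S_1^{in}/k_1\bigr)$, as claimed (and as illustrated in \cref{Figsoleq12}(a)).

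I do not anticipate any real obstacle here; the only points requiring a moment's care are verifying that the argument of $\mu_1$ and $\mu_1'$ stays strictly positive on the open interval so that the sign conclusions of \cref{Hypoth1} apply, and checking the two endpoint inequalities, both of which are immediate from $X_1^{1*}<S_1^{in}/k_1$ together with $D_2>0$.
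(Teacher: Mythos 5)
Your proof is correct and follows essentially the same route as the paper: both define the difference $F_1:=f_1-g_1$, verify $F_1\left(X_1^{1*}\right)>0$ and $F_1\left(S_1^{in}/k_1\right)<0$, and combine the Intermediate Value Theorem with the strict monotonicity of the difference (decreasing $f_1$ minus increasing $g_1$) to get exactly one zero. Your extra care in checking that the argument of $\mu_1'$ stays strictly positive on the open interval is a welcome refinement but does not change the argument.
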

\begin{figure}[!ht]
\setlength{\unitlength}{1.0cm}
\begin{center}
\begin{picture}(7.8,4.5)(0,0)	
\put(-4.3,0){{\includegraphics[width=5.5cm,height=4.5cm]{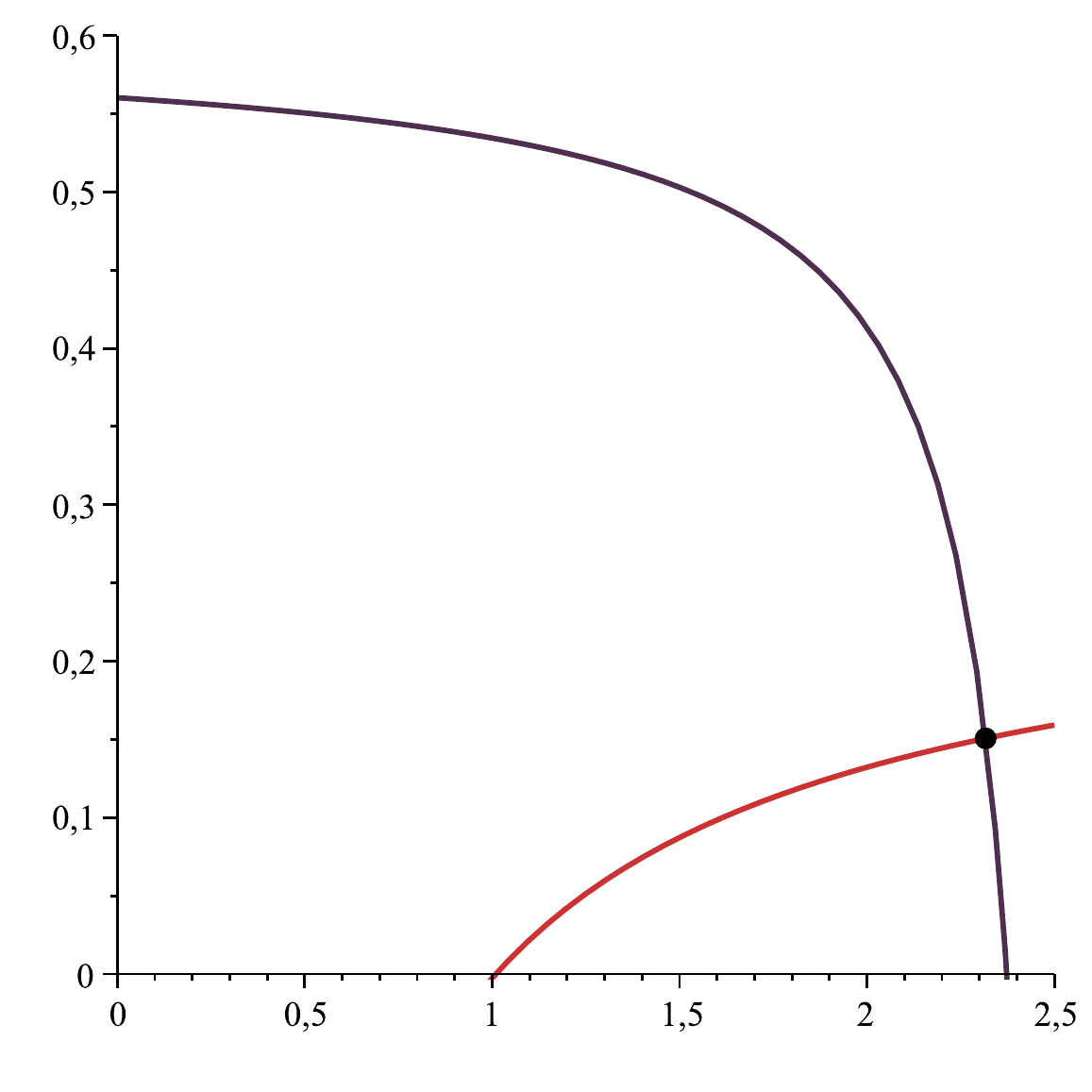}}}	
\put(0.9,0){{\includegraphics[width=5.5cm,height=4.5cm]{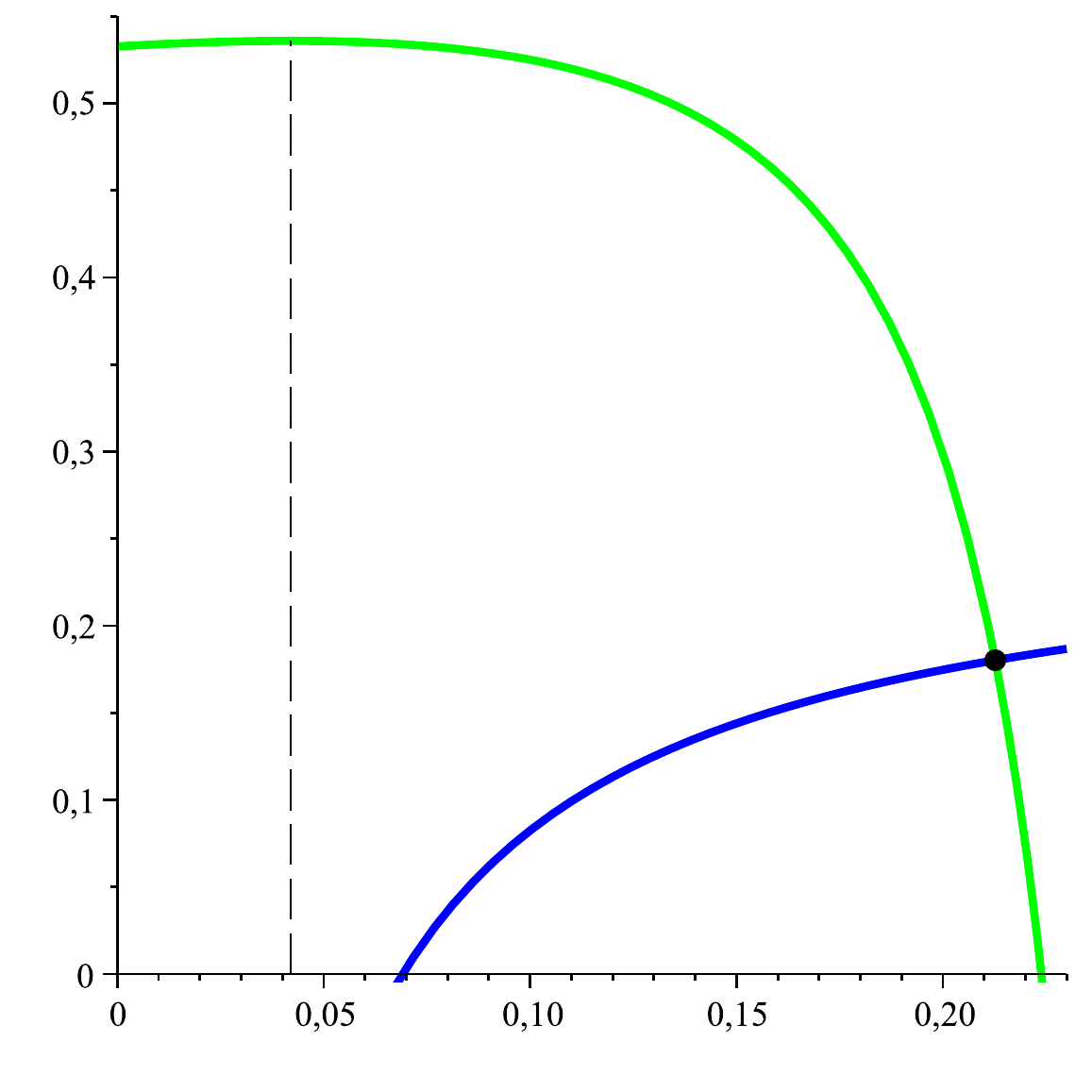}}}
\put(5,-6.7){{\includegraphics[width=8.5cm,height=12.5cm]{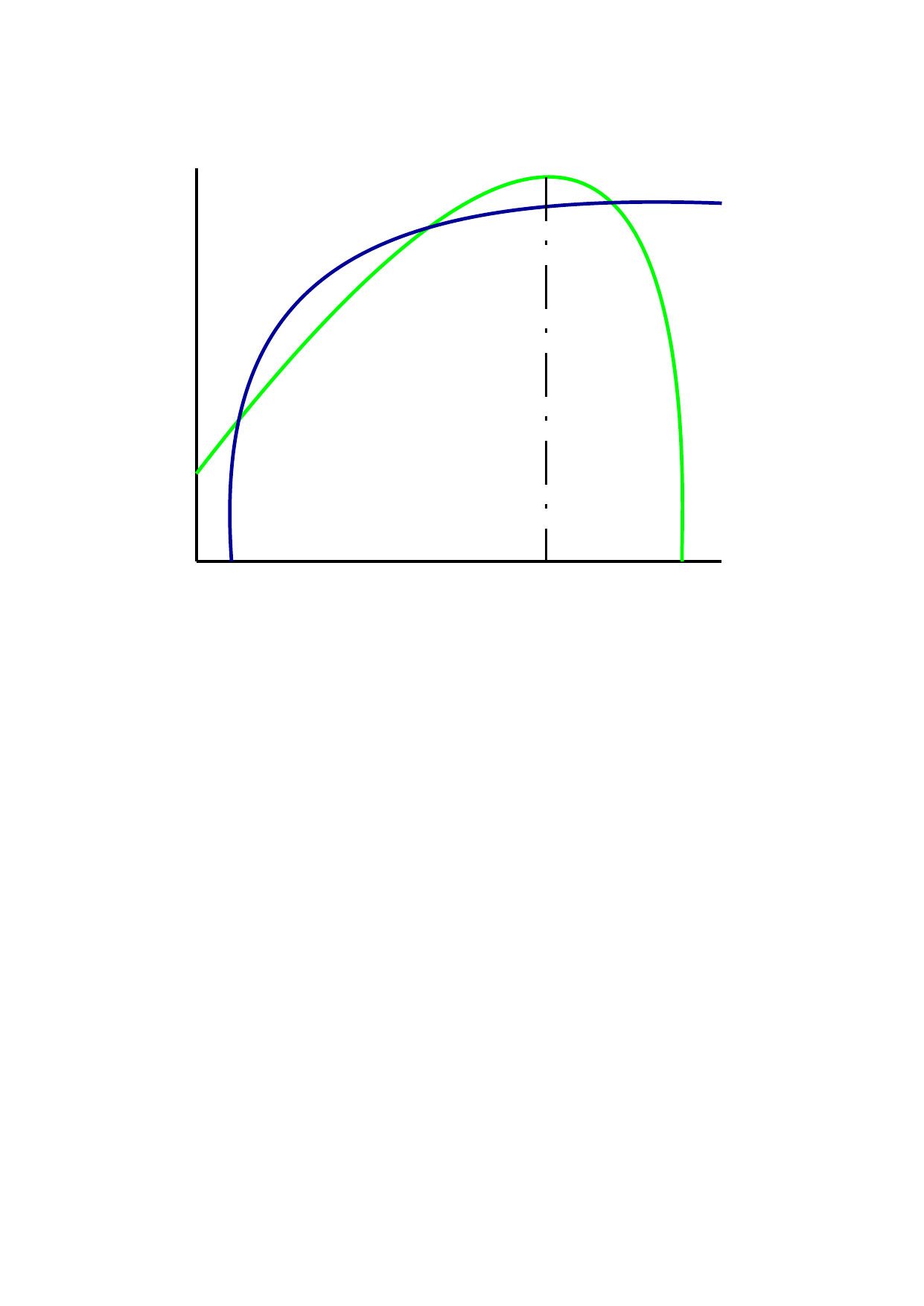}}}
\put(-2,4.4){\sc (a)}	
\put(0,1.45){\sc {\color{orange}$g_1$}}
\put(0,3.35){\sc {\color{violet}$f_1$}}
\put(-2,0){\sc $X_1^{1*}$}
\put(0.35,0,1){\sc $S_1^{in}/k_1$}
\put(1,0.45){\sc $x$}
\put(3.4,4.4){\sc (b)}	
\put(4.4,1.6){\sc {\color{blue}$g_2$}}
\put(4.4,4.1){\sc {\color{green}$f_2$}}
\put(2.1,0){\sc $x_1^m$}
\put(2.7,0.05){\sc $X_2^{1*}$}
\put(5.7,0.08){\sc $S_2^{in}/k_3$}
\put(6.25,0.45){\sc $x$}
\put(9,4.4){{\sc (c)}}
\put(11.3,3.6){\sc {\color{blue}$g_2$}}
\put(10.4,4.2){\sc {\color{green}$f_2$}}
\put(9.85,0.1){\sc $x_1^m$}
\put(7,0.05){\sc $X_2^{1*}$}
\put(10.8,0.1){\sc $S_2^{in}/k_3$}
\put(11.7,0.35){\sc $x$}
\put(7.12,1.7){\sc $\bullet$}
\put(8.9,3.56){\sc $\bullet$}
\put(10.54,3.78){\sc $\bullet$}
\end{picture}
\end{center}
\vspace{-0.3cm}
\caption{Number of positive solutions of equations : (a) $f_1(x)=g_1(x)$ (unique solution); (b) $f_2(x)=g_2(x)$ (unique solution when $X_2^{1*}\geq x_1^m$); (c) $f_2(x)=g_2(x)$ (an odd number of intersections when $X_2^{1*}< x_1^m$).} \label{Figsoleq12}
\end{figure}
\begin{proof}[\bf{Proof of \cref{lemH1}}]
Under \cref{Hypoth1}, the function $x \mapsto f_1(x)$ is nonnegative, decreasing and continuous on $\left[0, S_1^{in}/k_1\right]$. The function $x\mapsto g_1(x)$ is nonnegative, increasing hyperbola and continuous on $\left[X_1^{1*}, +\infty\right)$. Indeed,
$$
g_1'(x)=D_2\frac{X_1^{1*}}{x^2}>0.
$$
Let $X_1^{1*}<S_1^{in}/k_1$ and $F_1$ be the function defined by $F_1(x):=f_1(x)-g_1(x)$. Therefore, $F_1$ is decreasing continuous on $\left[X_1^{1*},S_1^{in}/k_1\right]$. In addition, we have
$$
F_1\left(X_1^{1*}\right)=\mu_1\left(S_1^{in}-k_1X_1^{1*}\right)>0 \quad\mbox{and}\quad F_1\left(\frac{S_1^{in}}{k_1}\right)=-g_1\left(\frac{S_1^{in}}{k_1}\right)<0.
$$
Using the Intermediate Value Theorem, it follows that the equation $F_1(x)=0$ has a unique solution in $\left(X_1^{1*}, S_1^{in}/k_1\right)$ (see \cref{Figsoleq12}(a)).
\end{proof}
\begin{lemma}\label{lemH2}
Let $f_2$ and $g_2$ be the functions defined in \cref{TabFunc2}. Let $X_2^{1*}$ be a positive constant so that $X_2^{1*}<S_2^{in}/k_3$ and $x_1^m:=\left(S_2^{in}-S_2^m\right)/k_3$ be the maximum value of $f_2$ on $\left[0, S_2^{in}/k_3\right]$. Assume that \cref{Hypoth2} holds.
\begin{enumerate}[leftmargin=*]
\item When $X_2^{1*}\geq x_1^m$, then the equation $f_2(x)=g_2(x)$ has a unique solution in $\left(X_2^{1*}, S_2^{in}/k_3\right)$, (see \cref{Figsoleq12}(b)).
\item When $X_2^{1*}<x_1^m$ then the equation $f_2(x)=g_2(x)$ has at least one solution in $\left(X_2^{1*},S_2^{in}/k_3\right)$. Generically, there exist an odd number of solutions in $\left(X_2^{1*},S_2^{in}/k_3\right)$, (see \cref{Figsoleq12}(c)).
\end{enumerate}
\end{lemma}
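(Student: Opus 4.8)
The plan is to analyse $f_2$ and $g_2$ separately on the interval $\left[X_2^{1*},S_2^{in}/k_3\right]$, record their monotonicity, and then apply the intermediate value theorem to $G_2:=f_2-g_2$, exactly as in the proof of \cref{lemH1}.

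First I would note that $f_2(x)=\mu_2\left(S_2^{in}-k_3x\right)$ is $\mathcal{C}^1$ on $\left[0,S_2^{in}/k_3\right]$ with $f_2'(x)=-k_3\mu_2'\left(S_2^{in}-k_3x\right)$, so by \cref{Hypoth2} one has $f_2'(x)>0$ for $x<x_1^m$ and $f_2'(x)<0$ for $x>x_1^m$: the function $f_2$ is strictly increasing on $\left[0,x_1^m\right]$ and strictly decreasing on $\left[x_1^m,S_2^{in}/k_3\right]$, with its maximum at $x_1^m$ (when $x_1^m\le 0$ this just says $f_2$ is strictly decreasing on the whole interval). As for $g_2$, it is positive, strictly increasing and continuous on $\left(X_2^{1*},+\infty\right)$ since $g_2'(x)=D_2\frac{X_2^{1*}}{x^2}>0$, and $g_2\left(X_2^{1*}\right)=0$. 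Evaluating $G_2$ at the endpoints gives $G_2\left(X_2^{1*}\right)=\mu_2\left(S_2^{in}-k_3X_2^{1*}\right)>0$, because $0<S_2^{in}-k_3X_2^{1*}$ by hypothesis, and $G_2\left(S_2^{in}/k_3\right)=-g_2\left(S_2^{in}/k_3\right)<0$, because $S_2^{in}/k_3>X_2^{1*}$. The intermediate value theorem then yields at least one zero of $G_2$ in $\left(X_2^{1*},S_2^{in}/k_3\right)$, which settles the existence part of both items.

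For item~1 I would use the assumption $X_2^{1*}\ge x_1^m$, so that $\left(X_2^{1*},S_2^{in}/k_3\right)\subseteq\left(x_1^m,S_2^{in}/k_3\right)$; on this interval $f_2$ is strictly decreasing while $g_2$ is strictly increasing, hence $G_2$ is strictly decreasing, and combined with the sign change at the endpoints this forces a unique zero. For item~2 I would observe that when $X_2^{1*}<x_1^m$ the interval $\left(X_2^{1*},S_2^{in}/k_3\right)$ contains the mode $x_1^m$: the monotonicity argument still gives exactly one crossing on $\left(x_1^m,S_2^{in}/k_3\right)$, but on $\left(X_2^{1*},x_1^m\right)$ both curves are increasing, so extra crossings may appear (cf. \cref{Figsoleq12}(c)). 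Since $G_2$ passes from a positive to a negative value, the number of its zeros must be odd as soon as all of them are simple, i.e., generically — each transversal intersection of the two graphs produces one sign change of $G_2$, and the total number of sign changes is odd.

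I do not expect a genuine obstacle here; the only points requiring a little care are the strictness of the endpoint inequalities $G_2\left(X_2^{1*}\right)>0$ and $G_2\left(S_2^{in}/k_3\right)<0$, which rest on $X_2^{1*}<S_2^{in}/k_3$ together with \cref{Hypoth2}, and the precise phrasing of the genericity statement, namely that the odd count is guaranteed only when every intersection is transversal, the non-transversal configurations being non-generic.
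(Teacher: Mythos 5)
Your proof is correct and follows essentially the same route as the paper's: the same monotonicity analysis of $f_2$ and $g_2$, the same sign evaluation of $f_2-g_2$ at the endpoints $X_2^{1*}$ and $S_2^{in}/k_3$, the intermediate value theorem for existence, and strict monotonicity of the difference on $\left(x_1^m,S_2^{in}/k_3\right)$ for uniqueness in item~1. Your added remarks (the explicit formula for $f_2'$, the degenerate case $x_1^m\le 0$, and the transversality justification of the generic odd count) are harmless refinements of the published argument.
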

\begin{proof}[\bf{Proof of \cref{lemH2}}]
Under \cref{Hypoth2}, $f_2$ is nonnegative and continuous on the interval $\left[0, S_2^{in}/k_3\right]$. In addition, $f_2$ is a increasing on $\left[0,x_1^m\right]$ and is decreasing on $\left[x_1^m,S_2^{in}/k_3\right]$. The function $g_2$ is nonnegative and increasing hyperbola continuous on $\left[X_2^{1*},+\infty\right)$. Indeed,
\begin{equation}                                       \label{g2prime}
g_2'(x)=D_2\frac{X_2^{1*}}{x^2}>0.
\end{equation}
Let $X_2^{1*}<S_2^{in}/k_3$ and let $F_2$ be the function defined by $F_2(x):=f_2(x)-g_2(x)$.
Therefore, $F_2$ is continuous on $\left[X_2^{1*},S_2^{in}/k_3\right]$. In addition, we have
$$
F_2\left(X_2^{1*}\right)=\mu_2\left(S_2^{in}-k_3X_2^{1*}\right)>0 \quad\mbox{and}\quad F_2\left(\frac{S_2^{in}}{k_3}\right)=-g_2\left(\frac{S_2^{in}}{k_3}\right)<0.
$$
If $X_2^{1*}>x_1^m$ then $F_2$ is decreasing on $\left[X_2^{1*}, S_2^{in}/k_3\right]$, because $f_2$ is decreasing and $g_2$ is increasing on the same interval. Consequently, the equation $F_2(x)=0$  has a unique solution on $\left(X_2^{1*}, S_2^{in}/k_3\right)$, (see \cref{Figsoleq12}(b)). If $X_2^{1*}<x_1^m$, then $F_2$ can be nonmonotonic. We complete the proof by using the Intermediate Value Theorem.
\end{proof}
\begin{lemma}                                            \label{lemH3}
Let $f_3$ and $g_2$ be the functions defined in \cref{TabFunc2}. Let $d:=\left(S_2^{in}+k_2X_1^{2*}\right)/k_3$ be the root of $f_3(x)=0$ and $x_2^m:=\left(S_2^{in}+k_2X_1^{2*}-S_2^m\right)/k_3$ be the maximum value of $f_3$.
Assume that $X_2^{1*}<d$ and \cref{Hypoth2} holds.
\begin{enumerate}[leftmargin=*]
\item When $X_2^{1*}\geq x_2^m$ then the equation $f_3(x)=g_2(x)$ has a unique solution in $\left(X_2^{1*},d\right)$ (see \cref{Figsoleq3}(a)).
\item When $X_2^{1*}<x_2^m$ then the equation $f_3(x)=g_2(x)$ has at least one solution in $\left(X_2^{1*},d\right)$. Generically, there exist an odd number of solutions in $\left(X_2^{1*},d\right)$, (see \cref{Figsoleq3}(b-c)).
\end{enumerate}
\end{lemma}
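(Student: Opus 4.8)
The plan is to follow the template used for \cref{lemH2}, with $f_2$ replaced by $f_3$ and $S_2^{in}$ by $S_2^{in}+k_2X_1^{2*}$. First I would record the shape of the two functions on the relevant interval. Writing $f_3(x)=\mu_2\!\left(S_2^{in}+k_2X_1^{2*}-k_3x\right)$, the argument decreases from $S_2^{in}+k_2X_1^{2*}$ to $0$ as $x$ runs over $[0,d]$, so under \cref{Hypoth2} the function $f_3$ is nonnegative and continuous on $[0,d]$ and vanishes only at $x=d$; moreover $f_3$ is strictly increasing on $[0,x_2^m]$ and strictly decreasing on $[x_2^m,d]$ when $x_2^m>0$, and strictly decreasing on all of $[0,d]$ when $x_2^m\le 0$. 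Note that $d-x_2^m=S_2^m/k_3>0$, so $x_2^m<d$ in every case. For $g_2$ I would reuse \cref{g2prime}: the given positive constant $X_2^{1*}$ satisfies $X_2^{1*}<d$, and $g_2$ is continuous and strictly increasing on $\left[X_2^{1*},+\infty\right)$ with $g_2\!\left(X_2^{1*}\right)=0$.

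Next I would introduce $F_3(x):=f_3(x)-g_2(x)$, continuous on $\left[X_2^{1*},d\right]$, and evaluate it at the two endpoints:
\[
F_3\!\left(X_2^{1*}\right)=\mu_2\!\left(S_2^{in}+k_2X_1^{2*}-k_3X_2^{1*}\right)>0, \qquad F_3(d)=-g_2(d)<0 ,
\]
the first inequality because $X_2^{1*}<d$ makes the argument $k_3\!\left(d-X_2^{1*}\right)$ positive, whence $\mu_2>0$ there by \cref{Hypoth2}, and the second because $d>X_2^{1*}$ gives $g_2(d)>0$.

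For item~(1), assume $X_2^{1*}\ge x_2^m$. Then $\left[X_2^{1*},d\right]$ is contained in the range where $f_3$ is strictly decreasing (namely $\left[x_2^m,d\right]$ if $x_2^m>0$, and $[0,d]$ if $x_2^m\le0$), while $g_2$ is strictly increasing there; hence $F_3$ is strictly decreasing on $\left[X_2^{1*},d\right]$, and the sign change together with strict monotonicity yields exactly one root in $\left(X_2^{1*},d\right)$ by the Intermediate Value Theorem (see \cref{Figsoleq3}(a)). For item~(2), assume $X_2^{1*}<x_2^m$, which in particular forces $x_2^m>0$. Then $F_3$ need not be monotone, but $F_3\!\left(X_2^{1*}\right)>0>F_3(d)$ still gives at least one root in $\left(X_2^{1*},d\right)$ by the Intermediate Value Theorem; arguing exactly as in \cref{lemH2}, for parameters outside a measure-zero set every intersection of $f_3$ and $g_2$ is transversal, so the roots can be counted with a sign given by the sign of $F_3'$, and the passage of $F_3$ from positive to negative forces their number to be odd (see \cref{Figsoleq3}(b--c)).

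The only nonroutine point is the genericity assertion in item~(2): one must discard the parameter values for which $f_3$ and $g_2$ are tangent, after which the signs of $F_3'$ alternate at consecutive simple roots and the boundary signs $F_3\!\left(X_2^{1*}\right)>0>F_3(d)$ pin the count to an odd number. Everything else is the Intermediate Value Theorem combined with the monotonicity facts recorded above, completely parallel to \cref{lemH1,lemH2}.
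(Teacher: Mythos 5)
Your proposal is correct and follows essentially the same route as the paper: define $F_3:=f_3-g_2$, check the sign change $F_3\left(X_2^{1*}\right)>0>F_3(d)$, use strict monotonicity of $F_3$ on $\left[X_2^{1*},d\right]$ when $X_2^{1*}\geq x_2^m$ to get uniqueness, and the Intermediate Value Theorem plus a transversality/genericity argument otherwise. Your treatment is in fact slightly more careful than the paper's (you handle the degenerate case $x_2^m\le 0$ and spell out the parity argument that the paper leaves implicit), but there is no substantive difference in approach.
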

\begin{figure}[!ht]
\setlength{\unitlength}{1.0cm}
\begin{center}
\begin{picture}(8.8,4.7)(0,0)
\put(-5.3,-6.7){{\includegraphics[width=8.5cm,height=12.5cm]{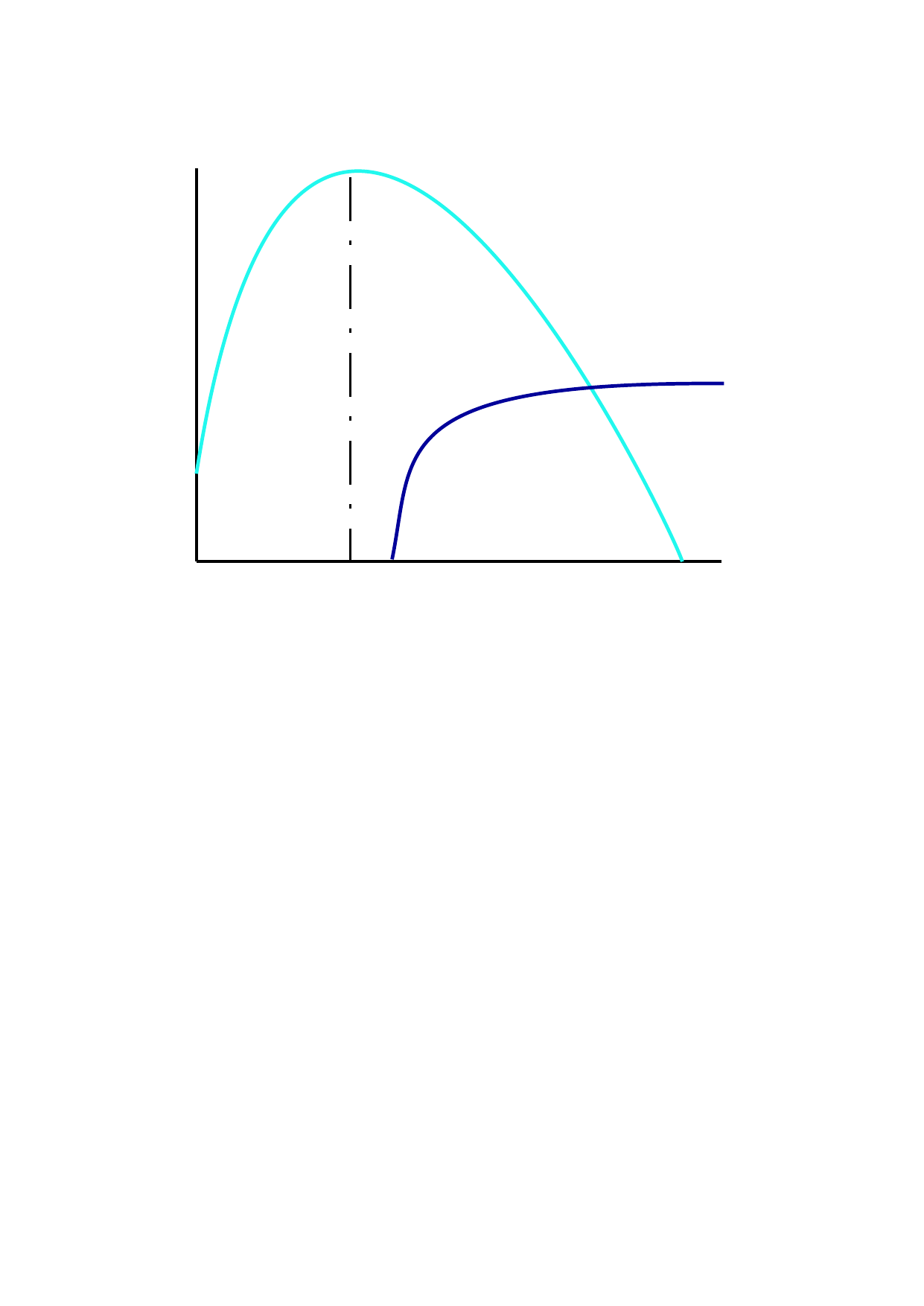}}}	
\put(1.5,0){{\includegraphics[width=5.5cm,height=4.5cm]{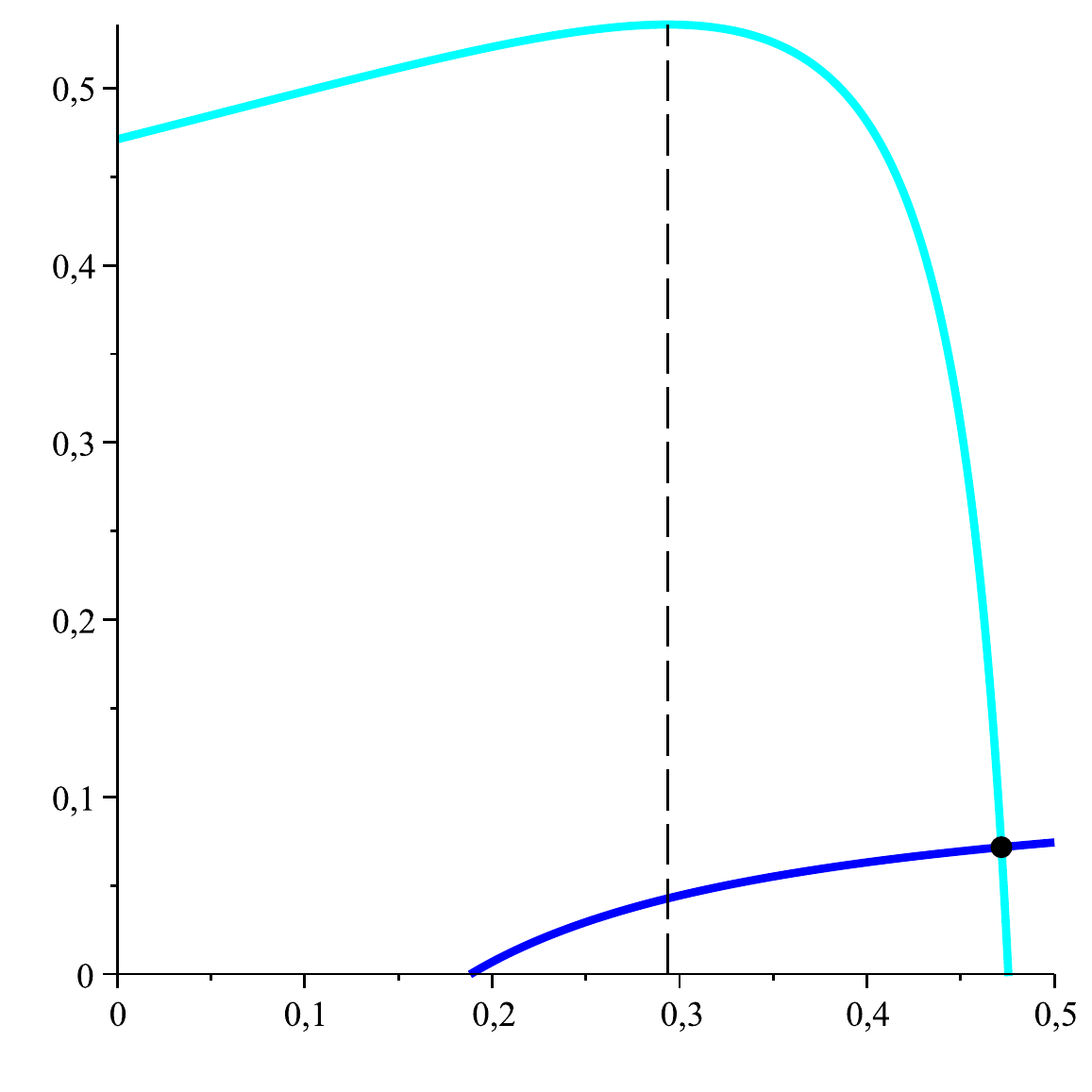}}}
\put(6.7,0){{\includegraphics[width=5.5cm,height=4.5cm]{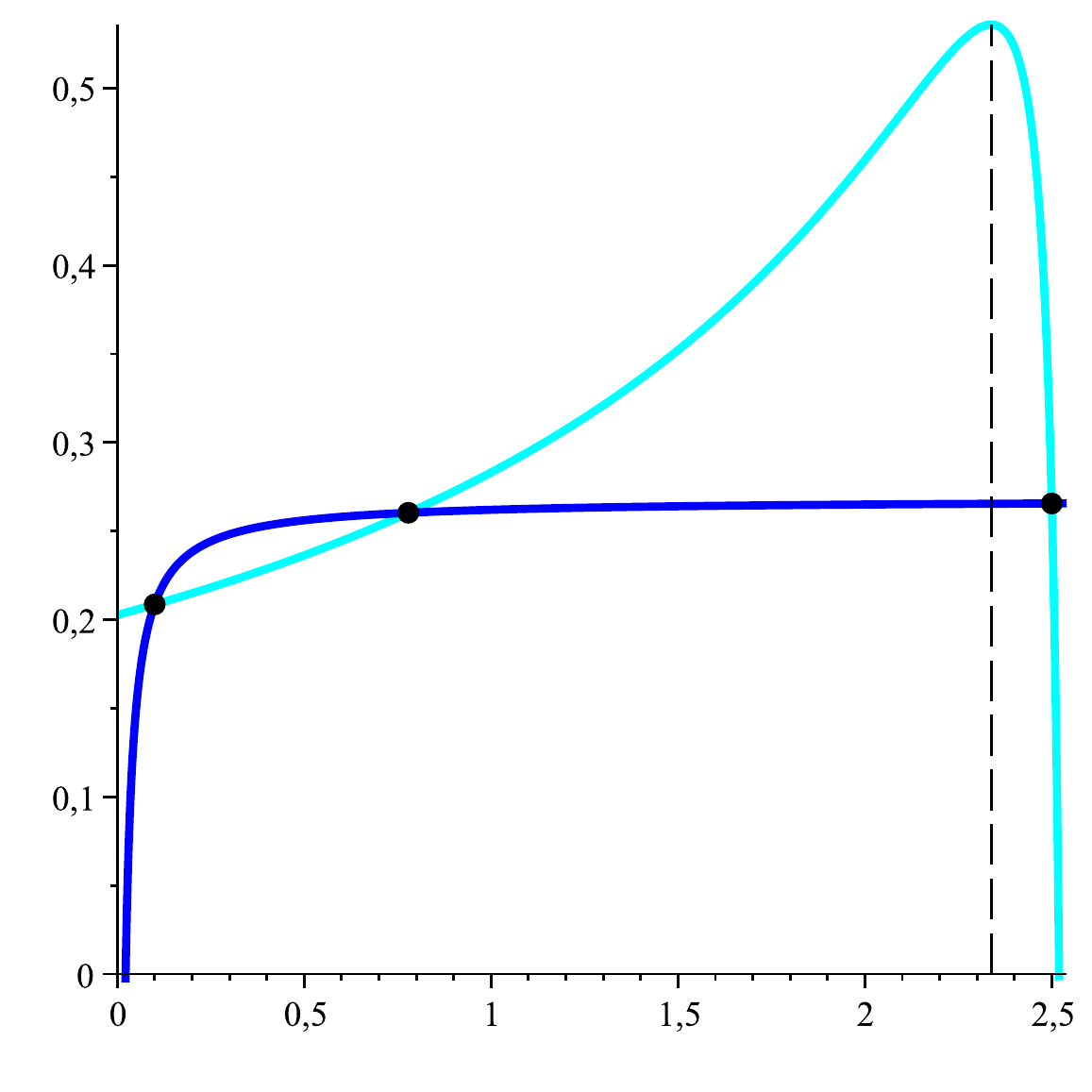}}}
\put(-2.3,4.5){\sc (a)}	
\put(0.5,2.2){\sc {\color{blue}$g_2$}}
\put(-0.6,3.3){\sc {\color{cyan}$f_3$}}
\put(-2.35,0.05){\sc $x_2^m$}
\put(-1.8,0.05){\sc $X_2^{1*}$}
\put(0.85,0.05){\sc $d$}
\put(1.4,0.4){\sc $x$}
\put(0.06,2.02){\sc $\bullet$}
\put(4.2,4.5){\sc (b)}	
\put(5.7,1.1){\sc {\color{blue}$g_2$}}
\put(5.7,4.3){\sc {\color{cyan}$f_3$}}
\put(4.7,0){\sc $x_2^m$}
\put(3.7,0){\sc $X_2^{1*}$}
\put(6.5,0.05){\sc $d$}
\put(6.8,0.45){\sc $x$}
\put(9.7,4.5){{\sc (c)}}
\put(10.7,2.55){\sc {\color{blue}$g_2$}}
\put(10.7,3.9){\sc {\color{cyan}$f_3$}}
\put(11.4,0.0){\sc $x_2^m$}
\put(7.2,0){\sc $X_2^{1*}$}
\put(12,0){\sc $d$}
\put(12.2,0.45){\sc $x$}
\end{picture}
\end{center}
\vspace{-0.3cm}
\caption{Number of positive solutions of equation $f_3(x)=g_2(x)$ on $(0,d)$ where $d$ is defined in \cref{lemH3}: (a) unique solution when $X_2^{1*}\geq x_2^m$; (b-c) an odd number of intersections when $X_2^{1*}<x_2^m$.} \label{Figsoleq3}
\end{figure}
\begin{proof}[\bf{Proof of \cref{lemH3}}]
Under \cref{Hypoth2}, $f_3$ is nonnegative and continuous on $[0,d]$ with $d:= \left(S_2^{in}+k_2X_1^{2*}\right)/k_3$. In addition, $f_3$ is increasing on $\left[0,x_2^m\right]$ and is decreasing on $\left[x_2^m,d\right]$. Let $X_2^{1*}<d$ and $F_3$ be the function defined by $F_3(x):=f_3(x)-g_2(x)$. Therefore, $F_3$ is continuous on $\left[X_2^{1*},d\right]$. In addition, we have
$$
F_3\left(X_2^{1*}\right)=\mu_2\left(S_2^{in}+k_2X_1^{2*}-k_3X_2^{1*}\right)>0,\quad F_3\left(\frac{S_2^{in}+k_2X_1^{2*}}{k_3}\right)=-g_2\left(\frac{S_2^{in}+k_2X_1^{2*}}{k_3}\right)<0.
$$
Recall from \cref{g2prime} that $g_2$ is increasing on $\left[X_2^{1*},+\infty\right)$. If $X_2^{1*}\geq x_2^m$ then $F_3$ is decreasing on $\left[X_2^{1*},d\right]$, because $f_3$ is decreasing on the same interval. Consequently, the equation $F_3(x)=0$ has a unique solution in $\left(X_2^{1*},d\right)$. If $X_2^{1*}<x_2^m$  then $F_3$ can be nonmonotonic. Using the Intermediate Value Theorem, the proof is completed.
\end{proof}
\begin{proof}[\bf{Proof of \cref{propES}}]
Similarly to system \cref{ModelAM2}, subsystem \cref{ModelAM2SR4} has a cascade structure.
Thus, for each steady state of the upper two-dimensional system of \cref{ModelAM2SR4} denoted by
$$
E_{ij}\left(X_1^1,X_2^1\right), \quad \{i,j\}\in \{0,1,2\}
$$
corresponds a steady state of the lower two-dimensional system of \cref{ModelAM2SR4} denoted by
$$
E_{ij}^{kl}\left(X_1^2,X_2^2\right).
$$
This steady state is given by putting the left-hand side of the lower two-dimensional subsystem of \cref{ModelAM2SR4} equal to zero when the two components $X_1^1$ and $X_2^1$ are at the steady state $E_{ij}$.
Then, we can deduce the corresponding steady state of the four-dimensional system \cref{ModelAM2SR4} denoted by $\mathcal{E}_{ij}^{kl}\left(X_1^1, X_2^1, X_1^2, X_2^2\right)$ as well as their existence condition by checking those of $E_{ij}$ and $E_{ij}^{kl}$ simultaneously.
Indeed, the upper two-dimensional system of \cref{ModelAM2SR4} corresponds to the classic AM2 model where the steady states are given by the solutions of the following equations:
 \begin{align}
\left[\mu_1\left(S_1^{in}-k_1X_1^1\right) - D_1\right]X_1^1          &=0,  \label{eq1Am2} \\
\left[\mu_2\left(S_2^{in}+k_2X_1^1-k_3X_2^1\right) - D_1\right]X_2^1 &=0.  \label{eq2Am2}
\end{align}
\begin{table}[!ht]
\begin{center}
\caption{Two components $\left(X_1^1, X_2^1\right)$ of all steady states and their existence condition for the upper two-dimensional system of \cref{ModelAM2SR4}. The functions $\lambda_1^1$, $\lambda_2^{1i}$, and $F_{1i}$, $i=1,2$ are defined in \cref{TabFunc1}.} \label{TableAM2}
\begin{tabular}{@{\hspace{1mm}}l@{\hspace{2mm}} @{\hspace{2mm}}l@{\hspace{2mm}}  @{\hspace{2mm}}l@{\hspace{2mm}} @{\hspace{2mm}}l@{\hspace{1mm}} }	
\hline
             & $X_1^1$    & $X_2^1$                               & Existence conditions    \\ \Xhline{3\arrayrulewidth}

${E}_{00}$  &      0      &   0                                    & Always exists  \\
$E_{10}$    & $\frac{1}{k_1}\left(S_1^{in}-\lambda_1^1\right)$ & 0 & $S_1^{in}>\lambda_1^1$ \\
$E_{0i}$    &    0        &  $\frac{1}{k_3}\left(S_2^{in}-\lambda_2^{1i}\right)$  & $S_2^{in}>\lambda_2^{1i}$  \\
$E_{1i}$    & $\frac{1}{k_1}\left(S_1^{in}-\lambda_1^1\right)$  & $\frac{k_2}{k_1k_3}\left(S_1^{in}-F_{1i}\right)$ & $S_1^{in}>\lambda_1^1$ and $S_1^{in}>F_{1i}$  \\ \hline
\end{tabular}
\end{center}
\end{table}
\begin{itemize}[leftmargin=*]
\item For $E_{00}$, $X_1^1=X_2^1=0$. It always exists.
\item For $E_{10}$, $X_1^1>0$ and $X_2^1=0$. Thus, \cref{eq1Am2} results in $\mu_1\left(S_1^{in}-k_1X_1^1\right) = D/r$. Using \cref{Hypoth1}, we obtain $X_1^1$ and the existence condition in
\cref{TableAM2}.
\item For $E_{0i}$, $i=1,2$, $X_1^1=0$ and $X_2^1>0$. Hence, \cref{eq2Am2} results in $\mu_2\left(S_2^{in}-k_3X_2^1\right) = D/r$. Using \cref{Hypoth2}, we obtain $X_2^1$ and the existence condition in  \cref{TableAM2}.
\item For $E_{1i}$, $i=1,2$, $X_1^1>0$ and $X_2^1>0$. From \cref{eq1Am2} and \cref{eq2Am2}, it follows that
 $$
 \mu_1\left(S_1^{in}-k_1X_1^1\right) = D_1 \quad\mbox{and}\quad \mu_2\left(S_2^{in}+k_2X_1^1-k_3X_2^1\right) = D_1.
 $$
Using the expressions of $\lambda_1^1$, $\lambda_2^{1i}$, and $F_{1i}$, $i=1,2$ in \cref{TabFunc1}, we obtain the two components of $E_{1i}$ and the two existence conditions in  \cref{TableAM2}.
\end{itemize}

The steady states of the lower two-dimensional system of \cref{ModelAM2SR4} are given by the solutions of the following equations:
\begin{align}
\mu_1\left(S_1^{in}-k_1X_1^2\right)X_1^2 + D_2\left(X_1^1- X_1^2\right)          &=0,     \label{eq3Am2s1}\\
\mu_2\left(S_2^{in}+k_2X_1^2-k_3X_2^2\right)X_2^2 + D_2\left(X_2^1- X_2^2\right) &=0.     \label{eq4Am2s2}
\end{align}
\begin{table}[!ht]
\begin{center}
\caption{Two components $\left(X_1^2, X_2^2\right)$ and existence condition of all steady states of the lower two-dimensional system of \cref{ModelAM2SR4}. The functions $\lambda_1^2$, $\lambda_2^{2i}$, and $F_{2i}$, $i=1,2$ are defined in \cref{TabFunc1}. The functions $f_1$, $f_2$, $f_3$, $g_1$ and $g_2$ are defined in \cref{TabFunc2} while $X_1^{2*}$ is the unique solution of equation $f_1(x)=g_1(x)$. } \label{TableAM2S}
\begin{tabular}{@{\hspace{1mm}}l@{\hspace{2mm}} @{\hspace{2mm}}l@{\hspace{2mm}} @{\hspace{2mm}}l@{\hspace{2mm}} @{\hspace{2mm}}l@{\hspace{1mm}}}		
\hline
                &   $X_1^2$   &   $X_2^2$             & Existence conditions         \\ \Xhline{3\arrayrulewidth}
$E_{00}^{00}$   &      0      &      0                & Always exists  \\
$E_{00}^{10}$   &  $\frac{1}{k_1}\left(S_1^{in}-\lambda_1^2\right)$      &      0    & $S_1^{in}>\lambda_1^2$  \\
$E_{00}^{0i}$   &      0      & $\frac{1}{k_3}\left(S_2^{in}-\lambda_2^{2i}\right)$  & $S_2^{in}>\lambda_2^{2i}$    \\
$E_{00}^{1i}$   &  $\frac{1}{k_1}\left(S_1^{in}-\lambda_1^2\right)$ & $\frac{k_2}{k_1k_3}\left(S_1^{in}-F_{2i}\right)$ & $S_1^{in}>\lambda_1^2$~~\mbox{and}~~$S_1^{in}>F_{2i}$  \\
$E_{10}^{10}$   &  $X_1^{2*}$ &      0                & Always exists \\
$E_{10}^{1i}$   &  $X_1^{2*}$ &  $\frac{1}{k_3}\left(S_2^{in}+k_2X_1^{2*}-\lambda_2^{2i}\right)$  & $S_2^{in}+k_2 X_1^{2*}>\lambda_2^{2i}$  \\
$E_{0i}^{01}$   &      0      &  a solution of $f_2\left(X_2^2\right)=g_2\left(X_2^2\right)$      & Always exists \\
$E_{0i}^{11}$   & $\frac{1}{k_1}\left(S_1^{in}-\lambda_1^2\right)$  &  a solution of $f_3(x)=g_2(x)$ & $S_1^{in}>\lambda_1^2$  \\
$E_{1i}^{11}$   &  $X_1^{2*}$ &  a solution of $f_3(x)=g_2(x)$      & Always exists  \\ \hline
\end{tabular}
\end{center}
\end{table}
\begin{itemize}[leftmargin=*]
\item For $E_{00}$, we have $X_1^1=X_2^1=0$. Hence, \cref{eq3Am2s1} and \cref{eq4Am2s2} result in
 \begin{align}
\left[\mu_1\left(S_1^{in}-k_1X_1^2\right) - D_2\right]X_1^2         &=0,      \label{eq3Am2s} \\
\left[\mu_2\left(S_2^{in}+k_2X_1^2-k_3X_2^2\right) -D_2\right]X_2^2 &=0.      \label{eq4Am2s}
\end{align}
\begin{itemize}[leftmargin=*,label=\raisebox{0.25ex}{\tiny$\bullet$}]
\item For $E_{00}^{00}$, $X_1^2=X_2^2=0$. It always exists.
\item For $E_{00}^{10}$, $X_1^2>0$ and $X_2^2=0$. Thus, \cref{eq3Am2s} results in $\mu_1\left(S_1^{in}-k_1X_1^2\right) =D/(1-r)$.  Using \cref{Hypoth1}, we obtain $X_1^2$ and the existence condition in  \cref{TableAM2S}.
\item For $E_{00}^{0i}$, $i=1,2$, $X_1^2=0$ and $X_2^2>0$. Hence, \cref{eq4Am2s} results in $\mu_2\left(S_2^{in}-k_3X_2^2\right) =  D/(1-r)$. Using \cref{Hypoth2}, we obtain $X_2^2$ and the existence condition in  \cref{TableAM2S}.
\item For $E_{00}^{1i}$, $i=1,2$, $X_1^2>0$ and $X_2^2>0$. Thus, \cref{eq3Am2s} and \cref{eq4Am2s} result in
$$
 \mu_1\left(S_1^{in}-k_1X_1^2\right) = D_2\quad\mbox{and}\quad \mu_2\left(S_2^{in}+k_2X_1^2-k_3X_2^2\right) = \frac{ D}{1-r}.
$$
Using \cref{Hypoth1,Hypoth2} and the expressions of $\lambda_1^2$, $\lambda_2^{2i}$, and $F_{2i}$ in \cref{TabFunc1},
we obtain two components $\left(X_1^2, X_2^2\right)$ and the two existence conditions in  \cref{TableAM2S}.
\end{itemize}
\item For $E_{10}$, we have $X_1^1>0$ and $X_2^1=0$. Hence, \cref{eq3Am2s1} and \cref{eq4Am2s2} result in
 \begin{align}
\mu_1\left(S_1^{in}-k_1X_1^2\right)X_1^2 + D_2\left(X_1^1- X_1^2\right) &=0,      \label{eq5Am2s}\\
\left[\mu_2\left(S_2^{in}+k_2X_1^2-k_3X_2^2\right) -D_2\right]X_2^2     &=0.      \label{eq6Am2s}
\end{align}
\begin{itemize}[leftmargin=*,label=\raisebox{0.25ex}{\tiny$\bullet$}]
\item For $E_{10}^{00}$, $X_1^2=X_2^2=0$. Then \cref{eq5Am2s} implies that $X_1^1=0$, which is in contradiction with the existence condition of $E_{10}$ where $X_1^1>0$. Hence, $E_{10}^{00}$ does not exist.
\item For $E_{10}^{10}$, $X_1^2>0$ and $X_2^2=0$. Thus, \cref{eq5Am2s} results in
\begin{equation}                                        \label{eqf1g1}
\mu_1\left(S_1^{in}-k_1X_1^2\right)= D_2\left(\frac{X_1^2-X_1^1}{X_1^2}\right),
\end{equation}
which is equivalent to $f_1(x)=g_1(x)$ (see \cref{TabFunc2}). From the component $X_1^1$ of $E_{10}$ in \cref{TableAM2}, it follows that $X_1^1<S_1^{in}/k_1$.
From \cref{lemH1}, it follows that the equation $f_1(x)=g_1(x)$ has a unique solution $X_1^{2*}\in\left(X_1^1,S_1^{in}/k_1\right)$ (see \cref{Figsoleq12}(a)). Thus, we obtain the components of steady state $E_{10}^{10}$ in \cref{TableAM2S}, which always exists.
\item For $E_{10}^{0i}$, $i=1,2$, $X_1^2=0$ and $X_2^2>0$. Then \cref{eq5Am2s} implies that $X_1^1=0$, which is in contradiction with the existence condition of $E_{10}$ where $X_1^1>0$. Hence, $E_{10}^{0i}$ does not exist.
\item For $E_{10}^{1i}$, $i=1,2$, $X_1^2>0$ and $X_2^2>0$. Thus, \cref{eq5Am2s} is equivalent to \cref{eqf1g1}. Similarly to the steady state $E_{10}^{10}$, equation \cref{eqf1g1} has a unique solution $X_1^{2*}\in \left(X_1^{1*},~S_1^{in}/k_1\right)$. In addition, \cref{eq6Am2s} implies that
$$
\mu_2\left(S_2^{in}+k_2X_1^{2*}-k_3X_2^2\right)= D_2.
$$
From \cref{Hypoth2} and the expression of $\lambda_2^{2i}$ in \cref{TabFunc1}, we obtain the component $X_2^2$ of the steady state $E_{10}^{1i}$ in \cref{TableAM2S}.
Moreover, the existence condition in \cref{TableAM2S} follows.
\end{itemize}
\item For $E_{0i}$, we have $X_1^1=0$ and $X_2^{1i} > 0$, $i=1,2$. Hence, \cref{eq3Am2s1} and \cref{eq4Am2s2} result in
 \begin{align}
\left[\mu_1\left(S_1^{in}-k_1X_1^2\right)- D_2 \right]X_1^2                         &=0,      \label{eq7Am2s1}\\
\mu_2\left(S_2^{in}+k_2X_1^2-k_3X_2^2\right)X_2^2 + D_2\left(X_2^{1i}- X_2^2\right) &=0.      \label{eq8Am2s2}
\end{align}
\begin{itemize}[leftmargin=*,label=\raisebox{0.25ex}{\tiny$\bullet$}]
\item For $E_{0i}^{00}$ [resp. $E_{0i}^{10}$], $X_1^2=X_2^2=0$ [resp. $X_1^2>0$ and $X_2^2=0$]. Then \cref{eq8Am2s2} implies that $X_2^{1i}=0$, which is in contradiction with $X_2^{1i}>0$. Hence, $E_{0i}^{00}$ and $E_{0i}^{10}$ does not exist.
\item For $E_{0i}^{01}$, we have $X_1^2=0$ and $X_2^2>0$. Then, \cref{eq8Am2s2} results in $X_2^2$ must be a solution of equation
$$
\mu_2\left(S_2^{in}-k_3X_2^2\right)= D_2\left(\frac{X_2^2-X_2^{1i}}{X_2^2}\right),
$$
which is equivalent to $f_2(X_2^2)=g_2(X_2^2)$ (see \cref{TabFunc2}). From the component $X_2^{1i}$ of $E_{0i}$ in \cref{TableAM2}, we have $X_2^{1i}<S_2^{in}/k_3$.
From \cref{lemH2}, it follows that the equation $f_2(X_2^2)=g_2(X_2^2)$ has at least one solution $X_2^{2*}\in \left(X_2^{1i}, S_2^{in}/k_3\right)$. Hence, we obtain the components of steady state $E_{0i}^{01}$ in \cref{TableAM2S}, where there exists at least one.
\item For $E_{0i}^{11}$, we have $X_1^2>0$ and $X_2^2>0$. Then, the component $X_1^2$ in \cref{TableAM2S} follows from equation \cref{eq7Am2s1} and \cref{Hypoth1}. In addition, \cref{eq8Am2s2} implies that $X_2^2$ must be a solution of equation
\begin{equation}                                        \label{Eqf3g2}
\mu_2\left(S_2^{in}+k_2X_1^2-k_3{X_2^2}\right)=D_2\left(\frac{X_2^2-X_2^{1i}}{X_2^2}\right),
\end{equation}
which is equivalent to $f_3(X_2^2)=g_2(X_2^2)$ (see \cref{TabFunc2}). Using the expression of $d$ in \cref{lemH3} and the component of $X_2^{1i}$ in \cref{TableAM2}, we see that $X_2^{1i}<d$. From \cref{lemH3}, it follows that the equation $f_3(X_2^2)=g_2(X_2^2)$ has at least one solution $X_2^{2*}\in \left(X_2^{1i}, d\right)$ (see \cref{Figsoleq3}). Consequently, we obtain the two components and the existence condition of the steady state $E_{0i}^{11}$ in \cref{TableAM2S}.
\end{itemize}
\item For $E_{1i}$, we have $X_1^1>0$ and $X_2^{1i} > 0$, $i=1,2$.
\begin{itemize}[leftmargin=*,label=\raisebox{0.25ex}{\tiny$\bullet$}]
\item For $E_{1i}^{00}$ [resp. $E_{1i}^{10}$, and $E_{1i}^{01}$,], $X_1^2=X_2^2=0$ [resp. ($X_1^2>0$ and $X_2^2=0$) and ($X_1^2=0$ and $X_2^2>0$)]. Then, \cref{eq3Am2s1} or \cref{eq4Am2s2} implies that $X_1^1=0$ or $X_2^{1i}=0$, which is a contradiction. Hence, these three types of steady states do not exist.
\item For $E_{1i}^{11}$, we have $X_1^2>0$ and $X_2^2>0$. Then, \cref{eq3Am2s1} and \cref{eq4Am2s2} are equivalent to \cref{eqf1g1} and \cref{Eqf3g2}, or, equivalently, $f_1(x)=g_1(x)$ and $f_3(x)=g_2(x)$, respectively.
Similarly to the steady state $E_{10}^{10}$, we obtain the component $X_1^2$ which is the unique solution of equation $f_1(x)=g_1(x)$.
Using the expression of $X_1^{1*}$ and $X_2^{1i*}$ in \cref{TableAM2} and the expressions of $F_{1i}$ in \cref{TabFunc1}, it follows that
$$
X_2^{1i*}=\frac{1}{k_3}\left(k_2 X_1^{1*}-\lambda_2^{1i}+S_2^{in}\right).
$$
From the expressions of $d$ in \cref{lemH3}, we have
$$
X_2^{1i*}-d=\frac{1}{k_3}\left(k_2\left(X_1^{1*}-X_1^{2*}\right)-\lambda_2^{1i}\right),
$$
which is negative since $X_1^{1*}<X_1^{2*}$ (see \cref{lemH1}).
From \cref{lemH3}, it follows that the equation $f_3(x)=g_2(x)$ has at least one solution in $\left(X_2^{1i*}, d\right)$.
Therefore, we obtain the two components of the steady state $E_{1i}^{11}$ in \cref{TableAM2S}, which always exists.
\end{itemize}
\end{itemize}
\end{proof}
\begin{proof}[\bf{Proof of \cref{propmultip}}]
For $k=0,1$ and $l=0,1,2$, the uniqueness of the six first steady states $\mathcal{E}_{00}^{kl}$ and $\mathcal{E}_{10}^{1l}$, follows from \cref{TabComSS}.
For $\mathcal{E}_{01}^{01}$, we have $\lambda_2^{11}<S_2^m$, that is, $X_2^{1*}>\left(S_2^{in}-S_2^m\right)/k_3:=x_1^m$ where $X_2^{1*}$ is defined in \cref{TabComSS} and $x_1^m$ in \cref{lemH2}, implying that the equation $f_2\left(X_2^2\right)=g_2\left(X_2^2\right)$ has a unique solution in $\left(X_2^{1*},S_2^{in}/k_3\right)$ (see \cref{Figsoleq12}(b)). That is, $\mathcal{E}_{01}^{01}$ is unique when it exists.
Inversely, for $\mathcal{E}_{02}^{01}$, we have $\lambda_2^{12}>S_2^m$, that is, $X_2^{1*}<x_1^m$. From \cref{lemH2}, the equation $f_2\left(X_2^2\right)=g_2\left(X_2^2\right)$ has at least one solution on $\left(X_2^{1*}, S_2^{in}/k_3\right)$ (see \cref{Figsoleq12}(b-c)).
For $\mathcal{E}_{0i}^{11}$, $i=1,2$, the $X_2^2$-component is a solution of $f_3(x)=g_2(x)$. Using the expressions of $X_2^{1*}$ and $X_1^{2*}$ in \cref{TabComSS} and the expressions of $x_2^m$ in \cref{lemH3}, a straightforward calculation shows that
$$
X_2^{1*}-x_2^m=\frac{1}{k_3}\left(S_2^m-\lambda_2^{11}-\frac{k_2}{k_1}\left(S_1^{in}-\lambda_1^2\right)\right).
$$
As $S_2^m>\lambda_2^{11}$ and $S_1^{in}>\lambda_1^2$, the two case $X_2^{1*}\geq x_2^m$ and $X_2^{1*}<x_2^m$ can be hold. From \cref{lemH3}, the equation $f_3(x)=g_2(x)$ has at least one solution in $\left(X_2^{1*}, d\right)$ (see \cref{Figsoleq3}).
The multiplicity of the other steady states $\mathcal{E}_{11}^{11}$ and $\mathcal{E}_{12}^{12}$ follows similar arguments.
\end{proof}
\begin{proof}[\bf{Proof of \cref{propstab}}]
Let $J$ be the Jacobian matrix of \cref{ModelAM2SR4} at $\left(X_1^1, X_2^1, X_1^2, X_2^2\right)$, which is given by the following lower block triangular matrix of the form
\begin{equation*}
J=
\begin{bmatrix}
J_1 & 0   \\
J_2 & J_3
\end{bmatrix};\quad
J_1=
\begin{bmatrix}
a_{11} & 0     \\
a_{21} & a_{22}
\end{bmatrix}, \quad
J_2=
\begin{bmatrix}
a_{31} & 0     \\
0      & a_{42}
\end{bmatrix},\quad
J_3=
\begin{bmatrix}
a_{33} & 0     \\
a_{43} & a_{44}
\end{bmatrix}
\end{equation*}
where
\begin{equation*}
\begin{array}{ll}
a_{11}=\mu_1\left(S_1^{in}-k_1X_1^1\right) - D_1-k_1\mu_1'\left(S_1^{in}-k_1X_1^1\right)X_1^1, \quad  a_{21}=k_2\mu_2'\left(S_2^{in}+k_2X_1^1-k_3X_2^1\right)X_2^1,  \\
a_{22}= \mu_2\left(S_2^{in}+k_2X_1^1-k_3X_2^1\right)-D_1-k_3\mu_2'\left(S_2^{in}+k_2X_1^1-k_3X_2^1\right)X_2^1, \quad  a_{31}=D_2, \\
a_{33}=\mu_1\left(S_1^{in}-k_1X_1^2\right) - D_2-k_1\mu_1'\left(S_1^{in}-k_1X_1^2\right)X_1^2,                 \quad  a_{43}=k_2\mu_2'\left(S_2^{in}+k_2X_1^2-k_3X_2^2\right)X_2^2,   \\
a_{44}=\mu_2\left(S_2^{in}+k_2X_1^2-k_3X_2^2\right)-D_2-k_3\mu_2'\left(S_2^{in}+k_2X_1^2-k_3X_2^2\right)X_2^2, \quad a_{42}=D_2.
\end{array}
\end{equation*}

Note that, $J_1$ is the Jacobian matrix of the upper two-dimensional subsystem of \cref{ModelAM2SR4} evaluated at $\left(X_1^1, X_2^1\right)$, and $J_3$ is the Jacobian matrix of the lower two-dimensional subsystem of \cref{ModelAM2SR4} evaluated at $\left(X_1^2, X_2^2\right)$. Thus, the stability of the steady
state is determined by the sign of the real parts of the eigenvalues of $J_1$ and $J_3$. A steady state is LES if the real parts of the eigenvalues of $J_1$ and $J_3$ at this steady state are negative.
Since they are lower triangular matrices, a steady state is LES if the corresponding eigenvalues $a_{ii}$, $i=1,\ldots,4$.

To simplify the presentation of the proof, we first determine the stability conditions of the steady states of the upper two-dimensional subsystem of \cref{ModelAM2SR4} which corresponds to the AM2 model.
Then, we will establish the stability conditions of the steady states of the lower two-dimensional subsystem of \cref{ModelAM2SR4}. Finally, the stability of the complete system \cref{ModelAM2SR4} follows by combining all conditions.
\begin{table}[h!]
\begin{center}
\caption{The stability conditions of all steady states of the upper two-dimensional subsystem of \cref{ModelAM2SR4}, where the existence condition are defined in \cref{TableAM2} and the functions $\lambda_1^1$ and $\lambda_2^{1i}$ are defined in \cref{TabFunc1}.}\label{TableStabModelAM2S1S}
\begin{tabular}{@{\hspace{2mm}}l@{\hspace{2mm}} @{\hspace{2mm}}l@{\hspace{2mm}} }		
\hline
            &   Stability conditions        \\ \Xhline{3\arrayrulewidth}
$E_{00}$    &      $S_1^{in}<\lambda_1^1$ and $S_2^{in}\notin\left[\lambda_2^{11}, \lambda_2^{12}\right]$   \\
$E_{10}$    &      $S_2^{in}+\frac{k_2}{k_1}\left(S_1^{in}-\lambda_1^1\right) \notin\left[\lambda_2^{11}, \lambda_2^{12}\right]$   \\
$E_{01}$    &     $S_1^{in}<\lambda_1^1$   \\
$E_{02}$    &     Always unstable   \\
$E_{11}$    &     LES when it exists   \\
$E_{12}$    &     Always unstable     \\ \hline
\end{tabular}
\end{center}
\end{table}
\begin{itemize}[leftmargin=*]
\item For $E_{00}=(0,0)$, the two eigenvalues of $J_1$ are
$$
a_{11}=\mu_1\left(S_1^{in}\right)-D_1 \quad\mbox{and}\quad a_{22}=\mu_2\left(S_2^{in}\right)-D_1.
$$
From \cref{Hypoth1,Hypoth2} and the expressions of $\lambda_1^1$, $\lambda_2^{11}$ and $\lambda_2^{12}$ in \cref{TabFunc1}, ${E}_{00}$ is LES if and only if $a_{11}$ and $a_{22}$ are negative, that is, the stability conditions of ${E}_{00}$ in
\cref{TableStabModelAM2S1S} hold.
\item For $E_{10}=\left(X_1^{1*}, 0\right)$, where $X_1^{1*}=(S_1^{in}-\lambda_1^1)/k_1$, the two eigenvalues of $J_1$ are
$$
a_{11}=-k_1\mu_1'\left(\lambda_1^1\right)X_1^{1*}\quad\mbox{and}\quad a_{22}= \mu_2\left(S_2^{in}+\frac{k_2}{k_1}\left(S_1^{in}-\lambda_1^1\right)\right)-D_1.
$$
From \cref{Hypoth1}, $a_{11}$ is negative. From \cref{Hypoth2}, $a_{22}$ is negative if and only if the stability condition of ${E}_{10}$ in \cref{TableStabModelAM2S1S} holds.
\item For $E_{0i}=\left(0,~X_2^{1i*}\right)$, $i=1,2$, where $X_2^{1i*}=\left(S_2^{in}-\lambda_2^{1i}\right)/k_3$, the two eigenvalues of $J_1$ are
$$
a_{11}=\mu_1\left(S_1^{in}\right)-D_1 \quad\mbox{and}\quad a_{22}=-k_3\mu_2'\left(\lambda_2^{1i}\right)X_2^{1i*}.
$$
Using \cref{Hypoth2}, $E_{02}$ is unstable when it exists since $a_{22}$ is positive. However, $E_{01}$ is LES if and only if $S_1^{in}<\lambda_1^1$ since $a_{22}$ is negative.
\item For $E_{1i}=\left(X_1^{1*}, X_2^{1i*}\right)$, $i=1,2$, where $X_1^{1*}=(S_1^{in}-\lambda_1^1)/k_1$ and $X_2^{1i*}=\left(S_2^{in}-\lambda_2^{1i}\right)/k_3$, the two eigenvalues of $J_1$ are
$$
a_{11}=-k_1\mu_1'\left(\lambda_1^1\right)X_1^{1*}\quad\mbox{and}\quad a_{22}=-k_3\mu_2'\left(\lambda_2^{1i}\right)X_2^{1i*}.
$$
From \cref{Hypoth2}, $E_{12}$ is unstable when it exists since $a_{22}$ is positive. However, $E_{11}$ is LES when it exists since $a_{11}$ and $a_{22}$ are negative.
\end{itemize}

Let us now consider the lower two-dimensional system of \cref{ModelAM2SR4}. From \cref{TableStabModelAM2S1S}, it follows that the steady states $E_{02}^{01}$, $E_{02}^{11}$ and $E_{12}^{11}$ are unstable.
\begin{table}[!ht]
\begin{center}
\caption{The stability conditions of all steady states of the lower two-dimensional system of \cref{ModelAM2SR4}. The functions $f_1$, $f_3$, $g_1$ and $g_2$ are defined in \cref{TabFunc2} while $X_1^{2*}$ is a unique solution of equation $f_1(x)=g_1(x)$ and $X_2^{2*}$ is a solution of equation $f_3(x)=g_2(x)$.} \label{TableStabModelAM2S2S}
\begin{tabular}{@{\hspace{2mm}}l@{\hspace{2mm}} @{\hspace{2mm}}l@{\hspace{2mm}} }		
\hline
                 &   Stability conditions           \\ \Xhline{3\arrayrulewidth}
$E_{00}^{00}$    &      $S_1^{in}<\lambda_1^2$ and $S_2^{in}\notin\left[\lambda_2^{21}, \lambda_2^{22}\right]$    \\
$E_{00}^{01}$    &      $S_1^{in}<\lambda_1^2$      \\
$E_{00}^{02}$    &      Always unstable             \\
$E_{00}^{10}$    &      $S_2^{in}+\frac{k_2}{k_1}\left(S_1^{in}-\lambda_1^2\right) \notin\left[\lambda_2^{21}, \lambda_2^{22}\right]$            \\
$E_{00}^{11}$    &      LES when it exists          \\
$E_{00}^{12}$    &      Always unstable             \\
$E_{10}^{10}$    &     $S_2^{in}+k_2X_1^{2*}\notin\left[\lambda_2^{21},\lambda_2^{22}\right]$            \\
$E_{10}^{11}$    &     LES when it exists           \\
$E_{10}^{12}$    &     Always unstable             \\
$E_{01}^{01}$    &     $S_1^{in}<\lambda_1^2$      \\
$E_{01}^{11}$    &     $g_2'\left(X_2^{2*}\right)>f_3'\left(X_2^{2*}\right)$      \\
$E_{11}^{11}$    &     $g_2'\left(X_2^{2*}\right)>f_3'\left(X_2^{2*}\right)$      \\ \hline
\end{tabular}
\end{center}
\end{table}
\begin{itemize}[leftmargin=*]
\item For $E_{00}^{00}=(0,0)$, the two eigenvalues of $J_3$ are
$$
a_{33}=\mu_1\left(S_1^{in}\right)-D_2 \quad\mbox{and}\quad a_{44}=\mu_2\left(S_2^{in}\right)-D_2,
$$
which are negative if and only if the two stability conditions in \cref{TableStabModelAM2S2S} hold.
\item For $E_{00}^{0i}=\left(0,~X_2^{2i*}\right)$, $i=1,2$, where $X_2^{2i*}=\left(S_2^{in}-\lambda_2^{2i}\right)/k_3$, the eigenvalues of $J_3$ are written as follows:
$$
a_{33}=\mu_1\left(S_1^{in}\right)-D_2\quad \mbox{and}\quad a_{44}=-k_3\mu_2'\left(\lambda_2^{2i}\right)X_2^{2i*}.
$$
From \cref{Hypoth2}, $E_{00}^{02}$ is unstable when it exists since $a_{44}$ is positive.
However, from \cref{Hypoth1}, $E_{00}^{01}$ is LES if and only if $S_1^{in}<\lambda_1^2$, since $a_{44}$ is negative.
\item For $E_{00}^{10}=\left(X_1^{2*}, 0\right)$, where $X_1^{2*}=\left(S_1^{in}-\lambda_1^2\right)/k_1$, the eigenvalues of $J_3$ are
$$
a_{33}=-k_1\mu_1'\left(\lambda_1^2\right)X_1^{2*} \quad\mbox{and}\quad a_{44}=\mu_2\left(S_2^{in}+\frac{k_2}{k_1}\left(S_1^{in}-\lambda_1^2\right)\right)-D_2.
$$
From \cref{Hypoth1}, $a_{33}$ is negative. From \cref{Hypoth2}, it follows that $E_{00}^{10}$ is LES if and only if the stability condition in \cref{TableStabModelAM2S2S} holds.
\item For $E_{00}^{1i}=\left(X_1^{2*}, X_2^{2i*}\right)$, where $X_1^{2*}=\frac{S_1^{in}-\lambda_1^2}{k_1}$ and $X_2^{2i*}=\frac{1}{k_3} \left(S_2^{in}+\frac{k_2}{k_1}\left(S_1^{in}-\lambda_1^2\right)-\lambda_2^{2i}\right)$, the eigenvalues of $J_3$ are
$$
a_{33}=-k_1\mu_1'\left(\lambda_1^2\right)X_1^{2*} \quad\mbox{and}\quad a_{44}=-k_3\mu_2'\left(\lambda_2^{2i}\right)X_2^{2i*}, \quad i=1,2.
$$
From \cref{Hypoth1}, $a_{33}$ is negative. From \cref{Hypoth2} and the expressions of ${X_2^{2j}}^*$, the steady state $\mathcal{E}_{00}^{12}$ is unstable when it exists since $a_{44}$ is positive. However, $E_{00}^{11}$ is LES since $a_{44}$ is negative.
\item For $E_{10}^{10}=\left(X_1^{2*}, 0\right)$, where $X_1^{2*}$ is the unique solution of equation $f_1(X_1^2)=g_1(X_1^2)$, the eigenvalues of $J_3$ are
$$
a_{33}=-D_2\frac{X_1^{1*}}{X_1^{2*}}-k_1\mu_1'\left(S_1^{in}-k_1X_1^{2*}\right)X_1^{2*} \quad\mbox{and}\quad a_{44}=\mu_2\left(S_2^{in}+k_2X_1^{2*}\right)-D_2.
$$
Using the definitions of $f_1$ and $g_1$ in \cref{TabFunc2}, a straightforward calculation shows that
$$
a_{33}=-\left[g_1'\left(X_1^{2*}\right)-f_1'\left(X_1^{2*}\right)\right]X_1^{2*}.
$$
Recall, from the proof of \cref{lemH1}, that the function $x\mapsto g_1(x)-f_1(x)$ is increasing. Hence, $a_{33}$ is negative. Using \cref{Hypoth2}, $E_{10}^{10}$ is LES if and only if $a_{44}$ is negative, that is, the stability condition in \cref{TableStabModelAM2S2S} holds.
\item For $E_{10}^{1i}=\left(X_1^{2*},X_2^{2i*}\right)$, $i=1,2$, where $X_1^{2*}$ is the unique solution of equation $f_1(x)=g_1(x)$ and $X_2^{2i*}=\frac{1}{k_3}\left(S_2^{in}+k_2X_1^{2*}-\lambda_2^{2i}\right)$, the eigenvalues of $J_3$ are
$$
a_{33}=-\left[g_1'\left(X_1^{2*}\right)-f_1'\left(X_1^{2*}\right)\right]X_1^{2*} \quad\mbox{and}\quad a_{44}=-k_3\mu_2'\left(\lambda_2^{2i}\right)X_2^{2i*}.
$$
Similarly to the steady state $E_{10}^{10}$, the term $a_{33}$ is negative. Thus, $E_{10}^{12}$ is unstable when it exists since $a_{44}$ is positive, while $E_{10}^{11}$ is LES when it exists since $a_{44}$ is negative.
\item For $E_{01}^{01}=\left(0,X_2^{2*}\right)$, where $X_2^{2*}$ is the unique solution of equation $f_2\left(X_2^2\right)=g_2\left(X_2^2\right)$ (see \cref{propmultip}), the eigenvalues of $J_3$ are given by
$$
a_{33}=\mu_1\left(S_1^{in}\right) - D_2 \quad\mbox{and}\quad a_{44}=-D_2\frac{X_2^{1*}}{X_2^{2*}}-k_3\mu_2'\left(S_2^{in}-k_3X_2^{2*}\right)X_2^{2*},
$$
where $X_2^{1*}=\left(S_2^{in}-\lambda_2^{11}\right)/k_3$. From \cref{Hypoth1}, $a_{33}$ is negative if and only if $S_1^{in}<\lambda_1^2$. Using the definitions of $f_2$ and $g_2$ in \cref{TabFunc2}, we can easily calculate
$$
a_{44}=-\left[g_2'\left(X_2^{2*}\right)-f_2'\left(X_2^{2*}\right)\right]X_2^{2*}.
$$
Recall that $X_2^{1*}>x_1^m:=\left(S_2^{in}-S_2^m\right)/k_3$ so that the function $x\mapsto g_2(x)-f_2(x)$ is increasing (see the proof of \cref{lemH2}). Hence, $a_{44}$ is negative. Therefore, the stability condition of $E_{01}^{01}$ in \cref{TableStabModelAM2S2S} hold.
\item For $E_{01}^{11}=\left(X_1^{2*},X_2^{2*}\right)$, where $X_1^{2*} = \frac{1}{k_1}\left(S_1^{in}-\lambda_1^2\right)$ and $X_2^{2*}$ is a solution of $f_3(x)=g_2(x)$, the eigenvalues of $J_3$ are given by
$$
a_{33}=-k_1\mu_1'\left(\lambda_1^2\right)X_1^{2*} \quad\mbox{and}\quad a_{44}=-\left[g_2'\left(X_2^{2*}\right)-f_3'\left(X_2^{2*}\right)\right]X_2^{2*},
$$
From \cref{Hypoth1}, $a_{33}$ is negative. Consequently, $E_{10}^{11}$ is LES if and only if $a_{44}$ is negative, that is, the stability condition in \cref{TableStabModelAM2S2S} holds.
\item For $E_{11}^{11}=\left(X_1^{2*},X_2^{2*}\right)$, where $X_1^{2*}$ is a solution of equation $f_1(x)=g_1(x)$ and $X_2^{2*}$ is a solution of equation $f_3(x)=g_2(x)$,
 the eigenvalues of $J_3$ are given by
$$
a_{33}=-\left[g_1'\left(X_1^{2*}\right)-f_1'\left(X_1^{2*}\right)\right]X_1^{2*} \quad\mbox{and}\quad a_{44}=-\left[g_2'\left(X_2^{2*}\right)-f_3'\left(X_2^{2*}\right)\right]X_2^{2*},
$$
Similarly to $E_{10}^{10}$, $a_{33}$ is negative. As for $E_{01}^{11}$, $E_{11}^{11}$ is LES if and only if $a_{44}$ is negative, that is, the stability condition in \cref{TableStabModelAM2S2S} holds.
\end{itemize}
\end{proof}
\begin{proof}[\bf{Proof of \cref{propPositionCurv}}]
The function $D \mapsto \lambda_1^1(D)$ is defined, positive and increasing on the interval $[0,rm_1)$. It tends to infinity as $D$ tends to $rm_1$. From \cref{TabSurf}, it follows that
$\gamma_0$ and $\gamma_6$ intersect if $rm_1>D_1^*$, that is, $m_1>\mu_2\left(S_2^{in}\right)$. In this case, we have $D = D_1^* = r \mu_2\left(S_2^{in}\right)$ and $S_1^{in}=\lambda_1^1\left(D_1^*,r\right)$, that is, $\gamma_0 \cap \gamma_6 = P_1$.
Since $D_1^*<\min(D_1^m,rm_1)$, then the functions $F_{11}$ and $F_{12}$ are defined in $D_1^*$.
At this values, we have $\mu_1\left(S_1^{in}\right)=\mu_2\left(S_2^{in}\right)=D_1$. If $S_2^{in} < S_2^m$, we obtain $S_2^{in}=\lambda_2^{11}$; otherwise $S_2^{in}=\lambda_2^{12}$.
From the definition of the functions $F_{11}$ and $F_{12}$ in \cref{TabFunc1}, it follows that $S_1^{in}=\lambda_1^1$ in these two cases. Consequently, $\gamma_0 \cap \gamma_6 \cap \gamma_4 = P_1$ if $S_2^{in} < S_2^m$ and $\gamma_0 \cap \gamma_6 \cap \gamma_5 = P_1$, otherwise.
Similar arguments apply to the second part of the proposition.
\end{proof}
\begin{proof}[\bf{Proof of \cref{PropF21F11}}]
Under \cref{LemmaPositionLambdai}, we have for all $r\in(0,1/2)$
$$
\lambda_1^2(D,r)<\lambda_1^1(D,r)\quad\mbox{and}\quad \lambda_2^{21}(D,r)<\lambda_2^{11}(D,r).
$$
From the definition of $F_{11}$ and $F_{21}$ in \cref{TabFunc1}, $F_{21}\left(D,r,S_2^{in}\right)<F_{11}\left(D,r,S_2^{in}\right)$. Similarly, $F_{21}\left(D,r,S_2^{in}\right)>F_{11}\left(D,r,S_2^{in}\right)$, for all $r\in(1/2,1)$.
\end{proof}

Let $\phi_i$, $i=1,2$ be the function defined in \cref{TabFunc1}. To describe the existence condition $\phi_i>0$ of the steady state $\mathcal{E}_{10}^{1i}$ in \cref{TableCondExis} and the stability condition $\phi_1<0$ or $\phi_2>0$ of the steady state $\mathcal{E}_{10}^{10}$ in \cref{propstab}, we need the following lemma showing the positivity of $\phi_i$ according to the values of $S_2^{in}$ and $D$.
\begin{lemma}                                           \label{Lemr3}
Let $D\in\left[0,\min\left(rm_1,(1-r)\mu_2\left(S_2^m\right)\right)\right]$ and $\phi_j(D)$ be the function defined in \cref{TabFunc1}.
For all $S_2^{in}>S_2^m$, we have $\phi_1(D)>0$. In addition, when $D>D_2^*=(1-r)\mu_2\left(S_2^{in}\right)$, then $\phi_2(D)>0$.
\end{lemma}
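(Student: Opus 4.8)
The plan is to unwind the definition $\phi_j\left(D,r,S_1^{in},S_2^{in}\right) = S_2^{in} + k_2 X_1^{2*}\left(D,r,S_1^{in}\right) - \lambda_2^{2j}(D,r)$ from \cref{TabFunc1} and to bound each piece by a sign estimate. The one structural input is that $X_1^{2*}$ is strictly positive throughout the relevant parameter range, so that $\phi_j(D) > S_2^{in} - \lambda_2^{2j}(D,r)$; after that the problem reduces to comparing the input concentration $S_2^{in}$ with the break-even concentrations $\lambda_2^{21}$ and $\lambda_2^{22}$.

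First I would check that $X_1^{2*}$ is well defined and positive. Since $\phi_j$ is defined only when $S_1^{in}>\lambda_1^1$, the component $X_1^{1*}=\left(S_1^{in}-\lambda_1^1\right)/k_1$ satisfies $0<X_1^{1*}<S_1^{in}/k_1$, so by \cref{lemH1} the equation $f_1(x)=g_1(x)$ has a unique solution $X_1^{2*}\in\left(X_1^{1*},S_1^{in}/k_1\right)$; in particular $k_2X_1^{2*}>0$. Also, on the stated interval $D\in\left[0,\min\left(rm_1,(1-r)\mu_2\left(S_2^m\right)\right)\right]$ one has $D\leq D_2^m=r_2\mu_2\left(S_2^m\right)$, hence both $\lambda_2^{21}(D,r)$ and $\lambda_2^{22}(D,r)$ are finite.

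For the first claim I would use \cref{Hypoth2}: since $\mu_2$ increases on $\left(0,S_2^m\right)$ and decreases on $\left(S_2^m,\infty\right)$ while $\mu_2(S_2)=D_2:=D/(1-r)$ with $D_2\leq\mu_2\left(S_2^m\right)$, the smaller root satisfies $\lambda_2^{21}(D,r)\leq S_2^m$. Hence, whenever $S_2^{in}>S_2^m$, we get $\lambda_2^{21}(D,r)\leq S_2^m<S_2^{in}$, and adding $k_2X_1^{2*}>0$ gives $\phi_1(D)=S_2^{in}+k_2X_1^{2*}-\lambda_2^{21}>0$. For the second claim, assume in addition $D>D_2^*=(1-r)\mu_2\left(S_2^{in}\right)$, i.e.\ $D_2>\mu_2\left(S_2^{in}\right)$; note this range is nonempty precisely because $S_2^{in}>S_2^m$ forces $\mu_2\left(S_2^{in}\right)<\mu_2\left(S_2^m\right)$, so $D_2^*<D_2^m$. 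The larger root $\lambda_2^{22}(D,r)$ lies in $\left(S_2^m,\infty\right)$ with $\mu_2\left(\lambda_2^{22}\right)=D_2>\mu_2\left(S_2^{in}\right)$; since $\mu_2$ is strictly decreasing on $\left(S_2^m,\infty\right)$ and both $\lambda_2^{22}$ and $S_2^{in}$ belong to that interval, it follows that $\lambda_2^{22}(D,r)<S_2^{in}$. Combining with $k_2X_1^{2*}>0$ yields $\phi_2(D)=S_2^{in}+k_2X_1^{2*}-\lambda_2^{22}>0$.

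The argument is entirely a chain of sign comparisons, so I do not expect a genuine obstacle. The only point requiring a little care is confirming that $X_1^{2*}$ and $\lambda_2^{2j}$ are defined on the whole $D$-interval under consideration, and that the extra hypothesis $D>D_2^*$ in the second part is compatible with $D\leq D_2^m$; this is dispatched by \cref{lemH1} together with the domain conventions of \cref{TabFunc1}.
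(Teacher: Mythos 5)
Your proof is correct and follows essentially the same route as the paper: both parts reduce to showing $S_2^{in}>\lambda_2^{2j}(D,r)$ (using $\lambda_2^{21}\leq S_2^m<S_2^{in}$ for $j=1$, and the fact that $\lambda_2^{22}(D,r)<\lambda_2^{22}\left(D_2^*,r\right)=S_2^{in}$ for $D>D_2^*$, which is exactly your monotonicity argument on the decreasing branch of $\mu_2$) and then adding $k_2X_1^{2*}>0$. Your explicit verification via \cref{lemH1} that $X_1^{2*}$ is well defined and positive is a detail the paper leaves implicit, but it is not a different method.
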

\begin{proof}[\bf{Proof of \cref{Lemr3}}]
For all $S_2^{in}>S_2^m$, we have $S_2^{in}>\lambda_2^{21}(D,r)$, so that $\phi_1(D)>0$ holds. In addition, from the definition of $D_2^*$ in  \cref{TabSurf}, we have $\lambda_2^{22}(D,r)<\lambda_2^{22}(D_2^*,r)=S_2^{in}$ for all $D>D_2^*$, so that $\phi_2(D)>0$ holds.
\end{proof}
In the next remark, we show that the existence condition of $\mathcal{E}_{10}^{1i}$ and the stability condition of $\mathcal{E}_{10}^{10}$ are not always satisfied where the function $\phi_i(D)$ can change their sign as we will see in \cref{FigCondE101i}.
\begin{remark}
For the set of parameters in \cref{TabParamVal}, the condition $\phi_2(D)>0$ is not always verified when $D<D_2^*$ (see \cref{FigCondE101i}(a)).
For the set of parameters in \cref{FigDO3} where $S_2^{in}<S_2^m$, condition $\phi_1(D)>0$ holds (see \cref{FigCondE101i}(b)) and the condition $\phi_2(D)>0$ is not always verified (see \cref{FigCondE101i}(c)).
\end{remark}
\begin{figure}[!ht]
\setlength{\unitlength}{1.0cm}
\begin{center}
\begin{picture}(7.8,4.7)(0,0)
\put(-4,0){{\includegraphics[width=5.3cm,height=4.5cm]{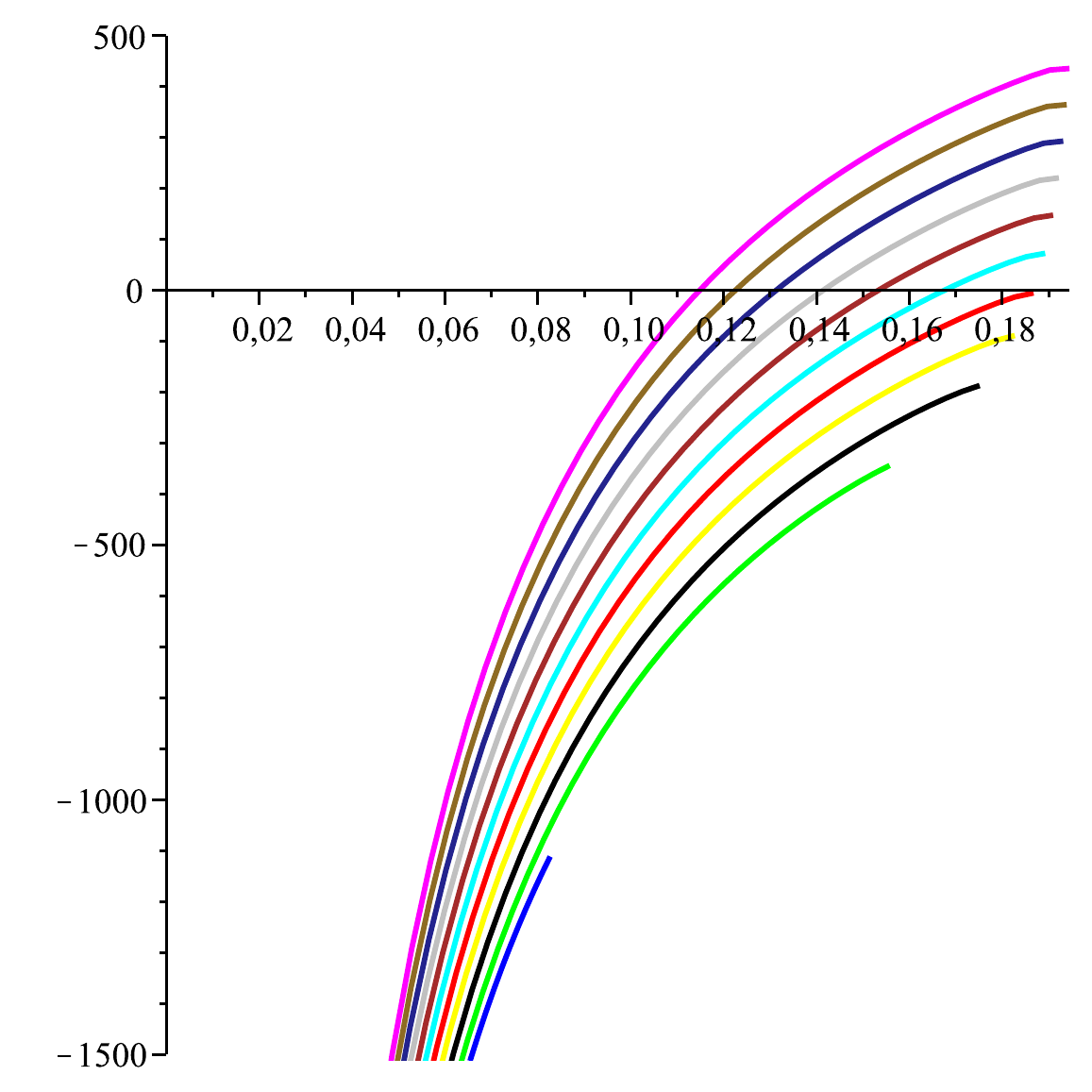}}}
\put(1.1,0){{\includegraphics[width=5.3cm,height=4.5cm]{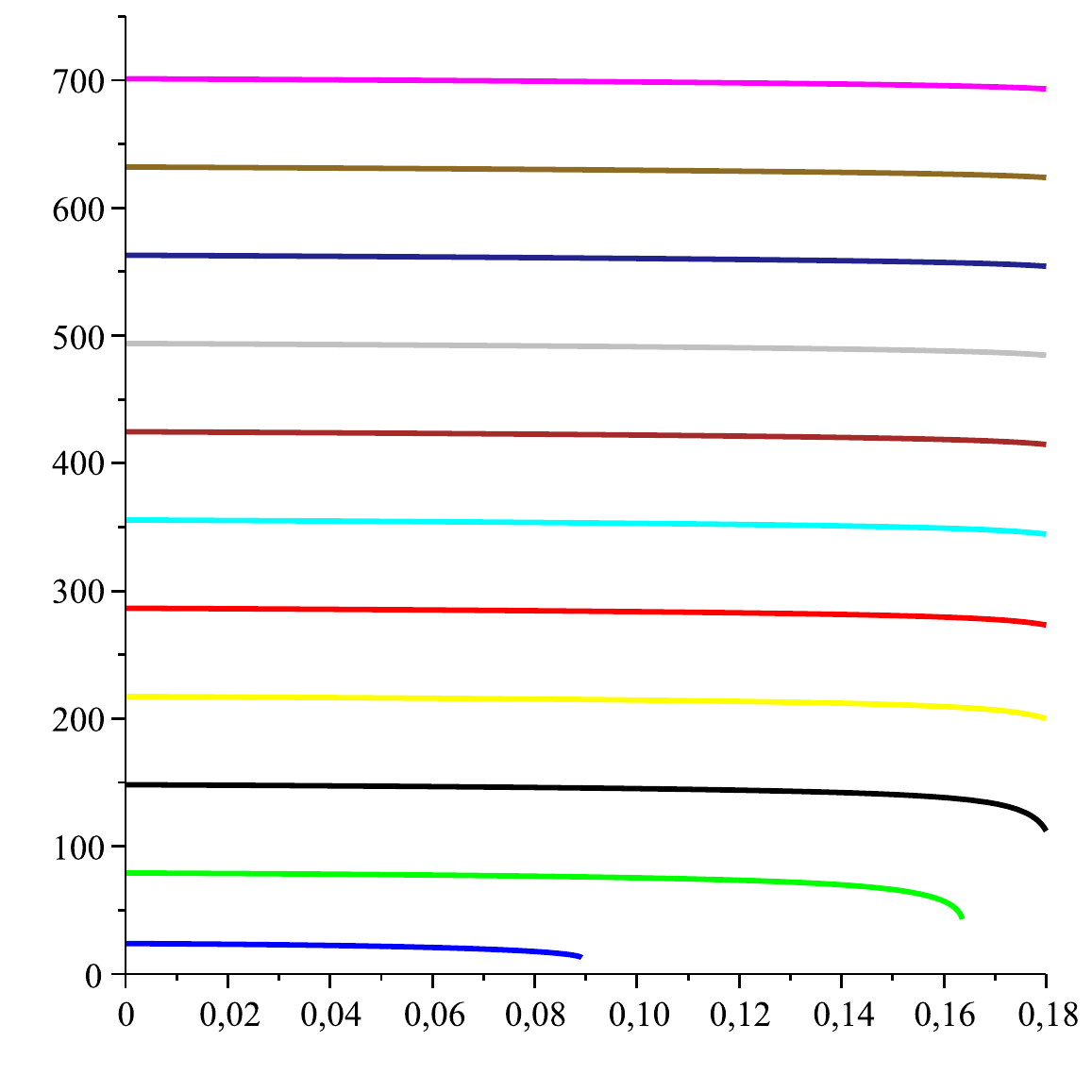}}}
\put(6.5,0){{\includegraphics[width=5.3cm,height=4.5cm]{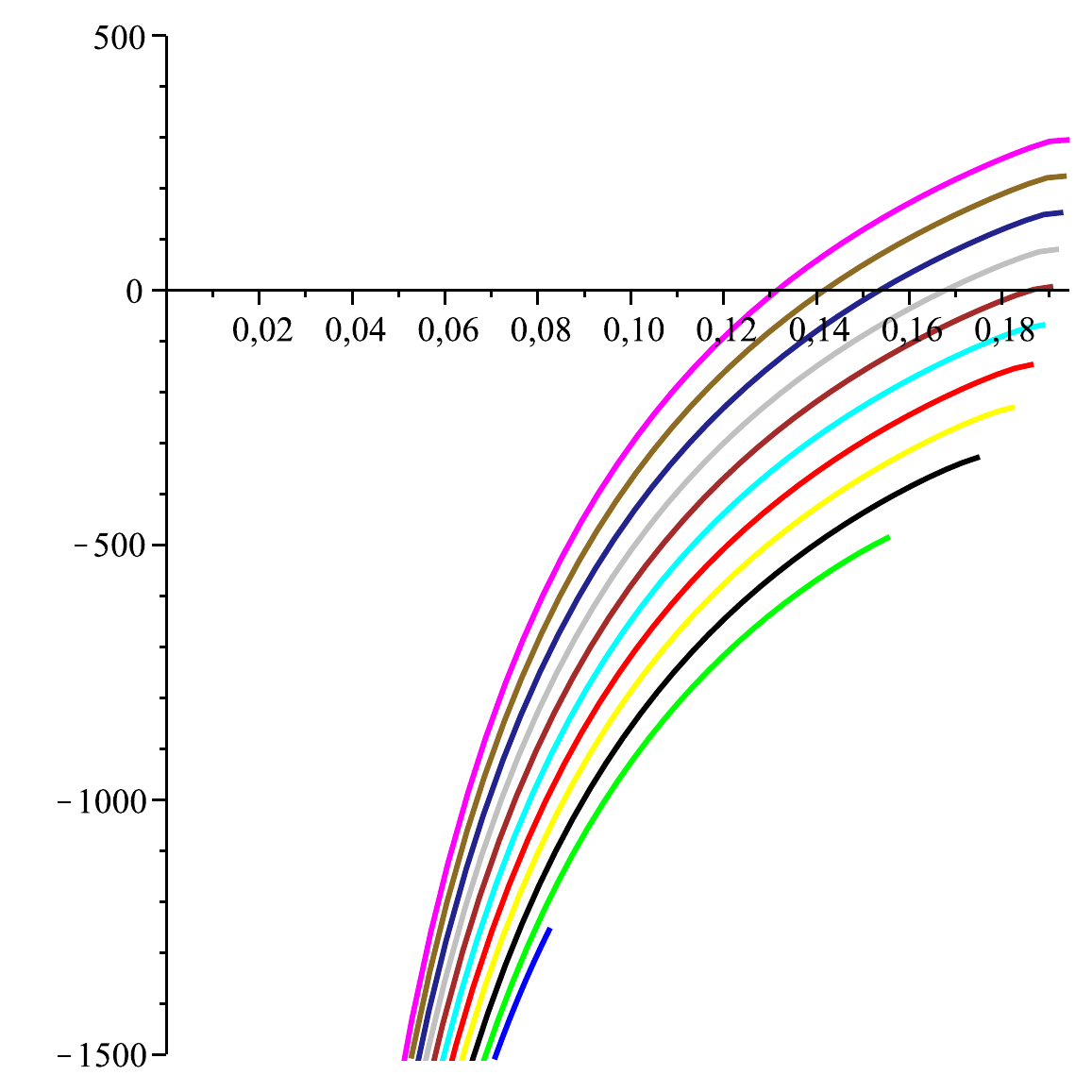}}}
\put(-2,4.5){\sc (a)}
\put(1.15,3.25){\sc $D$}
\put(-3.4,4.5){\sc $\phi_2$}
\put(3.7,4.5){\sc (b)}	
\put(6.3,0.45){\sc $D$}
\put(1.6,4.6){\sc $\phi_1$}
\put(9,4.5){{\sc (c)}}	
\put(11.7,3.25){\sc $D$}
\put(7.2,4.5){\sc $\phi_2$}
\end{picture}
\end{center}
\vspace{-0.4cm}
\caption{Curves of the functions $\phi_1(D)$ and $\phi_2(D)$, for several fixed values of $S_1^{in}=5,25,50,75,100,125,150,175,200,225,250$ and $S_2^m\approx48.74$: (a) $ \phi_2(D)$ when $S_2^{in}=150>S_2^m$ (b) $ \phi_1(D)$ when $S_2^{in}=10<S_2^m$; (c) $ \phi_2(D)$ when $S_2^{in}=10<S_2^m$.} \label{FigCondE101i}
\end{figure}
\begin{proof}[\bf{Proof of \cref{propstabstabDO1}}]
We determine the operating diagram of \cref{FigDO1} in the case $r\in(0,1/2)$, $S_2^{in}>S_2^m$, and $m_1>\mu_2\left(S_2^m\right)$.
However, the cases in items (2) and (3) can be treated similarly and are left to the
reader using \cref{CurvFigDO1,FigDO34Cruv}, respectively. In addition, the only difference between items (4) and (1) is the stability of the steady states $\mathcal{E}_{00}^{00}$, $\mathcal{E}_{00}^{01}$, and $\mathcal{E}_{01}^{01}$, where the minimum of the break-even concentrations $\lambda_1^i$ and $\lambda_2^{ij}$ is given by
$$
\min\left(\lambda_1^1,\lambda_1^2\right)=\lambda_1^1,\quad\min\left(\lambda_2^{11},\lambda_2^{21}\right)=\lambda_2^{11}\quad\mbox{and}\quad\max\left(\lambda_2^{12},\lambda_2^{22}\right)=\lambda_2^{12},
$$
using \cref{LemmaPositionLambdai} for $r>1/2$. Inversely, in item (1) for $r<1/2$, we have
\begin{equation}                             \label{lambdaMmRinfDemi}
\min\left(\lambda_1^1,\lambda_1^2\right)=\lambda_1^2,\quad\min\left(\lambda_2^{11},\lambda_2^{21}\right)=\lambda_2^{21} \quad\mbox{and}\quad  \max\left(\lambda_2^{12},\lambda_2^{22}\right)=\lambda_2^{22}.
\end{equation}
From \cref{propstab}, the six steady states $\mathcal{E}_{00}^{02}$, $\mathcal{E}_{00}^{12}$, $\mathcal{E}_{10}^{12}$, $\mathcal{E}_{02}^{01}$, $\mathcal{E}_{02}^{11}$ and $\mathcal{E}_{12}^{11}$ are unstable, when they exist.
Next, we establish the existence and stability of each steady state in the various regions of the operating diagram in the case of item (1).
\begin{figure}[!ht]
\setlength{\unitlength}{1.0cm}
\begin{center}
\begin{picture}(7,6.3)(0,0)
\put(-4.5,0){\rotatebox{0}{\includegraphics[width=7.5cm,height=5.5cm]{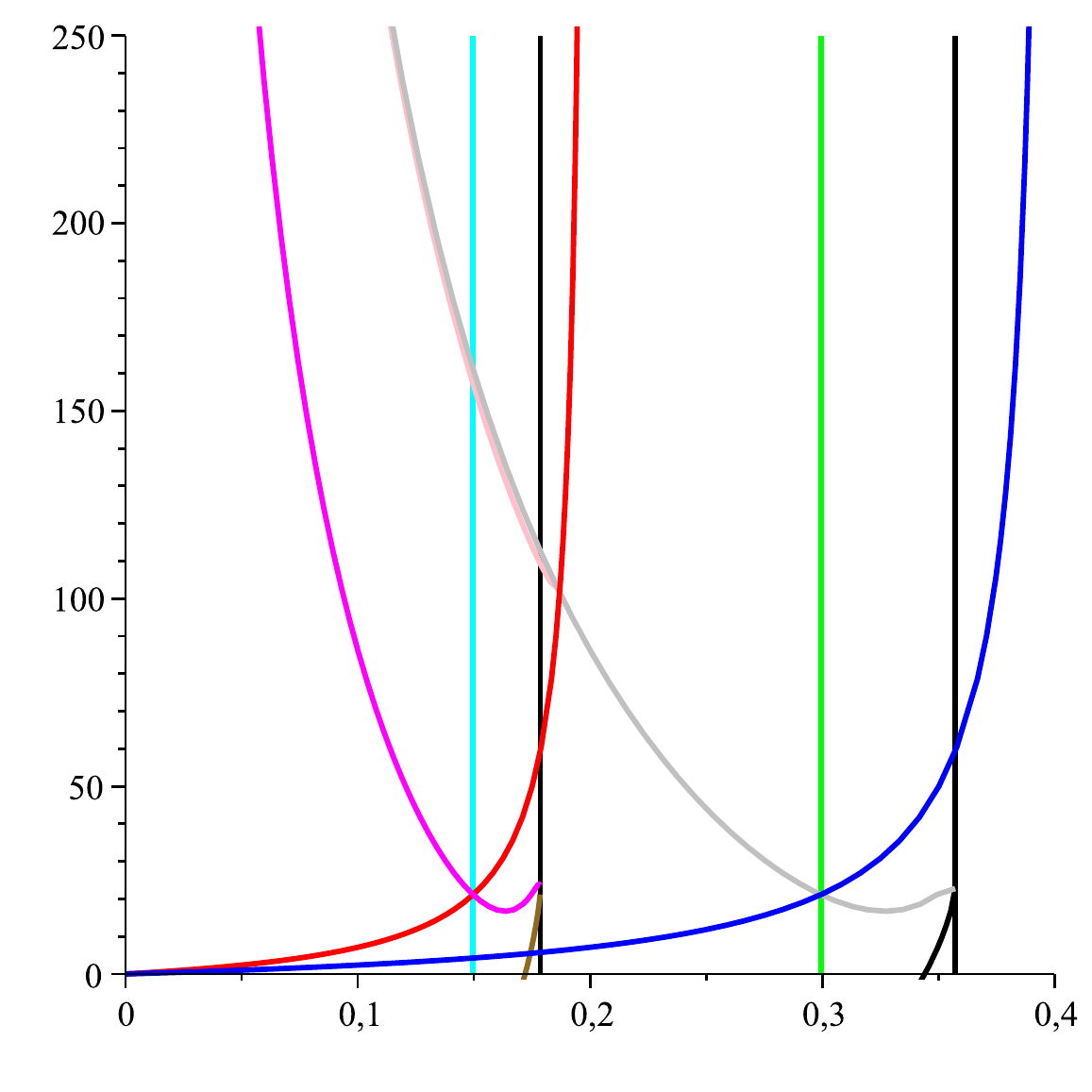}}}
\put(3.5,0){\rotatebox{0}{\includegraphics[width=7.5cm,height=5.5cm]{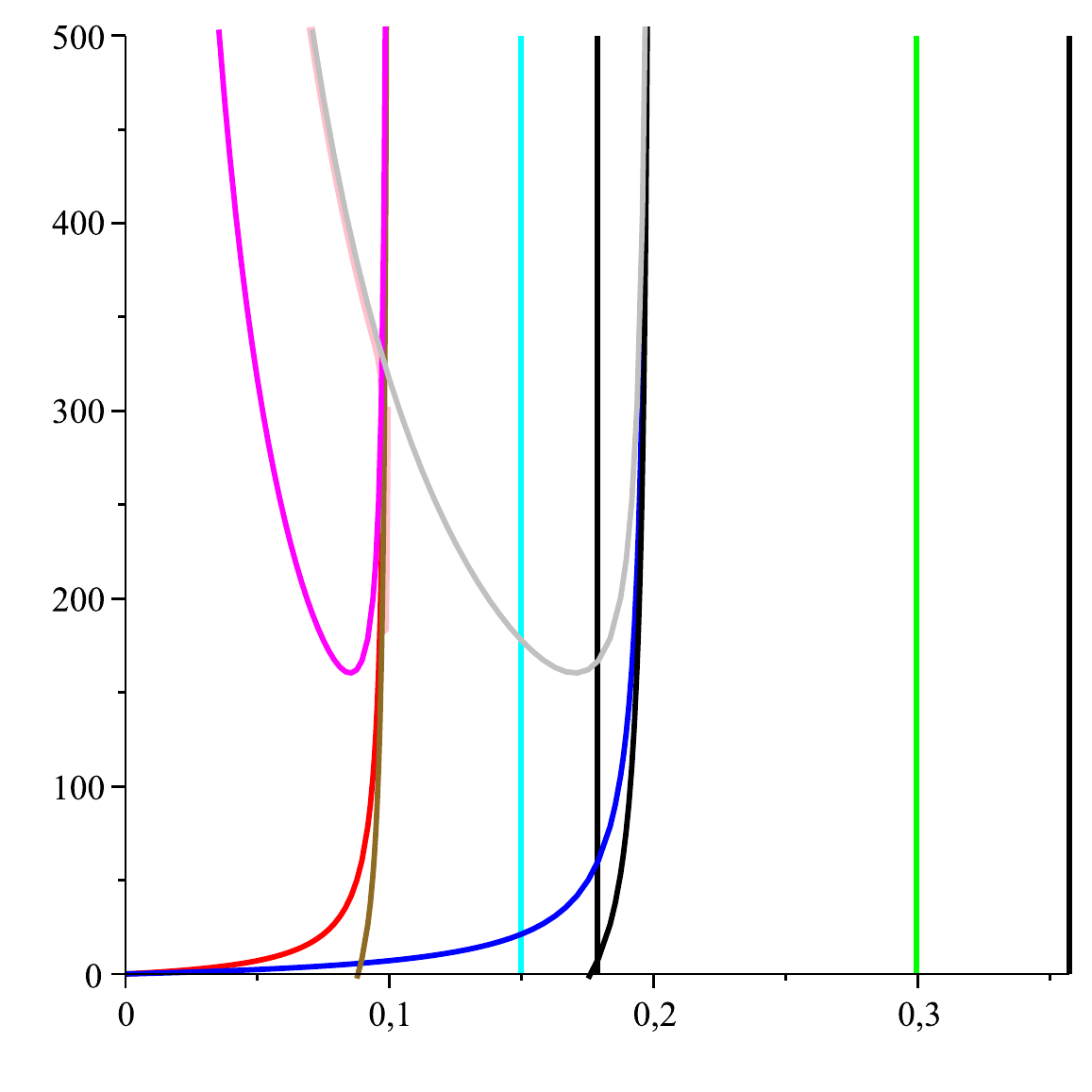}}}
\put(-1,6){\sc $(a)$}
\put(-4,5.5){\sc $S_1^{in}$}
\put(2.8,0.5){\sc $D$}
\put(-0.8,5.5){\sc { $\gamma_0$}} 
\put(-1.1,5.5){\sc { $\gamma_8$}} 
\put(-1.55,5.5){\sc { $\gamma_6$}} \put(-2.15,5.5){\sc {\color{pink} $\gamma_{15}$}}	
\put(-0.4,2.1){\sc { $\gamma_3$}} 
\put(0.8,5.5){\sc { $\gamma_7$}} 
\put(-3,5.5){\sc { $\gamma_5$}}
\put(2.4,5.5){\sc { $\gamma_1$}}
\put(1.7,5.5){\sc { $\gamma_9$}}
\put(1.8,0.35){\sc $\nwarrow$}
\put(1.9,0.1){\sc { $\gamma_2 $}}
\put(-0.9,0.35){\sc $\nwarrow$}
\put(-0.8,0.1){\sc { $ \gamma_4 $}}		
\put(6.8,6){{\sc $(b)$}}
\put(4.2,5.5){\sc $S_1^{in}$}
\put(10.9,0.5){\sc $D$}
\put(5.85,5.5){\sc { $\gamma_0$}}
\put(7.3,5.5){\sc { $\gamma_8$}}
\put(6.75,5.5){\sc { $\gamma_6$}}
\put(5.4,5.5){\sc {\color{pink} $\gamma_{15}$}}	
\put(6.3,3){\sc { $\gamma_3$}}
\put(9.5,5.5){\sc { $\gamma_7$}}
\put(4.7,5.5){\sc { $\gamma_5$}}
\put(7.7,5.5){\sc { $\gamma_1$}}
\put(10.55,5.5){\sc { $\gamma_9$}}
\put(7.5,0.35){\sc $\nwarrow$}
\put(7.7,0.1){\sc { $\gamma_2 $}}
\put(6,0.35){\sc $\nwarrow$}
\put(6.1,0.1){\sc { $ \gamma_4 $}}		
\end{picture}
\vspace{-0.4cm}
\caption{The curves $\gamma_i,i=1-9$ and $\gamma_{15}$, when $r=1/3\in(0,1/2)$, $S_2^{in}=150>S_2^m\approx48.740$: (a) $m_1=0.6>\mu_2\left(S_2^m\right)\approx0.535$; (b) $m_1=0.3<\mu_2\left(S_2^m\right)\approx0.535$.}\label{CurvFigDO1}
\end{center}
\end{figure}
\begin{figure}[!ht]
\setlength{\unitlength}{1.0cm}
\begin{center}
\begin{picture}(7,6.3)(0,0)
\put(3.5,0){\rotatebox{0}{\includegraphics[width=7.5cm,height=5.5cm]{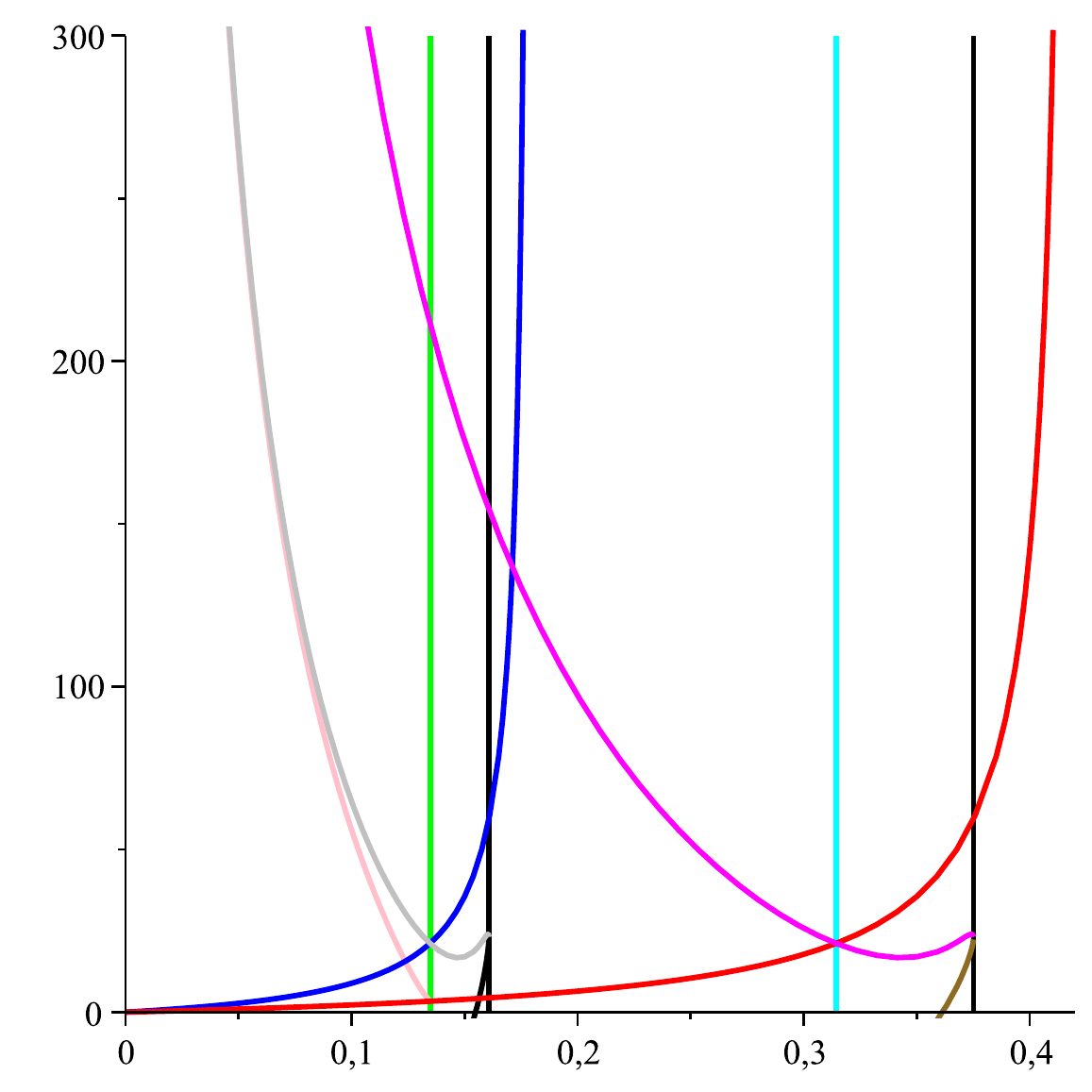}}}
\put(-4.5,0){\rotatebox{0}{\includegraphics[width=7.5cm,height=5.5cm]{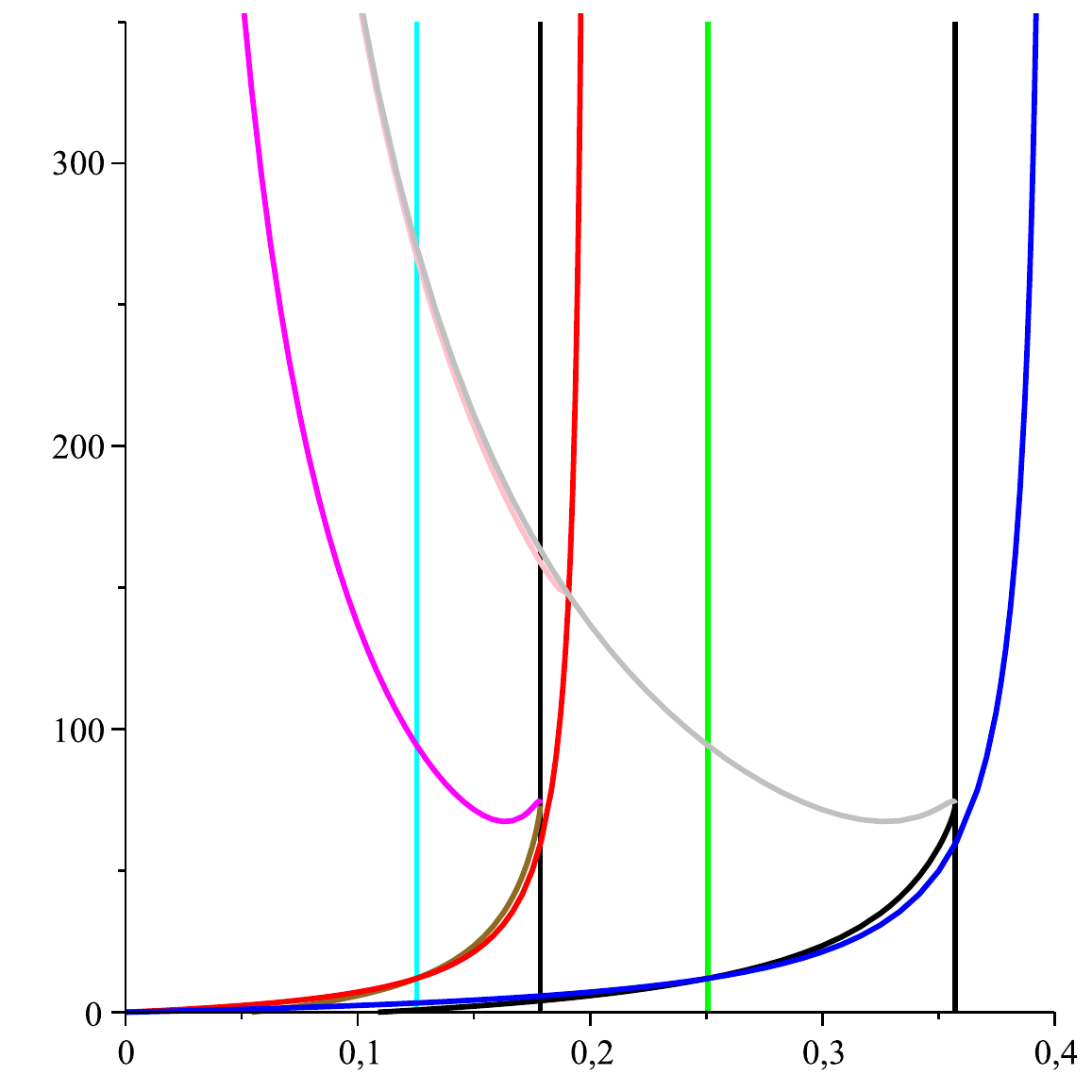}}}
\put(-1,6){\sc $(a)$}
\put(-4,5.6){\sc $S_1^{in}$}
\put(2.8,0.4){\sc $D$}
\put(-0.8,5.6){\sc { $\gamma_0$}}
\put(-1.1,5.6){\sc { $\gamma_8$}}
\put(-1.85,5.6){\sc { $\gamma_6$}} \put(-2.3,5.6){\sc { $\gamma_{15}$}}	
\put(-0.4,2.3){\sc { $\gamma_3$}}
\put(0.1,5.6){\sc { $\gamma_7$}}
\put(-3.1,5.6){\sc { $\gamma_5$}}
\put(2.4,5.6){\sc { $\gamma_1$}}
\put(1.8,5.6){\sc { $\gamma_9$}}
\put(1.4,1.1){\sc { $\gamma_2 $}}
\put(-1.35,1.1){\sc { $ \gamma_4 $}}	
\put(6.8,6){{\sc $(b)$}}
\put(4.1,5.5){\sc $S_1^{in}$}
\put(10.9,0.4){\sc $D$}
\put(6.9,5.5){\sc { $\gamma_1$}}
\put(6.6,5.5){\sc { $\gamma_9$}}
\put(6.25,5.5){\sc { $\gamma_7$}} 	
\put(5.8,5.5){\sc { $\gamma_5$}}
\put(9,5.5){\sc { $\gamma_6$}}
\put(4.8,5.5){\sc { $\gamma_3$}}
\put(10.5,5.5){\sc { $\gamma_0$}}
\put(9.9,5.5){\sc { $\gamma_8$}}
\put(10,0.3){\sc $\nwarrow$}
\put(10,0.05){\sc { $\gamma_4 $}}
\put(6.75,0.15){\sc $\nwarrow$}
\put(7,-0.05){\sc { $ \gamma_2 $}}	
\end{picture}
\vspace{-0.1cm}
\caption{The curves $\gamma_i,i=1-9$ and $\gamma_{15}$: (a) case when $r= 1/3\in(0,1/2)$, $S_2^{in}=10<S_2^m\approx48.740$ and $m_1=0.6>\mu_2\left(S_2^m\right)\approx0.535$; (b) case when $r= 0.7\in(0,1/2)$, $S_2^{in}=150>S_2^m\approx48.740$ and $m_1=0.6>\mu_2\left(S_2^m\right)\approx0.535$. }\label{FigDO34Cruv}
\end{center}
\end{figure}
\begin{itemize}[leftmargin=*]
\item $\mathcal{E}_{00}^{00}$ always exists. From \cref{TableCondStab} and \cref{lambdaMmRinfDemi}, it is LES if and only if,
$$
S_1^{in}<\lambda_1^2\quad\mbox{and}\quad\left(S_2^{in}<\lambda_2^{21}\quad\mbox{or}\quad S_2^{in}>\lambda_2^{22}\right).
$$
From \cref{TabSurf}, the first stability condition of $\mathcal{E}_{00}^{00}$ holds, for all $\left(D,S_1^{in}\right)$ in the region bounded by the curve $\gamma_1$ and located below this curve.
The second stability condition is equivalent to $D_2^*:=(1-r)\mu_2\left(S_2^{in}\right)<D$, that is, this condition holds in the regions at the right of the vertical line $\gamma_8$.
Using the definition of regions in \cref{TabRegionCondDO123}, $\mathcal{E}_{00}^{00}$ is LES in $\mathcal{J}_0$ and $\mathcal{J}_3$ (see \cref{FigDO1}). Hence, we obtain the first column in \cref{TabRegDOJi} corresponding to $\mathcal{E}_{00}^{00}$.
\item For $\mathcal{E}_{00}^{01}$, the existence condition is given in \cref{TableCondExis} by $S_2^{in}>\lambda_2^{21}$. Since $S_2^{in}>S_2^m$, then $S_2^{in}>\lambda_2^{21}$ for all $D<D_2^m=(1-r)\mu_2\left(S_2^m\right)$, that is, for all $\left(D,S_1^{in}\right)$ in the regions bounded by the vertical line $\gamma_9$ and located at the left of this line. Using \cref{TabRegionCondDO123}, $\mathcal{E}_{00}^{01}$ exists in $\mathcal{J}_i$, $i=2-20$ (see \cref{FigDO1}). From \cref{TableCondStab} and \cref{lambdaMmRinfDemi}, $\mathcal{E}_{00}^{01}$ is LES if and only if,
$$
S_1^{in}<\lambda_1^2 \quad\mbox{and}\quad \left( S_2^{in}<\lambda_2^{11} \quad\mbox{or}\quad S_2^{in}>\lambda_2^{12} \right).
$$
Thus, the first stability condition holds in the region below the curve $\gamma_1$. The second stability condition is equivalent to $D_1^*:=r\mu_2\left(S_2^{in}\right)<D$, that is, this condition holds in the regions at the right of the vertical line $\gamma_6$. Using \cref{TabRegionCondDO123}, $\mathcal{E}_{00}^{01}$ is LES in $\mathcal{J}_3$, $\mathcal{J}_4$ and $\mathcal{J}_{14}$ (see \cref{FigDO1}). Consequently, we obtain the second column in \cref{TabRegDOJi} corresponding to $\mathcal{E}_{00}^{01}$.
\item $\mathcal{E}_{00}^{02}$ exists if and only if $S_2^{in}>\lambda_2^{22}$, that is, for all $D_2^*<D<D_2^m$. Thus, the existence condition of $\mathcal{E}_{00}^{02}$ holds in the regions at the left of the vertical line $\gamma_9$ and the right of the vertical line $\gamma_8$. Using \cref{TabRegionCondDO123}, $\mathcal{E}_{00}^{02}$ exists in $\mathcal{J}_2$ and $\mathcal{J}_3$.
\item $\mathcal{E}_{00}^{10}$ exists in the regions above the curve $\gamma_1$ (see \cref{FigDO1}) as the existence condition is $S_1^{in}>\lambda_1^2$. Using the definition of regions in \cref{TabRegionCondDO123}, $\mathcal{E}_{00}^{10}$ exists in $\mathcal{J}_i,i=1,2,5-13,16-20$.
From \cref{TableCondStab}, the inequalities \cref{lambdaMmRinfDemi} and the functions $F_{21}$ and $F_{21}$ defined in \cref{TabFunc1}, the stability conditions
of $\mathcal{E}_{00}^{10}$ become
$$
S_1^{in}<\lambda_1^1 \quad\mbox{and}\quad \left(S_2^{in}<\lambda_2^{11} \quad\mbox{or}\quad  S_2^{in}>\lambda_2^{12}\right) \quad\mbox{and}\quad \left(S_1^{in} <  F_{21} \quad\mbox{or}\quad S_1^{in} >  F_{22}\right).
$$
Thus, the first stability condition of $\mathcal{E}_{00}^{10}$ holds in the regions below the curve $\gamma_0$.
Similarly to $\mathcal{E}_{00}^{01}$, the second stability condition holds in the regions at the right of the vertical line $\gamma_6$.
The third stability condition can be written as
$$
\mu_2\left(S_2^{in}+\frac{k_2}{k_1}\left(S_1^{in}-\lambda_1^2\right)\right)<\frac{D}{1-r}
$$
or equivalently
$$
D > D_2^m:=(1-r)\mu_2\left(S_2^m\right) \quad \mbox{or} \quad S_2^{in}+\frac{k_2}{k_1}\left(S_1^{in}-\lambda_1^2\right) > \lambda_2^{22} \quad (\mbox{or also } S_1^{in} >  F_{22})
$$
since $S_2^{in} > S_2^m$. Therefore, the third stability condition holds in the region at the right of the vertical line $\gamma_9$ or above the curve $\gamma_3$ (see \cref{FigDO1}). From \cref{TabRegionCondDO123}, it follows that $\mathcal{E}_{00}^{10}$ is LES in $\mathcal{J}_i,i=1,2,8$.
\item For $\mathcal{E}_{00}^{1i},i=1,2$, the existence conditions become
$$
S_1^{in}>\max\left(\lambda_1^2,F_{2i}\right),
$$
where $F_{2i}$ is defined for all $D < D_2^m:=(1-r)\mu_2\left(S_2^m\right)$ (see \cref{TabFunc1}), that is, this condition holds for all $\left(D, S_1^{in}\right)$ in the regions at the left of the vertical line $\gamma_9$.
From \cref{propPositionCurv}, we have $F_{21}<\lambda_1^2$ and $\lambda_1^2 < F_{22}$, for all $D<D_2^*$ and inversely $\lambda_1^2 > F_{22}$, for all $D>D_2^*$.
Hence, $\mathcal{E}_{00}^{11}$ exists in the regions above the curve $\gamma_1$ and at the left of the vertical line $\gamma_9$.
However, $\mathcal{E}_{00}^{12}$ exists either in the regions above the curve $\gamma_3$ when $\left(D,S_1^{in}\right)$ is on the left of the vertical line $\gamma_8$
either in the regions above the curve $\gamma_1$ when $\left(D,S_1^{in}\right)$ is on the right of the vertical line $\gamma_8$ and the left of the vertical line $\gamma_9$ (see \cref{FigDO1}).
From \cref{TabRegionCondDO123}, $\mathcal{E}_{00}^{12}$ exists in $\mathcal{J}_i$, $i=2,8-10,20$ while $\mathcal{E}_{00}^{11}$ exists in $\mathcal{J}_i,i=2,5-13,16-20$.
The stability conditions of $\mathcal{E}_{00}^{11}$ are given by
$$
S_1^{in}<\lambda_1^1 \quad \mbox{ and } \quad \left(S_2^{in}<\lambda_2^{11} \quad\mbox{or}\quad S_2^{in}>\lambda_2^{12}\right).
$$
Thus, the first stability condition of $\mathcal{E}_{00}^{10}$ holds in the regions below the curve $\gamma_0$.
Similarly to $\mathcal{E}_{00}^{01}$, the second stability condition of $\mathcal{E}_{00}^{11}$ holds in the regions at the right of the vertical line $\gamma_6$.
From \cref{TabRegionCondDO123}, $\mathcal{E}_{00}^{11}$ is LES in $\mathcal{J}_i$, $i=2,5,8,13$.
\item For $\mathcal{E}_{10}^{10}$, the existence conditions become $S_1^{in}>\lambda_1^1.$
Thus, $\mathcal{E}_{10}^{10}$ exists in the regions above the curve $\gamma_0$ (see \cref{FigDO1}). Using the definition of regions in \cref{TabRegionCondDO123}, $\mathcal{E}_{10}^{10}$ exists in $\mathcal{J}_i,i=6,7,9-12,17-20$. The stability conditions of $\mathcal{E}_{10}^{10}$ become
$$
\left[ D_1 > \mu_2\left(S_2^m\right) \mbox{ or } \left(D_1 < \mu_2\left(S_2^m\right) \mbox{ and }  \left(S_1^{in} <  F_{11} \mbox{ or } S_1^{in} >  F_{12}\right) \right)\right]
\mbox{ and } \left[ \phi_1<0 \mbox{ or } \phi_2>0\right].
$$
From \cref{propPositionCurv}, we have $\lambda_1^1>F_{11}$ so that the condition $S_1^{in} < F_{11}$ does not hold since the existence condition of $\mathcal{E}_{10}^{10}$ is $S_1^{in}>\lambda_1^1$.
Hence, the first stability condition of $\mathcal{E}_{10}^{10}$ holds in the regions at the right of the vertical line $\gamma_8$ or on the left of $\gamma_8$ and above the maximum of the two curves $\gamma_0$ and $\gamma_5$ (see \cref{CurvFigDO1}(a)).
According to \cref{Lemr3}, condition $\phi_1<0$ is not satisfied.
\cref{FigCondE101i}(a) shows that the condition $\phi_2(D)>0$ holds in the right of the root of equation $\phi_2(D)=0$ for several values of $S_1^{in}$. Hence, the second stability condition of $\mathcal{E}_{10}^{10}$ holds in the regions at the right of the curve $\gamma_{15}$.
From \cref{TabRegionCondDO123}, $\mathcal{E}_{10}^{10}$ is LES in $\mathcal{J}_i,i=7,9,10,11,19,20$.
\item For $\mathcal{E}_{10}^{1i},i=1,2$, the existence conditions become
$$
S_1^{in}>\lambda_1^1\quad\mbox{and}\quad \phi_i>0.
$$
From \cref{Lemr3}, condition $\phi_1>0$ is always verified.
Thus, $\mathcal{E}_{10}^{11}$ exists in the regions above the curve $\gamma_0$ while $\mathcal{E}_{10}^{12}$ exists in the regions on the left of the curve $\gamma_0$ and the right of the curve $\gamma_{15}$.
From \cref{TabRegionCondDO123}, $\mathcal{E}_{10}^{11}$ exists in $\mathcal{J}_i,i=6,7,9-12,17-20$ while $\mathcal{E}_{10}^{12}$ exists in $\mathcal{J}_i,i=7,9,11,19,20$. The stability condition
of $\mathcal{E}_{10}^{11}$ becomes
$$
S_1^{in} <  F_{11} \quad\mbox{ or }\quad S_1^{in} >  F_{12}.
$$
Similarly to $\mathcal{E}_{10}^{10}$, the stability conditions of $\mathcal{E}_{10}^{11}$ holds in the regions at the right of the vertical line $\gamma_8$ or on the left of $\gamma_8$ and above the maximum of the two curves $\gamma_0$ and $\gamma_5$.
From \cref{TabRegionCondDO123}, $\mathcal{E}_{10}^{11}$ is LES in $\mathcal{J}_i,i=6,7,9-12,18,19,20$.

\item For $\mathcal{E}_{0i}^{01},i=1,2$, the existence conditions become $S_2^{in}>\lambda_2^{1i}$. Since $S_2^{in}>S_2^m$, then the condition $S_2^{in}>\lambda_2^{21}$ holds for all $D<D_1^m$. Thus, the existence condition of $\mathcal{E}_{01}^{01}$ holds in the regions at the left of the vertical line $\gamma_8$. However, the existence condition of $\mathcal{E}_{02}^{01}$ holds in the regions on the left of the curve $\gamma_8$ and the right of the curve $\gamma_6$.
From \cref{TabRegionCondDO123}, $\mathcal{E}_{01}^{01}$ exists in $\mathcal{J}_i,i=10-20$ while $\mathcal{E}_{02}^{01}$ exists in $\mathcal{J}_i,i=10-14$.
The stability condition of $\mathcal{E}_{01}^{01}$ becomes $S_1^{in}<\lambda_1^2$, that is, this condition holds in the regions below the curve $\gamma_1$. From \cref{TabRegionCondDO123}, it follows that $\mathcal{E}_{01}^{01}$ is LES in $\mathcal{J}_{14}$ and $\mathcal{J}_{15}$.
\item For $\mathcal{E}_{0i}^{11}$, $i=1,2$, the existence conditions become
$$
S_2^{in}>\lambda_2^{1i} \quad\mbox{and}\quad S_1^{in}>\lambda_1^2.
$$
The second existence condition of $\mathcal{E}_{0i}^{11}$ holds in the regions above the curve $\gamma_1$.
Since $S_2^{in}>S_2^m>\lambda_2^{11}$ for all $D<D_1^m$, the first existence condition of $\mathcal{E}_{01}^{11}$ holds in the regions at the left of the vertical line $\gamma_8$.
The first existence condition of $\mathcal{E}_{02}^{11}$ holds in the regions on the left of the curve $\gamma_8$ and the right of the curve $\gamma_6$.
From \cref{TabRegionCondDO123}, it follows that $\mathcal{E}_{01}^{11}$ exists in $\mathcal{J}_i,i=10-13,16-20$ while $\mathcal{E}_{02}^{01}$ exists in $\mathcal{J}_i,i=10-13$. The stability conditions of $\mathcal{E}_{01}^{11}$ is given by
$$
S_1^{in}<\lambda_1^1\quad\mbox{and}\quad g_2'(X_2^{2*})>f_3'(X_2^{2*})
$$
The first stability condition holds in the region below the curve $\gamma_0$.
For the set of parameters in \cref{TabParamVal} corresponding to the operating diagram in \cref{FigDO1}, the equation $\phi_3(x):=f_3(x)-g_2(x)=0$ has unique solution, that is, $g_2'(X_2^{2*})>f_3'(X_2^{2*})$ holds.
Since the operating parameters $D$ and $S_1^{in}$ are variable, the numerical simulations show the uniqueness of the solution of equation $\phi_3(x)=0$ for several values of $D$ and $S_1^{in}$ as shown in \cref{FigCondStabE0111}. From \cref{TabRegionCondDO123}, it follows that $\mathcal{E}_{01}^{11}$ is LES in $\mathcal{J}_{13}$ and $\mathcal{J}_{16}$.
\begin{figure}[!ht]
\setlength{\unitlength}{1.0cm}
\begin{center}
\begin{picture}(7.8,4.7)(0,0)	
\put(-4.3,0){{\includegraphics[width=5.3cm,height=4.5cm]{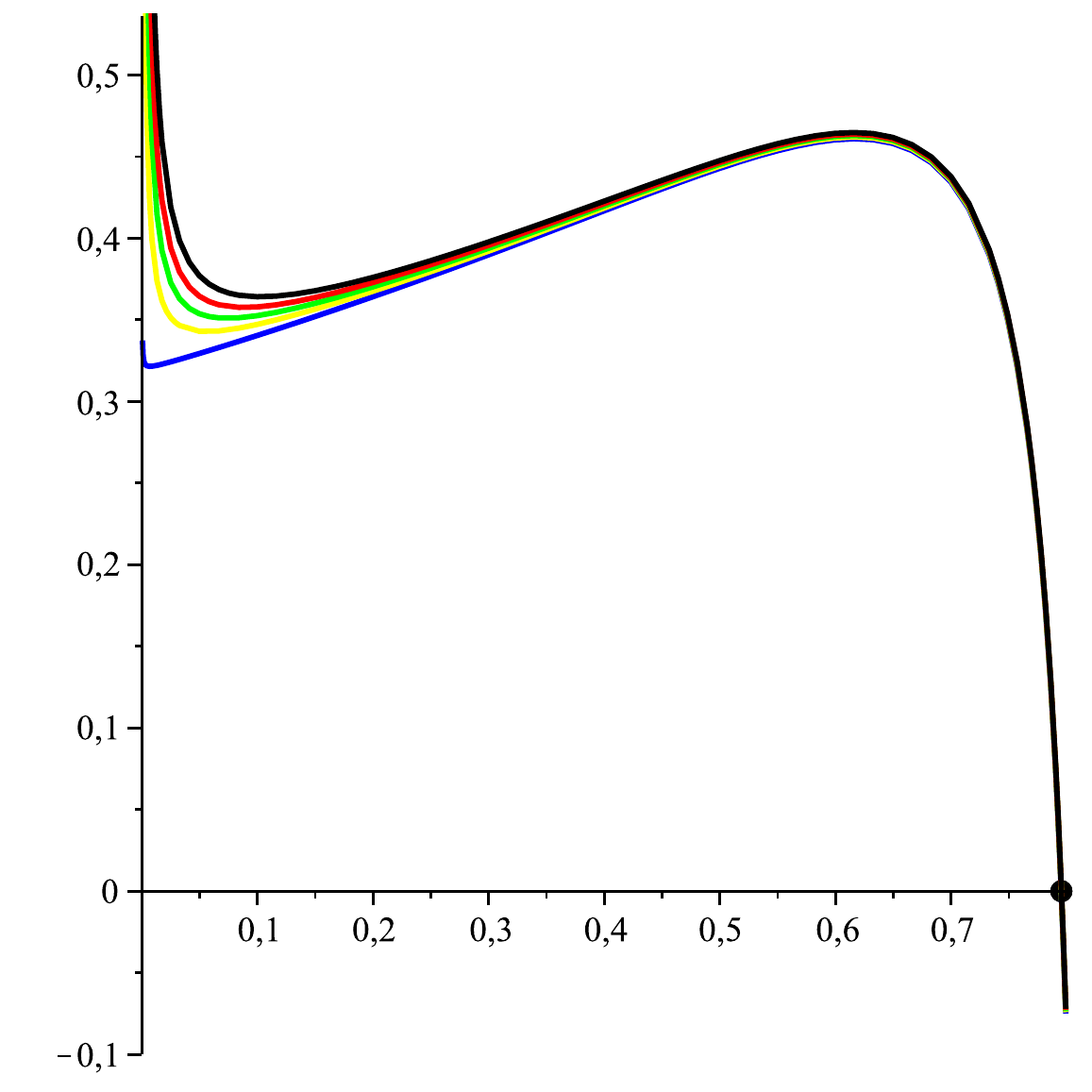}}}	
\put(1.3,0){{\includegraphics[width=5.3cm,height=4.5cm]{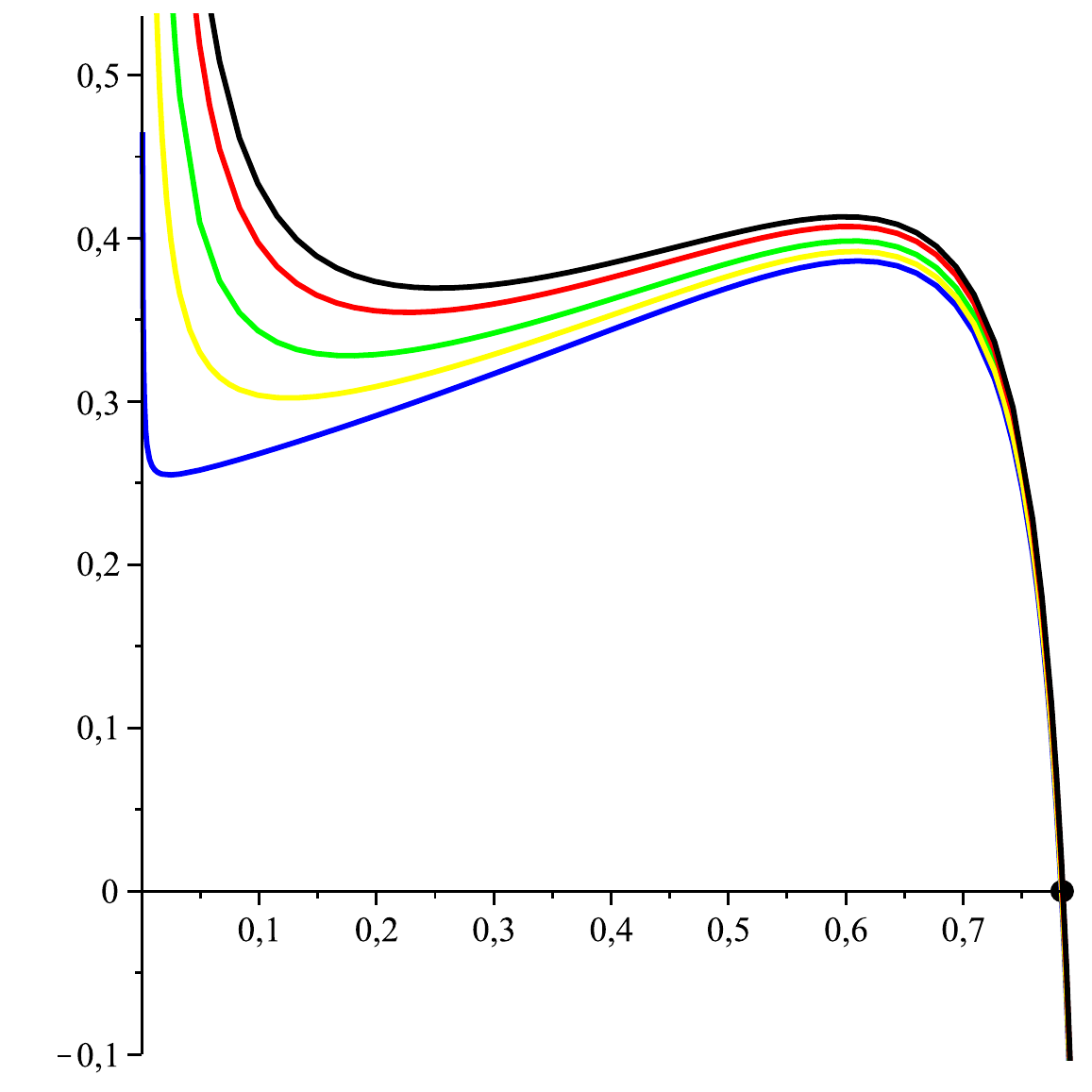}}}
\put(6.5,0){{\includegraphics[width=5.3cm,height=4.5cm]{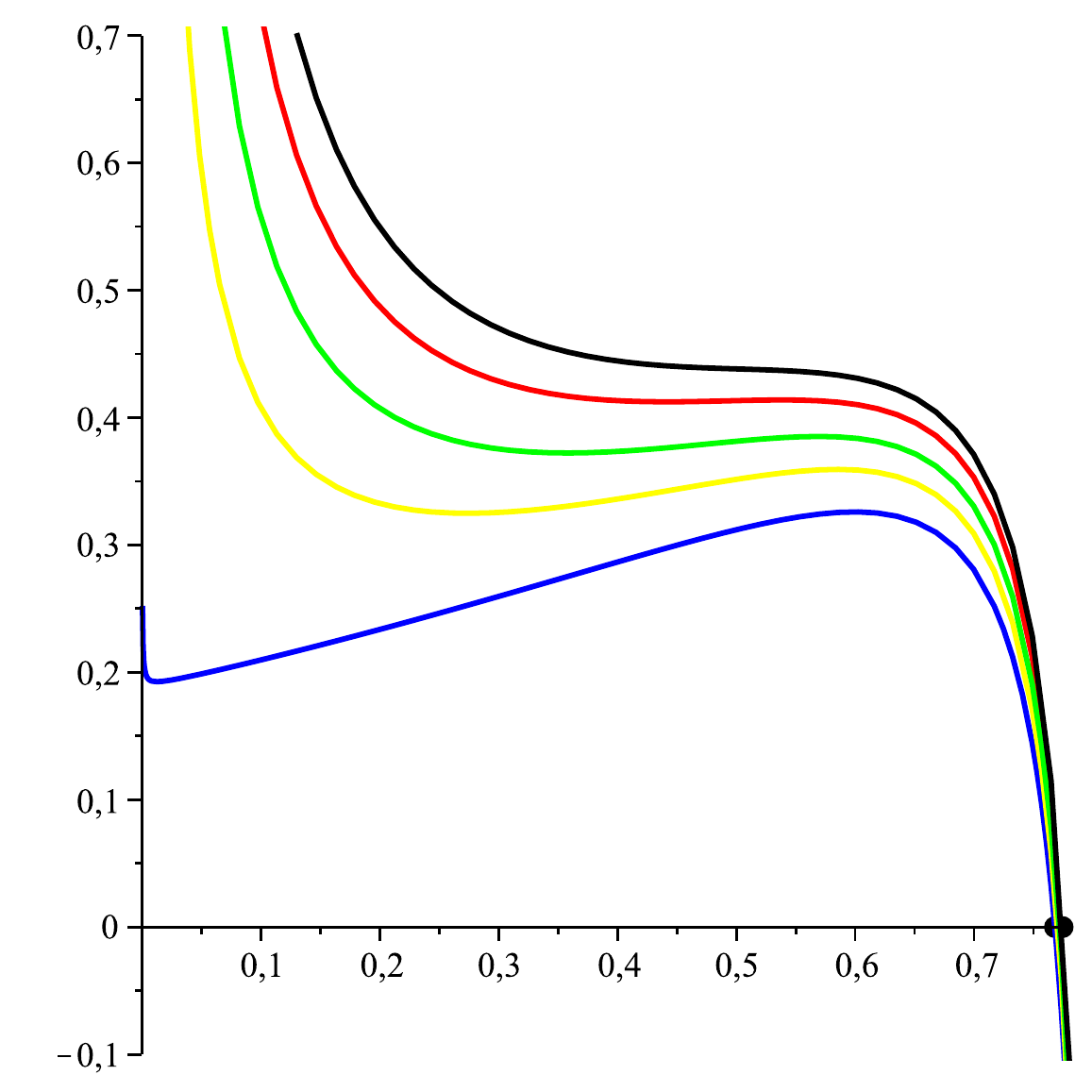}}}
\put(-2,4.5){\sc (a)}
\put(1,0.8){\sc $x$}
\put(-3.7,4.6){\sc $\phi_3$}
\put(4,4.5){\sc (b)}
\put(6.6,0.8){\sc $x$}
\put(1.8,4.6){\sc $\phi_3$}
\put(9,4.5){{\sc (c)}}
\put(7,4.6){\sc $\phi_3$}
\put(11.8,0.6){\sc $x$}
\end{picture}
\end{center}
\vspace{-0.4cm}
\caption{Number of positive solutions of equation $\phi_3(x)=0$ corresponding to the steady state $\mathcal{E}_{01}^{11}$, when $S_2^{in}=150>S_2^m\approx48.740$, and for various values of $D<D_1^{*}\approx0.149$ and $S_1^{in}\in\left[\lambda_1^2,\lambda_1^1\right]$: (a) $D=0.05$ and $S_1^{in}\in \{1.02,1.4,1.7,2,2.35\}$ where $\lambda_1^2 = 1.014$ and $\lambda_1^1=2.366$; (b) $D=0.1$ and $S_1^{in}\in \{2.4,3.4,4.5,6,7\}$ where $\lambda_1^2 = 2.366$ and $\lambda_1^1=7.10$; (c) $D=0.14$ and $S_1^{in}\in \{3.83,7.8,10.8,14,16.5\}$ where $\lambda_1^2 = 3.82$ and $\lambda_1^1=16.56$.} \label{FigCondStabE0111}
\end{figure}
\item For $\mathcal{E}_{1i}^{11},i=1,2$, the existence conditions become
$$
S_1^{in}>\max\left(\lambda_1^1,F_{1i}\right),
$$
where $F_{1i}$ is defined for all $D < \min\left(r_1m_1, D_1^m\right)=D_1^m$ (see \cref{TabFunc1}), that is, this last condition holds for all $\left(D, S_1^{in}\right)$ in the regions at the left of the vertical line $\gamma_8$. From \cref{propPositionCurv}, we have $\lambda_1^1 > F_{11}$ so that the existence condition of $\mathcal{E}_{11}^{11}$ holds in the regions above the curve $\gamma_0$.
However, the existence condition of $\mathcal{E}_{12}^{11}$ holds in the regions on the left of $\gamma_8$ and above the maximum of the two curves $\gamma_0$ and $\gamma_5$ (see \cref{CurvFigDO1}(a)).
From \cref{TabRegionCondDO123}, we see that $\mathcal{E}_{11}^{11}$ exists in $\mathcal{J}_i,i=10,11,12,17-20$ while $\mathcal{E}_{02}^{01}$ exists in $\mathcal{J}_i,i=10-13$.
Recall that the stability condition of $\mathcal{E}_{11}^{11}$ is
$$
g_2'(X_2^{2*})>f_3'(X_2^{2*}).
$$
Similarly to the steady state $\mathcal{E}_{01}^{11}$, the equation $\phi_3(x):=f_3(x)-g_2(x)=0$ has unique solution (that is, $g_2'(X_2^{2*})>f_3'(X_2^{2*})$) as we see in \cref{FigCondStabE1111} for the set of parameters in \cref{TabParamVal} corresponding to the operating diagram in \cref{FigDO1}. Thus, $\mathcal{E}_{11}^{11}$ is LES when it exists.
\end{itemize}
\end{proof}
\begin{figure}[!ht]
\setlength{\unitlength}{1.0cm}
\begin{center}
\begin{picture}(7.8,4.7)(0,0)	
\put(-4.3,0){{\includegraphics[width=5.3cm,height=4.5cm]{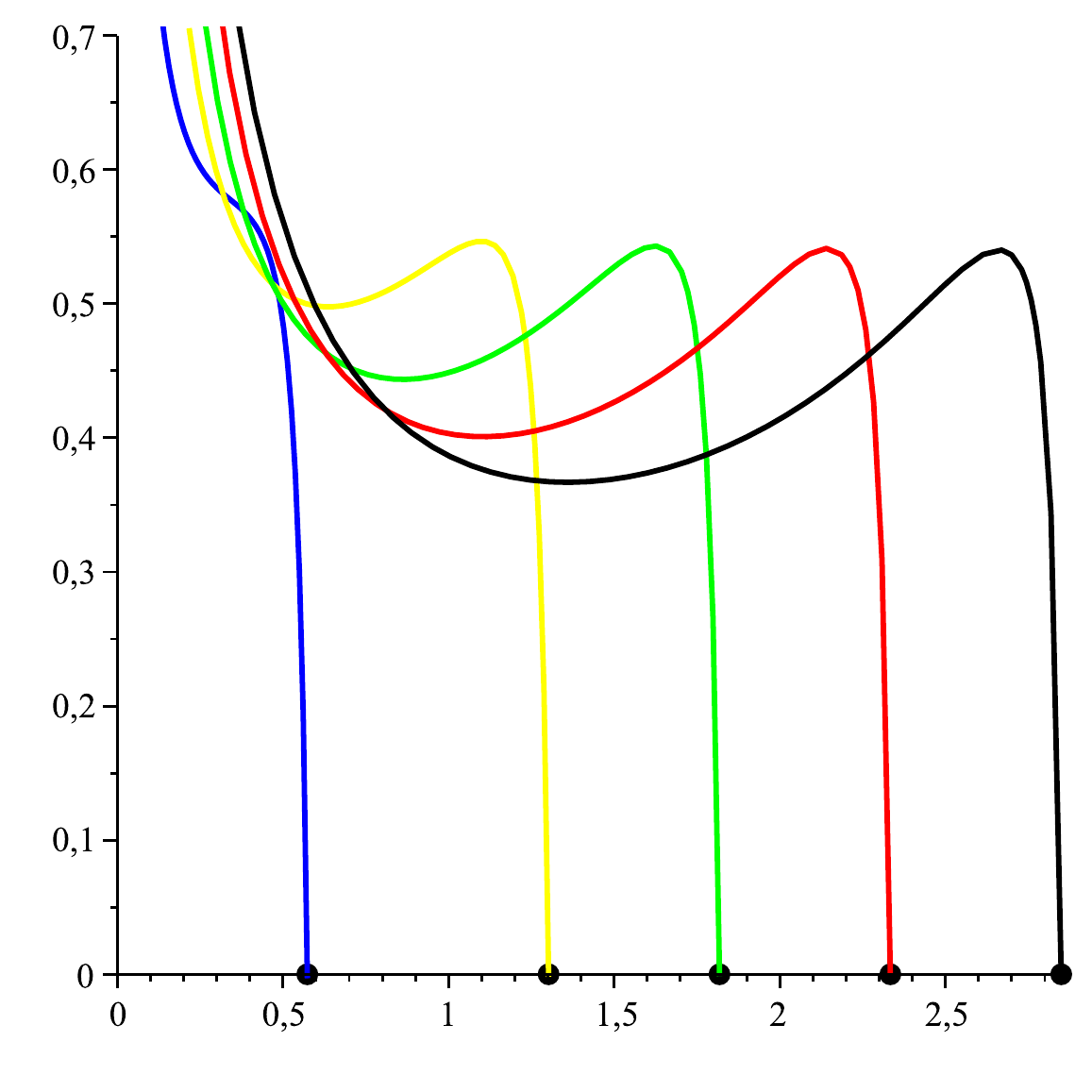}}}	
\put(1.3,0){{\includegraphics[width=5.3cm,height=4.5cm]{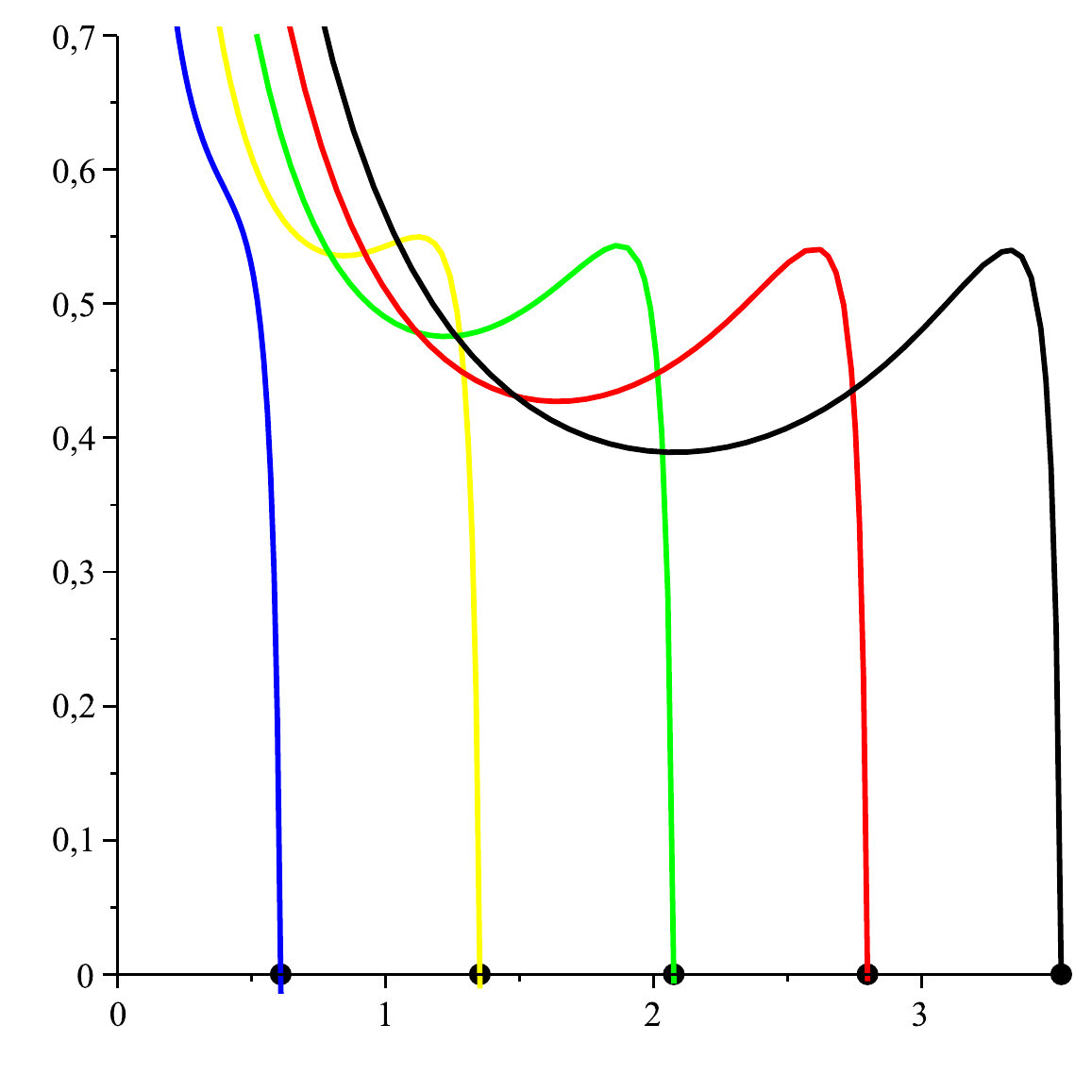}}}
\put(6.5,0){{\includegraphics[width=5.3cm,height=4.5cm]{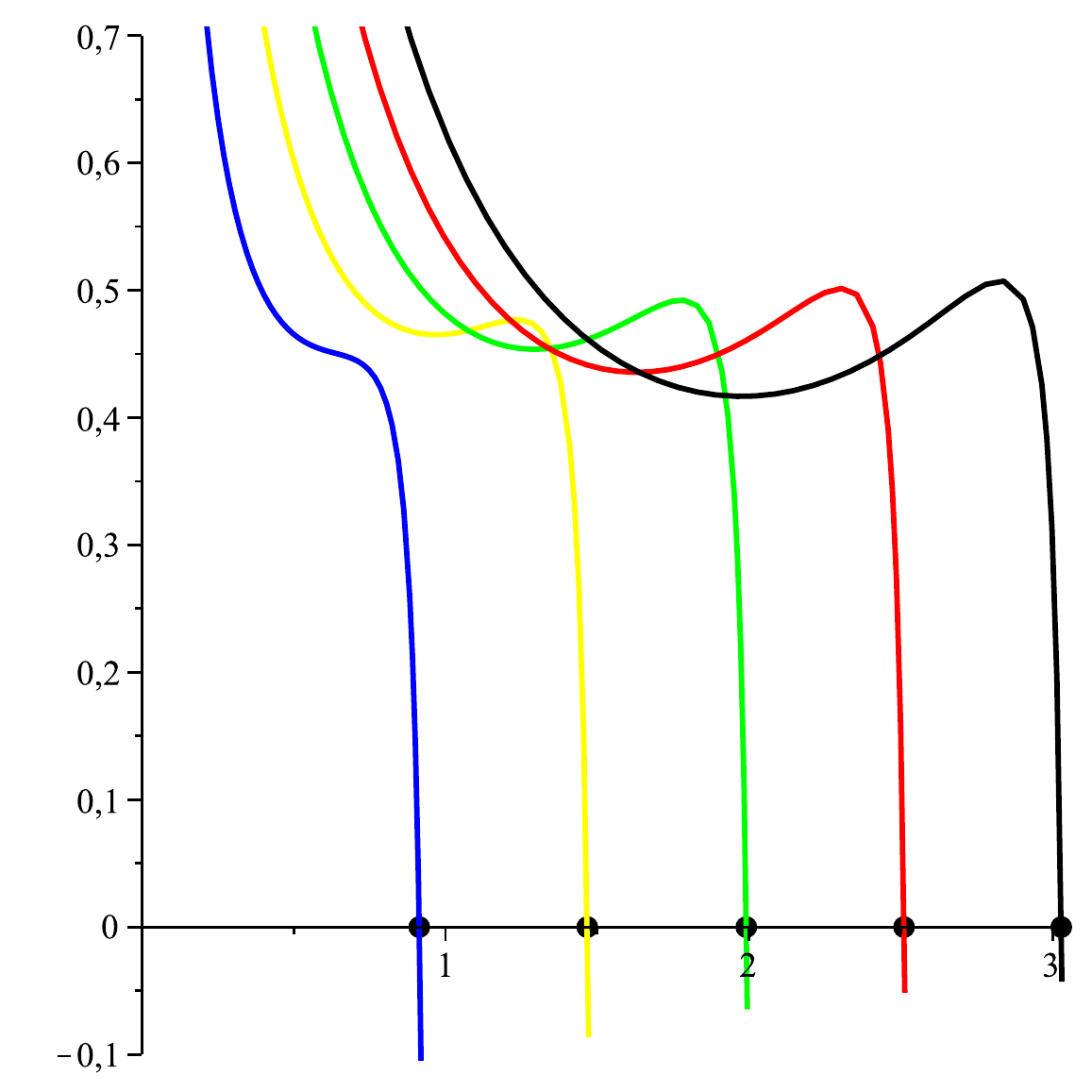}}}
\put(-2,4.5){\sc (a)}
\put(1,0.5){\sc $X$}
\put(-3.9,4.5){\sc $\phi_3$}
\put(4,4.5){\sc (b)}
\put(6.6,0.5){\sc $X$}
\put(1.6,4.5){\sc $\phi_3$}
\put(9,4.5){{\sc (c)}}	
\put(11.8,0.6){\sc $X$}
\put(7,4.5){\sc $\phi_3$}
\end{picture}
\end{center}
\vspace{-0.5cm}
\caption{Number of positive solutions of equation $\phi_3(x)=0$ corresponding to the steady state $\mathcal{E}_{11}^{11}$ when $S_2^{in}=150>S_2^m\approx48.740$, and for various values of $S_1^{in}>\lambda_1^1$ and $D\in \{0.05,0.1,0.17\}$: (a) $D=0.05$ and $S_1^{in}\in \{2.37,72,122,172,222\}$ where $\lambda_1^1\approx2.366$; (b) $D=0.1$ and $S_1^{in}\in \{7.2,77,147,217,287\}$ where $\lambda_1^1\approx 7.1$; (c) $D=0.17$ and $S_1^{in}\in \{40.24, 90, 140, 190, 240\}$ where $\lambda_1^1\approx40.233$.} \label{FigCondStabE1111}
\end{figure}
\begin{proof}[\bf{Proof of \cref{PropPosiCurvDfix}}]
For $D$ fixed in $(0,r\mu_2(S_2^m))$ and $r\in(0,1)$, we have $S_{21}^{in*}=\lambda_2^{11}(D,r)$. Therefore,
$$
\lambda_1^1(D,r)+\frac{k_1}{k_2}\left(\lambda_2^{11}(D,r)-S_{21}^{in*}\right)=\lambda_1^1(D,r).
$$
Using \cref{TabSurf}, we see that the curves $\gamma_0$, $\gamma_4$, $\gamma_{10}$ intersect at the same point. In the same way, as ${S_{23}^{in*}}=\lambda_2^{12}(D,r)$. Therefore,
$$
\lambda_1^1(D,r)+\frac{k_1}{k_2}\left(\lambda_2^{12}(D,r)-{S_{23}^{in*}}\right)=\lambda_1^1(D,r).
$$
Consequently, the curves $\gamma_0$, $\gamma_5$, $\gamma_{12}$ intersect at the same point. For $S_2^{in}<S_{21}^{in*}$, we have
$\lambda_2^{11}(D,r)<S_2^{in}$ and $\lambda_2^{12}(D,r)>S_2^{in}$.
Then, the first item holds. Similarly, the result holds when $S_{21}^{in*}<S_2^{in}<{S_{23}^{in*}}$ and $S_2^{in}>{S_{23}^{in*}}$.
\end{proof}

Using the definitions of the auxiliary functions in \cref{TabSurf}, we determine in the following proposition the relative positions of $\gamma_i$, $i=6,7$ and the curve $\gamma_5$.
\begin{lemma}                                           \label{Lem2D}
Fix $r\in(0,1)$ and $D\in \left(0,(1-r)\mu_2(S_2^m)\right)$. Let $S_{2i}^{in*}$, $i=1,\ldots,4$ be the critical values of $S_2^{in}$ defined in \cref{TabSurf}. Let $\phi_j\left(D,r,S_1^{in},S_2^{in}\right)$, $j=1,2$ be the functions defined in \cref{TabFunc1}. If $S_2^{in}>{S_{22}^{in*}}$, then $\phi_1>0$. In addition, if $S_2^{in}>{S_{24}^{in*}}$, then $\phi_2>0$.
\end{lemma}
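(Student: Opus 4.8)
The plan is to treat this as pure sign bookkeeping on the defining formula of $\phi_j$: since $\phi_j=S_2^{in}+k_2X_1^{2*}-\lambda_2^{2j}(D,r)$, it suffices to show that the correction term $k_2X_1^{2*}$ is strictly positive and then to combine it with the hypothesis on $S_2^{in}$. No computation beyond this is needed.

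First I would record that $\phi_j$ is defined only when $S_1^{in}>\lambda_1^1$, so the first component $X_1^{1*}=\left(S_1^{in}-\lambda_1^1\right)/k_1$ of the steady state $\mathcal{E}_{10}^{10}$ is strictly positive, and moreover $X_1^{1*}<S_1^{in}/k_1$ because $\lambda_1^1>0$. By \cref{lemH1}, the unique solution $X_1^{2*}$ of the equation $f_1(x)=g_1(x)$ lies in $\left(X_1^{1*},S_1^{in}/k_1\right)$, hence $X_1^{2*}>X_1^{1*}>0$; since $k_2>0$ is a pseudo-stoichiometric coefficient, $k_2X_1^{2*}>0$. This is the only place an earlier result is invoked. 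Next I would recall from \cref{TabSurf} that $S_{22}^{in*}=\lambda_2^{21}(D,r)$ and $S_{24}^{in*}=\lambda_2^{22}(D,r)$, both well defined since $D\in\left(0,(1-r)\mu_2(S_2^m)\right)=\left(0,D_2^m\right]$. Using the definition of $\phi_j$ in \cref{TabFunc1}, I would then write
$$
\phi_1=\left(S_2^{in}-S_{22}^{in*}\right)+k_2X_1^{2*},\qquad \phi_2=\left(S_2^{in}-S_{24}^{in*}\right)+k_2X_1^{2*}.
$$
If $S_2^{in}>S_{22}^{in*}$, the first bracket is positive, so $\phi_1>0$; if $S_2^{in}>S_{24}^{in*}$, the second bracket is positive, so $\phi_2>0$, which is the claim.

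There is no genuine obstacle: the only substantive point is the strict positivity of $X_1^{2*}$, which is immediate from \cref{lemH1} together with the defining domain $S_1^{in}>\lambda_1^1$ of $\phi_j$; everything else is arithmetic with the break-even concentrations.
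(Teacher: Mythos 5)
Your proof is correct and follows essentially the same route as the paper's: both reduce the claim to the decomposition $\phi_j=\left(S_2^{in}-\lambda_2^{2j}\right)+k_2X_1^{2*}$ together with the positivity of $k_2X_1^{2*}$ (which the paper leaves implicit and you justify via \cref{lemH1} and the domain condition $S_1^{in}>\lambda_1^1$). The only blemish is the identification $\left(0,(1-r)\mu_2\left(S_2^m\right)\right)=\left(0,D_2^m\right]$, which should be the open interval $\left(0,D_2^m\right)$; this does not affect the argument.
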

\begin{proof}[\bf{Proof of \cref{Lem2D}}]
From the expression of $S_{22}^{in*}$ in \cref{TabSurf}, if $S_2^{in}>{S_{22}^{in*}}$, it follows that $S_2^{in}>\lambda_2^{21}(D,r)$ so that $\phi_1>0$ holds. For $S_2^{in}>{S_{24}^{in*}}$. Similarly, if $S_2^{in}>\lambda_2^{22}(D,r)$, then $\phi_2>0$ holds.
\end{proof}
\begin{proof}[\bf{Proof of \cref{PropDO-Dfix}}]
Similarly to the proof of the operating diagram in \cref{propstabstabDO1} and using \cref{PropPosiCurvDfix} and \cref{Lem2D}, we obtain the result.
\end{proof}
\section{Tables}                              \label{Sec_AppTableVal}
In this Appendix, we first provide in \cref{TabParamVal} all the values of the parameters used in the numerical simulations.
\begin{table}[htbp]
{\small
\caption{Parameter values used for systems \cref{ModelAM2S2C,ModelAM2SR4} when the growth rates $\mu_1$ and $\mu_2$ are given by \cref{SpeciFunc}.}\label{TabParamVal}
\vspace{-0.1cm}
\begin{center}
\begin{tabular}{@{\hspace{1mm}}l@{\hspace{1mm}}
                                  @{\hspace{1mm}}l@{\hspace{1mm}} @{\hspace{1mm}}l@{\hspace{1mm}} @{\hspace{1mm}}l@{\hspace{1mm}} @{\hspace{1mm}}l@{\hspace{1mm}}
                                  @{\hspace{1mm}}l@{\hspace{1mm}} @{\hspace{1mm}}l@{\hspace{1mm}} @{\hspace{1mm}}l@{\hspace{1mm}} @{\hspace{1mm}}l@{\hspace{1mm}}
                                  @{\hspace{1mm}}l@{\hspace{1mm}} @{\hspace{1mm}}l@{\hspace{1mm}}
                                  }
\hline
 Parameter    & $m_1$ & $k_{S_1}$ & $m_2$ & $k_{S_2}$ & $k_I$ & $k_1$ & $k_2$ & $k_3$\\
              & $(d^{-1})$ & $(g/l)$ & $(d^{-1})$ & $(mmol/l)$ & $(mmol/l)$ &  $(mmol/g)$ & $(mmol/g)$ & $(mmol/g)$  \\ \hline\hline
\cref{FigDO1,FigDO3,FigDO4,FigDO5Dfix,Figsoleq12,Figsoleq3}
              & $0.6$ & $7.1$ & $0.74$ & $9.28$ & $256$ & $42.14$ & $116.5$ & $268$ \\
\cref{FigDO2} & $0.3$ & $7.1$ & $0.74$ & $9.28$ & $256$ & $42.14$ & $116.5$ & $268$ \\
\hline
\end{tabular}
\end{center}
}
\end{table}

Next, we first define in \cref{TabRegionCondDO123} the regions $\mathcal{J}_i$, $i=0,\dots,31$ of the operating diagram of \cref{FigDO1,FigDO2,FigDO3} in the two-dimensional plane $\left(D, S_1^{in}\right)$ when $r$ and $S_2^{in}$ are fixed so that $r\in(0,1/2)$. Then, we define in \cref{TabRegionCondDO4} the regions $\mathcal{J}_i$, $i=32,\dots,46$ of the operating diagram of \cref{FigDO4} in the two-dimensional plane $\left(D, S_1^{in}\right)$ when $r$ and $S_2^{in}$ are fixed so that $r\in(1/2,1)$. Then, we define in \cref{TabRegionCondDO5} the regions $\mathcal{J}_i$, $i=46,\dots,69$ of the operating diagram of \cref{FigDO5Dfix} in the two-dimensional plane $\left(S_2^{in}, S_1^{in}\right)$ when $r$ and $D$ are fixed so that $r\in(0,1/2)$ and $D\in\left(0,\mu_2\left(S_2^m\right)\right)$.


\begin{table}[htbp]
{\small
\begin{center}
\caption{Definitions of the regions $\mathcal{J}_i$, $i=0,\dots,31$ of the operating diagram of \cref{FigDO1,FigDO2,FigDO3} when $r\in(0,1/2)$. For $i,j=1,2$, the functions $D_i^m$ and $D_i^*$ are defined in \cref{TabSurf} while $F_{ij}$ and $\phi_2$ are defined in \cref{TabFunc1}.}
 \label{TabRegionCondDO123}
 \vspace{-0.1cm}
\begin{tabular}{c} 
 Condition on \\
\begin{tabular}{@{\hspace{0mm}}l@{\hspace{0mm}} @{\hspace{0mm}}l@{\hspace{0mm}}  @{\hspace{0mm}}l@{\hspace{0mm}} @{\hspace{0mm}}l@{\hspace{0mm}} }	
\hline
 $S_2^{in}$ &  $S_1^{in}$ and $\phi_2$ &  $D$ & Region \\
 \hline\hline
\begin{tabular}{l}
$[0,+\infty[$
\end{tabular} &
\begin{tabular}{l}
$\left(\lambda_1^1,F_{22}\right)$ and $ \phi_2<0$ \\
$\left(\lambda_1^1,F_{22}\right)$ and $\phi_2>0$\\
$\left(F_{22},\lambda_1^1\right)$  \\
$\left(\max\left(\lambda_1^1, F_{22}\right),+\infty\right)$ \\
$\left[0,\lambda_1^2\right)$ \\
$\left(\lambda_1^2,\lambda_1^1\right)$  \\
$\left(\lambda_1^1,\min\left(F_{12},F_{22}\right)\right)$ and $\phi_2<0$ \\
$\left(F_{12},F_{22}\right)$ and $\phi_2<0$ \\
$\left(F_{12},F_{22}\right)$ and $\phi_2>0$\\
$\left(\max(F_{12},F_{22}),+\infty\right)$
\end{tabular} &
\begin{tabular}{l}
$\left(D_1^m,\min\left(r_1m_1,D_2^m\right)\right)$\\
$\left(D_1^m,\min\left(r_1m_1,D_2^m\right)\right)$\\
$\left(D_1^m,\min\left(r_2m_1,D_2^*,D_2^m\right)\right)$\\
$\left(D_1^m,\min\left(r_1m_1,r_2m_1,D_2^m\right)\right)$\\
$\left(0,D_1^*\right)$\\
$\left(0,\min\left(r_2m_1,D_1^*\right)\right)$\\
$\left(0,\min\left(r_1m_1,D_1^*,D_2^m\right)\right)$\\
$\left(0,\min\left(r_1m_1,D_1^*,D_1^m,D_2^m\right)\right)$\\
$\left(0,\min\left(r_1m_1,D_1^*,D_1^m,D_2^m\right)\right)$\\
$\left(0,\min\left(r_1m_1,r_2m_2,D_1^*,D_1^m,D_2^m\right)\right)$\\
\end{tabular} &
\begin{tabular}{l}
$\mathcal{J}_6$\\
$\mathcal{J}_7$\\
$\mathcal{J}_{8}$\\
$\mathcal{J}_{9}$\\
$\mathcal{J}_{15}$\\
$\mathcal{J}_{16}$\\
$\mathcal{J}_{17}$\\
$\mathcal{J}_{18}$\\
$\mathcal{J}_{19}$\\
$\mathcal{J}_{20}$\\
\end{tabular}
\\\hline
\begin{tabular}{l}
$\left(S_2^m,+\infty\right)$
\end{tabular}&
\begin{tabular}{l}
$\left[0,\lambda_1^2\right)$\\
$\left[\lambda_1^2,+\infty\right)$\\
$\left[\lambda_1^2,+\infty\right)$\\
$\left[0,\lambda_1^2\right)$\\
$\left[0,\lambda_1^2\right)$  \\
$\left(\lambda_1^2,\min\left(\lambda_1^1, F_{22}\right)\right)$  \\
$\left(F_{22},+\infty\right)$\\
$\left(\lambda_1^1,F_{22}\right)$ and $\phi_2>0$\\
$\left(\lambda_1^1,F_{22}\right)$ and $\phi_2<0$\\
$\left(\lambda_1^2,\max\left(\lambda_1^1,F_{22}\right)\right)$\\
$\left[0,\lambda_1^2\right)$\\
$\left(\max\left(\lambda_1^1,\lambda_1^2\right),\min\left(F_{12},F_{22}\right)\right)$ and $\phi_2>0$\\
$\left(\max\left(\lambda_1^1,F_{22}\right),F_{12}\right)$\\
$\left(F_{22},\lambda_1^1\right)$\\
$\left(F_{22},+\infty\right)$
\end{tabular}&
\begin{tabular}{l}
$\left(D_2^m,+\infty\right)$\\
$\left(D_2^m,+\infty\right)$\\
$\left(D_2^*,D_2^m\right)$\\
$\left(D_2^*,D_2^m\right)$\\
$\left(D_1^m,D_2^*\right)$\\
$\left(D_1^m,r_1m_1\right)$\\
$\left(D_1^*,\min\left(r_2m_1,D_1^m,D_2^m\right)\right)$\\
$\left(D_1^*,\min\left(r_1m_1,D_1^m,D_2^m\right)\right)$\\
$\left(D_1^*,\min\left(r_1m_1,D_1^m,D_2^m\right)\right)$\\
$\left(D_1^*,\min\left(r_2m_1,D_1^m\right)\right)$\\
$\left(D_1^*,D_1^m\right)$\\
$\left(0,\min\left(r_1m_1,r_2m_1,D_2^m\right)\right)$\\
$\left(0,\min\left(r_1m_1,r_2m_1,D_2^m\right)\right)$\\
$\left(0,\min\left(r_2m_1,D_1^*,D_2^m\right)\right)$\\
$\left(D_1^*,\min\left(r_2m_1,D_1^m,D_2^m\right)\right)$
\end{tabular}&
\begin{tabular}{l}
$\mathcal{J}_0$\\
$\mathcal{J}_1$\\
$\mathcal{J}_2$\\
$\mathcal{J}_3$\\
$\mathcal{J}_4$\\
$\mathcal{J}_5$\\
$\mathcal{J}_{10}$\\
$\mathcal{J}_{11}$\\
$\mathcal{J}_{12}$\\
$\mathcal{J}_{13}$\\
$\mathcal{J}_{14}$\\
$\mathcal{J}_{21}$\\
$\mathcal{J}_{22}$\\
$\mathcal{J}_{23}$\\
$\mathcal{J}_{24}$
\end{tabular}
\\\hline
\begin{tabular}{l}
$\left[0,S_2^m\right)$
\end{tabular}&
\begin{tabular}{l}
$\left[0,\lambda_1^2\right)$\\
$\left(\lambda_1^2,F_{21}\right)\cup\left[\lambda_1^2,+\infty\right)$\\
$\left(F_{21},F_{22}\right)$\\
$\left(F_{22},+\infty\right)$\\
$\left(F_{22},+\infty\right)$\\
$\left(F_{12},F_{22}\right)$ and $\phi_2>0$\\
$\left(F_{12},F_{22}\right)$ and $\phi_2<0$\\
$\left(F_{11},F_{12}\right)$\\
$\left(\lambda_1^1,F_{11}\right)$\\
$\left(\lambda_1^2,\min\left(\lambda_1^1, F_{22}\right)\right)$  \\
$\left[0,\lambda_1^2\right)$  \\
\end{tabular}&
\begin{tabular}{l}
$\left(D_2^*,+\infty\right)$\\
$\left(D_2^m,+\infty\right)\cup\left(D_2^*,+\infty\right)$\\
$\left(D_2^*,\min\left(r_2m_1,D_2^m\right)\right)$\\
$\left(D_2^*,\min\left(r_2m_1,D_2^m\right)\right)$\\
$\left(D_1^*,\min\left(r_2m_1,D_1^m,D_2^m\right)\right)$\\
$\left(D_1^*,\min\left(r_1m_1,D_1^m\right)\right)$\\
$\left(D_1^*,\min\left(r_1m_1,D_1^m\right)\right)$\\
$\left(D_1^*,\min\left(r_1m_1,D_1^m\right)\right)$\\
$\left(D_1^*,\min\left(r_1m_1,D_1^m\right)\right)$\\
$\left(D_1^*,\min\left(r_2m_1,D_2^*\right)\right)$\\
$\left(D_1^*,D_2^*\right)$\\
\end{tabular}&
\begin{tabular}{l}
$\mathcal{J}_0$\\
$\mathcal{J}_1$\\
$\mathcal{J}_{25}$\\
$\mathcal{J}_{26}$\\
$\mathcal{J}_{27}$\\
$\mathcal{J}_{28}$\\
$\mathcal{J}_{29}$\\
$\mathcal{J}_{30}$\\
$\mathcal{J}_{31}$\\
$\mathcal{J}_5$\\
$\mathcal{J}_4$\\
\end{tabular}
\\\hline
\end{tabular}\end{tabular}
\end{center}
}
\end{table}
\begin{table}[htbp]
{\small
\begin{center}
\caption{Definitions of the regions $\mathcal{J}_i$, $i=0,20-22,32-46$ of the operating diagram of \cref{FigDO4} in the case $r\in(1/2,1)$ and $S_2^{in}\in\left(S_2^m,+\infty\right)$. The functions $D_i^m$ and $D_i^*$, $i=1,2$ are defined in \cref{TabSurf} while the functions $F_{ij}$ and $\phi_2$, $i,j=1,2$ are defined in \cref{TabFunc1}.}\label{TabRegionCondDO4}
 \vspace{-0.3cm}
\begin{tabular}{c} 
 Condition on \\
\begin{tabular}{@{\hspace{1mm}}l@{\hspace{1mm}} @{\hspace{1mm}}l@{\hspace{1mm}}  @{\hspace{1mm}}l@{\hspace{1mm}} }	
\hline
$S_1^{in}$ and $\phi_2$ &  $D$  & Region \\
 \hline\hline
\begin{tabular}{l}
$\left[0,\lambda_1^1\right)$\\
$\left[\lambda_1^1,+\infty\right)$\\
$\left[\lambda_1^1,+\infty\right)$\\
$\left[0,\lambda_1^1\right)$\\
$\left[0,\lambda_1^1\right)$\\
$\left(\lambda_1^1,\min\left(\lambda_1^2, F_{12}\right)\right)$  \\
$\left(\lambda_1^2,F_{12}\right)$\\
$\left(F_{12},\lambda_1^2\right)$\\
$\left(\max\left(\lambda_1^2, F_{12}\right),+\infty\right)$\\
$\left(F_{12},+\infty\right)$\\
$\left(\lambda_1^2,F_{12}\right)$\\
$\left(\lambda_1^1,\lambda_1^2\right)$\\
$\left[0,\lambda_1^1\right)$\\
$\left[0,\lambda_1^1\right)$\\
$\left(\lambda_1^1,\lambda_1^2\right)$ and $\phi_2<0$ \\
$\left(\lambda_1^1,\lambda_1^2\right)$ and $\phi_2>0$ \\
$\left[\lambda_1^2,F_{22}\right)$ and $\phi_2<0$ \\
$\left[\lambda_1^2,F_{22}\right)$ and $\phi_2>0$ \\
$\left(F_{22},F_{12}\right)$\\
$\left(F_{12},+\infty\right)$
\end{tabular}&
\begin{tabular}{l}
$\left(D_1^m,+\infty\right)$\\
$\left(D_1^m,r_1m_1\right)$\\
$\left(D_1^*,\min\left(r_1m_1,D_1^m\right)\right)$\\
$\left(D_1^*,D_1^m\right)$\\
$\left(D_2^m,D_1^*\right)$\\
$\left(D_2^m,r_1m_1\right)$\\
$\left(D_2^m,r_2m_1\right)$\\
$\left[0,\min\left(r_1m_1,D_1^*,D_1^m\right)\right)$\\
$\left(D_2^m,\min\left(r_1m_1,r_2m_1,D_1^m\right)\right)$\\
$\left(D_2^*,\min\left(r_1m_1,D_1^m,D_2^m\right)\right)$\\
$\left(D_2^*,\min\left(r_2m_1,D_2^m\right)\right)$\\
$\left(D_2^*,\min\left(r_1m_1,D_2^m\right)\right)$\\
$\left(D_2^*,D_2^m\right)$\\
$\left(0,D_2^*\right)$\\
$\left(0,\min\left(r_1m_1,D_2^*,D_2^m\right)\right)$\\
$\left(0,\min\left(r_1m_1,D_2^*,D_2^m\right)\right)$\\
$\left(0,\min\left(r_1m_1,r_2m_1,D_2^m\right)\right)$\\
$\left(0,\min\left(r_1m_1,r_2m_1,D_2^m\right)\right)$\\
$\left(0,\min\left(r_2m_1,D_2^*,D_2^m\right)\right)$\\
$\left(0,\min\left(r_1m_1,D_2^*,D_1^m\right)\right)$\\
\end{tabular}&
\begin{tabular}{l}
$\mathcal{J}_0$\\
$\mathcal{J}_{32}$\\
$\mathcal{J}_{33}$\\
$\mathcal{J}_{34}$\\
$\mathcal{J}_{35}$\\
$\mathcal{J}_{36}$\\
$\mathcal{J}_{37}$\\
$\mathcal{J}_{38}$\\
$\mathcal{J}_{39}$\\
$\mathcal{J}_{40}$\\
$\mathcal{J}_{41}$\\
$\mathcal{J}_{42}$\\
$\mathcal{J}_{43}$\\
$\mathcal{J}_{15}$\\
$\mathcal{J}_{44}$\\
$\mathcal{J}_{45}$\\
$\mathcal{J}_{46}$\\
$\mathcal{J}_{21}$\\
$\mathcal{J}_{22}$\\
$\mathcal{J}_{20}$
\end{tabular}
\\\hline
\end{tabular}\end{tabular}
\end{center}}
\end{table}
\begin{table}[htbp]
{\small
\begin{center}
\caption{Definitions of the regions $\mathcal{J}_i$, $i=0,1,4,5,15,27,46-69$ of the operating diagram of \cref{FigDO5Dfix} in the plane $\left(S_2^{in}, S_1^{in}\right)$ when $r\in(0,1/2)$ and $D\in\left(0,\mu_2\left(S_2^m\right)\right)$. For $i,j=1,2$, the functions $D_i^m$ and $D_i^*$ are defined in \cref{TabSurf} while the functions $F_{ij}$ and $\phi_2$ in \cref{TabFunc1}.}\label{TabRegionCondDO5}
 \vspace{-0.2cm}
\begin{tabular}{c} 
 Condition on \\
\begin{tabular}{@{\hspace{1mm}}l@{\hspace{1mm}} @{\hspace{1mm}}l@{\hspace{1mm}}  @{\hspace{1mm}}l@{\hspace{1mm}} @{\hspace{1mm}}l@{\hspace{1mm}} }	
\hline
 $S_2^{in}$ &  $S_1^{in}$ and $\phi_2$  & Region \\
 \hline\hline
\begin{tabular}{l}
$\left[0,\lambda_2^{21}\right)$
\end{tabular}&
\begin{tabular}{l}
$\left[0,\lambda_1^2\right)$\\
$\left(\lambda_1^2, F_{21}\right)$\\
$\left(F_{21},\lambda_1^1\right)$\\
$\left(\lambda_1^1, F_{11}\right)$\\
$\left(F_{11}, F_{12}\right)$\\
$\left(F_{12},F_{22}\right)$ and $\phi_2<0$\\
$\left(F_{12},F_{22}\right)$ and $\phi_2>0$\\
$\left(F_{22},+\infty\right)$
\end{tabular}&
\begin{tabular}{l}
$\mathcal{J}_0$\\
$\mathcal{J}_1$\\
$\mathcal{J}_{64}$\\
$\mathcal{J}_{65}$\\
$\mathcal{J}_{66}$\\
$\mathcal{J}_{67}$\\
$\mathcal{J}_{68}$\\
$\mathcal{J}_{69}$\\
\end{tabular}
\\\hline
\begin{tabular}{l}
$\left(\lambda_2^{21},\lambda_2^{11}\right)$
\end{tabular}&
\begin{tabular}{l}
$\left[0,\lambda_1^2\right)$\\
$\left(\lambda_1^2, \lambda_1^1\right)$\\
$\left(\lambda_1^1, F_{11}\right)$\\
$\left(F_{11}, F_{12}\right)$\\
$\left(F_{12},F_{22}\right)$ and $\phi_2<0$\\
$\left(F_{12},F_{22}\right)$ and $\phi_2>0$\\
$\left(F_{22},+\infty\right)$\\
\end{tabular}&
\begin{tabular}{l}
$\mathcal{J}_4$\\
$\mathcal{J}_5$\\
$\mathcal{J}_{63}$\\
$\mathcal{J}_{62}$\\
$\mathcal{J}_{61}$\\
$\mathcal{J}_{60}$\\
$\mathcal{J}_{27}$\\
\end{tabular}
\\\hline
\begin{tabular}{l}
$\left(\lambda_2^{11},\lambda_2^{12}\right)$
\end{tabular}&
\begin{tabular}{l}
$\left[0,\lambda_1^2\right)$\\
$\left(\lambda_1^2, \lambda_1^1\right)$\\
$\left(\lambda_1^1, F_{12}\right)$\\
$\left(F_{12},F_{22}\right)$ and $\phi_2<0$\\
$\left(F_{12},F_{22}\right)$ and $\phi_2>0$\\
$\left(F_{22},+\infty\right)$\\
\end{tabular}&
\begin{tabular}{l}
$\mathcal{J}_{15}$\\
$\mathcal{J}_{56}$\\
$\mathcal{J}_{46}$\\
$\mathcal{J}_{57}$\\
$\mathcal{J}_{58}$\\
$\mathcal{J}_{59}$\\
\end{tabular}\\\hline
\begin{tabular}{l}
$\left(\lambda_2^{12},\lambda_2^{22}\right)$
\end{tabular}&
\begin{tabular}{l}
$\left(\max\left(\lambda_1^1,F_{22}\right),+\infty\right)$\\
$\left(\lambda_1^1,F_{22}\right)$ and $\phi_2>0$\\
$\left(\lambda_1^1,F_{22}\right)$ and $\phi_2<0$\\
$\left(F_{22},\lambda_1^1\right)$\\
$\left(\lambda_1^2,\min\left(\lambda_1^1,F_{22}\right)\right)$  \\
$\left[0,\lambda_1^2\right)$
\end{tabular}&
\begin{tabular}{l}
$\mathcal{J}_{50}$\\
$\mathcal{J}_{51}$\\
$\mathcal{J}_{52}$\\
$\mathcal{J}_{53}$\\
$\mathcal{J}_{54}$\\
$\mathcal{J}_{55}$\\
\end{tabular}
\\\hline
\begin{tabular}{l}
$\left(\lambda_2^{22},+\infty\right)$
\end{tabular}&
\begin{tabular}{l}
$\left[0,\lambda_1^2\right)$\\
$\left(\lambda_1^2, \lambda_1^1\right)$\\
$\left(\lambda_1^1,+\infty\right)$
\end{tabular}&
\begin{tabular}{l}
$\mathcal{J}_{47}$\\
$\mathcal{J}_{48}$\\
$\mathcal{J}_{49}$\\
\end{tabular}
\\\hline
\begin{tabular}{l}
\end{tabular}&
\begin{tabular}{l}

\end{tabular}&
\begin{tabular}{l}

\end{tabular}
\end{tabular}\end{tabular}
\end{center}}
\end{table}
 
\end{document}